\newtheorem{thm}{Theorem}[section]
\newtheorem{dfn}[thm]{Definition}
\newtheorem{lemma}[thm]{Lemma}
\newtheorem{prop}[thm]{Proposition}
\newtheorem*{dfn*}{Definition}
\newtheorem*{thm*}{Theorem}
\newtheorem*{prop*}{Proposition}
\newtheorem*{lemma*}{Lemma}
\newtheorem*{corol*}{Corollary}
\newcommand{\G}{\mathcal{G}}
\newcommand{\D}{\mathcal{D}}
\newcommand{\J}{\mathcal{J}}
\newcommand{\K}{\mathcal{K}}
\renewcommand{\O}{\mathcal{O}}
\newcommand{\T}{\mathcal{T}}
\newcommand{\DG}{\mathcal{D}(\mathcal{G})}
\newcommand{\OG}{\mathcal{O}_{\mathcal{G}}}
\newcommand{\KG}{\mathcal{K}_{\mathcal{G}}}
\newcommand{\ccinfty}{C_c^\infty}
\newcommand{\cinfty}{C^\infty}
\newcommand{\pro}{\mathrm{pro}}
\DeclareMathOperator{\Mor}{Mor}
\DeclareMathOperator{\im}{im}
\DeclareMathOperator{\id}{id}
\DeclareMathOperator{\Hom}{Hom}
\DeclareMathOperator{\Alg}{Alg}
\title{Equivariant periodic cyclic homology for ample groupoids}
\author{Francesco Pagliuca}
\address{School of Mathematics and Statistics \\
         University of Glasgow \\
         University Place \\
         Glasgow G12 8QQ \\
         United Kingdom 
}
\email{francesco.pagliuca@glasgow.ac.uk}
\author{Christian Voigt}
\address{School of Mathematics and Statistics \\
         University of Glasgow \\
         University Place \\
         Glasgow G12 8QQ \\
         United Kingdom 
}
\email{christian.voigt@glasgow.ac.uk}
\begin{document}

\begin{abstract}
We define and study bivariant equivariant periodic cyclic homology for actions of ample groupoids. In analogy to the group case, we show that the theory satisfies homotopy invariance, stability, and excision in both variables. We also prove an analogue of the Green-Julg theorem for actions of proper groupoids. 
\end{abstract}

\maketitle

\section{Introduction}

Cyclic homology is a homology theory for algebras which was introduced inde\-pendently by Connes and Tsygan. It can be viewed as an analogue of de Rham cohomology in the realm of noncommutative geometry \cite{CONNES_noncommutativedifferentialgeometry}, \cite{CONNES_noncommutativegeometry}. 
Groupoids, in turn, provide a vast range of examples of noncommutative spaces. Among other things, they arise naturally in topological dynamics and the classification of simple $ C^* $-algebras, see for instance \cite{LI_classifiablecartan}. 

The aim of this paper is to introduce a version of cyclic homology which takes into account groupoid symmetries. More precisely, we shall define and study bivariant equivariant periodic cyclic homology with respect to actions of ample Hausdorff groupoids on complex algebras. This is based on the Cuntz-Quillen approach to cyclic homology  \cite{CUNTZ_QUILLEN_algebraextensions}, \cite{CUNTZ_QUILLEN_nonsingularity}, \cite{CUNTZ_QUILLEN_excision}, and the group equivariant cyclic theory introduced in \cite{VOIGT_thesis}, \cite{VOIGT_equivariantperiodiccyclichomology}. 

Our main motivation comes from topological dynamics. Matui's HK-conjecture \cite{MATUI_etalegroupoidshifts} 
asserts that the $ K $-theory of a certain class of ample groupoids is given by groupoid homology in the sense of Crainic and Moerdijk \cite{CRAINIC_MOERDIJK_homologyetalegroupoids}. This conjecture is known to hold in a range of cases, see for instance
\cite{MATUI_homologyandtoplogicalfullgroups}, \cite{MATUI_etalegroupoidshifts},  
\cite{FARSI_KUMJIAN_PASK_SIMS_amplegroupoidshomology}, \cite{PROIETTI_YAMASHITA_homologyktheory1}, \cite{BOENICKE_DELLAIERA_GABE_WILLETT_dynamicasymptoticdimension}. 
As highlighted in \cite{PROIETTI_YAMASHITA_homologyktheory1}, it is natural to approach the HK-conjecture, and more generally the problem of calculating the $ K $-theory of ample groupoids, through the Baum-Connes conjecture \cite{TU_feuilletagesmoyennables}. From this perspective, one may hope to gain insight into the $ K $-theory of ample groupoids via a bivariant Chern character as in \cite{VOIGT_chern}. This is precisely where equivariant cyclic homology enters the picture. 

An important feature of group equivariant cyclic homology is that the basic object of the theory, the equivariant $ X $-complex, is not a chain complex in the usual sense of homological algebra. More precisely, the square of the differential of the equivariant $ X $-complex fails to be zero. This failure is controlled in a precise way, and after passing to mapping complexes one obtains honest chain complexes. The same is true in the groupoid case, and for this reason we only define a generalisation of bivariant periodic cyclic homology, and not of ordinary cyclic homology. 

We remark that the constructions in \cite{VOIGT_equivariantperiodiccyclichomology}, on which our work builds, are carried out in the category of bornological vector spaces. In the present paper we restrict ourselves to the purely algebraic framework of complex vector spaces without additional structure. 
This makes the algebraic nature of the theory as transparent as possible. However, let us remark that the constructions carry over to the bornological setting with minimal changes. 
In contrast, in order to treat more general classes of \'etale groupoids some of our techniques would need to be modified, and working with bornological vector spaces would then be the most natural starting point. 

Already in the ample case, removing our assumption that all groupoids under consideration are Hausdorff would require additional work as well. The key issue is that the standard space of ``locally constant functions'' on a non-Hausdorff ample groupoid fails to be closed under pointwise multiplication. This directly affects a basic ingredient in our theory, namely the definition of anti-Yetter-Drinfeld modules. It is conceivable that the theory of Hausdorff covers of non-Hausdorff groupoids, as developed in \cite{BRIX_GONZALES_HUME_LI_hausdorffcovers}, could be used to address this problem, but we shall not pursue this direction here.    

Let us now explain how this paper is organised. In section \ref{section:preliminaries} we collect some background material on groupoids and their actions. This includes a review of basic properties of locally constant functions on totally disconnected locally compact Hausdorff spaces and balanced tensor products. Section \ref{section:dgmodules} is an exposition of key facts about the category of $ \G $-modules for an ample Hausdorff groupoid $ \G $ and its monoidal structure. In particular, we provide an alternative description of $ \G $-modules in terms of what we call $ \ccinfty(\G) $-comodules. We define $ \G $-algebras, give some examples, and introduce the notion of a $ \G $-anti-Yetter-Drinfeld module. 
In section \ref{section:epch} we first review some facts about pro-categories and the concept of a paracomplex. We then introduce the main ingredients in the construction of equivariant cyclic homology, namely equivariant differential forms, the periodic tensor algebra, and the equivariant $ X $-complex. After these preparations we state the  definition of the bivariant equivariant cyclic homology groups $ HP^\G_*(A,B) $ for a pair of $ \G $-algebras $ A,B $. Section \ref{section:properties} is devoted to establishing fundamental properties of the resulting homology theory. We show that $ HP^\G_* $ is homotopy invariant with respect to smooth homotopies, stable under tensoring with algebras of finite rank operators on $ \G $-modules with invariant pairings, and satisfies excision in both variables. The proofs are similar to the group equivariant case, but additional arguments are required in particular in connection with stability. Finally, in section \ref{section:greenjulg} we establish an analogue in periodic cyclic homology of the Green-Julg theorem for the $ K $-theory of proper groupoids, see \cite{TU_novikovhyperbolique}. Here we make use of localisation techniques, which allows us to reduce the assertion to the group equivariant case studied in \cite{VOIGT_thesis}. 

We conclude with some comments on notation and terminology. In this paper we work over the complex numbers, and we write $ \otimes $ for the algebraic tensor product of complex vector spaces. Throughout, by a locally compact space we mean a topological space with a locally compact Hausdorff topology. In particular, all groupoids are assumed to be Hausdorff. 

It is a pleasure to thank Christian B\"onicke and Xin Li for instructive discussions about ample groupoids. We would also like to thank Julian Kranz for pointing out a gap in our original argument for stability. 

\section{Preliminaries} \label{section:preliminaries}

In this section we review some definitions and facts about ample groupoids and fix our notation. For further details and background we refer the reader to \cite{PATERSON_groupoidsinversesemigroups}, \cite{RENAULT_groupoidapproach}, \cite{TU_nonhausdorffgroupoids}.  

\subsection{Groupoids}

By definition, a groupoid $ \G $ is a small category in which all morphisms are invertible. We will frequently identify the set $ \G^{(0)} $ of objects of $ \G $ with the set of identity morphisms via the unit map $ u: \G^{(0)} \rightarrow \G $. The source and range maps are denoted by $ s,r: \G \rightarrow \G^{(0)} $, respectively. The 
composition $ m: \G^{(2)} \rightarrow \G $, given by $ m(\alpha, \beta) = \alpha \beta $, is defined on the 
set $ \G^{(2)} = \{(\alpha, \beta) \in \G \times \G \mid s(\alpha) = r(\beta) \} $ of composable morphisms. Finally, the inverse map $ i: \G \rightarrow \G $ is given by $ i(\alpha) = \alpha^{-1} $. We will use the notations $ \G_x = s^{-1}(x), \G^x = r^{-1}(x) $ and $ \G^y_x = \G_x \cap \G^y $ for $ x, y \in \G^{(0)} $. Moreover, if $ U, V \subseteq \G $ are subsets we write $ UV = \{\alpha \beta \mid \alpha \in U, \beta \in V, s(\alpha) = r(\beta) \} $. 

A topological groupoid is a groupoid $ \G $ such that $ \G $ and $ \G^{(0)} $ are equipped with topologies and all structure maps $ u,s,r,m,i $ are continuous. In this paper we will only consider topological groupoids which are Hausdorff and locally compact. A topological groupoid $ \G $ is \'etale if the range map $ r $ is a local homeomorphism. In this case
the unit map identifies $ \G^{(0)} $ with an open subset of $ \G $, and the maps $ u, s, m $ are local homeomorphisms as well. 

\begin{dfn}
An ample groupoid is an \'etale groupoid $ \G $ such that the base space $ \G^{(0)} $ is totally disconnected. 
\end{dfn}

An open source section of an \'etale groupoid $ \G $ is an open subset $ U \subseteq \G $ such that the restriction of the source map induces a homeomorphism $ U \rightarrow s(U) $. 
Similarly one defines open range sections. An open bisection of $ \G $ is an open subset of $ \G $ which is both a source and a range section. Ample groupoids can be equivalently characterised as those \'etale groupoids for which the set of all compact open bisections forms a basis for the topology. 

Let $ X,Y,Z $ topological spaces and let $ p: X \rightarrow Z $ and $ q : Y \rightarrow Z $ be continuous maps. The fibre product
$$
X \times_{p,q} Y = \{(x,y) \in X \times Y \mid p(x) = q(y) \}
$$
is equipped with the subspace topology from $ X \times Y $. If $ X, Y $ are locally compact and $ Z $ is Hausdorff then $ X \times_{p,q} Y $ is again locally compact. 

Let $ \G $ be an \'etale groupoid and let $ X $ be a locally compact space. A left action of $ \G $ on $ X $ consists of continuous maps
\begin{itemize}
\item [(i)] $ \pi: X \rightarrow \G^{(0)} $, called the anchor map,
\item[(ii)] $ m: \G \times_{s,\pi} X \rightarrow X $, denoted $ m(\alpha,x) = \alpha \cdot x $,
\end{itemize}
such that if $ (\alpha, \beta) \in \G^{(2)} $ and $ (\alpha,x) \in \G \times_{s,\pi} X $ then $ (\beta, \alpha \cdot x) \in \G \times_{s,\pi} X $ and $ \beta \cdot (\alpha\cdot x) = (\beta\alpha)\cdot x $, and $ u(\pi(x)) \cdot x = x $ for all $ x \in X $. 

One defines right actions of $ \G $ in an analogous fashion. A locally compact space with an action of $ \G $ will be called a $ \G $-space. 
A basic example of a left $ \G $-space is $ X = \G^{(0)} $ with anchor map $ \pi = \id $ and the action $ \alpha \cdot s(\alpha) = r(\alpha) $ for $ \alpha \in \G $. 

Let $ X $ be a left $ \G $-space $ X $ with anchor map $ \pi $. One defines $ \G \backslash X $ as the quotient of $ X $ by the equivalence relation $ \sim $ such that $ x \sim y $ if and only if there exists an element $ \alpha \in \G $ such that $ \pi(x) = s(\alpha) $ and $ y = \alpha \cdot x $. For an \'etale groupoid $ \G $ the projection map $ X \rightarrow \G \backslash X $ is always open, and the quotient topology on $ \G \backslash X $ is locally compact but not necessarily Hausdorff.

\subsection{Algebras and multipliers} 

By an algebra we shall mean an associative but not necessarily unital algebra over the complex numbers. We will typically work with algebras $ A $ which are essential in the sense that the multiplication map induces an isomorphism $ A \otimes_A A \cong A $. We will also be mostly interested in algebras with a nondegenerate multiplication in the sense that $ ab = 0 $ for all $ a \in A $ implies $ b = 0 $ and $ ab = 0 $ for all $ b \in A $ implies $ a = 0 $. Note that every unital algebra is essential and has a nondegenerate multiplication. 

The algebraic multiplier algebra $ M(A) $ of an algebra of $ A $ consists of all two-sided multipliers $ (L,R) $ of $ A $, compare for instance \cite[Appendix]{VANDAELE_multiplierhopfalgebras}. By definition, a two-sided multiplier $ (L,R) $ of $ A $ is a pair of a left multiplier $ L $, that is, a right $ A $-linear map $ L: A \rightarrow A $, and a right multiplier $ R $, that is, a left $ A $-linear map $ R: A \rightarrow A $, such that $ R(a)b = a(L(b)) $ for all $ a,b \in A $. The vector space $ M(A) $ becomes a unital algebra via composition of maps and unit $ 1 = (\id, \id) $. Every element $ a \in A $ defines a two-sided multiplier $ \iota(a) = (L_a, R_a) $ where $ L_a(b) = ab, R_a(b) = ba $. If the multiplication in $ A $ is nondegenerate then the canonical homomorphism $ \iota: A \rightarrow M(A) $ is injective, in which case we identify $ A $ with a subalgebra of $ M(A) $ in this way. 

A (left) $ A $-module $ M $ is called essential if the canonical map $ A \otimes_A M \rightarrow M $ induced by the module action is an isomorphism. Note that in this case $ AM $, the linear span of all elements $ a \cdot m $ for $ a \in A $ and $ m \in M $, equals $ M $. 
An algebra homomorphism $ f: A \rightarrow M(B) $ is called essential if $ B $ becomes an essential left and right module via $ f $. 
If $ A $ is unital then an $ A $-module $ M $ is essential iff it is unital in the sense that $ 1 \cdot m = m $ for all $ m \in M $, 
and an algebra homomorphism $ f: A \rightarrow M(B) $ is essential iff it is unital. For the following fact compare \cite[Proposition A.5]{VANDAELE_multiplierhopfalgebras}. 

\begin{lemma} 
Let $ f: A \rightarrow M(B) $ be an essential algebra homomorphism. If the multiplication in $ B $ is nondegenerate there exists a unique unital algebra homo\-morphism $ F: M(A) \rightarrow M(B) $ such that $ F \iota = f $. 
\end{lemma}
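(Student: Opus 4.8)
The plan is to construct $F$ by hand: given a two-sided multiplier $(L,R)$ of $A$, I will produce a two-sided multiplier $(L',R')$ of $B$, declare $F(L,R) = (L',R')$, and then check that this is the desired unital homomorphism and that it is forced. Since $f$ is essential, $B = f(A)B$ and $B = Bf(A)$ as linear spans. For $b \in B$ I pick a representation $b = \sum_i f(a_i) b_i$ with $a_i \in A$, $b_i \in B$, and set $L'(b) = \sum_i f(L(a_i)) b_i$; dually, using a representation $b = \sum_i b_i f(a_i)$, I set $R'(b) = \sum_i b_i f(R(a_i))$.

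The key point, and the step I expect to be the main obstacle, is that these two formulas are independent of the chosen representations. Suppose $\sum_i f(a_i) b_i = 0$ and put $y = \sum_i f(L(a_i)) b_i$. For any $c \in A$ the multiplier identity $cL(a_i) = R(c)a_i$ gives $f(c)y = \sum_i f(R(c)a_i)b_i = f(R(c))\sum_i f(a_i)b_i = 0$, so $f(c)y = 0$ for every $c \in A$. Writing an arbitrary $b \in B$ as $\sum_k b_k f(c_k)$ then yields $by = \sum_k b_k f(c_k) y = 0$, and nondegeneracy of the multiplication in $B$ forces $y = 0$. Thus $L'$ is well defined, and the argument for $R'$ is symmetric. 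This is precisely where essentiality on both sides and nondegeneracy in $B$ all have to be used at once.

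The remaining verifications are routine and I will carry them out without belabouring them. Right $A$-linearity of $L$ immediately gives right $B$-linearity of $L'$ by multiplying a representation of $b$ on the right; symmetrically $R'$ is left $B$-linear. The compatibility $R'(b)b' = bL'(b')$ follows by writing $b = \sum_i b_i f(a_i)$ and $b' = \sum_j f(c_j) d_j$ and using $R(a_i)c_j = a_i L(c_j)$ inside $f$, so $(L',R') \in M(B)$. That $F$ is multiplicative reduces, on left multipliers, to the identity $(L_1 L_2)' = L_1' L_2'$, which holds because $\sum_i f(L_2(a_i)) b_i$ is again a legitimate representation of $L_2'(b)$; the right-multiplier components compose in the opposite order, matching the product in $M(A)$. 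Unitality is clear since $\id_A$ induces $\id_B$, and $F\iota = f$ is the observation that $\iota(a) = (L_a,R_a)$ induces the left and right multiplications by $f(a)$, that is, the multiplier $f(a) \in M(B)$. Finally, for uniqueness one checks in $M(A)$ that $(L,R)\iota(a) = \iota(L(a))$ and $\iota(a)(L,R) = \iota(R(a))$, again from $A$-linearity and the multiplier relation, so any unital $F'$ with $F'\iota = f$ must satisfy $F'(L,R)f(a) = f(L(a))$ and $f(a)F'(L,R) = f(R(a))$ for all $a \in A$. Since $B = f(A)B = Bf(A)$ and the multiplication in $B$ is nondegenerate, these relations determine the multiplier $F'(L,R)$ uniquely, hence $F' = F$.
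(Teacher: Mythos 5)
Your proof is correct, and it is essentially the standard argument: the paper itself does not prove this lemma but quotes it from Van Daele (Proposition A.5 of the multiplier Hopf algebra paper), where the extension $F$ is constructed in exactly this way, by defining $F(L,R)$ on $B=f(A)B=Bf(A)$ via representations $\sum_i f(a_i)b_i$ and using nondegeneracy of the multiplication in $B$ to check well-definedness. Your verification of the multiplier relations, multiplicativity, $F\iota=f$, and uniqueness via $(L,R)\iota(a)=\iota(L(a))$, $\iota(a)(L,R)=\iota(R(a))$ matches that reference's reasoning.
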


We say that an algebra $ A $ has local units if for every finite set of elements $ a_1, \dots, a_n $ of $ A $ there exists $ e \in A $ such that $ e a_i = a_i = a_i e $ for all $ i $. The multiplication in such an algebra is clearly nondegenerate. Let us record the following well-known fact. 

\begin{lemma} \label{localunitsessential}
Let $ A $ be an algebra with local units. Then a left $ A $-module is essential iff $ AM = M $. A similar statement holds for right modules. 
\end{lemma}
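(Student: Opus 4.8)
The plan is to prove both implications directly from the definition of local units. Recall that a left $A$-module $M$ is essential if the canonical map $A \otimes_A M \to M$, $a \otimes m \mapsto a \cdot m$, is an isomorphism.

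First I would treat the easy direction: if $M$ is essential, then the canonical map $A \otimes_A M \to M$ is in particular surjective, and its image is precisely $AM$; hence $AM = M$. This uses nothing about local units.

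For the converse, suppose $A$ has local units and $AM = M$. Surjectivity of $A \otimes_A M \to M$ is immediate from $AM = M$, so the real content is injectivity. I would take an element $\sum_i a_i \otimes m_i \in A \otimes_A M$ lying in the kernel, so $\sum_i a_i \cdot m_i = 0$ in $M$. Using $AM = M$, I would first rewrite each $m_i$ as a finite sum $m_i = \sum_j b_{ij} \cdot n_{ij}$ with $b_{ij} \in A$; then $\sum_i a_i \otimes m_i = \sum_{i,j} a_i b_{ij} \otimes n_{ij}$, so after relabelling we may assume $\sum_k c_k \otimes m_k$ with each $c_k \in A^2$ — more importantly, we can arrange that all the $m_k$ appearing are of the form $b \cdot n$. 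The point of local units is then this: choose $e \in A$ with $e c_k = c_k$ for all $k$ (applying the local units property to the finite set $\{c_k\}$). Then in $A \otimes_A M$ we have $\sum_k c_k \otimes m_k = \sum_k e c_k \otimes m_k = e \otimes \bigl(\sum_k c_k \cdot m_k\bigr) = e \otimes 0 = 0$, where the middle equality moves $c_k$ across the balanced tensor product. Wait — that argument shows the kernel is zero but I should double-check the bookkeeping: it works provided the original $m_i$ can be absorbed so that the left tensor factors lie in the span hit by a single local unit, which the rewriting above arranges. So injectivity follows, and $A \otimes_A M \to M$ is an isomorphism. The right-module statement is entirely symmetric.

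The main obstacle is the injectivity argument, and specifically the manipulation of the balanced tensor product $A \otimes_A M$: one must be careful that the relation $ae \otimes m = a \otimes em$ is being used legitimately and that the finiteness of the sum lets us pick a single $e \in A$ acting as a unit on all relevant left tensor factors simultaneously. Everything else is routine.
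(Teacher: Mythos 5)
Your proposal is correct and takes essentially the same route as the paper: injectivity is obtained by choosing a single local unit $e$ with $e c_k = c_k$ for the finitely many left tensor factors and sliding them across the balanced tensor product to get $\sum_k c_k \otimes m_k = e \otimes \sum_k c_k \cdot m_k = 0$, which is exactly the paper's argument (applied there directly to the original $a_i$). Your preliminary rewriting of the $m_i$ using $AM = M$ is harmless but unnecessary, since the local-units property can be applied to the finite set $\{a_i\}$ straight away; the surjectivity direction is handled the same way in both.
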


\begin{proof} 
We consider only the case of left modules. Let $ M $ be an essential left $ A $-module and let $ m \in M $. By assumption, there are elements $ a_i \in A $ and $ m_i \in M $ such that $ \sum a_i \cdot m_i = m $. Since $ A $ has local units we find $ e \in A $ such that $ e a_i = a_i $ for all $ i $, and hence the canonical map $ m: A \otimes_A M \rightarrow M $ sends $ \sum_i e \otimes a_i \cdot m $ to $ m $. We conclude that $ m $ is surjective. 

Now assume that $ \sum a_i \otimes m_i \in A \otimes_A M $ satisfies $ \sum a_i \cdot m_i = 0 $. By assumption we find $ e \in A $ such that $ e a_i = a_i $ for all $ i $ and hence 
$$
\sum a_i \otimes m_i = \sum ea_i \otimes m_i = \sum e \otimes a_i m_i = 0 
$$
in $ A \otimes_A M $. This means that the canonical map is injectve. 
\end{proof}

Note that Lemma \ref{localunitsessential} implies in particular that an algebra with local units is essential. 

\subsection{Function spaces}

Let $ X $ be a totally disconnected locally compact space. We define $ \ccinfty(X) $ 
as the space of all locally constant functions $ X \rightarrow \mathbb{C} $ with compact support. 
This is equal to the linear span of all characteristic functions $ \chi_K $ for compact open subsets $ K \subseteq X $. More precisely, we have the following basic fact. 

\begin{lemma} \label{lemma:funct.local.disc.tensor}
Let $ X $ be a totally disconnected locally compact space. 
Then every element $ f \in \ccinfty(X) $ can be written as a linear combination $ f = \sum_i c_i \chi_{U_i} $ for a finite family of pairwise disjoint compact open subsets $ U_k \subseteq X $ and coefficients $ c_k \in \mathbb{C} $. 
\end{lemma}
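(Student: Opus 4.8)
The plan is to start from the defining observation that $\ccinfty(X)$ is spanned by characteristic functions $\chi_K$ of compact open subsets $K \subseteq X$, and then to upgrade an arbitrary finite linear combination $f = \sum_{j=1}^n a_j \chi_{K_j}$ into one over \emph{pairwise disjoint} compact open sets. The key tool is that in a totally disconnected locally compact Hausdorff space the compact open subsets form a Boolean algebra: finite unions, finite intersections, and relative complements of compact open sets are again compact open. (Compactness and openness are both preserved by finite unions and intersections; for complements, if $K, L$ are compact open then $K \setminus L = K \cap (X \setminus L)$ is the intersection of a compact set with an open set, hence open, and it is closed in the compact set $K$, hence compact.)

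First I would reduce to a common refinement by the standard ``atoms of a finite Boolean algebra'' construction. Given the sets $K_1, \dots, K_n$ appearing in the expression for $f$, consider all $2^n$ intersections of the form $\bigcap_{j \in S} K_j \cap \bigcap_{j \notin S} (K_0 \setminus K_j)$, where $S$ ranges over subsets of $\{1,\dots,n\}$ and $K_0 = K_1 \cup \dots \cup K_n$; equivalently, for each point one records exactly which $K_j$ contain it. Each such set is compact open by the Boolean algebra remark, these sets are pairwise disjoint by construction, and each $K_j$ is the disjoint union of those atoms $U_S$ with $j \in S$. Discarding the empty atoms, we obtain a finite family $U_1, \dots, U_m$ of pairwise disjoint compact open sets with $\chi_{K_j} = \sum_{U_i \subseteq K_j} \chi_{U_i}$ for every $j$.

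Substituting these expressions back into $f = \sum_j a_j \chi_{K_j}$ and collecting terms gives $f = \sum_{i=1}^m c_i \chi_{U_i}$ with $c_i = \sum_{j : U_i \subseteq K_j} a_j$, which is exactly the claimed form. The argument is essentially bookkeeping once the Boolean algebra structure is in place; the only point requiring a little care — and the main (mild) obstacle — is verifying that relative complements of compact open sets remain compact open, which is precisely where total disconnectedness enters through the fact that compact open sets are simultaneously closed. There is no genuine difficulty here, and I would keep the write-up short.
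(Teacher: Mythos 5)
There is a gap at the very first step. In this paper $\ccinfty(X)$ is \emph{defined} as the space of locally constant compactly supported functions; the statement that it is spanned by characteristic functions of compact open sets is not a definition but precisely the claim that this lemma is meant to substantiate (the lemma is introduced in the text as the ``more precise'' form of that spanning assertion). By taking $f = \sum_j a_j \chi_{K_j}$ as your ``defining observation'', you assume the nontrivial half of the statement and only prove the disjointness refinement, which makes the argument circular in context. The paper's proof goes the other way and is shorter: since $f$ is locally constant with compact support, its image $f(X)$ is a finite subset of $\mathbb{C}$, and the level sets $U_k = f^{-1}(c_k)$ for the nonzero values $c_k$ are automatically compact open and pairwise disjoint, giving $f = \sum_k c_k \chi_{U_k}$ in one stroke. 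Once you fill your gap this way, the Boolean-atom refinement becomes unnecessary; if instead you fill it by covering $\supp(f)$ with finitely many compact open sets on which $f$ is constant (using that compact open sets form a base), then your atom construction does finish the job, so the route is salvageable but needs that extra step spelled out.

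Two smaller points. First, the justification of the Boolean-algebra remark is garbled: $K \setminus L = K \cap (X \setminus L)$ is open because $K$ is open and $X \setminus L$ is open (the latter since $L$ is compact, hence closed in a Hausdorff space), not because it is ``the intersection of a compact set with an open set''; and it is compact because it is a closed subset of $K$ (closedness of $X \setminus L$ here uses that $L$ is open). Second, you attribute the closedness of compact open sets to total disconnectedness, but that is just Hausdorffness; total disconnectedness enters exactly at the step you skipped, namely in guaranteeing a base of compact open sets so that a locally constant compactly supported function can be expressed through characteristic functions of compact open sets at all.
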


\begin{proof}
Since $ f $ is locally constant and the support of $ f $ is compact, the image $ f(X) $ is a finite subset of $ \mathbb{C} $. If we denote by $ c_1, \dots c_n $ the nonzero 
elements of $ f(X) $ and set $ U_k = f^{-1}(c_k) $ then each $ U_k \subseteq X $ is compact open, the sets $ U_1, \dots, U_n $ are pairwise disjoint, and we 
have $ f = \sum_i c_i \chi_{U_i} $.
\end{proof}

The vector space $ \ccinfty(X) $ is naturally an algebra with pointwise multiplication. This algebra has local units, 
and hence is essential, see the remark after Lemma \ref{localunitsessential}. We will write $ C^\infty(X) $ for the algebraic multiplier algebra of $ \ccinfty(X) $. This can be identified canonically with the algebra of all locally constant functions $ f: X \rightarrow \mathbb{C} $. 

Recall that a continuous map $ \varphi: X \rightarrow Y $ between locally compact spaces $ X, Y $ is proper iff $ \varphi^{-1}(K) $ is compact for every compact subset $ K \subseteq Y $. 

\begin{lemma} \label{lemma: non degeneracy module structure at level of functions}
Let $ X, Y $ be totally disconnected locally compact spaces and let $ \varphi: X \rightarrow Y $ be a continuous map. Then $ \varphi^*: \ccinfty(Y) \rightarrow C^\infty(X) = M(\ccinfty(X)), \varphi^*(f) = f \circ \varphi $ is a well-defined essential algebra homomorphism. If $ \varphi $ is proper then the image of $ \varphi^* $ is contained in $ \ccinfty(X) $. 
\end{lemma}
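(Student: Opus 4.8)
The plan is to verify the three assertions in turn, the first two being essentially formal and the third requiring a small argument about supports. First I would check that $\varphi^*$ lands in $C^\infty(X)$: if $f: Y \to \mathbb{C}$ is locally constant then $f \circ \varphi$ is locally constant since $\varphi$ is continuous, so $\varphi^*(f) \in C^\infty(X) = M(\ccinfty(X))$. (Strictly, one views $f \circ \varphi$ as the multiplier $(L,R)$ with $L(g) = R(g) = (f\circ\varphi) \cdot g$; this lies in $\ccinfty(X)$ whenever $g$ does because the support of $(f\circ\varphi)g$ is contained in the compact open $\supp(g)$ and the product is again locally constant.) That $\varphi^*$ is an algebra homomorphism is immediate from pointwise multiplication: $(fg)\circ\varphi = (f\circ\varphi)(g\circ\varphi)$ and $1_Y \circ \varphi = 1_X$, so $\varphi^*$ is unital as a map into $M(\ccinfty(X))$.

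Next I would establish that $\varphi^*$ is essential, i.e. that $\ccinfty(X)$ is an essential left and right $\ccinfty(Y)$-module via $\varphi^*$. Since $\ccinfty(X)$ is commutative the two sides coincide, and by Lemma~\ref{localunitsessential} it suffices to show $\varphi^*(\ccinfty(Y)) \cdot \ccinfty(X) = \ccinfty(X)$. Given $g \in \ccinfty(X)$, its support $\supp(g)$ is compact, and its image $\varphi(\supp(g)) \subseteq Y$ is compact, hence contained in some compact open $K \subseteq Y$ because $Y$ is totally disconnected and locally compact (compact open sets form a basis, and a compact set is covered by finitely many of them). Then $\chi_K \in \ccinfty(Y)$ and $\varphi^*(\chi_K) = \chi_{\varphi^{-1}(K)}$ acts as the identity on $g$, since $\varphi^{-1}(K) \supseteq \supp(g)$; thus $g = \varphi^*(\chi_K) \cdot g$, proving essentiality.

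Finally, for the properness claim: if $\varphi$ is proper and $f \in \ccinfty(Y)$, then $\supp(f)$ is compact, so $\varphi^{-1}(\supp(f))$ is compact by properness. Since $\supp(\varphi^*(f)) = \supp(f \circ \varphi) \subseteq \varphi^{-1}(\supp(f))$ is a closed subset of a compact set, it is compact, and $f \circ \varphi$ is locally constant with compact support, hence $\varphi^*(f) \in \ccinfty(X)$. I do not anticipate a genuine obstacle here; the only point requiring mild care is the covering argument in the essentiality step, where one must use that $Y$ is totally disconnected and locally compact Hausdorff to enclose the compact set $\varphi(\supp(g))$ in a compact open neighbourhood — without total disconnectedness one would only get an open set with compact closure, which would not suffice to produce an element of $\ccinfty(Y)$ acting as a local unit.
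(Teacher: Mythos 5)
Your proposal is correct and follows essentially the same route as the paper: locally constant composed with continuous is locally constant, essentiality by enclosing $\varphi(\supp(g))$ in a finite union of compact open subsets of $Y$ and using the resulting characteristic function as a local unit, and compactness of supports under properness. The only cosmetic differences are that you invoke Lemma \ref{localunitsessential} explicitly (the paper uses it implicitly) and you prove the last claim directly from $\supp(f\circ\varphi)\subseteq \varphi^{-1}(\supp(f))$ rather than reusing essentiality as the paper does; also note that the aside about $1_Y$ is vacuous since $1_Y\notin\ccinfty(Y)$ unless $Y$ is compact, but nothing in your argument depends on it.
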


\begin{proof}
For $ f \in \ccinfty(Y) $ the function $ \varphi^*(f) = f \circ \varphi $ is locally constant since it is the composition of a continuous function and a locally constant function. It follows that $ \varphi^*: \ccinfty(Y) \rightarrow C^\infty(X) = M(\ccinfty(X)) $ is well-defined. Moreover this map is clearly an algebra homomorphism. 

In order to show that $ \varphi^* $ is essential let $ f \in \ccinfty(X) $. Since $ K = \operatorname{supp}(f) $ is compact the same is true for $ \varphi(K) $. We can 
cover $ \varphi(K) $ by finitely many compact open subsets of $ Y $, and if $ \chi $ denotes the characteristic function of the union of these sets then $ f = \varphi^*(\chi)f $ 
is contained in $ \varphi^*(\ccinfty(Y)) \ccinfty(X) $. 

Finally, assume that $ \varphi $ is proper and let $ g \in \ccinfty(Y) $. Then the preimage $ \varphi^{-1}(K) $ of the compact open set $ K = \operatorname{supp}(g) $ is compact open. If we write $ e \in \ccinfty(X) $ for the characteristic function of $ \varphi^{-1}(K) $ then we get $ \varphi^*(g) = \varphi^*(g) e = e \varphi^*(g) \in \ccinfty(X) $ as required because $ \varphi^* $ is essential. 
\end{proof}

\begin{prop} \label{Lemma:cartesian loc. const. function}
Let $ X, Y $ be totally disconnected locally compact spaces. Then the canonical linear map 
$$ 
\gamma: \ccinfty(X) \otimes \ccinfty(Y) \rightarrow \ccinfty(X \times Y), 
$$ 
given by $ \gamma(f \otimes g)(x,y) = f(x) g(y) $, is an isomorphism. 
\end{prop}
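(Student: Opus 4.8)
The plan is to check directly that $\gamma$ is well defined, injective, and surjective, with surjectivity being the substantial point.

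First, $\gamma$ is well defined: for $f \in \ccinfty(X)$ and $g \in \ccinfty(Y)$ the function $(x,y) \mapsto f(x)g(y)$ is locally constant, being a product of pullbacks of locally constant functions along the continuous projections, and its support is contained in $\supp(f) \times \supp(g)$, hence compact. Bilinearity in $f$ and $g$ is immediate, so $\gamma$ descends to the tensor product. For injectivity, suppose $\gamma\bigl(\sum_{i=1}^n f_i \otimes g_i\bigr) = 0$ where, after rewriting the element, the $g_i$ are linearly independent in $\ccinfty(Y)$. Then for each fixed $x \in X$ we have $\sum_i f_i(x) g_i = 0$ in $\ccinfty(Y)$, and linear independence forces $f_i(x) = 0$ for all $i$. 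As $x$ was arbitrary, each $f_i = 0$, so $\sum_i f_i \otimes g_i = 0$ in $\ccinfty(X) \otimes \ccinfty(Y)$.

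For surjectivity I would first reduce to characteristic functions: by Lemma \ref{lemma:funct.local.disc.tensor} any $h \in \ccinfty(X \times Y)$ is a finite linear combination of functions $\chi_W$ with $W \subseteq X \times Y$ compact open, so it suffices to show each such $\chi_W$ lies in the image of $\gamma$. Since $X$ and $Y$ are totally disconnected and locally compact, their compact open subsets form a basis for the respective topologies, and hence the rectangles $U \times V$ with $U \subseteq X$ and $V \subseteq Y$ compact open form a basis for the product topology on $X \times Y$. Covering the compact open set $W$ by such rectangles contained in $W$ and extracting a finite subcover yields $W = \bigcup_{j=1}^m (U_j \times V_j)$.

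The remaining step, and the only place where a small computation is needed, is to pass from a finite union of rectangles to the image of $\gamma$. By inclusion--exclusion,
$$
\chi_W = \sum_{\emptyset \neq S \subseteq \{1,\dots,m\}} (-1)^{|S|+1}\, \chi_{\bigcap_{j \in S}(U_j \times V_j)},
$$
and $\bigcap_{j \in S}(U_j \times V_j) = \bigl(\bigcap_{j \in S} U_j\bigr) \times \bigl(\bigcap_{j \in S} V_j\bigr)$ is again a compact open rectangle, with $\chi_{U \times V} = \gamma(\chi_U \otimes \chi_V)$. Hence $\chi_W$, and therefore every element of $\ccinfty(X \times Y)$, lies in the image of $\gamma$, which together with injectivity gives the claim. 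I do not expect any genuine obstacle; the one point to be careful about is that the compact open rectangles form an honest basis of $X \times Y$ rather than merely a subbasis, which is exactly where total disconnectedness and local compactness of the two factors enter.
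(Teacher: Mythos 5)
Your proof is correct, but it takes a somewhat different route from the paper at both halves of the argument. For injectivity, you rewrite the tensor so that the $ g_i $ are linearly independent and evaluate at points, which is a clean and perfectly valid general argument; the paper instead uses Lemma \ref{lemma:funct.local.disc.tensor} to put the element into a normal form $ \sum_k c_k \chi_{U_k} \otimes \chi_{V_k} $ with the $ U_k $ mutually disjoint and the $ V_k $ mutually disjoint, and then evaluates. For surjectivity, you write the compact open set $ W $ as a finite (not necessarily disjoint) union of compact open rectangles and conclude by inclusion--exclusion, noting that intersections of rectangles are rectangles; the paper instead refines the finite rectangle cover to a cover by \emph{mutually disjoint} compact open rectangles, so that $ \chi_W $ is literally a sum of the corresponding $ \gamma(\chi_U \otimes \chi_V) $. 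Your version is slightly more elementary, and your one caveat (that compact open rectangles form a genuine basis of $ X \times Y $) is handled exactly as you say. What the paper's approach buys is the disjoint normal form itself: the representation $ F = \sum_k c_k \chi_{U_k} \otimes \chi_{V_k} $ with disjoint $ U_k $ and disjoint $ V_k $ is cited again in the proof of Proposition \ref{prop:isomorphisms ccinfty} to prove injectivity of the balanced map $ \gamma_{p,q} $, so if one followed your route one would still need to extract that normal form (or an equivalent statement) separately at that later point. Also note, as a minor point common to both arguments, that reducing to characteristic functions on $ X \times Y $ implicitly uses that $ X \times Y $ is again totally disconnected and locally compact, which is harmless.
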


\begin{proof}
Assume $ F = \sum f_i \otimes g_i \in \ccinfty(X) \otimes \ccinfty(Y) $ satisfies $ \gamma(F) = 0 $. By Lemma \ref{lemma:funct.local.disc.tensor} we can write each $ f_i $ as a linear combination of characteristic functions $ \chi_{U_{ij}} $ for mutually disjoint compact open subsets $ U_{ij} \subseteq X $, and similarly each $ g_i $ as a linear combination of characteristic functions $ \chi_{V_{ik}} $ for mutually disjoint compact open subsets of $ Y $. Upon taking intersections of these subsets, it follows that $ F $ can be written in the form $ F = \sum_k c_k \chi_{U_k} \otimes \chi_{V_k} $, where $ U_1, \dots, U_n $ and $ V_1, \dots, V_n $ are mutually disjoint compact open subsets of $ X $ and $ Y $, respectively. Without loss of generality we may assume that these sets are all nonempty. For every index $ k $ pick $ (x_k,y_k) \in U_k \times V_k $. Then the relation
$$ 
0 = \gamma(F)(x_k,y_k) = c_k \chi_{U_k}(x_k)\chi_{V_k}(y_k) = c_k 
$$ 
gives $ c_k = 0 $. Hence $ F = 0 $, and it follows that $ \gamma $ is injective. 
    
To show surjectivity, it suffices to verify that the characteristic function $ \chi_W $ of an arbitrary compact open subset $ W \subseteq X \times Y $ is contained in the image of $ \gamma $. For this it is enough to write $ W $ as a disjoint union of sets of the form $ U \times V $ where $ U \subseteq X $ and $ V \subseteq Y $ are compact open. In order to obtain such a decomposition of $ W $, note first that since $ X $ and $ Y $ are totally disconnected and locally compact they both have a basis for their topology made up of compact open sets. 
In particular, for every point $ w = (x, y) \in W $ we find compact open neighborhoods $ U_w \subseteq X $ of $ x $ and $ V_w \subseteq Y $ of $ y $ such that the rectangle $ R_w = U_w \times V_w $ is contained in $ W $. Since $ W $ is compact we obtain a finite cover of $ W $ by rectangles $ R_{w_1}, \dots, R_{w_n} $ for some $ w_1, \dots, w_n \in W $. Upon taking intersections of the compact open sets $ U_{w_i} $ and $ V_{w_i} $ making up the rectangles $ R_{w_i} $, we can refine this to a finite cover of $ W $ consisting of mutually disjoint compact open rectangles as required. 
\end{proof}

\begin{lemma} \label{lemma:restriction is epimorphism}
Let $ X $ be a totally disconnected locally compact space and let $ K \subseteq X $ be a closed subset. Then the canonical restriction map $ \ccinfty(X) \rightarrow \ccinfty(K) $, mapping $ f $ to $ f_{\mid K} $, is surjective.
\end{lemma}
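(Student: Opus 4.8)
The plan is to reduce the statement, by linearity, to the extension of a single characteristic function, and then to exploit that a totally disconnected locally compact space has a basis of compact open sets.

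First I would observe that $K$, endowed with the subspace topology, is again a totally disconnected locally compact Hausdorff space, so $\ccinfty(K)$ is defined and Lemma~\ref{lemma:funct.local.disc.tensor} is available for it. Given $g \in \ccinfty(K)$, write $g = \sum_{i=1}^n c_i \chi_{U_i}$ with $c_i \in \mathbb{C}$ and pairwise disjoint compact open subsets $U_i \subseteq K$. Since the restriction map is linear, it then suffices to extend each $\chi_{U_i}$ separately: if $f_i \in \ccinfty(X)$ restricts to $\chi_{U_i}$ on $K$, then $f = \sum_i c_i f_i$ restricts to $g$.

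The main step, and the only place where the hypothesis on $X$ really enters, is to show that a compact open subset $U \subseteq K$ can be written as $U = W \cap K$ for some compact open $W \subseteq X$. For this I would pick an open set $V \subseteq X$ with $V \cap K = U$, which exists because $U$ is open in $K$, cover $U$ by compact open subsets of $X$ contained in $V$ — these exist and form a basis since $X$ is totally disconnected and locally compact, as already used in the proof of Proposition~\ref{Lemma:cartesian loc. const. function} — pass to a finite subcover by compactness of $U$, and let $W$ be the union of the sets in this subcover. Then $W$ is compact open in $X$ and $U \subseteq W \subseteq V$, which forces $U \subseteq W \cap K \subseteq V \cap K = U$, i.e. $W \cap K = U$.

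To conclude, I would take $f_i = \chi_{W_i}$ for the compact open sets $W_i \subseteq X$ obtained from the $U_i$ as above; each $\chi_{W_i}$ lies in $\ccinfty(X)$ and restricts to $\chi_{U_i}$ on $K$, so $f = \sum_i c_i \chi_{W_i}$ is the sought extension of $g$. I do not expect a serious obstacle here: once Lemma~\ref{lemma:funct.local.disc.tensor} and the existence of a basis of compact open sets are invoked, the argument is elementary point-set topology, the only mild subtlety being the verification $W \cap K = U$, which is immediate from the inclusions $U \subseteq W \subseteq V$.
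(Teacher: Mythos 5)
Your argument is correct and coincides with the paper's own proof: both reduce by linearity to extending a single characteristic function $\chi_U$, choose an open $V \subseteq X$ with $V \cap K = U$, cover $U$ by compact open subsets of $X$ contained in $V$, pass to a finite subcover with union $W$, and take $\chi_W$ as the extension. Your explicit verification $U \subseteq W \cap K \subseteq V \cap K = U$ just spells out the step the paper leaves implicit.
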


\begin{proof}
For any given $ f \in \ccinfty(K) $ we have to construct a function $ F \in \ccinfty(X) $ such that $ F_{|K} = f $. Since every element of $ \ccinfty(K) $ is a linear combination of characteristic functions it suffices to consider the case that $ f = \chi_U $ for some compact open set $ U \subseteq K $. In this case, since $ U $ is open, there exists an open set $ V \subseteq X $ such that $ V \cap K = U $. Using that $ V $ is open and $ X $ is totally disconnected we can write $ V $ as a union of compact open subsets of $ X $. Since we have $ U \subseteq V $, these sets are in particular an open cover of the compact set $ U $. This means that we can find finitely many compact open subsets $ W_1, \dots, W_n \subseteq X $ such that $ W_i \subseteq V $ for all $ i $ and the union $ W $ of the $ W_i $ satisfies $ W \cap K = U $. It follows that the function $ F = \chi_W $ has the desired properties.  
\end{proof}

Let $ X, Y, Z $ be totally disconnected locally compact spaces and let $  p: X \rightarrow Z, q: Y \rightarrow Z $ be continuous maps. Then $ \ccinfty(X) $ and $ \ccinfty(Y) $ become $ \ccinfty(Z) $-modules via the pullback homomorphisms $ p^*, q^* $, see Lemma \ref{lemma: non degeneracy module structure at level of functions}. The balanced tensor product of $ \ccinfty(X) $ and $ \ccinfty(Y) $ over $ \ccinfty(Z) $ with respect to $ p,q $ is defined as the quotient
$$
\ccinfty(X) \stackrel{p,q}{\otimes}\ccinfty(Y) = (\ccinfty(X) \otimes \ccinfty(Y))/R,
$$
where $ R $ is the linear subspace spanned by all elements of the form $ f p^*(h) \otimes g - f \otimes q^*(h) g $ for $ f \in \ccinfty(X), g \in \ccinfty(Y) $ 
and $ h \in \ccinfty(Z) $. 

\begin{prop} \label{prop:isomorphisms ccinfty}
Let $ X, Y, Z $ be totally disconnected locally compact spaces and let $ p: X \rightarrow Z, q: Y \rightarrow Z $ be continuous maps. Then the canonical $ \ccinfty(Z) $-linear map
$$
\ccinfty(X) \stackrel{p,q}{\otimes }\ccinfty(Y) \rightarrow \ccinfty(X \times_{p,q} Y) 
$$
is an isomorphism. 
\end{prop}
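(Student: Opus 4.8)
The plan is to factor the map in question through the two results about function spaces already established, and then to pin down the kernel of the resulting composite. First I would observe that $X \times_{p,q} Y$ is the preimage of the diagonal $\Delta_Z \subseteq Z \times Z$ under the continuous map $(p,q): X \times Y \to Z \times Z$; since $Z$ is Hausdorff, $\Delta_Z$ is closed, so $X \times_{p,q} Y$ is a closed, hence locally compact, totally disconnected subspace of $X \times Y$. We therefore have at our disposal the isomorphism $\gamma: \ccinfty(X) \otimes \ccinfty(Y) \to \ccinfty(X \times Y)$ of Proposition~\ref{Lemma:cartesian loc. const. function} and the surjective restriction map $\rho: \ccinfty(X \times Y) \to \ccinfty(X \times_{p,q} Y)$ of Lemma~\ref{lemma:restriction is epimorphism}, and the canonical map in the statement is precisely the one induced by $\rho \circ \gamma$ on the quotient $\ccinfty(X) \stackrel{p,q}{\otimes} \ccinfty(Y) = (\ccinfty(X) \otimes \ccinfty(Y))/R$. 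Since $\rho$ and $\gamma$ are both surjective, the proposition reduces to the single assertion that $\ker(\rho \circ \gamma) = R$.

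The inclusion $R \subseteq \ker(\rho \circ \gamma)$ (which is also what makes the induced map well defined) is a one-line computation: for $f \in \ccinfty(X)$, $g \in \ccinfty(Y)$ and $h \in \ccinfty(Z)$, using the module structures coming from Lemma~\ref{lemma: non degeneracy module structure at level of functions} one gets $\gamma(f p^*(h) \otimes g - f \otimes q^*(h) g)(x,y) = f(x) g(y)(h(p(x)) - h(q(y)))$, and this vanishes whenever $p(x) = q(y)$.

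For the reverse inclusion, take $\Phi \in \ccinfty(X) \otimes \ccinfty(Y)$ with $\rho(\gamma(\Phi)) = 0$. Arguing exactly as in the proof of Proposition~\ref{Lemma:cartesian loc. const. function} (via Lemma~\ref{lemma:funct.local.disc.tensor} and the refinement of rectangles), I would rewrite $\Phi = \sum_k c_k\, \chi_{A_k} \otimes \chi_{B_k}$ in $\ccinfty(X) \otimes \ccinfty(Y)$, where the $A_k \subseteq X$ and $B_k \subseteq Y$ are compact open and the rectangles $A_k \times B_k$ are pairwise disjoint and nonempty. Then $\gamma(\Phi) = \sum_k c_k\, \chi_{A_k \times B_k}$ vanishes on $X \times_{p,q} Y$, so evaluating at a point of $(A_k \times B_k) \cap (X \times_{p,q} Y)$, if there is one, would force $c_k = 0$; hence for every index $k$ with $c_k \neq 0$ we have $p(A_k) \cap q(B_k) = \emptyset$. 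It thus suffices to show that $\chi_A \otimes \chi_B \in R$ whenever $A \subseteq X$, $B \subseteq Y$ are compact open with $p(A) \cap q(B) = \emptyset$.

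This last point is the real content, and the place where total disconnectedness of $Z$ is used. The disjoint compact sets $p(A)$ and $q(B)$ in the totally disconnected locally compact Hausdorff space $Z$ can be separated by a compact open set: covering the compact set $p(A)$ by finitely many compact open subsets of the open set $Z \setminus q(B)$ and taking their union yields a compact open $C \subseteq Z$ with $p(A) \subseteq C$ and $C \cap q(B) = \emptyset$. With $h = \chi_C \in \ccinfty(Z)$ and identifying $M(\ccinfty(X))$ with locally constant functions, we have $p^*(h) = \chi_{p^{-1}(C)}$ and $q^*(h) = \chi_{q^{-1}(C)}$, hence $\chi_A\, p^*(h) = \chi_{A \cap p^{-1}(C)} = \chi_A$ because $A \subseteq p^{-1}(C)$, and $q^*(h)\, \chi_B = \chi_{B \cap q^{-1}(C)} = 0$ because $B \cap q^{-1}(C) = \emptyset$. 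Consequently $\chi_A \otimes \chi_B = \chi_A\, p^*(h) \otimes \chi_B - \chi_A \otimes q^*(h)\, \chi_B$ lies in $R$ by definition, which completes the identification $\ker(\rho \circ \gamma) = R$. I do not expect a genuine obstacle: the only nontrivial inputs beyond the cited results are the reduction to pairwise disjoint compact open rectangles and the clopen separation of $p(A)$ from $q(B)$, both of which are routine consequences of the bases of compact open sets available in $X$, $Y$ and $Z$.
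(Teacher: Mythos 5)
Your proposal is correct and follows essentially the same route as the paper: represent the element as a sum over pairwise disjoint compact open rectangles, kill the coefficients of rectangles meeting the fibre product by evaluation, and then use a compact open subset of $Z$ separating $p(A_k)$ from $q(B_k)$ to show the remaining elementary tensors lie in the defining relations $R$. The only cosmetic difference is that you obtain the separating compact open set directly from the basis of compact open sets in $Z$, whereas the paper invokes regularity of $Z$; the argument is the same.
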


\begin{proof}
It is straightforward to check that the composition of the canonical homo\-morphism $ \gamma: \ccinfty(X) \otimes \ccinfty(Y) \rightarrow \ccinfty(X \times Y) $ with the restriction homomorphism $ \ccinfty(X \times Y) \rightarrow \ccinfty(X \times_{p,q} Y) $ factorises through $ \ccinfty(X) \stackrel{p,q}{\otimes }\ccinfty(Y) $. We shall write $ \gamma_{p,q} $ for the resulting $ \ccinfty(Z) $-linear map $ \ccinfty(X) \stackrel{p,q}{\otimes }\ccinfty(Y) \rightarrow \ccinfty(X \times_{p,q} Y) $. Due to Proposition \ref{Lemma:cartesian loc. const. function} and Lemma \ref{lemma:restriction is epimorphism} the map $ \gamma_{p,q} $ is surjective,  
and it remains only to show that $ \gamma_{p,q} $ is injective. 

Assume that $ F \in \ccinfty(X) \stackrel[]{p,q}{\otimes} \ccinfty(Y)$ satisfies $ \gamma_{p,q}(F) = 0 $. As explained in the proof of Proposition \ref{Lemma:cartesian loc. const. function}, we can represent $ F $ as a linear combination $ F = \sum_k c_k \chi_{U_k} \otimes \chi_{V_k} $ where $ U_1, \dots, U_n $ and $ V_1, \dots, V_n  $ are mutually disjoint compact open subsets of $ X $ and $ Y $, respectively. If there exists an index $ k $ and points $ x \in U_k, y \in V_k $ such that $ p(x) = q(y) $ then $ (x,y) \in X\stackrel[]{p, q}{\times} Y $ and $ c_k = c_k \chi_{U_k}(x) \chi_{V_k}(y) = \gamma_{p,q}(F)(x,y) = 0 $. 
Therefore we may assume without loss of generality that $ p(U_k) \cap q(V_k) = \emptyset $ for all $ k $. Using that $ Z $ is locally compact and hence regular we can then find compact open sets $ E_k \subseteq Z $ such that $ p(U_k) \subseteq E_k $ and $ E_k \cap q(V_k) = \emptyset $ for all $ k $. 
It follows that $ e_k = \chi_{E_k} \in \ccinfty(Z) $ satisfies $ \chi_{U_k} \cdot e_k = \chi_{U_k} $ and $ e_k \cdot \chi_{V_k} = 0 $ for all $ k $. Hence we conclude 
$$ 
F = \sum_k c_k \chi_{U_k} \otimes \chi_{V_k} = \sum_k c_k \chi_{U_k} \cdot e_k \otimes \chi_{V_k} - c_k \chi_{U_k} \otimes e_k \cdot \chi_{V_k} = 0 
$$ 
as required. 
\end{proof}

\subsection{Proper groupoids} 

In this subsection we review some basic facts about proper groupoids, compare \cite{RENAULT_groupoidapproach}, \cite{TU_novikovhyperbolique}.  

Let us first recall that a (left) Haar system on an \'etale groupoid $ \G $ is a family $ (\lambda^x)_{x \in \G^{(0)}} $ of positive regular Borel measures $ \lambda^x $ on $ \G $ such that 
\begin{enumerate}
\item the support of $ \lambda^x $ is $ \G^x $ for all $ x \in \G^{(0)} $, 
\item for every $ f \in C_c(\G) $ the function $ \lambda(f): \G^{(0)} \rightarrow \mathbb{C} $ given by 
$$
\lambda(f)(x) = \int_{\G^x} f(\beta) d\lambda^x(\beta) 
$$
is contained in $ C_c(\G^{(0)}) $,
\item we have 
$$
\int_{\G^{s(\alpha)}} f(\alpha \beta) d\lambda^{s(\alpha)}(\beta) = \int_{\G^{r(\alpha)}} f(\beta) d\lambda^{r(\alpha)}(\beta)
$$
for all $ f \in C_c(\G^{(0)}) $ and $ \alpha \in \G^{(0)} $. 
\end{enumerate}

Every \'etale groupoid $ \G $ admits a canonical Haar system $ (\lambda^x)_{x \in \G^{(0)}} $ given by the counting measures on the range fibers, so that 
$$
\int_{\G^x} f(\beta) \lambda^x(\beta) = \sum_{\beta \in \G^x} f(\beta) 
$$
for $ x \in \G^{(0)} $ and $ f \in C_c(\G) $. 
This Haar system behaves well with respect to integration of locally constant functions in the following sense. 

\begin{lemma} \label{lemma: integration over fiber is continuous for functions with proper support}
Let $ \G $ be an ample groupoid and let $ f: \G \rightarrow \mathbb{C} $ be a locally constant function such that $ \operatorname{supp}(f) \cap r^{-1}(K) $ is compact for all compact 
sets $ K \subseteq \G^{(0)} $. Then the function $ \lambda(f): \G^{(0)} \rightarrow \mathbb{C} $ defined by 
$$ 
\lambda(f)(x) = \sum_{\alpha\in \G^x}f(\alpha)
$$ 
is locally constant.
\end{lemma}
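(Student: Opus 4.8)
The plan is to check local constancy at an arbitrary point $ x_0 \in \G^{(0)} $ by exhibiting a compact open neighbourhood of $ x_0 $ on which $ \lambda(f) $ coincides with a function that is manifestly locally constant on all of $ \G^{(0)} $. Since $ \G^{(0)} $ is totally disconnected and locally compact, I would first fix a compact open set $ K \subseteq \G^{(0)} $ with $ x_0 \in K $. Before this, one should note that the sum defining $ \lambda(f)(x) $ is finite for every $ x $: the fibre $ \G^x = r^{-1}(x) $ is discrete because $ r $ is a local homeomorphism, and $ \supp(f) \cap r^{-1}(\{x\}) $ is compact by hypothesis, so $ \supp(f) \cap \G^x $ is finite and only finitely many terms $ f(\alpha) $, $ \alpha \in \G^x $, can be nonzero.

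Next I would cover the compact set $ L = \supp(f) \cap r^{-1}(K) $ by finitely many pairwise disjoint compact open bisections on which $ f $ is constant and which are contained in $ r^{-1}(K) $. For $ \alpha \in L $ the set $ r^{-1}(K) \cap \{\beta : f(\beta) = f(\alpha)\} $ is open (here $ f $ locally constant and $ K $ open are used), so ampleness of $ \G $ gives a compact open bisection $ B_\alpha $ with $ \alpha \in B_\alpha \subseteq r^{-1}(K) $ and $ f $ constant on $ B_\alpha $. Compactness of $ L $ yields a finite subcover $ B_1, \dots, B_n $. Using that the compact open subsets of the totally disconnected locally compact space $ \G $ form a Boolean algebra — in particular are closed under relative complements — I would disjointify by setting $ C_i = B_i \setminus (B_1 \cup \dots \cup B_{i-1}) $; each $ C_i $ is a compact open subset of the bisection $ B_i $, hence itself a compact open bisection inside $ r^{-1}(K) $ on which $ f $ takes a single value $ c_i $, the $ C_i $ are pairwise disjoint, and $ \bigcup_i C_i \supseteq L $.

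Finally, for any $ x \in K $ every nonzero term of the fibre sum is indexed by an element of $ \G^x \cap \supp(f) \subseteq L \subseteq \bigcup_i C_i $, and since the $ C_i $ are disjoint and $ f \equiv c_i $ on $ C_i $ one gets
$$
\lambda(f)(x) = \sum_{\alpha \in \G^x} f(\alpha) = \sum_{i=1}^n \sum_{\alpha \in \G^x \cap C_i} f(\alpha) = \sum_{i=1}^n c_i\, |\G^x \cap C_i|.
$$
Because $ C_i $ is a bisection, $ r $ restricts to a homeomorphism $ C_i \to r(C_i) $, so $ |\G^x \cap C_i| $ equals $ 1 $ if $ x \in r(C_i) $ and $ 0 $ otherwise; moreover $ r(C_i) $ is compact open since $ r $ is an open map and $ C_i $ is compact. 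Hence $ \lambda(f) $ agrees on $ K $ with $ \sum_{i=1}^n c_i \chi_{r(C_i)} $, which lies in $ \ccinfty(\G^{(0)}) $ and is in particular locally constant. As $ x_0 \in K $ and $ x_0 $ was arbitrary, $ \lambda(f) $ is locally constant. The only mildly delicate point in this argument is the simultaneous arrangement of the covering bisections so that they are pairwise disjoint, contained in $ r^{-1}(K) $, and supports of constancy for $ f $; once this is in place, the proof is just the remark that a bisection meets each range fibre at most once, turning the fibrewise sum into a finite $ \mathbb{C} $-combination of characteristic functions of compact open subsets of $ \G^{(0)} $.
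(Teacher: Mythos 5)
Your proof is correct and follows essentially the same route as the paper: localise over a compact open neighbourhood in the unit space, write the restriction of $ f $ to $ r^{-1}(K) $ as a finite linear combination of characteristic functions of compact open bisections, and use that a bisection meets each range fibre at most once, so that $ \lambda(f) $ agrees on $ K $ with $ \sum_i c_i \chi_{r(C_i)} $. The only difference is that you spell out the disjointification and fibre-counting that the paper dismisses as straightforward.
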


\begin{proof}
Let $ x \in \G^{(0)} $ and let $ V $ be a compact open neighbourhood of $ x $. By assumption the set $ \operatorname{supp}(f) \cap r^{-1}(V)\subseteq \G $ is compact. Hence $ g = f \chi_{r^{-1}(V)} $ is contained in $ \ccinfty(\G) $. Writing $ g $ as a linear combination of characteristic functions of compact open bisections of $ \G $ it is straightforward to check that $ \lambda(g) $ is locally constant. 
Since by construction the functions $ \lambda(g) $ and $ \lambda(f) $ agree on $ V $ it follows that $ \lambda(f) $ is locally constant in a neighborhood of $ x $, and since $ x $ was arbitrary this yields the claim. 
\end{proof}

Let us now come to the definition of properness for groupoids. Recall that we write $ s, r $ for the source and range maps, respectively. 

\begin{dfn}
An \'etale groupoid $ \G $ is called proper if $ (s,r): \G \rightarrow \G^{(0)} \times \G^{(0)} $ is a proper map. 
\end{dfn}

If $ \G $ is a proper \'etale groupoid then the quotient space $ \G \backslash \G^{(0)} $ is Hausdorff, 
and if $ \G $ is ample then $ \G \backslash \G^{(0)} $ is a totally disconnected locally compact space. 
The canonical quotient map $ \G^{(0)} \rightarrow \G \backslash \G^{(0)} $ induces an embedding $ \ccinfty(\G\backslash \G^{(0)}) \rightarrow C^{\infty}(\G^{(0)}) $ in this case. As a consequence, every essential $ \ccinfty(\G^{(0)}) $-module becomes an essential $ \ccinfty(\G \backslash \G^{(0)}) $-module in a canonical way. 

Next we review the concept of a cut-off function for \'etale groupoids, compare \cite[Definition 6.7]{TU_novikovhyperbolique}. 

\begin{dfn}
Let $ \G $ be an \'etale groupoid. A cut-off function for $ \G $ is a continuous function $ c: \G^{(0)} \rightarrow [0, \infty) $ such that
\begin{enumerate}
\item for every $ x \in \G^{(0)} $ we have $ \sum_{\alpha \in \G^x} cs(\alpha) = 1 $,
\item the map $ r :\operatorname{supp}(cs) \rightarrow \G^{(0)} $ is proper.
\end{enumerate}
\end{dfn}

It is shown in \cite[Proposition 6.10 and Proposition 6.11]{TU_novikovhyperbolique} that the existence of cut-off functions is closely related to properness. 
The following result is a variant of \cite[Proposition 6.11]{TU_novikovhyperbolique}. 

\begin{prop} \label{smoothcutoff}
Let $ \G $ be a proper ample groupoid with $ \G \backslash \G^{(0)} $ paracompact. Then $ \G $ admits a locally constant cut-off function. If $ \G \backslash \G^{(0)} $ is compact 
then $ \G $ admits a locally constant cut-off function with compact support.
\end{prop}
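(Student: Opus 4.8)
The plan is to construct the cut-off function locally and then patch the pieces together using a partition of unity on the base of the quotient, exploiting paracompactness, while being careful to keep everything locally constant. First I would recall from \cite{TU_novikovhyperbolique} that properness of $\G$ guarantees the existence of a \emph{continuous} cut-off function; the point here is to upgrade this to a locally constant one in the ample setting. Since $\G$ is ample and proper, $\G \backslash \G^{(0)}$ is a totally disconnected, locally compact, Hausdorff space, and by hypothesis it is paracompact; such a space admits a basis of compact open sets and, crucially, every open cover admits a locally finite refinement by compact open sets which are moreover pairwise disjoint on each "layer". The strategy is to produce, for each point of the quotient, a compact open bisection-type neighbourhood on which a local cut-off can be written down explicitly as a characteristic function divided by an integer-valued locally constant function, and then glue.

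In more detail: for $x \in \G^{(0)}$, properness of $(s,r)$ means $\G^x$ meets only finitely many "slices", so one can choose a compact open set $U \subseteq \G^{(0)}$ containing $x$ small enough that the restriction of $r$ to $s^{-1}(U) \cap (\text{suitable compact open piece})$ is a proper local homeomorphism onto its image. Using that $\G$ is ample, cover a neighbourhood of the orbit by finitely many compact open bisections $W_1, \dots, W_k$ and set $V = \bigcup_i s(W_i)$; then $\chi_V$ is locally constant with compact support, and $\lambda(\chi_V)(y) = \#\{\alpha \in \G^y : s(\alpha) \in V\}$ is a locally constant, strictly positive, integer-valued function on a neighbourhood of the orbit of $x$ by Lemma \ref{lemma: integration over fiber is continuous for functions with proper support}. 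Dividing, $c_0 = \chi_V / \lambda(\chi_V)$ is a locally constant function satisfying $\sum_{\alpha \in \G^y} c_0 s(\alpha) = 1$ for all $y$ in that neighbourhood, with $r : \supp(c_0 s) \to \G^{(0)}$ proper. Passing to the quotient, this yields a locally constant function on a compact open subset of $\G \backslash \G^{(0)}$ with the desired normalisation over that subset.

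To globalise, I would pick, using paracompactness and total disconnectedness of $\G \backslash \G^{(0)}$, a locally finite cover of $\G \backslash \G^{(0)}$ by compact open sets $\{O_j\}$ on each of which the above construction gives a local candidate, refine it so that the $O_j$ are pairwise disjoint (possible since the space is totally disconnected: replace each $O_j$ by $O_j \setminus \bigcup_{i<j} O_i$, still compact open), pull back to get compact open $\widetilde{O}_j \subseteq \G^{(0)}$, and on each $\widetilde{O}_j$ normalise the local cut-off exactly as above so that the pieces simply add up: $c = \sum_j \chi_{\widetilde{O}_j}/\lambda(\chi_{V_j})$ where $V_j$ is the chosen saturating compact open set over $O_j$. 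Local finiteness of $\{O_j\}$, together with properness, ensures this sum is locally finite and that $r : \supp(cs) \to \G^{(0)}$ remains proper; disjointness of the $\widetilde{O}_j$ and the layerwise normalisation give $\sum_{\alpha \in \G^x} cs(\alpha) = 1$ for every $x$. When $\G \backslash \G^{(0)}$ is compact, finitely many $O_j$ suffice, so $c$ has compact support.

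The main obstacle I anticipate is the globalisation/patching step: ordinarily one glues cut-off functions with a genuine continuous partition of unity subordinate to the cover, but a partition of unity by locally constant functions on a totally disconnected space forces the pieces to be (multiples of) characteristic functions of a disjoint refinement, so one must be careful that after disjointifying the cover the \emph{local} normalisations on overlaps still assemble into a single function satisfying condition (1) exactly, and that properness in condition (2) is not destroyed by the (possibly infinite, but locally finite) sum. Keeping track of the interaction between the disjointification in the quotient, its pullback to $\G^{(0)}$, and the fibrewise counting measure is the delicate bookkeeping that the proof must handle with care; everything else reduces to the function-space lemmas already established and the cited results of Tu.
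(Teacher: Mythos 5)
Your overall route is the same as the paper's: partition $\G\backslash\G^{(0)}$ into mutually disjoint compact open sets, choose above each piece a compact open subset of $\G^{(0)}$ whose image under the (open) quotient map $\pi$ is the whole piece, and divide the resulting characteristic function by the fibrewise count, which is locally constant by Lemma \ref{lemma: integration over fiber is continuous for functions with proper support} combined with properness, strictly positive, and constant along orbits. However, two of your intermediate steps are false as literally stated, although both are dispensable within this strategy. First, orbits of a proper ample groupoid are closed and discrete but may be infinite --- for the transformation groupoid $\mathbb{Z}\ltimes\mathbb{Z}$ the single orbit is all of $\mathbb{Z}$ --- so a neighbourhood of an orbit cannot in general be covered by finitely many compact open bisections. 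Nothing of that sort is needed: for $\lambda(\chi_V s)$ to be strictly positive on the saturation $\pi^{-1}(\pi(V))$ it suffices that every orbit meeting this set meets $V$, which is automatic, and $\pi(V)$ is an open neighbourhood of $[x]$ simply because $\pi$ is open. Second, the honest pullback $\pi^{-1}(O_j)$ of a compact open $O_j\subseteq\G\backslash\G^{(0)}$ is open and saturated but essentially never compact; what you need (and what the paper constructs) is a finite union $V_j$ of compact open subsets of $\pi^{-1}(O_j)$ with $\pi(V_j)=O_j$, which exists by openness of $\pi$ and compactness of $O_j$.

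Consequently your final formula must use the same set upstairs in numerator and denominator, $c=\sum_j \chi_{V_j}/\lambda(\chi_{V_j}s)$; with $\widetilde{O}_j\neq V_j$ the normalisation $\sum_{\alpha\in\G^x}c(s(\alpha))=1$ is lost. In verifying that normalisation you should also record explicitly that $\lambda(\chi_{V_j}s)$ is constant along orbits, which is what allows it to be pulled out of the sum over $\G^x$. With these repairs your argument coincides with the paper's proof, which obtains the disjoint compact open partition of the quotient directly from the Bourbaki decomposition into $\sigma$-compact clopen pieces rather than by disjointifying a locally finite cover; the remaining verifications (properness of $r$ on $\supp(cs)$ via $(s,r)^{-1}(\supp(d)\times K)$, and compact support when $\G\backslash\G^{(0)}$ is compact) are as you describe.
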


\begin{proof}
The quotient $ \G \backslash \G^{(0)} $ is a totally disconnected locally compact Hausdorff space. By assumption it is also paracompact, and hence can be written as a disjoint union of a family of open $ \sigma $-compact totally disconnected locally compact Hausdorff spaces, see \cite[Section 9.10]{BOURBAKI_generaltopology1}. 
Every $ \sigma $-compact totally disconnected locally compact space, in turn, can be written as a disjoint union of a countable family of compact open subsets. 
As a consequence, there is a cover $ (V_i)_{i \in I} $ of $ \G \backslash \G^{(0)} $ consisting of mutually disjoint compact open subsets. 

Since the projection map $ \pi: \G^{(0)} \rightarrow \G \backslash \G^{(0)} $ is open we can find $ n_i \in \mathbb{N} $ and compact open subsets $ U_{i,1} \dots, U_{i, n_i} $ 
of $ \G^{(0)} $ for each $ i \in I $ such that $ \pi(U_{ij}) \subseteq V_i $ for all $ j $ and $ \pi(U_{i,1}) \cup \cdots \cup \pi(U_{i, n_i}) = V_i $. 
Without loss of generality we can arrange the sets $ U_{i,j} $ to be mutually disjoint. 
We then define $ d: \G^{(0)} \rightarrow [0, \infty) $ by 
$$ 
d = \sum_{i \in I} \sum_{j = 1}^{n_i} \chi_{U_{i,j}}.
$$
By construction $ d $ is well-defined and locally constant. In fact, $ d $ is the characteristic function of the union of the sets $ U_{i,j} $. 

If $ K \subseteq \G^{(0)} $ is compact then $ \pi(K) \cap V_i $ is nonempty only for finitely many $ i \in I $, and hence 
$ \operatorname{supp}(d) \cap \pi^{-1}(\pi(K)) $ is compact. As a consequence, 
$$ 
\mathrm{supp}(d s) \cap r^{-1}(K) = (s\times r)^{-1}(\operatorname{supp}(d) \times K) = (s\times r)^{-1}(\operatorname{supp}(d) \cap \pi^{-1}(\pi(K)) \times K) 
$$ 
is compact by properness of $ \G $. According to Lemma \ref{lemma: integration over fiber is continuous for functions with proper support} it follows that the function $ \lambda(ds) $ is locally constant. 

Note that for every $ x \in \G^{(0)} $ there exists an index $ i \in I $ such that $ \pi(x) \in V_i $. This implies that there exists some $ 1 \leq j \leq n_i $ 
and an element $ \alpha \in \G^x $ such that $ s(\alpha) \in U_{i,j} $, 
and we conclude that $ \lambda(ds)(x) = \sum_{\alpha\in \G^x} d(s(\alpha)) > 0 $.
It is then straightforward to check that $ c(x) = d(x)/\lambda(ds)(x) $ is a locally constant cut-off function for $ \G $.

Finally, if $ \G \backslash \G^{(0)} $ is compact then the index set $ I $ in the above construction can be taken to be a singleton, and then both $ d $ and $ c $ have compact support. 
\end{proof}

\section{The category of $ \G $-modules} \label{section:dgmodules}

In this section we discuss $ \G $-modules for an ample groupoid $ \G $ and show that the category of $ \G $-modules is monoidal. This allows us to introduce $ \G $-algebras as algebra objects in this category. We also discuss the notion of a $ \G $-anti-Yetter-Drinfeld module. 

\subsection{$ \G $-modules}

Let us write $ \DG $ for the Steinberg algebra of $ \G $, that is, the vector space $ \ccinfty(\G) $ equipped with the convolution product given by 
$$
(f * g)(\alpha) = \sum_{\beta\in\G^{r(\alpha)}} f(\beta) g(\beta^{-1}\alpha)
= \sum_{\gamma\in\G_{s(\alpha)}} f(\alpha \gamma^{-1}) g(\gamma). 
$$
The algebra $ \DG $ has local units, given by the characteristic functions of compact open subsets of the unit space $ \G^{(0)} $, and it is unital iff $ \G^{(0)} $ is compact, compare \cite{STEINBERG_discreteinversesemigroup}. 

\begin{dfn}
A left $ \G $-module is an essential left module over $ \DG $. A linear map $ f: M \rightarrow N $ between left $ \G $-modules is called $ \G $-equivariant if it is $ \DG $-linear. 
\end{dfn}

One defines right $ \G $-modules in the same way, namely as essential right modules over $ \DG $. 
We denote by $ \G \operatorname{-Mod} $ the category of essential left $ \G $-modules and $ \G $-equivariant linear maps. Since we will be mostly working with left modules, we refer to the objects of $ \G \operatorname{-Mod} $ simply as $ \G $-modules. 

Our first aim is to provide an alternative description of $ \G $-modules, inspired by the discussion in \cite{BUSS_HOLKAR_MEYER_universalproperty}. 
Consider the maps $ d_0,d_1,d_2: \G^{(2)} \rightarrow \G^{(1)} = \G $ given by 
\begin{equation*} \label{equation: v_0,v_1,v_2}
d_0(\alpha,\beta) = \beta, \quad d_1(\alpha,\beta) = \alpha\beta,\quad d_2(\alpha,\beta) = \alpha
\end{equation*}
for $ (\alpha,\beta) \in \G^{(2)} = \G \times_{s,r} \G $. 
Each of these maps can be used to turn $ \ccinfty(\G^{(2)}) $ into a $ \ccinfty(\G) $-module with the action by pointwise multiplication, that is,  
$$
(f \cdot_i g)(\alpha, \beta) = f(\alpha, \beta) g(d_i(\alpha, \beta)).  
$$
Let us write $ \ccinfty(\G^{(2)}, d_i) $ for the resulting $ \ccinfty(\G) $-module. 
Then if $ P, Q $ are $ \ccinfty(\G) $-modules and $ T: P \rightarrow Q $ is a $ \ccinfty(\G) $-linear map we get induced linear maps  
$$
\id \otimes T: \ccinfty(\G^{(2)}, d_i) \otimes_{\ccinfty(\G)} P \rightarrow \ccinfty(\G^{(2)}, d_i) \otimes_{\ccinfty(\G)} Q
$$
for $ i = 0,1,2 $. We will denote these maps by $ d_i^*(T) $ in the sequel, in order to distinguish the different module structures on $ \ccinfty(\G^{(2)}) $ used in the construction. 
Consider the special case $ P = \ccinfty(\G)\stackrel{r,\id}{\otimes} M,\; Q =  \ccinfty(\G)\stackrel{s,\id}{\otimes} M $ for a $ \ccinfty(\G^{(0)}) $-module $ M $, where both $ P $ and $ Q $ are viewed as $ \ccinfty(\G) $-modules with the action by pointwise multiplication in the first tensor factor. If we write 
\begin{equation*} 
v_0 = r d_1 = r d_2, \quad v_1 = r d_0 = s d_2, \quad v_2 = s d_0 = s d_1,   
\end{equation*}
then we can view the tensor product maps $ \id \otimes T $ for $ i = 0,1,2 $ in this case as maps
\begin{align*}
d_0^*(T)&:\ccinfty(\G^{(2)})\stackrel{v_1,\id}{\otimes} M \rightarrow \ccinfty(\G^{(2)})\stackrel{v_2,\id}{\otimes} M,\\
d_1^*(T)&:\ccinfty(\G^{(2)})\stackrel{v_0,\id}{\otimes} M \rightarrow \ccinfty(\G^{(2)})\stackrel{v_2,\id}{\otimes} M,\\
d_2^*(T)&:\ccinfty(\G^{(2)})\stackrel{v_0,\id}{\otimes} M \rightarrow \ccinfty(\G^{(2)})\stackrel{v_1,\id}{\otimes} M,
\end{align*}
taking into account the canonical identifications of the respective tensor products. These identifications will be tacitly used in the following definition. 

\begin{dfn}
A $ \ccinfty(\G) $-comodule is an essential $ \ccinfty(\G^{(0)}) $-module $ M $ together with a $ \ccinfty(\G) $-linear isomorphism 
$$
T_M:\ccinfty(\G)\stackrel{r,\id}{\otimes} M \rightarrow \ccinfty(\G)\stackrel{s,\id}{\otimes} M
$$
satisfying the coaction identity 
$$ 
d_0^*(T_M) d_2^*(T_M) = d_1^*(T_M).
$$
A morphism of $ \ccinfty(\G) $-comodules is a $ \ccinfty(\G^{(0)}) $-linear map $ f: M \rightarrow N $ such that $ (\id \otimes f)T_M = T_N (\id \otimes f) $. 
\end{dfn}

The terminology used here is inspired by the duality theory for actions of groups and quantum groups. We will write $ \ccinfty(\G) \operatorname{-Comod} $ for the category of $ \ccinfty(\G) $-comodules. 

Our aim is to show that $ \G $-modules and $ \ccinfty(\G) $-comodules are essentially the same thing.  Let us first explain how to pass from $ \G $-modules to $ \ccinfty(\G) $-comodules. Consider $ M = \ccinfty(\G) = \DG $ as a left module over itself. The homeomorphism  
$$ 
t: \G \times_{s,r} \G \rightarrow \G \times_{r,r} \G, \qquad t(\alpha, \beta) = (\alpha, \alpha \beta)  
$$ 
induces a linear isomorphism $ T: \ccinfty(\G) \stackrel{r,r}{\otimes} \ccinfty(\G)\rightarrow \ccinfty(\G)\stackrel{s,r}{\otimes} \ccinfty(\G) $, given by 
$$
T(f)(\alpha, \beta) = f(\alpha, \alpha \beta)
$$
for $ f \in \ccinfty(\G) \stackrel{r,r}{\otimes} \ccinfty(\G) = \ccinfty(\G \times_{r,r} \G) $. Note that $ T $ is right $ \DG $-linear with respect to the multiplication action on the second tensor factor because $ t $ is right $ \G $-equivariant. Moreover, $ T $ is $ \ccinfty(\G) $-linear with respect to the pointwise multiplication action on the first tensor factor on both sides. We will also refer to $ T $ as the canonical map of $ M = \ccinfty(\G) $. 

Let us show that the canonical map turns $ \ccinfty(\G) = \DG $ into a $ \ccinfty(\G) $-comodule. To this end, note that for each $ i = 0,1,2 $ we have homeomorphisms
\begin{align*} 
\G^{(2)} \times_{d_0, \pi}(\G \times_{s,r} \G) \cong \G^{(2)} \times_{v_2, r} \G, \\
\G^{(2)} \times_{d_0, \pi}(\G \times_{r,r} \G) \cong \G^{(2)} \times_{v_1, r} \G, \\ 
\G^{(2)} \times_{d_1, \pi}(\G \times_{s,r} \G) \cong \G^{(2)} \times_{v_2, r} \G, \\
\G^{(2)} \times_{d_1, \pi}(\G \times_{r,r} \G) \cong \G^{(2)} \times_{v_0, r} \G, \\
\G^{(2)} \times_{d_2, \pi}(\G \times_{s,r} \G) \cong \G^{(2)} \times_{v_1, r} \G, \\
\G^{(2)} \times_{d_2, \pi}(\G \times_{r,r} \G) \cong \G^{(2)} \times_{v_0, r} \G, 
\end{align*} 
where $ \pi $ denotes the projection to the first copy of $ \G $ in either case. 
Using these homeomorphisms we can identify the maps induced by $ t $ on these fibre products as 
\begin{align*}
(\id \times_{d_0,\pi} t)&:\G^{(2)} \times_{v_2,r}\G \rightarrow \G^{(2)} \times_{v_1,r}\G, \quad (\id \times_{d_0,\pi} t)(\alpha,\beta,\gamma) = (\alpha, \beta, \beta \gamma), \\
(\id \times_{d_1,\pi} t)&:\G^{(2)} \times_{v_2,r}\G \rightarrow \G^{(2)} \times_{v_0,r}\G, \quad (\id \times_{d_1,\pi} t)(\alpha,\beta,\gamma) = (\alpha, \beta, \alpha\beta\gamma),\\
(\id \times_{d_2,\pi} t)&:\G^{(2)} \times_{v_1,r}\G \rightarrow \G^{(2)} \times_{v_0,r}\G, \quad (\id \times_{d_2,\pi} t)(\alpha,\beta,\gamma)=(\alpha, \beta, \alpha \gamma).
\end{align*}
From this description it is obvious that $ (\id \times_{d_2,\pi} t) (\id \times_{d_0,\pi} t) = (\id \times_{d_1,\pi} t) $. Since $ d_i^*(T) $ is the transpose of $ \id \times_{d_i,\pi} t $ this yields the coaction identity $ d_0^*(T) d_2^*(T) = d_1^*(T) $ for $ T $.

Now let $ M $ be an arbitrary $ \G $-module. Then $ M $ is a non-degenerate $ \ccinfty(\G^{(0)}) $-module with the restricted action along the inclusion $ \G^{(0)} \rightarrow \G $. Moreover, we obtain a $ \ccinfty(\G) $-linear isomorphism $ T_M:\ccinfty(\G)\stackrel{r,\id}{\otimes} M \rightarrow \ccinfty(\G)\stackrel{s,\id}{\otimes} M $  
as the unique map fitting into the commutative diagram 
\begin{center}
\begin{tikzcd}
(\ccinfty(\G) \stackrel{r,r}{\otimes} \ccinfty(\G)) \otimes_{\DG} M \arrow[r,"T \otimes \id"]\arrow[d,"\cong"] & \arrow[d,"\cong"] 
(\ccinfty(\G) \stackrel{s,r}{\otimes} \ccinfty(\G)) \otimes_{\DG} M \\
\ccinfty(\G)\stackrel{r,\id}{\otimes} M \arrow[r,"T_M"] & \ccinfty(\G)\stackrel{s,\id}{\otimes} M. 
\end{tikzcd}
\end{center}
Here we use the identification $ \DG \otimes_{\DG} M \cong M $ and right $ \DG $-linearity of $ T $. 
From the construction, it is immediate that we obtain analogous commutative diagrams linking $ d_i^*(T_M) $ and $ d_i^*(T) \otimes \id $ for $ i = 0,1,2 $ and hence the coaction identity 
$ d_0^*(T_M) d_2^*(T_M) = d_1^*(T_M) $ holds. We will refer to $ T_M $ as the canonical map of $ M $ in the sequel. 

These constructions are clearly compatible with $ \G $-equivariant linear maps. That is, if $ f: M \rightarrow N $ is $ \G $-equivariant, then it is a morphism of $ \ccinfty(\G) $-comodules with respect to the canonical maps $ T_M, T_N $. 
In summary, we obtain the following result. 

\begin{lemma} \label{modtocomod}
Let $ M $ be a $ \G $-module. Then $ M $ becomes a $ \ccinfty(\G) $-comodule via the canonical map $ T_M $ defined above. This assignment defines a functor $ A: \G \operatorname{-Mod} \rightarrow \ccinfty(\G) \operatorname{-Comod} $. 
\end{lemma}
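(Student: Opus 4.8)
The plan is to assemble the ingredients already set up in the discussion preceding the statement; the assignment $M \mapsto (M,T_M)$ has essentially been constructed, so what remains is to check that it genuinely lands in $\ccinfty(\G)\operatorname{-Comod}$ and is functorial. First I would record that a $\G$-module $M$ is automatically an essential $\ccinfty(\G^{(0)})$-module: the characteristic functions of compact open subsets of $\G^{(0)}$ that provide local units for $\DG$ already lie in the subalgebra $\ccinfty(\G^{(0)}) \subseteq \DG$ (on which convolution restricts to pointwise multiplication), so $\ccinfty(\G^{(0)})M = M$, and since $\ccinfty(\G^{(0)})$ has local units, essentiality follows from Lemma \ref{localunitsessential}.

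Next I would justify that the canonical map $T_M$ is well defined by the commuting square. This requires the identifications $(\ccinfty(\G)\stackrel{r,r}{\otimes}\ccinfty(\G)) \otimes_{\DG} M \cong \ccinfty(\G)\stackrel{r,\id}{\otimes} M$ and $(\ccinfty(\G)\stackrel{s,r}{\otimes}\ccinfty(\G)) \otimes_{\DG} M \cong \ccinfty(\G)\stackrel{s,\id}{\otimes} M$, which follow from associativity of the balanced tensor product together with $\DG \otimes_{\DG} M \cong M$ (essentiality of $M$); one has to check that under this identification the left $\ccinfty(\G^{(0)})$-action on $\DG$ via $r^*$ (resp. $s^*$) matches the restriction of the $\DG$-action on $M$ to $\ccinfty(\G^{(0)})$, which is precisely the computation that left (resp. right) convolution by functions supported on $\G^{(0)}$ coincides with pullback along $r$ (resp. $s$). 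Granting this, $T_M$ exists and is unique as the map making the square commute; it is $\ccinfty(\G)$-linear because $T$ is $\ccinfty(\G)$-linear in the first tensor leg and this structure is untouched by $- \otimes_{\DG} M$, and it is an isomorphism because $T$ is induced by a homeomorphism and $- \otimes_{\DG} M$ is a functor.

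Then I would verify the coaction identity $d_0^*(T_M) d_2^*(T_M) = d_1^*(T_M)$. The discussion already provides, for each $i = 0,1,2$, a commuting square relating $d_i^*(T_M)$ to $d_i^*(T) \otimes \id_M$ under canonical identifications of the relevant balanced tensor products (of the same shape as above, again obtained from associativity and essentiality), so it suffices to know the identity for $T$, which in turn was reduced to the evident relation $(\id \times_{d_2,\pi} t)(\id \times_{d_0,\pi} t) = (\id \times_{d_1,\pi} t)$ between the maps of fibre products induced by $t$. Finally, for functoriality, given a $\G$-equivariant map $f: M \rightarrow N$, both $(\id \otimes f) T_M$ and $T_N (\id \otimes f)$ fit into the defining square together with $T \otimes \id$ and the naturality in $M$ of the identifications above, so by the uniqueness clause they agree; preservation of composition and identities is then clear.

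I expect the main obstacle to be bookkeeping rather than any genuine difficulty: one must check carefully that every balanced-tensor-product identification used — both in the definition of $T_M$ and in the squares relating $d_i^*(T_M)$ to $d_i^*(T) \otimes \id$ — is compatible with the various $\ccinfty(\G^{(0)})$- and $\DG$-module structures in play and natural in $M$. Once these compatibilities are pinned down, the coaction identity and functoriality follow formally from the corresponding facts for $T$.
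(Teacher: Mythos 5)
Your proposal is correct and follows essentially the same route as the paper, whose ``proof'' of Lemma \ref{modtocomod} is precisely the preceding discussion: construct $T_M$ from $T$ via the identification $\DG \otimes_{\DG} M \cong M$ and right $\DG$-linearity of $T$, deduce the coaction identity from the corresponding diagrams for $d_i^*(T_M)$ and $d_i^*(T)\otimes\id$, and observe compatibility with $\G$-equivariant maps. The extra details you supply (essentiality of the restricted $\ccinfty(\G^{(0)})$-action via Lemma \ref{localunitsessential}, and the compatibility of the balanced tensor product identifications with the module structures) are exactly the bookkeeping the paper leaves implicit.
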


Let us now discuss how to pass from $ \ccinfty(\G) $-comodules to $ \G $-modules. To this end consider the integration map $ \lambda: \ccinfty(\G) \rightarrow\ccinfty(\G^{(0)}) $ defined by 
$$ 
\lambda(f)(x) = \sum_{\alpha\in \G^x} f(\alpha).
$$
This map is $ \ccinfty(\G^{(0)}) $-linear with respect to the action of $ \ccinfty(\G^{(0)}) $ on $ \ccinfty(\G) $ induced by the range map $ r $ and the obvious multiplication action on $ \ccinfty(\G^{(0)}) $. Indeed, for $ f \in \ccinfty(\G), h \in \ccinfty(\G^{(0)}) $ we have
$$
\lambda(h \cdot f)(x) = \sum_{\alpha\in \G^x} h(r(\alpha))f(\alpha)  = \sum_{\alpha\in \G^x} h(x) f(\alpha) = h(x) \lambda(f)(x) = (h \cdot \lambda(f))(x) 
$$ 
as required. In fact, we have the following equivariance property. 

\begin{lemma} \label{integrationmap}
The integration map $ \lambda: \ccinfty(\G) \rightarrow\ccinfty(\G^{(0)}) $ is $ \G $-equivariant with respect to the left multiplication action on $ \ccinfty(\G) = \DG $. 
\end{lemma}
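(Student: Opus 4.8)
The plan is to reduce the identity $\lambda(f * g) = f \cdot \lambda(g)$ to a finite manipulation of sums over range fibres, after a single change of summation variable. First I would pin down the module structure on the target: the $\G$-action on its own unit space, $\alpha \cdot s(\alpha) = r(\alpha)$, makes $\ccinfty(\G^{(0)})$ into a $\G$-module with
$$
(f \cdot h)(x) = \sum_{\alpha \in \G^x} f(\alpha)\, h(s(\alpha))
$$
for $ f \in \DG $ and $ h \in \ccinfty(\G^{(0)}) $, and it is with respect to this structure that equivariance is asserted. (Restricting this action along the inclusion $ \ccinfty(\G^{(0)}) \subseteq \DG $ recovers pointwise multiplication, consistently with the $ \ccinfty(\G^{(0)}) $-linearity along $ r $ already established before the statement.) Since $ \G $ is \'etale the fibres $ \G^x $ are discrete, and since every element of $ \ccinfty(\G) $ has compact support, the set $ \G^x \cap \supp(f) $ is finite; hence all the sums occurring below are finite and any rearrangement is permissible. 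That $ \lambda $ actually takes values in $ \ccinfty(\G^{(0)}) $ is guaranteed by Lemma \ref{lemma: integration over fiber is continuous for functions with proper support} together with compactness of $ \supp(f) $.

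The computation itself is then direct. Fix $ x \in \G^{(0)} $. Using the first expression for the convolution product and the fact that $ r(\alpha) = x $ whenever $ \alpha \in \G^x $, one obtains
$$
\lambda(f * g)(x) = \sum_{\alpha \in \G^x}(f*g)(\alpha) = \sum_{\alpha \in \G^x}\sum_{\beta \in \G^x} f(\beta)\, g(\beta^{-1}\alpha).
$$
Interchanging the two finite sums and, for each fixed $ \beta \in \G^x $, substituting $ \gamma = \beta^{-1}\alpha $ — which is a bijection $ \G^x \to \G^{s(\beta)} $ with inverse $ \gamma \mapsto \beta\gamma $ — turns the inner sum into $ \sum_{\gamma \in \G^{s(\beta)}} g(\gamma) = \lambda(g)(s(\beta)) $. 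Therefore
$$
\lambda(f * g)(x) = \sum_{\beta \in \G^x} f(\beta)\, \lambda(g)(s(\beta)) = (f \cdot \lambda(g))(x),
$$
which is the claim.

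I do not anticipate a real obstacle: the only points deserving a word of justification are the finiteness of the fibrewise sums (which legitimises the interchange of summation order) and the fact that left translation by $ \beta^{-1} $ restricts to a bijection $ \G^x \to \G^{s(\beta)} $, and both are immediate. An alternative, should one prefer to avoid writing out the module structure on $ \ccinfty(\G^{(0)}) $ explicitly at this stage, would be to check $ \DG $-linearity of $ \lambda $ on the local-unit generators $ \chi_K $ of $ \DG $ for $ K $ a compact open bisection, combining this with the $ \ccinfty(\G^{(0)}) $-linearity along $ r $ already in hand; but the fibrewise computation above is the cleanest route.
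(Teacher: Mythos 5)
Your proof is correct and takes essentially the same route as the paper: the paper performs the identical fibrewise computation, re-indexing the inner sum via left translation (its $\gamma = \alpha^{-1}\beta$ is your $\gamma = \beta^{-1}\alpha$), merely specialising the first argument to the characteristic function $\chi_U$ of a compact open bisection rather than a general $f \in \DG$. Your identification of the target module structure, $(f \cdot h)(x) = \sum_{\alpha \in \G^x} f(\alpha)\, h(s(\alpha))$, agrees with the action used implicitly in the paper's computation, so there is nothing to add.
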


\begin{proof}
Let $ f \in \ccinfty(\G) $ and let $ U \subseteq \G $ be a compact open bisection. We calculate 
\begin{align*}
\lambda(\chi_U * f)(x) &= \sum_{\beta \in \G^x} (\chi_U * f)(\beta) \\
&= \sum_{\alpha, \beta \in \G^x} \chi_U(\alpha) f(\alpha^{-1} \beta) \\
&= \sum_{\alpha \in \G^x} \chi_U(\alpha) \sum_{\gamma \in \G^{s(\alpha)}} f(\gamma) \\
&= \sum_{\alpha \in \G^x} \chi_U(\alpha) \lambda(f)(s(\alpha)) \\
&= \chi_U \cdot \lambda(f)(x),  
\end{align*}
and this yields the claim. 
\end{proof}

Now assume that $ M $ is a $ \ccinfty(\G) $-comodule with canonical map $ T_M $ and define $ \mu_M: \DG \otimes M \rightarrow M $ by the formula 
$$ 
\mu_M = (\lambda \otimes \id) T^{-1}_M q_M, 
$$
where $ q_M: \DG \otimes M = \ccinfty(\G) \otimes M \rightarrow \ccinfty(\G)\stackrel{s,\id}{\otimes} M $ is the quotient map. 
Consider the diagram 
\begin{center}
\begin{tikzcd}
\ccinfty(\G^{(2)}) \stackrel{v_2, \id}{\otimes} M \arrow[d," \cong"] \arrow[r,"d_0^*(T^{-1}_M)"] &\ccinfty(\G^{(2)})\stackrel[]{v_1,\id}{\otimes} M \arrow[d,"\cong"] \arrow[r,"d_2^*(T^{-1}_M)"]& \ccinfty(\G^{(2)})\stackrel[]{v_0,\id}{\otimes} M \arrow[d,"\cong"] \\
\ccinfty(\G) \stackrel[]{s, r}{\otimes} \ccinfty(\G) \stackrel[]{s, \id}{\otimes} M 
\arrow[r,"\id \otimes T^{-1}_M"] &\ccinfty(\G) \stackrel[]{s, r}{\otimes} \ccinfty(\G)\stackrel[]{r,\id}{\otimes} M \arrow[d,"\id \otimes \lambda \otimes \id"] \arrow[r,"(T^{-1}_M)_{13}"]
& (\ccinfty(\G) \stackrel[]{s, r}{\otimes} \ccinfty(\G))\stackrel[]{v_0,\id}{\otimes} M \arrow[d,"\id \otimes \lambda \otimes \id"] \\
& \ccinfty(\G) \stackrel[]{s, \id}{\otimes} M \arrow[r,"T^{-1}_M"] & \ccinfty(\G) \stackrel[]{r, \id}{\otimes} M.
\end{tikzcd}
\end{center}    
Here $ (T^{-1}_M)_{13} $ is the map $ T^{-1}_M $ applied to the first and third tensor factors. 
The two top squares are commutative by the definition of $ d_0^*(T^{-1}_M) $ and $ d_2^*(T^{-1}_M) $. The bottom right square commutes trivially. As a consequence, we obtain
$$
(\lambda \otimes \id)(\id \otimes \lambda \otimes \id) d_2^*(T^{-1}_M)d_0^*(T^{-1}_M)(f \otimes g \otimes m) = \mu_M(\id \otimes \mu_M)(f \otimes g \otimes m)
$$
for all $ f,g \in \ccinfty(\G) $ and $ m \in M $. Similarly, we have a commutative diagram 
\begin{center}
\begin{tikzcd}
\ccinfty(\G^{(2)}) \stackrel[]{v_2, \id}{\otimes} M \arrow[d," T^{-1} \otimes \id"] \arrow[r,"d_1^*(T^{-1}_M)"] &\ccinfty(\G^{(2)})\stackrel[]{v_0,\id}{\otimes} M \arrow[d,"T^{-1} \otimes \id"] \\
(\ccinfty(\G) \stackrel[]{r, r}{\otimes} \ccinfty(\G)) \stackrel[]{s \pi,\id}{\otimes} M \arrow[d," \lambda \otimes \id \otimes \id"] \arrow[r,"\id \otimes T^{-1}_M"] &(\ccinfty(\G) \stackrel[]{r, r}{\otimes} \ccinfty(\G))\stackrel[]{r \pi,\id}{\otimes} M \arrow[d,"\lambda \otimes \id \otimes \id"] \arrow[r,"T \otimes \id"] &(\ccinfty(\G) \stackrel[]{s, r}{\otimes} \ccinfty(\G))\stackrel[]{v_0,\id}{\otimes} M \arrow[d,"\id \otimes \lambda \otimes \id"]\\
\ccinfty(\G) \stackrel[]{s, \id}{\otimes} M \arrow[r,"T^{-1}_M"]& \ccinfty(\G) \stackrel[]{r, \id}{\otimes} M \arrow[d,"\lambda \otimes \id"]
& \ccinfty(\G) \stackrel[]{r, \id}{\otimes} M \arrow[d,"\lambda \otimes \id"] \\
& M \arrow[r,"="] &M, 
\end{tikzcd}
\end{center}
where $ \pi $ is the projection onto the second factor. Observing that $ (\lambda \otimes \id)T^{-1}(f \otimes g) = f * g = \mu(f\otimes g) $ is the convolution product, 
we obtain
$$
(\lambda \otimes \id)(\id \otimes \lambda \otimes \id) d_1^*(T^{-1}_M) (f \otimes g \otimes m) %= \mu_M(f * g \otimes m) 
= \mu_M(\mu \otimes \id)(f \otimes g \otimes m)
$$
for all $ f,g \in \ccinfty(\G) $ and $ m \in M $. Applying the coaction identity $ d_2^*(T^{-1}_M)d_0^*(T^{-1}_M) = d_1^*(T^{-1}_M) $ we conclude that $ \mu_M $ turns $ M $ into a 
left $ \DG $-module. Since both $ \lambda $ and $ q_M $ are surjective we see that $ \mu_M(\DG \otimes M) = M $, and it follows that the resulting module structure is essential. 

The construction of $ \mu_M $ is compatible with morphisms, that is, if $ f: M \rightarrow N $ is a morphism of $ \ccinfty(\G) $-comodules then $ f $ is $ \G $-equivariant for the corresponding $ \G $-module structures. This completes the proof of the following result.

\begin{lemma} \label{comodtomod}
Let $ M $ be a $ \ccinfty(\G) $-comodule. Then $ M $ becomes a $ \G $-module via the action $ \mu_M $ defined above. This assignment defines a functor $ B: \ccinfty(\G) \operatorname{-Comod} \rightarrow \G \operatorname{-Mod} $. 
\end{lemma}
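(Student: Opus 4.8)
The argument is essentially assembled from the discussion preceding the statement; the plan is to organise it into a self-contained verification that $\mu_M = (\lambda \otimes \id)T_M^{-1}q_M$ is a well-defined, associative, essential left $\DG$-action on $M$, and that the assignment $M \mapsto (M,\mu_M)$ is functorial. First I would check that $\mu_M$ is well-defined with target $M$: since $M$ is by definition an essential $\ccinfty(\G^{(0)})$-module, all the balanced tensor products occurring in the construction are the expected ones and the canonical identifications used tacitly in the diagrams are genuine isomorphisms. In particular, because $\lambda$ is $\ccinfty(\G^{(0)})$-linear for the $r$-action (as verified just before Lemma \ref{integrationmap}), the map $\lambda \otimes \id$ descends to $\ccinfty(\G)\stackrel{r,\id}{\otimes}M \to \ccinfty(\G^{(0)}) \otimes_{\ccinfty(\G^{(0)})}M \cong M$, and $T_M^{-1}$ is again $\ccinfty(\G)$-linear, so the composite $\mu_M:\DG \otimes M \to M$ makes sense.

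Next, associativity. I would combine the two commutative diagrams displayed above: the first exhibits $\mu_M(\id \otimes \mu_M)$ as $(\lambda \otimes \id)(\id \otimes \lambda \otimes \id)d_2^*(T_M^{-1})d_0^*(T_M^{-1})$, while the second, using that $(\lambda \otimes \id)T^{-1}$ is the convolution product on $\DG$, exhibits $\mu_M(\mu \otimes \id)$ as $(\lambda \otimes \id)(\id \otimes \lambda \otimes \id)d_1^*(T_M^{-1})$. Hence associativity of $\mu_M$ reduces to the inverse form of the coaction identity, $d_2^*(T_M^{-1})d_0^*(T_M^{-1}) = d_1^*(T_M^{-1})$, which I would deduce from $d_0^*(T_M)d_2^*(T_M) = d_1^*(T_M)$ by inverting, noting that inversion reverses the order of composition. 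The part I expect to be the main obstacle is precisely this bookkeeping: one must keep track of which of the three module structures on $\ccinfty(\G^{(2)})$ (the ones labelled by $v_0,v_1,v_2$) sits at each corner, and check that the numerous fibre-product identifications appearing in the two diagrams are mutually compatible, so that the diagrams genuinely commute and the inverse identity really does hold; the underlying computations themselves are routine.

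Essentiality is then quick: $\lambda$ and $q_M$ are both surjective, so $\mu_M(\DG \otimes M) = \DG M = M$, and since $\DG$ has local units, Lemma \ref{localunitsessential} gives that $M$ is an essential left $\DG$-module, i.e.\ a $\G$-module. For functoriality, if $f:M \to N$ is a morphism of $\ccinfty(\G)$-comodules then $(\id \otimes f)T_M = T_N(\id \otimes f)$, whence $(\id \otimes f)T_M^{-1} = T_N^{-1}(\id \otimes f)$; since $f$ is $\ccinfty(\G^{(0)})$-linear it also intertwines the quotient maps $q_M,q_N$ and the maps $\lambda \otimes \id$, so chaining these identities yields $f\mu_M = \mu_N(\id \otimes f)$, i.e.\ $f$ is $\G$-equivariant. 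Preservation of identities and composition is immediate, so $B$ is a functor.
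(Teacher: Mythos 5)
Your proposal follows the paper's own argument essentially verbatim: the paper's proof consists precisely of the preceding discussion, namely the two commutative diagrams identifying $\mu_M(\id\otimes\mu_M)$ and $\mu_M(\mu\otimes\id)$ with $(\lambda\otimes\id)(\id\otimes\lambda\otimes\id)$ applied to $d_2^*(T_M^{-1})d_0^*(T_M^{-1})$ and $d_1^*(T_M^{-1})$ respectively, the inverted coaction identity, surjectivity of $\lambda$ and $q_M$ for essentiality, and compatibility with comodule morphisms. Your additional explicit appeals to Lemma \ref{localunitsessential} for essentiality and the short computation verifying $f\mu_M=\mu_N(\id\otimes f)$ are fine elaborations of steps the paper leaves implicit, so the proposal is correct and takes the same route.
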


Upon combining Lemma \ref{modtocomod} and Lemma \ref{comodtomod} we are now ready to establish the correspondence between $ \G $-modules and $ \ccinfty(\G) $-comodules. 

\begin{prop} \label{theorem: unification theorem} 
Let $ \G $ be an ample groupoid. The constructions described above implement an isomorphism of categories between the category $ \G \operatorname{-Mod} $ of $ \G $-modules and the category $ \ccinfty(\G) \operatorname{-Comod} $ of $ \ccinfty(\G) $-comodules. 
\end{prop}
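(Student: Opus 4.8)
The plan is to show that the functors $A: \G\operatorname{-Mod} \rightarrow \ccinfty(\G)\operatorname{-Comod}$ and $B: \ccinfty(\G)\operatorname{-Comod} \rightarrow \G\operatorname{-Mod}$ constructed in Lemma \ref{modtocomod} and Lemma \ref{comodtomod} are mutually inverse. Since both functors act as the identity on underlying vector spaces (and on morphisms), it suffices to verify the two equalities $BA = \id$ and $AB = \id$ at the level of the structure data, i.e.\ that starting from a $\G$-module $M$ with action $\mu$, forming its canonical comodule map $T_M$, and then applying $B$ returns the original action $\mu$; and dually that starting from a comodule $(M, T_M)$, forming the module action $\mu_M$, and then applying $A$ returns the original $T_M$.

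First I would treat $BA = \id$. Given a $\G$-module $M$ with action $\mu: \DG \otimes M \rightarrow M$, the canonical map $T_M$ is characterised by the commutative diagram in Lemma \ref{modtocomod} built from the homeomorphism $t(\alpha,\beta) = (\alpha,\alpha\beta)$. One must then unwind the definition $\mu_M = (\lambda \otimes \id)T_M^{-1} q_M$ and check it agrees with $\mu$. The cleanest route is to exploit that every $\G$-module is essential, so elements of $M$ can be written as $\mu(\chi_U \otimes m')$ for compact open bisections $U$; applying $T_M^{-1}$ (whose underlying map is transpose to $t^{-1}(\alpha,\gamma) = (\alpha,\alpha^{-1}\gamma)$ in the first variable) and then integrating via $\lambda$ should reproduce the convolution-style formula defining $\mu$ on such elementary tensors. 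Concretely, one checks that on $\chi_U \otimes m$ with $U$ a bisection one recovers $\chi_U \cdot m$, using that $\lambda$ restricted to a bisection is essentially the identity on the base; this is the same bookkeeping already carried out implicitly in the paragraph establishing that $(\lambda \otimes \id)T^{-1}(f \otimes g) = f * g$.

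Next I would treat $AB = \id$. Starting from a comodule $(M, T_M)$, Lemma \ref{comodtomod} produces the module action $\mu_M$, and I must verify that the canonical map associated to this module structure, as in Lemma \ref{modtocomod}, is again $T_M$. Since the canonical map of a module is uniquely determined by the commutative diagram involving $\DG \otimes_{\DG} M \cong M$, it is enough to show that $T_M$ satisfies that defining relation when $M$ carries the action $\mu_M$. This again reduces to the identity $\mu_M = (\lambda \otimes \id)T_M^{-1}q_M$ together with the coaction identity, and is in effect the reverse reading of the computation already spelled out before Lemma \ref{comodtomod}. The essentiality of the $\ccinfty(\G^{(0)})$-module structure and surjectivity of $\lambda$ and $q_M$ guarantee there is enough room to pin down $T_M$ uniquely from $\mu_M$.

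The main obstacle is purely bookkeeping: keeping track of the various balanced tensor products over $\ccinfty(\G)$ and $\ccinfty(\G^{(0)})$ along the maps $v_0,v_1,v_2$ and $d_0,d_1,d_2$, and making sure the canonical identifications of fibre products (via Proposition \ref{prop:isomorphisms ccinfty}) are applied consistently so that "transpose of a homeomorphism" arguments go through. There is no conceptual difficulty — the coaction identity for $T_M$ corresponds exactly to associativity of $\mu_M$, and conversely — but one should either carry out the verification on elementary tensors $\chi_U \otimes m$ for compact open bisections $U$ (where all integrals collapse to single terms), or phrase it diagrammatically by observing that $B$ and $A$ are inverse on structure maps because $t$ and the integration $\lambda$ invert the passage between the two pictures. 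I would adopt the elementary-tensor approach for concreteness and note that the general case follows by linearity and essentiality.
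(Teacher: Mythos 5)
Your proposal is correct and follows essentially the same route as the paper: both directions are verified on the structure data by computing on elementary tensors $ \chi_U \otimes m $ for compact open bisections $ U $, where the integration $ \lambda $ collapses to a single term and the identity $ (\lambda \otimes \id)T^{-1}q = $ convolution recovers the original action, with the general case following from essentiality and the identification $ \DG \otimes_{\DG} M \cong M $. The only point to make explicit in the $ AB $ direction is that the mechanism is the $ \ccinfty(\G) $-linearity of $ T_N $ (used to arrange the supports of $ T_N^{-1}(\chi_U \otimes n) $ inside $ U $ before integrating) rather than the coaction identity, but the elementary-tensor computation you propose surfaces exactly this.
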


\begin{proof}
We have already constructed functors $ A: \G \operatorname{-Mod} \rightarrow \ccinfty(\G) \operatorname{-Comod} $ and $ B: \ccinfty(\G) \operatorname{-Comod} \rightarrow \G \operatorname{-Mod} $, and it suffices to show that the compositions $ B A $ and $ A B $ equal the identity on $ \G \operatorname{-Mod} $ and $ \ccinfty(\G) \operatorname{-Comod} $, respectively. 

For the $ \G $-module $ \DG $, the $ \G $-module $ BA(\DG) $ is obtained by passing from the canonical map $ T: \ccinfty(\G) \stackrel{r,r}{\otimes} \ccinfty(\G)\rightarrow \ccinfty(\G)\stackrel{s,r}{\otimes} \ccinfty(\G) $ to the $ \G $-module structure $ \DG \times \DG \rightarrow \DG $ given by 
$$
(\lambda \otimes \id)T^{-1} q(f \otimes g)(\alpha) = (\lambda \otimes \id)T^{-1}(f \otimes g)(\alpha) = \sum_{\beta \in \G^{r(\alpha)}} f(\beta) g(\beta^{-1} \alpha)
$$
for $f,g\in\DG$ and $\alpha\in \G$.
This coincides with the left multiplication action of $ \DG $ on itself. It follows that $ BA(\DG) = \DG $ as $ \G $-modules. Using this, combined with the canonical identification $ \DG \otimes_{\DG} M \cong M $, we see that we have in fact $ BA(M) = M $ as $ \G $-modules for every $ M \in \G \operatorname{-Mod} $. 

Now let $ N $ be a $ \ccinfty(\G) $-comodule with canonical map $ T_N:\ccinfty(\G)\stackrel{r,\id}{\otimes} N \rightarrow \ccinfty(\G)\stackrel{s,\id}{\otimes} N $. In order to describe the canonical map $ T_{AB(N)} $ we start from the module structure $ \mu_{B(N)} = (\lambda \otimes \id) T^{-1}_N q_N $ on $ B(N) $. Then, by construction, 
\begin{align*}
T^{-1}_{AB(N)}(\chi_U\otimes n) &= \chi_U \otimes \mu_{B(N)}(\chi_{U} \otimes n) \\ 
&=\chi_U\otimes (\lambda \otimes \id) T^{-1}_N q_N(\chi_U \otimes n) \\ 
&=\chi_U\otimes (\lambda \otimes \id) T^{-1}_N(\chi_U \otimes n)
\end{align*} 
for any compact open bisection $ U \subseteq \G $ and $ n \in N $. 
Let us write $ T^{-1}_N(\chi_U \otimes n) = \sum_i \chi_{U_i} \otimes n_i $ for compact open bisections $ U_i \subseteq \G $ and elements $ n_i \in N $. Since $ T^{-1}_N $ is $ \ccinfty(\G) $-linear we have
\begin{align*}
\sum_i\chi_{U_i}\otimes n_i &= T^{-1}_N(\chi_U\otimes n) \\
&= T^{-1}_N(\chi_U \chi_U \otimes n) \\
&= \chi_U \cdot (\sum_i\chi_{U_i}\otimes n_i) \\
&= \sum_i \chi_U \chi_{U_i} \otimes n_i \\
&= \sum_i \chi_{U \cap U_i} \otimes n_i.
\end{align*}
Hence we can assume without loss of generality that $ U_i \subseteq U $ for all $ i $. 
With this understood, we calculate 
\begin{align*} 
T^{-1}_{AB(N)}(\chi_U\otimes n) &=\chi_U\otimes (\lambda \otimes \id)(\sum_i\chi_{U_i}\otimes n_i) \\
&= \sum_i \chi_U\otimes \lambda(\chi_{U_i}) n_i \\
&= \sum_i \chi_U \cdot \lambda(\chi_{U_i}) \otimes n_i \\
&= \sum_i \chi_U\cdot \chi_{r(U_i)} \otimes n_i \\ 
&= \sum_i \chi_{U\cap U_i} \otimes n_i \\
&= T^{-1}_N(\chi_U\otimes n), 
\end{align*}
using that the action of $ \ccinfty(\G^{(0)}) $ on the left tensor factor is given by the range map in the penultimate step. Since $ U $ and $ n $ were arbitrary we conclude $ T_{AB(N)} = T_N $ as required. 
\end{proof}

Theorem \ref{theorem: unification theorem} can be phrased as saying that a $ \G $-module $ M $ is the same thing as the data of an essential $ \ccinfty(\G^{(0)}) $-module structure on $ M $ together with a $ \ccinfty(\G) $-linear map $ T_M:\ccinfty(\G)\stackrel{r,\id}{\otimes} M \rightarrow \ccinfty(\G)\stackrel{s,\id}{\otimes} M $ satisfying the coaction identity. This is how this result will be used in the sequel.

\subsection{Tensor products}

The category $ \G \operatorname{-Mod} $ admits a natural tensor product operation. More precisely, if $ M, N $ are $ \G $-modules then they are in particular essential $ \ccinfty(\G^{(0)}) $-modules by restriction of the action, and we obtain a natural $ \G $-module structure on the tensor product $ M \otimes_{\ccinfty(\G^{(0)})} N $. 

We shall describe this action using the comodule picture developed in the previous section. According to Theorem \ref{theorem: unification theorem}, 
it suffices to define a $ \ccinfty(\G) $-linear isomorphism 
$$ 
T_{M\otimes N}: \ccinfty(\G) \stackrel{r,\id}{\otimes}(M\otimes_{\ccinfty(\G^{(0)})} N) \rightarrow \ccinfty(\G)\stackrel{s,\id}{\otimes}(M\otimes_{\ccinfty(\G^{(0)})} N),
$$
satisfying the coaction identity.
We start with the case $ M = N = \DG $ and consider the homeomorphism 
$$
t_{\G \times \G}: \G \stackrel{}{\times_{s,r}}(\G\stackrel{}{\times_{r,r}} \G) \rightarrow \G \stackrel{}{\times_{r,r}}(\G \stackrel{}{\times_{r,r}} \G), 
\quad t_{\G \times \G}(\alpha,(\beta,\gamma)) = (\alpha,(\alpha\beta, \alpha\gamma)). 
$$
With the notation used in the proof of Lemma \ref{modtocomod} we obtain natural identifications 
\begin{align*} 
\G^{(2)} \times_{d_0, \pi}(\G \stackrel{}{\times_{s,r}}(\G \stackrel{}{\times_{r,r}} \G)) \cong \G^{(2)} \stackrel{}{\times_{v_2,r}}(\G\stackrel{}{\times_{r,r}} \G), \\
\G^{(2)} \times_{d_0, \pi}(\G \stackrel{}{\times_{r,r}}(\G\stackrel{}{\times_{r,r}} \G)) \cong \G^{(2)} \stackrel{}{\times_{v_1,r}}(\G\stackrel{}{\times_{r,r}} \G), \\ 
\G^{(2)} \times_{d_1, \pi}(\G \stackrel{}{\times_{s,r}}(\G \stackrel{}{\times_{r,r}} \G)) \cong \G^{(2)} \stackrel{}{\times_{v_2,r}}(\G\stackrel{}{\times_{r,r}} \G), \\
\G^{(2)} \times_{d_1, \pi}(\G \stackrel{}{\times_{r,r}}(\G\stackrel{}{\times_{r,r}} \G)) \cong \G^{(2)} \stackrel{}{\times_{v_0,r}}(\G\stackrel{}{\times_{r,r}} \G), \\
\G^{(2)} \times_{d_2, \pi}(\G \stackrel{}{\times_{s,r}}(\G \stackrel{}{\times_{r,r}} \G)) \cong \G^{(2)} \stackrel{}{\times_{v_1,r}}(\G\stackrel{}{\times_{r,r}} \G), \\
\G^{(2)} \times_{d_2, \pi}(\G \stackrel{}{\times_{r,r}}(\G\stackrel{}{\times_{r,r}} \G)) \cong \G^{(2)} \stackrel{}{\times_{v_0,r}}(\G\stackrel{}{\times_{r,r}} \G), 
\end{align*} 
and one checks that the induced maps 
\begin{align*}
\id \times_{d_0,\pi} t_{\G \times \G}&:\G^{(2)} \stackrel{}{\times_{v_2,r}}(\G\stackrel{}{\times_{r,r}} \G)\rightarrow \G^{(2)} \stackrel{}{\times_{v_1,r}}(\G\stackrel{}{\times_{r,r}} \G),\\
\id \times_{d_1,\pi} t_{\G \times \G}&:\G^{(2)} \stackrel{}{\times_{v_2,r}}(\G\stackrel{}{\times_{r,r}} \G) \rightarrow \G^{(2)} \stackrel{}{\times_{v_0,r}}(\G\stackrel{}{\times_{r,r}} \G),\\
\id \times_{d_2,\pi} t_{\G \times \G}&:\G^{(2)} \stackrel{}{\times_{v_1,r}}(\G\stackrel{}{\times_{r,r}} \G) \rightarrow \G^{(2)} \stackrel{}{\times_{v_0,r}}(\G\stackrel{}{\times_{r,r}} \G), 
\end{align*}
are given by 
\begin{align*}
(\id \times_{d_0,\pi} t_{\G \times \G})(\alpha,\beta,\gamma, \delta) &= (\alpha, \beta, (\beta \gamma, \beta \delta)), \\
(\id \times_{d_1,\pi} t_{\G \times \G})(\alpha,\beta,\gamma, \delta) &= (\alpha, \beta, (\alpha\beta\gamma, \alpha\beta \delta)),\\
(\id \times_{d_2,\pi} t_{\G \times \G})(\alpha,\beta,\gamma, \delta) &=(\alpha, \beta, (\alpha\gamma, \alpha \delta)), 
\end{align*}
respectively. It follows that the linear isomorphism
\begin{align*}
T_{\DG \otimes \DG}: \ccinfty(\G)\stackrel{r,r}{\otimes} (\ccinfty(\G)\stackrel{r,r}{\otimes}\ccinfty(\G))&\rightarrow \ccinfty(\G)\stackrel{s,r}{\otimes}(\ccinfty(\G)\stackrel{r,r}{\otimes}\ccinfty(\G)), 
\\
T_{\DG \otimes \DG}(f)(\alpha, \beta, \gamma) &=f(\alpha,\alpha\beta,\alpha\gamma)
\end{align*}
induced by transposition of $ t_{\G \times \G} $ satisfies 
$$ 
d_0^*(T_{\DG \otimes \DG}) d_2^*(T_{\DG \otimes \DG}) = d_1^*(T_{\DG \otimes \DG}) 
$$ 
as required. We also note that $ T_{\DG \otimes \DG} $ is right $ \DG \otimes \DG $-linear with respect to the action on the second and third tensor factors. 

Now let $ M, N $ be arbitrary $ \G $-modules. Then the canonical isomorphism 
$$
(\DG \otimes \DG) \otimes_{\DG \otimes \DG} (M\otimes N) \rightarrow M\otimes N
$$
induces a linear isomorphism 
$$
(\DG \stackrel{r,r}{\otimes} \DG)\otimes_{\DG \otimes \DG} (M\otimes N) \rightarrow M \otimes_{\ccinfty(\G^{(0)})} N. 
$$
Using the left $ \DG $-action on $ \DG \stackrel{r,r}{\otimes} \DG = \DG \otimes_{\ccinfty(\G^{(0)})} \DG $ corresponding to the $ \ccinfty(\G) $-comodule structure constructed above we obtain the desired $ \G $-module structure on $ M \otimes_{\ccinfty(\G^{(0)})} N $ through this identification. 

For calculations it is useful to know how the tensor product $ \G $-module structure looks in terms of compact open bisections.

\begin{lemma} \label{lemma:how diagonal action works}
Let $ M, N $ be $ \G $-modules. Then the $ \DG $-module structure on the tensor product  $ M \otimes_{\ccinfty(\G^{(0)}} N $ satisfies 
$$ 
\chi_U \cdot(m \otimes n) = (\chi_U \cdot m) \otimes (\chi_U \cdot n), 
$$ 
for any compact open bisection $ U $ of $ \G $. 
\end{lemma}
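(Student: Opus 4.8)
The strategy is to unwind the construction of $T_{M \otimes N}$ and evaluate it on a characteristic function $\chi_U$ of a compact open bisection $U$, reducing everything to the model case $M = N = \DG$ via the canonical isomorphism $(\DG \stackrel{r,r}{\otimes} \DG) \otimes_{\DG \otimes \DG}(M \otimes N) \cong M \otimes_{\ccinfty(\G^{(0)})} N$. Concretely, by Proposition \ref{theorem: unification theorem} the $\DG$-action on $M \otimes_{\ccinfty(\G^{(0)})} N$ is recovered from its canonical map by the formula $\mu = (\lambda \otimes \id) T_{M\otimes N}^{-1} q$. So first I would compute $T_{\DG \otimes \DG}^{-1}$ explicitly: from the formula $T_{\DG\otimes\DG}(f)(\alpha,\beta,\gamma) = f(\alpha,\alpha\beta,\alpha\gamma)$ one reads off that the inverse is given by $T_{\DG\otimes\DG}^{-1}(f)(\alpha,\beta,\gamma) = f(\alpha, \alpha^{-1}\beta, \alpha^{-1}\gamma)$, corresponding to the inverse homeomorphism $t_{\G\times\G}^{-1}$.

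Next I would chase $\chi_U \otimes (m \otimes n)$ through the defining diagram. Using $\DG$-linearity and the fact that $M, N$ are essential, it suffices to treat $m = \chi_V \cdot m'$, $n = \chi_W \cdot n'$ and then reduce further to tracking how the bisection $U$ acts; the key point is that all the relevant fibre-product homeomorphisms ($t$, $t_{\G\times\G}$, and the identifications preceding Lemma \ref{modtocomod}) are built from the single operation "left-translate by $\alpha$", applied diagonally to each tensor slot in the $t_{\G\times\G}$ case. Applying $\lambda$ (summation over the range fibre) and using that $U$ is a bisection — so that for each $x$ there is at most one $\alpha \in U \cap \G^x$ — collapses the sum to a single term, and one obtains $\chi_U \cdot (m \otimes n) = (\chi_U \cdot m)\otimes(\chi_U \cdot n)$ after comparing with the analogous single-slot computation that produces $\chi_U \cdot m$ from $T_M^{-1}$ (this is exactly the $BA$ computation already carried out in the proof of Proposition \ref{theorem: unification theorem}, namely $T^{-1}_{AB(N)}(\chi_U \otimes n) = T^{-1}_N(\chi_U\otimes n)$).

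Alternatively — and this is probably the cleaner writeup — I would verify the claimed formula directly by checking that the assignment $\chi_U \cdot (m\otimes n) := (\chi_U \cdot m)\otimes(\chi_U\cdot n)$ defines a well-defined essential $\DG$-module structure on $M \otimes_{\ccinfty(\G^{(0)})} N$ whose associated canonical map agrees with $T_{M\otimes N}$, and then invoke the uniqueness in Proposition \ref{theorem: unification theorem}. Well-definedness over $\ccinfty(\G^{(0)})$ uses that $\chi_U \cdot (h \cdot m) = (\chi_U h') \cdot (\chi_U \cdot m)$ for $h \in \ccinfty(\G^{(0)})$, where $h' = h \circ$ (the range/source bookkeeping of the bisection $U$), which matches on both tensor factors; associativity and essentiality are then immediate from the corresponding properties on $M$ and $N$ separately; and the canonical map of this module structure is computed from $T_M, T_N$ slot by slot, which by the very definition of $T_{\DG\otimes\DG}$ (diagonal left translation) reproduces $T_{M\otimes N}$.

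The main obstacle is purely bookkeeping: keeping the six fibre-product identifications straight and making sure the diagonal left-translation in $t_{\G\times\G}$ is correctly matched with the two separate left-translations defining $T_M$ and $T_N$. There is no genuine mathematical difficulty beyond the observation that a bisection meets each fibre at most once, which is what turns the integration map $\lambda$ into the evaluation that produces the pointwise-looking formula; I expect the argument to be short once the reduction to $\DG \otimes \DG$ is in place.
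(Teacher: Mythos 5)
Your first route is exactly the paper's argument: reduce to the model case $M = N = \DG$ via the canonical isomorphism $(\DG \stackrel{r,r}{\otimes} \DG)\otimes_{\DG\otimes\DG}(M\otimes N)\cong M\otimes_{\ccinfty(\G^{(0)})}N$, compute $T^{-1}_{\DG\otimes\DG}(\chi_U\otimes\chi_V\otimes\chi_W)$ explicitly as the characteristic function of $U\times UV\times UW$, and apply $\lambda$ in the first slot, where the bisection property gives $\chi_{UV}\otimes\chi_{UW}=(\chi_U\cdot\chi_V)\otimes(\chi_U\cdot\chi_W)$. The proposal is correct and essentially identical to the paper's proof.
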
 

\begin{proof}
In view of the construction of the tensor product action it suffices to consider the case $ M = N = \DG $. Given compact open bisections $ U,V,W $ of $ \G $ and  $ \alpha, \beta, \gamma \in \G $ such that $ r(\alpha) = r(\beta) = r(\gamma) $ we compute
\begin{align*}
T^{-1}_{\DG \otimes \DG}(\chi_U\otimes \chi_V\otimes \chi_W)(\alpha,\beta,\gamma) &= (\chi_U\otimes \chi_V\otimes \chi_W)(\alpha,\alpha^{-1}\beta,\alpha^{-1}\gamma)\\
&=\chi_U(\alpha) \chi_V(\alpha^{-1}\beta)\chi_W(\alpha^{-1}\gamma).
\end{align*}
It follows that $ T^{-1}_{\DG \otimes \DG}(\chi_U\otimes \chi_V\otimes \chi_W) $ is the characteristic function of the set $ U \times U V \times UW $. 
Applying the integration map $ \lambda $ to the first tensor factor therefore gives 
$$
\chi_U \cdot (\chi_V \otimes \chi_W) = \chi_{UV} \otimes \chi_{UW} = (\chi_U \cdot \chi_V) \otimes (\chi_U \cdot \chi_W),  
$$
and since characteristic functions of compact open bisections span $ \DG $ this yields the claim. 
\end{proof}

We will always view the tensor product $ M \otimes_{\ccinfty(\G^0)} N $ of $ \G $-modules $ M, N $ as a $ \G $-module with the action defined above. Let us summarise our discussion as follows. 

\begin{prop} \label{tensorcategory}
The category $ \G\mbox{-}\operatorname{Mod} $ with the tensor product operation defined above is a monoidal category.  
\end{prop}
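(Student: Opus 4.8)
The goal is to verify the axioms of a monoidal category for $\G\operatorname{-Mod}$ with the tensor product $\otimes_{\ccinfty(\G^{(0)})}$ constructed above, unit object $\ccinfty(\G^{(0)})$, and the obvious associativity and unit constraints inherited from the underlying category of $\ccinfty(\G^{(0)})$-modules. The plan is to reduce everything to statements about $\ccinfty(\G^{(0)})$-modules, where these coherences are already known, and to check at each stage only that the maps involved are $\G$-equivariant. By Theorem \ref{theorem: unification theorem} it suffices throughout to work in the comodule picture and check compatibility with the canonical maps $T$; and by Lemma \ref{lemma:how diagonal action works} the tensor-product $\DG$-action is completely pinned down on characteristic functions of compact open bisections, which makes all such checks into short computations.

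First I would record that the bifunctor $- \otimes_{\ccinfty(\G^{(0)})} -$ is well-defined on morphisms: if $f: M \to M'$ and $g: N \to N'$ are $\G$-equivariant, then $f \otimes g: M \otimes_{\ccinfty(\G^{(0)})} N \to M' \otimes_{\ccinfty(\G^{(0)})} N'$ is again $\G$-equivariant. Using Lemma \ref{lemma:how diagonal action works} this is immediate from $\chi_U \cdot (f(m) \otimes g(n)) = (\chi_U \cdot f(m)) \otimes (\chi_U \cdot g(n)) = (f(\chi_U \cdot m)) \otimes (g(\chi_U \cdot n))$ and the fact that such $\chi_U$ span $\DG$. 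Functoriality in each variable (preservation of identities and composition) is then inherited from the tensor product of vector spaces. Next I would treat the unit object $\ccinfty(\G^{(0)})$, equipped with the canonical $\G$-module structure coming from the embedding $\G^{(0)} \to \G$ (equivalently, the comodule structure whose canonical map is the identification $\ccinfty(\G) \stackrel{r,\id}{\otimes} \ccinfty(\G^{(0)}) \cong \ccinfty(\G) \cong \ccinfty(\G) \stackrel{s,\id}{\otimes}\ccinfty(\G^{(0)})$), and check that the left and right unit isomorphisms $\ccinfty(\G^{(0)}) \otimes_{\ccinfty(\G^{(0)})} M \cong M \cong M \otimes_{\ccinfty(\G^{(0)})} \ccinfty(\G^{(0)})$, which exist because every $\G$-module is an essential $\ccinfty(\G^{(0)})$-module, are $\G$-equivariant. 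Again by Lemma \ref{lemma:how diagonal action works} this amounts to checking that $\chi_U \cdot \chi_{r(U)} \otimes m \mapsto \chi_U \cdot m$ is compatible with the actions, which follows from $\chi_{r(U)} \cdot \chi_U = \chi_U$ on the unit factor.

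The associativity constraint is the heart of the matter. For $\G$-modules $M, N, P$ the underlying $\ccinfty(\G^{(0)})$-modules have a canonical associator $(M \otimes_{\ccinfty(\G^{(0)})} N) \otimes_{\ccinfty(\G^{(0)})} P \cong M \otimes_{\ccinfty(\G^{(0)})} (N \otimes_{\ccinfty(\G^{(0)})} P)$, and I must show it is $\G$-equivariant. The clean way is to observe, from the construction of the tensor product action and the explicit formula $T_{\DG \otimes \DG}(f)(\alpha,\beta,\gamma) = f(\alpha, \alpha\beta, \alpha\gamma)$, that the $n$-fold tensor product $\DG^{\otimes n}$ carries the "diagonal" comodule structure $T_{\DG^{\otimes n}}(f)(\alpha, \beta_1, \dots, \beta_n) = f(\alpha, \alpha\beta_1, \dots, \alpha\beta_n)$, and that this is manifestly symmetric under permuting and re-bracketing the last $n$ factors — in particular the associator for $\DG \otimes \DG \otimes \DG$ intertwines $T_{(\DG \otimes \DG)\otimes \DG}$ with $T_{\DG \otimes (\DG \otimes \DG)}$. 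Equivalently, one invokes Lemma \ref{lemma:how diagonal action works}: on both sides $\chi_U$ acts by $\chi_U \cdot (m \otimes n \otimes p) = (\chi_U \cdot m)\otimes(\chi_U \cdot n)\otimes(\chi_U \cdot p)$, which is visibly associator-invariant, and passing from $M=N=P=\DG$ to general modules via the canonical isomorphism $(\DG^{\otimes 3}) \otimes_{\DG^{\otimes 3}} (M \otimes N \otimes P) \cong M \otimes_{\ccinfty(\G^{(0)})} N \otimes_{\ccinfty(\G^{(0)})} P$ transports equivariance. Finally, the pentagon and triangle identities hold because both sides, after forgetting the $\G$-action, are the pentagon and triangle identities in $\ccinfty(\G^{(0)})\operatorname{-Mod}$ (indeed in vector spaces), which are already valid, and the forgetful functor $\G\operatorname{-Mod} \to \ccinfty(\G^{(0)})\operatorname{-Mod}$ is faithful.

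The main obstacle is the bookkeeping in the associativity step: one must be careful that the $\G$-module structure on an iterated tensor product built by bracketing from the two-fold one agrees with the symmetric "diagonal" structure on $\DG^{\otimes n}$, so that the underlying-vector-space associator really is a morphism of $\G$-modules rather than merely an isomorphism of the underlying spaces. Once the description of $T_{\DG^{\otimes n}}$ (or, equivalently, the diagonal formula of Lemma \ref{lemma:how diagonal action works}) is in hand, this is routine, and everything else follows from faithfulness of the forgetful functor to $\ccinfty(\G^{(0)})$-modules.
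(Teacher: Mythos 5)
Your proposal is correct and follows essentially the same route as the paper: the associativity and unit constraints are the canonical ones for $\ccinfty(\G^{(0)})$-modules, and their $\G$-equivariance is checked via the diagonal-action formula of Lemma \ref{lemma:how diagonal action works} (equivalently the explicit comodule map on tensor powers of $\DG$), with the canonical module $\ccinfty(\G^{(0)})$ as tensor unit and the coherence identities inherited from the underlying module category. Your comodule description of the unit object is just the paper's trivial $\G$-module in the picture of Proposition \ref{theorem: unification theorem}, so there is no substantive difference.
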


\begin{proof}
Let $ M, N $ and $ P $ be $ \G $-modules. Then we have a canonical isomorphism 
$$ 
(M\otimes_{\ccinfty(\G^{(0)})} N) \otimes_{\ccinfty(\G^{(0)})} P 
\cong M \otimes_{\ccinfty(\G^{(0)})} N \otimes_{\ccinfty(\G^{(0)})} P 
\cong M \otimes_{\ccinfty(\G^{(0)})} (N\otimes_{\ccinfty(\G^{(0)})} P) 
$$ 
of $ \ccinfty(\G^{(0)}) $-modules, and using Lemma \ref{lemma:how diagonal action works} we see that the action on either side is given by 
$$
\chi_U \cdot(m \otimes n \otimes p) = (\chi_U \cdot m) \otimes (\chi_U \cdot n) \otimes (\chi_U \otimes p) 
$$
for $ m \in M, n \in N, p \in P $. We conclude that the above isomorphism is $ \G $-equivariant, thus giving the required associativity constraint. 

The tensor unit is given by the trivial $ \G $-module $ \ccinfty(\G^{(0)}) $, with the action 
$$ 
f \cdot h = \lambda(f s^*(h)) 
$$ 
for $ f \in \DG, h \in \ccinfty(\G^{(0)}) $. It is straightforward to check that this turns $ \ccinfty(\G^{(0)}) $ indeed into a $ \G $-module. To check the behaviour of the trivial $ \G $-module under taking tensor products with other $ \G $-modules recall that the canonical identification $ \DG \cong  \ccinfty(\G^{(0)}) \otimes_{\ccinfty(\G^0)} \DG $ is given by $ \phi(h \otimes f) = r^*(h) f $. Then if $ U \subseteq \G $ is a bisection we calculate
\begin{align*}
\phi(\chi_U \cdot h \otimes \chi_U \cdot f)(\alpha) &= (\chi_U \cdot h)(r(\alpha)) (\chi_U * f)(\alpha) \\
&= \lambda(\chi_U s^*(h))(r(\alpha)) \sum_{\beta \in \G^{r(\alpha)}} \chi_U(\beta) f(\beta^{-1} \alpha) \\
&= \sum_{\gamma \in \G^{r(\alpha)}} \chi_U(\gamma) h(s(\gamma)) \sum_{\beta \in \G^{r(\alpha)}} \chi_U(\beta) f(\beta^{-1} \alpha) \\
&= \sum_{\beta \in \G^{r(\alpha)}} \chi_U(\beta) h(s(\beta)) f(\beta^{-1} \alpha) \\
&= \sum_{\beta \in \G^{r(\alpha)}} \chi_U(\beta) \phi(h \otimes f)(\beta^{-1} \alpha) \\
&=(\chi_U \cdot \phi(h \otimes f))(\alpha)
\end{align*}
for $ f \in \DG $ and $ h \in \ccinfty(\G^{(0)}) $. 
Due to Lemma \ref{lemma:how diagonal action works}, it follows easily that the canonical identifications $ M \otimes_{\ccinfty(\G^{(0)})} \ccinfty(\G^{(0)}) \cong M \cong  \ccinfty(\G^{(0)}) \otimes_{\ccinfty(\G^0)} M $ are $ \G $-equivariant for every $ \G $-module $ M $. 

With these structures in place, the axioms for a monoidal category are readily verified. 
\end{proof}

\subsection{$ \G $-algebras}

Our main object of study in this paper are $ \G $-algebras over an ample groupoid $ \G $ in the following sense. 

\begin{dfn} \label{Galgebra}
A $ \G $-algebra is a $ \G $-module $ A $ together with a $ \G $-equivariant linear map $ m: A \otimes_{\ccinfty(\G^{(0)})} A \rightarrow A $, written $ m(a \otimes b) = ab $, such that $ (ab)c = a(bc) $ for all $ a,b,c \in A $. 
\end{dfn}

This can be phrased equivalently as saying that a $ \G $-algebra is a non-unital algebra object in the monoidal category $ \G\mbox{-}\operatorname{Mod} $. 

By definition, if $ A, B $ are $ \G $-algebras then a $ \G $-equivariant algebra homomorphism $ \phi: A \rightarrow B $ is a $ \G $-equivariant linear map of the underlying $ \G $-modules such that $ \phi(ab) = \phi(a) \phi(b) $ for all $ a,b \in A $. 

Let us list some examples and  constructions with $ \G $-algebras. 

\subsubsection{Algebras of functions}

We obtain examples of commutative $ \G $-algebras from actions of $ \G $ on totally disconnected locally compact spaces.  

\begin{prop} \label{functionsgmodule}
Let $ X $ be a totally disconnected locally compact $ \G $-space. Then $ \ccinfty(X) $ with pointwise multiplication is a $ \G $-algebra in a natural way. 
\end{prop}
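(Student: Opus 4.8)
The plan is to use the comodule description of $\G$-modules from Theorem~\ref{theorem: unification theorem} to build the $\G$-module structure on $\ccinfty(X)$ out of the anchor and action maps, and then to check that pointwise multiplication is $\G$-equivariant; associativity of the resulting $\G$-algebra is immediate since pointwise multiplication of functions is associative. So the two points of substance are the comodule structure and equivariance of the multiplication.

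For the comodule structure, write $\pi : X \to \G^{(0)}$ for the anchor map. By Lemma~\ref{lemma: non degeneracy module structure at level of functions} the pullback $\pi^* : \ccinfty(\G^{(0)}) \to M(\ccinfty(X))$ is an essential algebra homomorphism, so $\ccinfty(X)$ is an essential $\ccinfty(\G^{(0)})$-module. The action of $\G$ on $X$ gives a homeomorphism
$$
t_X : \G \times_{s,\pi} X \to \G \times_{r,\pi} X, \qquad t_X(\alpha, x) = (\alpha, \alpha \cdot x),
$$
with inverse $(\alpha, y) \mapsto (\alpha, \alpha^{-1}\cdot y)$; both maps are well-defined and continuous by the axioms of a $\G$-space. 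Using Proposition~\ref{prop:isomorphisms ccinfty} to identify $\ccinfty(\G) \stackrel{r,\id}{\otimes} \ccinfty(X) \cong \ccinfty(\G \times_{r,\pi} X)$ and $\ccinfty(\G) \stackrel{s,\id}{\otimes} \ccinfty(X) \cong \ccinfty(\G \times_{s,\pi} X)$, transposition of $t_X$ yields a linear isomorphism
$$
T_{\ccinfty(X)} : \ccinfty(\G) \stackrel{r,\id}{\otimes} \ccinfty(X) \to \ccinfty(\G) \stackrel{s,\id}{\otimes} \ccinfty(X), \qquad T_{\ccinfty(X)}(f)(\alpha, x) = f(\alpha, \alpha \cdot x).
$$
Since $t_X$ commutes with the projections onto the $\G$-factor, $T_{\ccinfty(X)}$ is $\ccinfty(\G)$-linear for the pointwise multiplication action on the first tensor factor. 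The coaction identity $d_0^*(T_{\ccinfty(X)}) d_2^*(T_{\ccinfty(X)}) = d_1^*(T_{\ccinfty(X)})$ is verified exactly as for $\DG$ in the proof of Lemma~\ref{modtocomod}: after the appropriate identifications of the fibre products $\G^{(2)} \times_{d_i,\pi}(\G \times_{s,\pi} X)$ and $\G^{(2)} \times_{d_i,\pi}(\G \times_{r,\pi} X)$ with fibre products of the form $\G^{(2)} \times_{v_j,\pi} X$, one computes that $t_X$ induces the maps $(\alpha,\beta,x) \mapsto (\alpha,\beta,\beta\cdot x)$, $(\alpha,\beta,x) \mapsto (\alpha,\beta,\alpha\cdot x)$ and $(\alpha,\beta,x) \mapsto (\alpha,\beta,(\alpha\beta)\cdot x)$ for $i = 0, 2, 1$ respectively, and the composite of the first two equals the third precisely because $\alpha \cdot (\beta\cdot x) = (\alpha\beta)\cdot x$. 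Dualising, Theorem~\ref{theorem: unification theorem} then endows $\ccinfty(X)$ with a $\G$-module structure.

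It remains to check that $m : \ccinfty(X) \otimes_{\ccinfty(\G^{(0)})} \ccinfty(X) \to \ccinfty(X)$, $m(f \otimes g) = fg$, is well-defined and $\G$-equivariant. It descends from $\ccinfty(X) \otimes \ccinfty(X) \to \ccinfty(X)$ to the balanced tensor product because pointwise multiplication is commutative and $\pi^*$ is an algebra homomorphism. For equivariance, one first unwinds the definition $\mu_M = (\lambda \otimes \id) T^{-1}_M q_M$ of the $\G$-action to obtain, for every compact open bisection $U$ of $\G$ and every $f \in \ccinfty(X)$, the explicit formula
$$
(\chi_U \cdot f)(y) = f(\alpha^{-1}\cdot y)
$$
whenever $U$ contains an element $\alpha$ with $r(\alpha) = \pi(y)$, which is then necessarily unique, and $(\chi_U \cdot f)(y) = 0$ otherwise. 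Since the characteristic functions of compact open bisections span $\DG$, Lemma~\ref{lemma:how diagonal action works} reduces $\G$-equivariance of $m$ to the identity $\chi_U \cdot (fg) = (\chi_U \cdot f)(\chi_U \cdot g)$ in $\ccinfty(X)$, and this is immediate from the translation formula above.

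The main obstacle I anticipate is purely organisational: tracking the several $\ccinfty(\G^{(0)})$- and $\ccinfty(\G)$-module structures and the vertex maps $v_0, v_1, v_2$ in the verification of the coaction identity, and passing the action of a compact open bisection through the chain of isomorphisms that define $\mu_M$ in order to reach the translation formula. Conceptually nothing new happens: the argument is a faithful transcription of the computations already carried out for $\DG$ and for tensor products of $\G$-modules, with the associativity relation $\alpha \cdot (\beta \cdot x) = (\alpha\beta)\cdot x$ of the $\G$-space replacing associativity of the convolution product.
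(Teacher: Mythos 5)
Your proof is correct and takes essentially the same route as the paper: it pulls back the homeomorphism $ t_X(\alpha,x) = (\alpha, \alpha\cdot x) $ to obtain the comodule map $ T_{\ccinfty(X)} $, verifies the coaction identity by the same fibre-product computation used for $ \DG $, and invokes the equivalence between $ \ccinfty(\G) $-comodules and $ \G $-modules. The only difference is that you additionally spell out the $ \G $-equivariance of pointwise multiplication via the explicit translation formula for $ \chi_U \cdot f $ on compact open bisections, a verification the paper leaves implicit; both the formula and the argument are correct.
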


\begin{proof}
Let us denote the anchor map of $ X $ by $ \pi $. Then we obtain a $ \ccinfty(\G^{(0)}) $-linear map 
$$ 
T_{\ccinfty(X)}: \ccinfty(\G) \stackrel{r,\pi}{\otimes} \ccinfty(X) \rightarrow \ccinfty(\G) \stackrel{s,\pi}{\otimes} \ccinfty(X) 
$$ 
by setting 
$$
T_{\ccinfty(X)}(f)(\alpha, x) = f(\alpha, \alpha \cdot x). 
$$
An argument analogous to the discussion before Lemma \ref{modtocomod} shows that this map satisfies the coaction identity, thus turning $ \ccinfty(X) $ into a $ \ccinfty(\G) $-comodule. Hence the claim follows from Proposition \ref{theorem: unification theorem}. 
\end{proof} 

The construction in Proposition \ref{functionsgmodule} is compatible with tensor products. More precisely, if $ X, Y $ are totally disconnected locally compact $ \G $-spaces, with anchor maps $\pi_1$ and $\pi_2$, respectively, then 
the canonical map 
$$
\ccinfty(X) \otimes_{\ccinfty(\G^{(0)})} \ccinfty(Y) \rightarrow \ccinfty(X \times_{\pi_1,\pi_2} Y) 
$$
is an isomorphism of $ \G $-algebras, compare Proposition \ref{prop:isomorphisms ccinfty}.  

\subsubsection{Algebras associated with pairings} 

Another class of examples of $ \G $-algebras comes from $ \G $-modules equipped with $ \G $-equivariant pairings in the following sense. 

\begin{dfn} \label{defpairing}
Let $ E $ be a $ \G $-module. A $ \G $-equivariant pairing on $ E $ is a $ \G $-equivariant linear map $ h: E \otimes_{\ccinfty(\G^{(0)})} E \rightarrow \ccinfty(\G^{(0)}) $. 
\end{dfn}

We may equivalently view a $ \G $-equivariant pairing as in Definition \ref{defpairing} as a $ \ccinfty(\G^{(0)}) $-bilinear map $ h: E \times E \rightarrow \ccinfty(\G^{(0)}) $ such that 
$$
h(\chi_U \cdot e, \chi_U \cdot f) = \chi_U \cdot h(e,f)
$$
for all $ e,f \in E $ and all compact open bisections $ U \subseteq \G $. If $ E,F $ are $ \G $-modules equipped with $ \G $-equivariant pairings $ h_E, h_F $, respectively, then the tensor product pairing 
$$
h_{E \otimes F}: E \otimes_{\ccinfty(\G^{(0)})} F \otimes_{\ccinfty(\G^{(0)})} E \otimes_{\ccinfty(\G^{(0)})} F \rightarrow \ccinfty(\G^{(0)})
$$
given by 
$$
h_{E \otimes F}(e_1 \otimes f_1, e_2 \otimes f_2) = h_E(e_1, e_2) h_F(f_1, f_2) 
$$
is a $ \G $-equivariant pairing on $ E \otimes_{\ccinfty(\G^{(0)})} F $. 

Given a $ \G $-equivariant pairing on $ E $, consider the tensor product 
$$ 
\K(E) = E \otimes_{\ccinfty(\G^{(0)})} E 
$$ 
as a $ \G $-module with the diagonal action. Then $ \K(E) $ becomes a $ \G $-algebra with the multiplication defined by
\begin{align*} 
(e_1 \otimes f_1) (e_2 \otimes f_2) = e_1 \otimes h(f_1, e_2) f_2 = e_1 h(f_1, e_2) \otimes f_2
\end{align*} 
for $ e_1, e_2, f_1, f_2 \in E $. Note that the multiplication in $ \K(E) $ depends on the pairing $ h $, so it would be more accurate to write $ \K(E, h) $ for the resulting $ \G $-algebra. However, in the sequel the pairings we use will always be clear from the context. 

The most important example of a $ \G $-equivariant pairing is the \emph{regular pairing} 
$ \lambda: \DG \times\DG \rightarrow \ccinfty(\G^{(0)}) $ defined by  
\begin{align*}
\lambda(f,g)(x) = \sum_{\alpha\in \G^x} f(\alpha) g(\alpha),
\end{align*}
for $ x \in \G^{(0)} $. We will simply write $ \K_\G = \K(\DG) $ for the associated $ \G $-algebra. 

\subsubsection{$ \ccinfty(X) $-algebras} 

Given an algebra $ A $ we write $ ZM(A) $ for the center of the multiplier algebra of $ A $. 

\begin{dfn}
Let $ X $ be a totally disconnected locally compact space. A $ \ccinfty(X) $-algebra is an algebra $ A $ together with an essential homomorphism $ \ccinfty(X) \rightarrow M(A) $ which takes values in $ ZM(A) $. 
\end{dfn}

Let us record the following observation, in analogy to the study of groupoid actions in the $ C^* $-algebra setting. 

\begin{lemma} 
Let $ \G $ be an ample groupoid and let $ A $ be a $ \G $-algebra. If the multiplication in $ A $ is nondegenerate then $ A $ is canonically a $ \ccinfty(\G^{(0)}) $-algebra. \end{lemma}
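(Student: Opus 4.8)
The plan is to construct the essential homomorphism $\ccinfty(\G^{(0)}) \to M(A)$ directly from the $\G$-module structure on $A$ and to verify that it lands in the center of $M(A)$. Recall that $A$ is in particular an essential $\ccinfty(\G^{(0)})$-module by restriction of the $\DG$-action along the inclusion $\G^{(0)} \hookrightarrow \G$. Since $A$ has nondegenerate multiplication, it embeds in $M(A)$, and by Lemma (the one following Lemma~\ref{localunitsessential}) the algebra $\ccinfty(\G^{(0)})$ has local units and is essential. The first step is therefore to check that the $\ccinfty(\G^{(0)})$-module structure on $A$ is compatible with the multiplication of $A$, in the sense that each $h \in \ccinfty(\G^{(0)})$ acts on $A$ as a two-sided multiplier $(L_h, R_h)$ with $L_h(a) = h \cdot a$ and $R_h(a) = a \cdot h$, and $R_h(a) b = a L_h(b)$. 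This amounts to the identities $h \cdot (ab) = (h \cdot a) b = a (h \cdot b)$, which I would deduce from $\G$-equivariance of the multiplication map $m: A \otimes_{\ccinfty(\G^{(0)})} A \to A$ together with Lemma~\ref{lemma:how diagonal action works}: for a compact open bisection $U$, $\chi_U \cdot (ab) = (\chi_U \cdot a)(\chi_U \cdot b)$, and specialising to $U \subseteq \G^{(0)}$ (where $\chi_U$ is an idempotent and $U = U^{-1}$) gives $\chi_U \cdot (ab) = (\chi_U \cdot a) b = a(\chi_U \cdot b)$ since on units the left and diagonal actions of $\ccinfty(\G^{(0)})$ agree; linear combinations of such $\chi_U$ span $\ccinfty(\G^{(0)})$, so the identities hold for all $h$.

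Once this is established, the assignment $h \mapsto (L_h, R_h)$ defines an algebra homomorphism $\ccinfty(\G^{(0)}) \to M(A)$ because the $\ccinfty(\G^{(0)})$-action on $A$ is by algebra maps on each side and is associative; it is a homomorphism into a \emph{unital} algebra even though $\ccinfty(\G^{(0)})$ need not be unital, which is the usual setup for essential homomorphisms. The second step is to check essentiality: $A$ is essential as a $\ccinfty(\G^{(0)})$-module on both sides, so $\ccinfty(\G^{(0)}) \cdot A = A = A \cdot \ccinfty(\G^{(0)})$, which is exactly the condition that the homomorphism $\ccinfty(\G^{(0)}) \to M(A)$ be essential. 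The third step is centrality: I need $(L_h, R_h)$ to commute with every multiplier $(L, R) \in M(A)$, equivalently $L_h L = L L_h$ and $R R_h = R_h R$ as maps $A \to A$. For $a \in A$, using essentiality write $a = \sum b_i c_i$; then $h \cdot (L(a))$ and $L(h \cdot a)$ can both be computed by moving $h$ across the multiplication using the identities from the first step, and the left $A$-linearity (resp. right $A$-linearity) of the multiplier components does the rest. Concretely $L(h \cdot a)$ reduces via $h \cdot (b_i c_i) = b_i (h \cdot c_i)$ and right $A$-linearity of... wait, one has to be a little careful about which side, but the point is that $h$ commutes past elements of $A$ and multipliers are $A$-linear on the appropriate side, so the computation closes.

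The main obstacle, such as it is, is bookkeeping around the non-unitality of $\ccinfty(\G^{(0)})$ and making the centrality argument watertight: one must genuinely use that $A$ is essential (not merely nondegenerate) over $\ccinfty(\G^{(0)})$ to reduce every element of $A$ to a sum of products, since otherwise one cannot slide $h$ past an arbitrary multiplier $(L,R)$. A clean way to package the whole argument is to invoke the Lemma following Lemma~\ref{lemma: non degeneracy module structure at level of functions} on extending essential homomorphisms: we already know $\ccinfty(\G^{(0)}) \to M(A)$ is essential; to see it is central, one observes that the image commutes with $\iota(A) \subseteq M(A)$ by the first-step identities, and since $A$ generates $M(A)$ in the appropriate sense (the multiplier algebra acts faithfully and essentially on $A$), commuting with $\iota(A)$ forces commuting with all of $M(A)$. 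I would spell this last implication out explicitly rather than leave it implicit, as it is the only genuinely non-formal point. No other step requires more than the identities already recorded in the excerpt.
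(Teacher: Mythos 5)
Your first two steps are sound and coincide with what the paper does implicitly: the identities $h\cdot(ab)=(h\cdot a)b=a(h\cdot b)$ do follow from equivariance of the multiplication map on the balanced tensor product (specialising the diagonal action to $\chi_U$ with $U\subseteq\G^{(0)}$ and using the balancing relation), and essentiality of the resulting homomorphism $\iota\colon\ccinfty(\G^{(0)})\to M(A)$, $\iota(h)a=h\cdot a$, is immediate from essentiality of $A$ as a $\ccinfty(\G^{(0)})$-module. The problem is the centrality step, which is the whole content of the lemma, and neither of your two routes actually establishes it. Your concrete computation requires writing $a=\sum b_ic_i$ with $b_i,c_i\in A$; no such factorisation is available, since the lemma assumes only that the multiplication of $A$ is nondegenerate, not that $A=A\cdot A$, and essentiality of $A$ over $\ccinfty(\G^{(0)})$ (which you invoke for this purpose) only yields decompositions $a=\sum f_i\cdot a_i$ with $f_i\in\ccinfty(\G^{(0)})$ — these do not help you slide $h$ past an arbitrary multiplier. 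Your fallback, that an element of $M(A)$ commuting with $\iota(A)$ must lie in $ZM(A)$ when the multiplication is nondegenerate, is a true statement, but you leave exactly this implication unproved while acknowledging it is the only non-formal point; as written, the proposal therefore does not contain a proof of centrality.

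What closes the argument — and is precisely the paper's proof, needing no commutant lemma and no factorisation of elements — is a sandwich computation using only your step-one identities and nondegeneracy: for $m\in M(A)$, $f\in\ccinfty(\G^{(0)})$ and $a,b\in A$,
$$
a\,m\,\iota(f)\,b \;=\; a\,m\,(f\cdot b) \;=\; f\cdot(a\,m\,b) \;=\; a\,\bigl(f\cdot(m\,b)\bigr) \;=\; a\,\iota(f)\,m\,b,
$$
where the middle equalities apply $c(f\cdot b)=f\cdot(cb)$ and $f\cdot(a c)=a(f\cdot c)$ with $c$ taken to be the elements $a\,m$ and $m\,b$ of $A$ obtained by letting the multiplier act. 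Nondegeneracy then finishes the proof: cancelling $a$ shows the left-multiplier parts of $m\,\iota(f)$ and $\iota(f)\,m$ agree on every $b$, and a two-sided multiplier of a nondegenerate algebra is determined by its left part, so $m\,\iota(f)=\iota(f)\,m$. The same trick, if you prefer your packaging, is what proves the deferred claim that the commutant of $\iota(A)$ in $M(A)$ equals $ZM(A)$; either way this short computation has to appear, and it is the piece currently missing from your argument.
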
 

\begin{proof}
The underlying $ \ccinfty(\G^{(0)}) $-module structure of $ A $ determines an essential homomorphism $ \iota: \ccinfty(\G^{(0)}) \rightarrow M(A) $ such that $ \iota(f) a = f \cdot a $ for $ a \in A $. If $ m \in M(A) $ is an arbitrary multiplier and $ f \in \ccinfty(\G^{(0)}) $ we 
get $ a m \iota(f) b = am f\cdot b = f \cdot (a m b) = a f \cdot (m b) = a \iota(f) m b $ for all $ a, b \in A $, and using nondegeneracy of the multiplication we conclude 
that $ \iota(f) $ is contained in $ ZM(A) $. 
\end{proof}

We also note that if the ample groupoid $ \G = \G^{(0)} = X $ is obtained by viewing a totally disconnected locally compact space $ X $ as a groupoid then every $ \ccinfty(X) $-algebra is canonically a $ \G $-algebra. 

\subsubsection{Unitarisation} 

By a unital $ \G $-algebra object we shall mean a unital algebra object in the category of $ \G $-modules, that is, a $ \G $-algebra $ A $ in the sense of Definition \ref{Galgebra} together with a $ \G $-equivariant homomorphism $ u: \ccinfty(\G^{(0)}) \rightarrow A $ such that $ u(f) a = a u(f) = f \cdot a $ for $ f \in  \ccinfty(\G^{(0)}), a \in A $. 
A $ \G $-equivariant algebra homomorphism between unital $ \G $ algebra objects is called unital if it commutes with the unit maps in the obvious way. 

A basic example of a unital $ \G $-algebra object is $ A = \ccinfty(\G^{(0)}) $ with the trivial $ \G $-action and $ u = \id $. This example shows already that a unital $ \G $-algebra object does not need to have a unit element in general. For this reason, we speak of unital $ \G $-algebra objects and not of unital $ \G $-algebras. 

\begin{dfn}
Let $ A $ be a $ \G $-algebra. The $ \G $-unitarisation of $ A $ is defined as 
$$
A^+= A \oplus \ccinfty(\G^{(0)})
$$ 
viewed as $ \G $-module with the given action on $ A $ and the trivial action on $ \ccinfty(\G^{(0)}) $, and the multiplication given by
$$
(a, f) \cdot(b, g) = (ab + g \cdot a + f \cdot b, fg)
$$ 
for $ a,b \in A $ and $ f,g \in \ccinfty(\G^{(0)}) $. Here the dot product denotes the $ \ccinfty(\G^{(0)}) $-structure on $ A $ induced from its $ \G $-module structure.    
\end{dfn}

Let us write $ \Alg_\G(A,B) $ for the set of all $ \G $-equivariant algebra homomorphisms between $ \G $-algebras $ A, B $. If $ A, B $ are unital $ \G $-algebra objects then we denote by $ \Alg^u_\G(A,B) $ the set of all unital $ \G $-equivariant algebra homomorphisms. 

With this notation in place, let us show that the $ \G $-unitarisation of a $ \G $-algebra satisfies the following universal property. 

\begin{lemma}
Let $ A $ be a $ \G $-algebra. Then $ A^+ $ is a unital $ \G $-algebra object, and there is a natural bijection
$$
\Alg^u_\G( A^+, B) \cong \Alg_\G(A, B) 
$$
for every unital $ \G $-algebra object $ B $. 
\end{lemma}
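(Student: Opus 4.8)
The plan is to exhibit explicit maps in both directions and check they are mutually inverse natural bijections, after first verifying that $A^+$ is genuinely a unital $\G$-algebra object.

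For the first part I would observe that $A^+ = A \oplus \ccinfty(\G^{(0)})$ is a direct sum of $\G$-modules (with $\ccinfty(\G^{(0)})$ carrying the trivial action), so that by Lemma \ref{lemma:how diagonal action works} the induced decomposition
$$
A^+ \otimes_{\ccinfty(\G^{(0)})} A^+ \cong (A \otimes_{\ccinfty(\G^{(0)})} A) \oplus (A \otimes_{\ccinfty(\G^{(0)})} \ccinfty(\G^{(0)})) \oplus (\ccinfty(\G^{(0)}) \otimes_{\ccinfty(\G^{(0)})} A) \oplus (\ccinfty(\G^{(0)}) \otimes_{\ccinfty(\G^{(0)})} \ccinfty(\G^{(0)}))
$$
is a decomposition of $\G$-modules. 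Under this identification the multiplication of $A^+$ restricts on the four summands to the multiplication of $A$, to the two unit constraints $A \otimes_{\ccinfty(\G^{(0)})} \ccinfty(\G^{(0)}) \cong A \cong \ccinfty(\G^{(0)}) \otimes_{\ccinfty(\G^{(0)})} A$, and to the multiplication of $\ccinfty(\G^{(0)})$; all of these are $\G$-equivariant by Definition \ref{Galgebra} and Proposition \ref{tensorcategory}, hence $m_{A^+}$ is $\G$-equivariant. Associativity is the usual computation for a unitalisation, and the map $u \colon \ccinfty(\G^{(0)}) \to A^+$, $u(f) = (0,f)$, is $\G$-equivariant because it is the inclusion of the trivial summand and clearly satisfies $u(f)(a,g) = (a,g)u(f) = f \cdot (a,g)$.

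For the bijection, given $\phi \in \Alg_\G(A,B)$ I would define $\phi^+ \colon A^+ \to B$ by $\phi^+(a,f) = \phi(a) + u_B(f)$, where $u_B \colon \ccinfty(\G^{(0)}) \to B$ is the unit map of $B$; conversely, for $\psi \in \Alg^u_\G(A^+,B)$ set $\psi^\flat = \psi \circ \iota_A$ with $\iota_A \colon A \to A^+$, $\iota_A(a) = (a,0)$, which is an evident $\G$-equivariant algebra homomorphism by the direct-sum structure. One then checks that $\phi^+$ is a unital $\G$-equivariant algebra homomorphism: linearity and unitality are immediate, $\G$-equivariance follows from that of $\phi$ and $u_B$, and multiplicativity reduces, after expanding $(a,f)(b,g)$, to the identities $\phi(a)\phi(b) = \phi(ab)$, $u_B(f)u_B(g) = u_B(fg)$, and $u_B(f)\phi(b) = f \cdot \phi(b) = \phi(f \cdot b)$ — the last using that a $\G$-equivariant map is in particular $\ccinfty(\G^{(0)})$-linear and that $u_B(f)$ acts as multiplication by $f$. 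Finally $(\psi^\flat)^+ = \psi$ because $\psi(a,f) = \psi(a,0) + \psi(0,f) = \psi^\flat(a) + u_B(f)$ by unitality of $\psi$, while $(\phi^+)^\flat = \phi$ is immediate, and naturality in $B$ is a one-line diagram chase.

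I do not expect a serious obstacle here: the only point that requires care is the $\G$-equivariance of the multiplication on $A^+$, and recasting it via the direct-sum decomposition above together with the already-established monoidal structure of $\G\operatorname{-Mod}$ (Proposition \ref{tensorcategory}) avoids having to manipulate compact open bisections by hand; everything else is a routine adaptation of the non-equivariant universal property of unitalisation.
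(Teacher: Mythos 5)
Your argument is correct and follows essentially the same route as the paper: restriction along $A \subset A^+$ in one direction and extension by the unit map, $\psi^+(a,f) = \psi(a) + u(f)$, in the other, with the paper leaving the verifications (equivariance of the multiplication on $A^+$, multiplicativity of $\psi^+$, the two composites being identities) as straightforward checks that you have simply written out. Your use of the direct-sum decomposition of $A^+ \otimes_{\ccinfty(\G^{(0)})} A^+$ and the monoidal structure from Proposition \ref{tensorcategory} is a clean way to organise those checks, but it is not a different proof.
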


\begin{proof}
By construction, the embedding into the first summand is a $ \G $-equivariant homomorphism $ \ccinfty(\G^{(0)}) \rightarrow A^+ $ which turns $ A^+ $ into a unital $ \G $-algebra object. 

If $ \phi: A^+ \rightarrow B $ is a $ \G $-equivariant unital algebra homomorphism then the restriction of $ \phi $ to $ A \subset A^+ $ defines a $ \G $-equivariant algebra homomorphism $ \phi: A \rightarrow B $. Conversely, if $ \psi: A \rightarrow B $ is a $ \G $-equivariant algebra homomorphism then we obtain a unital $ \G $-equivariant algebra homomorphism $ \psi^+: A^+ \rightarrow B $ by setting 
$$
\psi^+(a, f) = \psi(a) + u(f) 
$$
where $ u: \ccinfty(\G^{(0)}) \rightarrow B $ is the unit morphism. 

It is straightforward to check that these assignments determine mutually inverse bijections as claimed. 
\end{proof}

\subsubsection{Crossed products} 

The algebraic crossed product $ A \rtimes \G $ of a $ \G $-algebra $ A $ can be defined in a similar way as for discrete groups. 

More precisely, we define $ A \rtimes \G = A \otimes_{\ccinfty(\G^{(0)})} \DG $ as a vector space, with the left action of $ \ccinfty(\G^{(0)}) $ on $ \DG $ given via the range map. The multiplication in $ A \rtimes \G $ is determined by 
$$
(a \otimes \chi_U)(b \otimes g) = a \chi_U \cdot b \otimes \chi_U * g 
$$
for $ a, b \in A $, compact open bisections $ U \subseteq \G $ and $ g \in \DG $. It is straightforward to check that this turns $ A \rtimes \G $ into an algebra. 

The crossed product admits algebra homomorphisms $ i_A: A \rightarrow M(A \rtimes \G) $ and $ i_\G: \DG \rightarrow M(A \rtimes \G) $ such that $ i_A(a) i_\G(f) = a \otimes f $ 
for all $ a \in A, f \in \DG $. 
By definition, a covariant representation of $ (A, \G) $ on an algebra $ B $ is a pair of essential homomorphisms $ \phi: A \rightarrow M(B), \pi: \DG \rightarrow M(B) $ such that $ \phi(f \cdot a) \pi(g) = \phi(a) \pi(f * g) $ for all $ f \in \ccinfty(\G^{(0)}), a \in A, g \in \DG $ and 
$$
\phi(\chi_U \cdot a) \pi(\chi_U) = \pi(\chi_U) \phi(a) 
$$
for all compact open bisections $ U \subseteq \G $ and $ a \in A $. Clearly the maps $ i_A, i_{\G} $ define a covariant representation of $ (A, \G) $ on $ A \rtimes \G $. 

\begin{prop} \label{crossedproductuniversal}
Let $ A $ be a $ \G $-algebra. The crossed product $ A \rtimes \G $ is universal for covariant representations of $ (A, \G) $, that is, for every algebra $ B $ and every covariant representation $ (\phi, \pi) $ of $ (A,\G) $ on $ B $ there exists a unique essential algebra homomorphism $ \psi: A \rtimes \G \rightarrow M(B) $ such that $ \phi = \psi i_A $ 
and $ \pi = \psi i_\G $. 
\end{prop}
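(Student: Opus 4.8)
The plan is to exploit the fact that $A \rtimes \G = A \otimes_{\ccinfty(\G^{(0)})} \DG$ is generated, in a suitable sense, by the images of the two covariant maps $i_A$ and $i_\G$, and that every element can be written as a finite sum $\sum_j a_j \otimes \chi_{U_j}$ with $a_j \in A$ and $U_j$ compact open bisections. Given a covariant representation $(\phi,\pi)$ on $B$, one is forced to set
\[
\psi\Bigl(\sum_j a_j \otimes \chi_{U_j}\Bigr) = \sum_j \phi(a_j)\,\pi(\chi_{U_j}) \in M(B),
\]
so uniqueness of $\psi$ (given that it must satisfy $\phi = \psi i_A$, $\pi = \psi i_\G$, and be multiplicative) is essentially automatic once existence is established; indeed $i_A(a)i_\G(f) = a\otimes f$ spans $A\rtimes\G$, and an essential homomorphism is determined by its values on a spanning set together with the module relations.

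First I would check that the formula for $\psi$ above is \emph{well-defined} as a linear map $A \rtimes \G \to M(B)$. The only subtlety is that $A\rtimes\G$ is a balanced tensor product over $\ccinfty(\G^{(0)})$, so I must verify that $\sum_j \phi(a_j)\pi(\chi_{U_j})$ is unchanged under the relation $a\cdot\chi_{r(V)}\otimes\chi_U \sim a\otimes (\chi_{r(V)}*\chi_U)$ for $V$ a compact open bisection; here the first covariance identity $\phi(f\cdot a)\pi(g) = \phi(a)\pi(f*g)$ with $f = \chi_{r(V)}$ (an element of $\ccinfty(\G^{(0)})$) does exactly the job, using that $\chi_{r(V)}*g$ is the module action of $\ccinfty(\G^{(0)})$ on $\DG$ through the range map. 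It is convenient to work with spanning elements of the form $a\otimes\chi_U$ for $U$ a compact open bisection throughout, since $\DG = \ccinfty(\G)$ is spanned by such $\chi_U$ by the remarks following Lemma~\ref{lemma:funct.local.disc.tensor} and the ample basis.

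Next I would verify that $\psi$ is \emph{multiplicative}. Using the defining multiplication rule
$(a\otimes\chi_U)(b\otimes g) = a\,\chi_U\cdot b\otimes\chi_U * g$
and expanding $g$ as a combination of $\chi_V$, it suffices to compute on generators:
\[
\psi\bigl((a\otimes\chi_U)(b\otimes\chi_V)\bigr) = \psi\bigl(a\,(\chi_U\cdot b)\otimes\chi_U*\chi_V\bigr) = \phi\bigl(a(\chi_U\cdot b)\bigr)\,\pi(\chi_U*\chi_V),
\]
and this should equal $\phi(a)\pi(\chi_U)\phi(b)\pi(\chi_V)$. The key move is the second covariance relation $\phi(\chi_U\cdot b)\pi(\chi_U) = \pi(\chi_U)\phi(b)$, which lets one slide $\pi(\chi_U)$ past $\phi(b)$; combined with multiplicativity of $\phi$ and of $\pi$ (so $\pi(\chi_U)\pi(\chi_V) = \pi(\chi_U*\chi_V)$) one lands on the desired expression. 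A small point to attend to is that $\phi(a(\chi_U\cdot b)) = \phi(a)\phi(\chi_U\cdot b)$ requires $\phi$ to be a genuine algebra homomorphism into $M(B)$, which it is.

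Then I would check that $\psi$ is \emph{essential}, i.e.\ that $B$ becomes an essential left and right module via $\psi$. Since $\DG$ has local units (characteristic functions of compact open subsets of $\G^{(0)}$) and $\pi$ is essential, the images $\pi(\chi_K)$ form an approximate-unit-type family for $B$; given $b\in B$, essentiality of $\pi$ gives $b$ in the span of $\pi(\chi_K)\cdot b'$, and then $\pi(\chi_K) = \psi(e_K\otimes\chi_K)$ for a suitable local unit $e_K \in A$ with $e_K\cdot$ acting as the identity where needed — more cleanly, one notes $\pi(\chi_K)\in\psi(A\rtimes\G)\cdot M(B)$ directly because $A$ itself has enough elements (if $A$ lacks local units one argues instead via $i_\G$ and the essentiality of $\pi$, observing $\psi(i_\G(\DG))\cdot B$ is already all of $B$). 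Finally, the identities $\phi = \psi i_A$ and $\pi = \psi i_\G$ follow by unwinding the definitions of $i_A, i_\G$ at the multiplier level, using nondegeneracy to compare multipliers.

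The main obstacle I anticipate is the bookkeeping around multiplier algebras: $\psi$ lands in $M(B)$ rather than $B$, $i_A$ and $i_\G$ are themselves multiplier-valued, and one must repeatedly invoke Lemma~\ref{localunitsessential}-type essentiality and nondegeneracy arguments to justify that equalities of multipliers can be checked after pairing with elements of $B$. In particular, verifying that $\psi$ as defined on the algebraic tensor product genuinely extends to an \emph{essential} homomorphism (as opposed to merely a homomorphism into $M(B)$) is where the covariance hypotheses on $\phi$ and $\pi$ being essential must be used carefully. The purely formal parts — well-definedness over the balanced tensor product and multiplicativity on generators — are routine given the two covariance relations.
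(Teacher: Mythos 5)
Your proposal follows essentially the same route as the paper: define $ \psi(a \otimes f) = \phi(a)\pi(f) $, use the first covariance relation for well-definedness over the balanced tensor product, use the second covariance relation to slide $ \pi(\chi_U) $ past $ \phi(b) $ for multiplicativity, and deduce uniqueness from $ A \rtimes \G = i_A(A) i_\G(\DG) $. The only spot where you overcomplicate matters is essentiality, which follows directly from $ B = \phi(A)B = \phi(A)\pi(\DG)B = \psi(A \rtimes \G)B $ using essentiality of $ \phi $ and $ \pi $, rather than via local units of $ A $ or via $ \psi(i_\G(\DG)) $ (note $ i_\G(\DG) $ lies in $ M(A \rtimes \G) $, not in $ A \rtimes \G $, so the latter phrase only makes sense after $ \psi $ has already been shown essential and extended to multipliers).
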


\begin{proof} 
We define $ \psi(a \otimes f) = \phi(a) \pi(f) $. This gives a well-defined linear map $ A \rtimes \G \rightarrow M(B) $ by the first part of the covariance condition. Using the second part of the covariance condition we calculate 
\begin{align*}
\psi(a \otimes \chi_U) \psi(b \otimes \chi_V) &= \phi(a) \pi(\chi_U) \phi(b) \pi(\chi_V) \\
&= \phi(a) \phi(\chi_U \cdot b) \pi(\chi_U) \pi(\chi_V) = \psi((a \otimes \chi_U)(b \otimes \chi_V))
\end{align*}
for $ a,b \in A $ and all compact open bisections $ U,V \subseteq \G $, and it follows that $ \psi $ is a homomorphism. It is straightforward to check that $ \psi $ is essential, satisfying $ \phi = \psi i_A, \pi = \psi i_\G $, and since $ A \rtimes \G = i_A(A) i_\G(\DG) $ it is uniquely determined. 
\end{proof}

\subsection{Anti-Yetter-Drinfeld modules}

Consider the set of loops 
$$
\G_{ad} = \{\alpha\in \G \mid r(\alpha)=s(\alpha)\} 
$$ 
in $ \G $. This is a subgroupoid of  $\G $ with the same objects, often called the isotropy subgroupoid or inertia groupoid. 
Noting that $ \G_{ad} = (s,r)^{-1}(\Delta) $, where 
$ \Delta \subseteq \G^{(0)} \times \G^{(0)} $ is the diagonal, we see that $ \G_{ad} $ is a closed subset of $ \G $, and thus a totally disconnected locally compact space with the subspace topology.

Due to Lemma \ref{lemma:restriction is epimorphism}, a function $ f \in \ccinfty(\G_{ad}) $ can be represented as a linear combination of restrictions to $ \G_{ad} $ of characteristic functions of compact open bisections of $ \G $. 
We will often view the characteristic function $ \chi_U $ of a compact open bisection $ U \subseteq \G $ as an element of $ \ccinfty(\G_{ad}) $ in this way. 

It is straightforward to check that $ \G_{ad} $ is a $ \G $-space with the adjoint action 
$$
\alpha \cdot \beta = \alpha \beta \alpha^{-1}
$$
for $ \alpha \in \G, \beta \in \G_{ad} $, with anchor map $ \pi = r = s: \G_{ad} \rightarrow \G^{(0)} $. According to Lemma \ref{functionsgmodule} we therefore obtain a natural $ \G $-algebra structure on $ \ccinfty(\G_{ad}) $. We will write $ \OG $ for this $ \G $-algebra in the sequel. 

\begin{dfn}
A $ \G $-anti-Yetter-Drinfeld module is a $ \G $-module $ M $ which is also an essential $ \OG $-module such that the module action induces a $ \G $-equivariant linear map $ \OG \otimes_{\ccinfty(\G^{(0)})} M \rightarrow M $. A morphism of $ \G $-anti-Yetter-Drinfeld modules is a $ \G $-equivariant linear map which is also $ \OG $-linear. 
\end{dfn}

The terminology used here is motivated from the theory of quantum groups, compare \cite{VOIGT_equivariantperiodiccyclicquantum}. 
A basic example of a $ \G $-anti-Yetter-Drinfeld module is obtained by considering $ M = \OG \otimes_{\ccinfty(\G^{(0)})} E $ for a $ \G $-module $ E $, with the diagonal action of $ \G $ and the action of $ \OG $ by multiplication on the first tensor factor. 

One can view $ \G $-anti-Yetter-Drinfeld modules equivalently as essential modules over the crossed product $ A(\G) = \OG \rtimes \G $. This observation is a special case of Proposition \ref{crossedproductuniversal}, note that both $ \OG $ and $ \DG $ are subalgebras of the multiplier algebra $ M(A(\G)) $, and composition with the inclusion maps gives the asserted equivalence. We will frequently use this identification between $ \G $-anti-Yetter-Drinfeld modules and $ A(\G) $-modules in the sequel. 

Given a $ \G $-anti-Yetter-Drinfeld module $ M $ our goal is to define a certain canonical automorphism $ T = T_M: M \rightarrow M $. We start with $ M = A(\G) = \OG \otimes_{\ccinfty(\G^{(0)})} \ccinfty(\G) = \ccinfty(\G_{ad} \times_{\pi, r} \G) $, in which case we define $ T $ by the formula 
$$
T(f)(\alpha, \beta) = f(\alpha, \alpha \beta)
$$
for $ f \in \ccinfty(\G_{ad} \times_{\pi, r} \G) $, in a similar way as in the discussion of $ \ccinfty(\G) $-comodules. 

\begin{lemma} \label{Tlemma}
The map $ T: A(\G) \rightarrow A(\G) $ defined above is an isomorphism of $ A(\G) $-bimodules. 
\end{lemma}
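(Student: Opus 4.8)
The plan is to verify directly that the map $T(f)(\alpha,\beta) = f(\alpha,\alpha\beta)$ is well-defined, bijective, and intertwines the left and right $A(\G)$-module structures. First I would check that $T$ is a well-defined linear isomorphism of the underlying vector space $\ccinfty(\G_{ad}\times_{\pi,r}\G)$. This comes from the fact that the map
$$
\tau: \G_{ad}\times_{\pi,r}\G \rightarrow \G_{ad}\times_{\pi,r}\G, \qquad \tau(\alpha,\beta) = (\alpha,\alpha^{-1}\beta)
$$
is a homeomorphism: indeed $\pi(\alpha) = s(\alpha) = r(\alpha)$ for $\alpha \in \G_{ad}$, so $r(\alpha^{-1}\beta) = s(\alpha) = r(\alpha) = \pi(\alpha)$ shows $\tau$ lands in the stated fibre product, and $\tau$ is its own inverse up to replacing $\alpha$ by $\alpha^{-1}$; being continuous with continuous inverse, it is a homeomorphism. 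Then $T = \tau^*$ is the induced isomorphism on $\ccinfty$, using Proposition \ref{prop:isomorphisms ccinfty} to identify $\ccinfty(\G_{ad}\times_{\pi,r}\G)$ appropriately.

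Next I would identify the two bimodule structures on $A(\G)$ explicitly in terms of characteristic functions of compact open bisections. The left $\OG$-action and the $\DG$-action combine via the crossed product multiplication, and since $A(\G) = i_{\OG}(\OG) i_{\G}(\DG)$ with $\OG, \DG \subseteq M(A(\G))$, it suffices to compute how $T$ interacts with left and right multiplication by $\chi_V$ for $V \subseteq \G_{ad}$ the restriction of a compact open bisection, and by $\chi_U$ for $U$ a compact open bisection of $\G$ sitting inside $\DG$. The heart of the computation is the crossed product relation
$$
(\chi_U \cdot a) \otimes \chi_U = \chi_U \otimes (\text{appropriate element}),
$$
i.e. the covariance identity from the definition preceding Proposition \ref{crossedproductuniversal}, together with Lemma \ref{lemma:how diagonal action works} describing the diagonal action on tensor products. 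I would evaluate $T$ on elements $\chi_W \otimes \chi_U$ (thinking of these as characteristic functions of subsets of $\G_{ad}\times_{\pi,r}\G$) and check that $T$ takes left multiplication by $i_{\G}(\chi_U)$ to right multiplication by $i_{\G}(\chi_U)$ up to the $\OG$-part, and is $\OG$-linear on the nose — mirroring the fact, already established in the discussion of $\ccinfty(\G)$-comodules, that the analogous canonical map $T: \DG \to \DG$ is right $\DG$-linear and $\ccinfty(\G)$-linear in the pointwise factor.

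Concretely, I would show: (1) $T$ is left $\OG$-linear, where $\OG$ acts on the first tensor factor — this is immediate from the formula $T(f)(\alpha,\beta) = f(\alpha,\alpha\beta)$ since the first coordinate is untouched; (2) $T$ is right $\OG$-linear for the appropriate right action coming through the crossed product, using the adjoint action $\alpha\cdot\gamma = \alpha\gamma\alpha^{-1}$ to move the $\OG$-factor past; and (3) $T$ intertwines the left and right $\DG$-actions, which is the genuine content and uses the groupoid composition law $r(\alpha\beta) = r(\alpha)$ for $\alpha\in\G_{ad}$ together with the convolution formula. The main obstacle I anticipate is bookkeeping: carefully matching the various balanced-tensor-product identifications and the fibre-product descriptions so that "left multiplication" and "right multiplication" are unambiguous, and checking that the formula for $T$ respects all the balancing relations defining $A(\G) = \OG \otimes_{\ccinfty(\G^{(0)})}\DG$. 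Once $T$ is known to be a well-defined linear isomorphism intertwining both one-sided actions, it is automatically an isomorphism of $A(\G)$-bimodules, since the bimodule structure is generated by these actions. I would also remark that bijectivity can alternatively be seen by exhibiting the explicit inverse $T^{-1}(f)(\alpha,\beta) = f(\alpha,\alpha^{-1}\beta)$, which is checked by the same one-line computation.
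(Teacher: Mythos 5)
Your overall strategy — exhibit the explicit inverse (equivalently the underlying homeomorphism of $ \G_{ad} \times_{\pi,r} \G $), then reduce bimodule linearity to compatibility with the $ \OG $- and $ \DG $-actions on each side, checked on characteristic functions — is exactly the paper's route, and your treatment of bijectivity and of left/right $ \OG $-linearity (the right action being $ (f \cdot h)(\alpha,\beta) = h(\beta^{-1}\alpha\beta) f(\alpha,\beta) $, compatible with $ T $ because $ (\alpha\beta)^{-1}\alpha(\alpha\beta) = \beta^{-1}\alpha\beta $) is fine.

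However, step (3) as you state it is wrong, and it is the heart of the lemma. You propose to check that ``$ T $ takes left multiplication by $ i_\G(\chi_U) $ to right multiplication by $ i_\G(\chi_U) $ up to the $ \OG $-part.'' No such identity holds (test it already for a finite group: $ T(u_k \cdot F)(g,h) = F(k^{-1}gk, k^{-1}gh) $ while $ (T(F)\cdot u_k)(g,h) = F(g, ghk^{-1}) $), and even if it did, it would not give what the lemma asserts. What must be verified — and what does hold — is that $ T $ commutes \emph{separately} with the left and with the right convolution action of $ \DG $: with $ (g \cdot F)(\alpha,\beta) = \sum_{\gamma \in \G^{r(\alpha)}} g(\gamma) F(\gamma^{-1}\alpha\gamma, \gamma^{-1}\beta) $ and $ (F \cdot g)(\alpha,\beta) = \sum_{\gamma \in \G_{s(\beta)}} F(\alpha, \beta\gamma^{-1}) g(\gamma) $ one computes
$$
(g \cdot T(f))(\alpha,\beta) = \sum_{\gamma \in \G^{r(\alpha)}} g(\gamma)\, f(\gamma^{-1}\alpha\gamma, \gamma^{-1}\alpha\beta) = T(g \cdot f)(\alpha,\beta),
\qquad
(T(f) \cdot g)(\alpha,\beta) = \sum_{\gamma \in \G_{s(\beta)}} f(\alpha, \alpha\beta\gamma^{-1})\, g(\gamma) = T(f \cdot g)(\alpha,\beta),
$$
using $ \gamma^{-1}\alpha\gamma\,\gamma^{-1}\beta = \gamma^{-1}\alpha\beta $ in the first case. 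The analogy you invoke with the comodule-picture map $ T: \ccinfty(\G)\stackrel{r,r}{\otimes}\ccinfty(\G) \rightarrow \ccinfty(\G)\stackrel{s,r}{\otimes}\ccinfty(\G) $ is what misleads you: that map goes between two differently balanced modules and is only one-sidedly $ \DG $-linear, whereas here $ T $ is an endomorphism of $ A(\G) $ and commutes with multiplication on both sides. With the two displayed identities in place of your (3), the rest of your argument (that $ \OG $- and $ \DG $-linearity on each side, since $ A(\G) = \OG \cdot \DG $ inside $ M(A(\G)) $, gives $ A(\G) $-bilinearity) goes through and coincides with the paper's proof.
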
 

\begin{proof}
It is clear that $ T $ is bijective with inverse given by $ T^{-1}(f)(\alpha, \beta) = f(\alpha, \alpha^{-1} \beta) $. The left and right $ \OG $-module structures on $ A(\G) $ are given by 
$$
(h \cdot f)(\alpha, \beta) = h(\alpha) f(\alpha, \beta), \qquad 
(f \cdot h)(\alpha, \beta) = h(\beta^{-1}\alpha \beta) f(\alpha, \beta)
$$
for $ h \in \OG, f \in A(\G) $. We thus obtain 
\begin{align*}
(h \cdot T(f))(\alpha, \beta) &= h(\alpha) f(\alpha, \alpha\beta) 
= (h \cdot f)(\alpha, \alpha \beta) = T(h \cdot f)(\alpha, \beta) 
\end{align*}
and 
\begin{align*}
(T(f) \cdot h)(\alpha, \beta) &= h(\beta^{-1}\alpha \beta) f(\alpha, \alpha\beta) 
= (f \cdot h)(\alpha, \alpha \beta) = T(f \cdot h)(\alpha, \beta), 
\end{align*}
which shows that $ T $ is both left and right $ \OG $-linear. 
For $ g \in \DG $ we have
\begin{align*}
(g \cdot T(f))(\alpha, \beta) &= \sum_{\gamma \in \G^{r(\alpha)}} g(\gamma) f(\gamma^{-1}\alpha\gamma, \gamma^{-1} \alpha \gamma \gamma^{-1}\beta) 
= T(g \cdot f)(\alpha, \beta) 
\end{align*}
and 
\begin{align*}
(T(f) \cdot g)(\alpha, \beta) &= \sum_{\gamma \in \G_{s(\beta)}} 
f(\alpha, \alpha \beta \gamma^{-1}) g(\gamma) 
= T(f \cdot g)(\alpha, \beta), 
\end{align*}
and it follows that $ T $ is left and right $ \DG $-linear. Combining these observations yields the claim. 
\end{proof}

In view of Lemma \ref{Tlemma} we can define $ T_M: M \rightarrow M $ for a $ \G $-anti-Yetter-Drinfeld module $ M $ by 
$$
T_M = m_M(T \otimes \id)m_M^{-1}, 
$$
where $ m_M: A(\G) \otimes_{A(\G)} M \rightarrow M $ is the canonical isomorphism. This defines an automorphism of the $ \G $-anti-Yetter-Drinfeld module $ M $. 

\begin{lemma} \label{automaticcommutation}
Let $ \G $ be an ample groupoid and let $ \phi: M \rightarrow N $ be a morphism of $ \G $-anti-Yetter-Drinfeld modules. Then $ T_N \phi = \phi T_M $. 
\end{lemma}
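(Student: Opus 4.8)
The plan is to exploit the identification, recorded just before the statement, between $\G$-anti-Yetter-Drinfeld modules and essential modules over the crossed product $A(\G) = \OG \rtimes \G$. Under this identification a morphism $\phi: M \to N$ of $\G$-anti-Yetter-Drinfeld modules corresponds precisely to an $A(\G)$-linear map, and $T_M$ is by definition $m_M(T \otimes \id)m_M^{-1}$, where $m_M: A(\G) \otimes_{A(\G)} M \to M$ is the canonical isomorphism and $T: A(\G) \to A(\G)$ is the $A(\G)$-bimodule automorphism of Lemma \ref{Tlemma}. So the whole assertion follows by chasing $\phi$ through these structures.

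First I would observe that the canonical isomorphism $m_M$ is natural: for an $A(\G)$-linear map $\phi: M \to N$ one has $\phi\, m_M = m_N(\id_{A(\G)} \otimes \phi)$, and hence also $m_N^{-1}\phi = (\id_{A(\G)} \otimes \phi)m_M^{-1}$. This is the standard fact that, for essential modules, the multiplication isomorphism $A(\G) \otimes_{A(\G)} (-) \cong \id$ is a natural isomorphism of functors. Next, since $T$ is right $A(\G)$-linear by Lemma \ref{Tlemma}, the map $T \otimes \id_M$ is a well-defined endomorphism of $A(\G) \otimes_{A(\G)} M$, and because $T \otimes \id$ acts on the left tensor factor while $\id_{A(\G)} \otimes \phi$ acts on the right one, the two commute: $(T \otimes \id_N)(\id_{A(\G)} \otimes \phi) = (\id_{A(\G)} \otimes \phi)(T \otimes \id_M)$.

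Putting these together yields the desired identity:
\begin{align*}
T_N \phi &= m_N(T \otimes \id)m_N^{-1}\phi = m_N(T \otimes \id)(\id_{A(\G)} \otimes \phi)m_M^{-1} \\
&= m_N(\id_{A(\G)} \otimes \phi)(T \otimes \id)m_M^{-1} = \phi\, m_M(T \otimes \id)m_M^{-1} = \phi\, T_M.
\end{align*}
I do not expect any genuine obstacle here: the only points requiring (routine) care are the naturality of $m_M$ and the fact that $T \otimes \id$ is well-defined, both of which rest on the bimodule statement of Lemma \ref{Tlemma} and on $M, N$ being essential $A(\G)$-modules. One could alternatively avoid the $A(\G)$-module language and argue directly with the $\OG$- and $\DG$-actions, but the module-theoretic formulation makes the commutation manifest.
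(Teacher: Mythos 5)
Your argument is correct and is essentially the paper's own proof: both use the naturality of the canonical isomorphism $m_M$ (i.e. $\phi\, m_M = m_N(\id \otimes \phi)$) together with the fact that $T \otimes \id$, well-defined by the bimodule property of Lemma \ref{Tlemma}, commutes with $\id \otimes \phi$, and then chain the identities. Your write-up simply spells out the routine points the paper leaves implicit.
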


\begin{proof}
Using the canonical isomorphisms and $ m_N (\id \otimes \phi) = \phi m_M $ we compute
\begin{align*}
T_N \phi &= m_N (T \otimes \id) (\id \otimes \phi) m_M^{-1} 
= m_N (\id \otimes \phi) (T \otimes \id) m_M^{-1} = \phi T_M 
\end{align*}
as required. 
\end{proof}

In calculations it is useful to have an explicit formula for the action of the canonical automorphism. We will only need this for $ \G $-anti-Yetter-Drinfeld modules of the form $ M = \OG \otimes_{\ccinfty(\G^{(0)})} E $ for a $ \G $-module $ E $, in which case we get 
$$
T_M(\chi_U \otimes e) = \chi_U \otimes \chi_{U^{-1}} \cdot e
$$
for any compact open bisection $ U \subseteq \G $ and $ e \in E $.

\section{Equivariant periodic cyclic homology} \label{section:epch}

In this section we define bivariant equivariant periodic cyclic homology with respect to an ample groupoid $ \G $ which is fixed throughout. 

\subsection{Projective systems}

Following Cuntz and Quillen \cite{CUNTZ_QUILLEN_excision}, in the sequel we will not only consider $ \G $-algebras but work in greater generality with pro-$ \G $-algebras. This s convenient when it comes to discussing quasifreeness, even if one is only interested in $ \G $-algebras. Accordingly, we shall consider projective systems of $ \G $-modules 
and $ \G $-anti-Yetter-Drinfeld modules. We briefly discuss these concepts here and fix our notation, and refer 
to \cite{CUNTZ_QUILLEN_excision}, \cite{VOIGT_thesis}, \cite{VOIGT_equivariantperiodiccyclichomology} for more details. 

To an additive category $ \mathcal{C} $ one associates the pro-category $ \pro(\mathcal{C}) $ of projective systems over $ \mathcal{C} $ as follows. A projective system 
over $ \mathcal{C} $ consists of a directed index set $ I $, objects $ V_i $ for all $ i \in I $, and morphisms $ p_{ij}: V_j \rightarrow V_i $ for all $ j \geq i $. These 
morphisms are assumed to satisfy $ p_{ij} p_{jk} = p_{ik} $ if $ k \geq j \geq i $. By definition, the objects of $ \pro(\mathcal{C}) $ are projective systems over $ \mathcal{C} $. 
The space of morphisms in $ \pro(\mathcal{C}) $ between $ (V_i)_{i \in I} $ and $ (W_j)_{j \in J} $ is defined by 
\begin{equation*}
\Mor((V_i),(W_j)) = \varprojlim_{j} \varinjlim_i
\Mor_\mathcal{C}(V_i,W_j), 
\end{equation*}
where the limits are taken in the category of abelian groups. Any object of $ \mathcal{C} $ can be viewed as a constant pro-object, and this gives rise to a fully faithful embedding 
of $ \mathcal{C} $ into $ \pro(\mathcal{C}) $. Moreover the category $ \pro(\mathcal{C}) $ is again additive in a canonical way. 

If we apply these general constructions to the category of $ \G $-modules we obtain the category $ \pro(\G \operatorname{-Mod}) $ of pro-$ \G $-modules.
A morphism in $ \pro(\G \operatorname{-Mod}) $ will be called a $ \G $-equivariant pro-linear
map. Similarly, we have the category of pro-$ \G $-anti-Yetter-Drinfeld modules. 

If $ \mathcal{C} $ is an additive monoidal category with tensor product functor $ \mathcal{C} \times \mathcal{C} \rightarrow \mathcal{C} $ one defines the tensor 
product $ V \otimes W $ of pro-objects $ V = (V_i)_{i \in I} $ and $ W =  (W_j)_{j \in J} $ by
\begin{equation*}
(V_i)_{i \in I} \otimes (W_j)_{j \in J} =
(V_i \otimes W_j)_{(i,j) \in I \times J},
\end{equation*}
equipped with the obvious structure maps. Here $ I \times J $ is ordered using the product ordering. 
With this tensor product the category $ \pro(\mathcal{C}) $ becomes additive monoidal and the embedding functor $ \mathcal{C} \rightarrow \pro(\mathcal{C}) $ is monoidal. The existence of a tensor product in $ \pro(\mathcal{C}) $ yields a natural notion of algebra objects and algebra homo\-morphisms in this category, which we refer to as pro-algebras and their homomor\-phisms. 

According to Proposition \ref{tensorcategory} the category $ \G\operatorname{-Mod} $ is additive monoidal. 
By definition, a pro-$ \G $-algebra is an algebra object in $ \pro(\G \operatorname{-Mod}) $, in the same way as $ \G $-algebras are algebra objects in $ G \operatorname{-Mod} $, compare section \ref{section:dgmodules}. An algebra homo\-morphism $ f: A \rightarrow B $ in $ \pro(\G \operatorname{-Mod}) $ will simply be called a $ \G $-equivariant homomor\-phism. Clearly, every $ \G $-algebra is a pro-$ \G $-algebra in a canonical way. 

Occasionally we will encounter unital pro-$ \G $-algebras. The $ \G $-unitarisation $ A^+ $ of a pro-$ \G $-algebra $ A $ is defined in the same way as for $ \G $-algebras. Similarly, the construction of crossed products for $ \G $-algebras carries over to pro-$ \G $-algebras.

Let $ \mathcal{C} $ be any additive category. An admissible extension in $ \pro(\mathcal{C}) $ consists of objects $ K, E, Q \in \pro(\mathcal{C}) $ and morphisms $ \iota: K \rightarrow E, \pi: E \rightarrow Q $ in $ \pro(\mathcal{C}) $, such that there exist morphisms $ \sigma: Q \rightarrow E, \rho: E \rightarrow K $ in $ \pro(\mathcal{C}) $ satisfying $ \rho \iota = \id, \pi \sigma = \id $ and $ \iota \rho + \sigma \pi = \id $. In other words, we require that $ E $ decomposes into a direct sum of 
$ K $ and $ Q $ in $ \pro(\mathcal{C}) $. We will  write
\begin{equation*}
   \xymatrix{
     K \;\; \ar@{>->}[r]^{\iota} & E \ar@{->>}[r]^{\pi} & Q 
     }
\end{equation*}
or $ 0 \rightarrow K \rightarrow E \rightarrow Q 
\rightarrow 0 $ for an admissible extension. 

Let $ K, E $ and $ Q $ be pro-$ \G $-algebras. An admissible extension of pro-$ \G $-algebras is an admissible extension 
$ 0 \rightarrow K \rightarrow E \rightarrow Q 
\rightarrow 0 $ in $ \pro(\G \operatorname{-Mod}) $ such that $ \iota $ and $ \pi $ are $ \G $-equivariant algebra homomorphisms. In connection with excision we will only need that the underlying objects of $ K,E, Q $ in $ \pro(\ccinfty(\G^{(0)}) \operatorname{-Mod}) $ form an admissible extension, or equivalently, the splitting $ \sigma $ for the homomorphism $ \pi $ is a morphism of pro-$ \ccinfty(\G^{(0)}) $-modules but not necessarily $ \G $-equivariant. 

We will frequently describe morphisms between pro-objects by writing down explicit formulas involving ``elements'' in subsequent sections. This can be justified by observing that these formulas encode identities between abstractly defined mor\-phisms. 

\subsection{Paracomplexes} \label{secpara}

Let us next review the notion of a paracomplex in a para-additive category, compare \cite{VOIGT_equivariantperiodiccyclichomology}.  

\begin{dfn} 
A para-additive category is an additive category $ \mathcal{C} $ 
together with a natural isomorphism $ T $ of the identity functor $ \id: \mathcal{C} \rightarrow \mathcal{C} $.   
\end{dfn}

In other words, we are given invertible morphisms $ T(M): M \rightarrow M $ for all 
objects $ M \in \mathcal{C} $ such that $ \phi  T(M) = T(N) \phi $ for all morphisms $ \phi: M \rightarrow N $. In the sequel we will 
simply write $ T $ instead of $ T(M) $. 

Any additive category is para-additive by setting $ T = \id $. The para-additive categories that are relevant for us are the category of $ \G $-anti-Yetter-Drinfeld modules and its pro-category, compare Lemma \ref{automaticcommutation}. 

\begin{dfn} 
Let $ \mathcal{C} $ be a para-additive category. 
A paracomplex $ C = C_0 \oplus C_1 $ in $ \mathcal{C} $ is a given by objects $ C_0 $ and $ C_1 $ in $ \mathcal{C} $ together with  morphisms $ \partial_0: C_0 \rightarrow C_1 $ and $ \partial_1: C_1 \rightarrow C_0 $ such that
$$ 
\partial^2 = \id - T, 
$$
where the differential $ \partial: C \rightarrow C_1 \oplus C_0 \cong C $ is the composition of $ \partial_0 \oplus \partial_1 $
with the canonical flip map. A chain map $ \phi: C \rightarrow D $ between two paracomplexes is a morphism from $ C $ to $ D $ that commutes with
the differentials.
\end{dfn}

In general it does not make sense to speak about the homology of a paracomplex, but one can give meaning to the statement that two paracomplexes are homotopy equivalent, by using the standard formulas for chain homotopies. 

The paracomplexes we are interested in arise from paramixed complexes in the following sense. 

\begin{dfn} 
Let $ \mathcal{C} $ be a para-additive category. A paramixed complex $ M $ in $ \mathcal{C} $ is a sequence of objects $ M_n $ together with differentials $ b $ of degree $ -1 $ and $ B $ of degree $ + 1 $ satisfying
$ b^2 = 0 $, $ B^2 = 0 $ and
\begin{equation*}
[b,B] = bB + Bb = \id - T.
\end{equation*}
\end{dfn}

One can define Hochschild homology of a paramixed complex in the usual way since the 
Hochschild operator $ b $ satisfies $ b^2 = 0 $. However, we will not study Hochschild homology in this paper and focus entirely on periodic cyclic homology. 

\subsection{Equivariant differential forms}

For a pro-$ \G $-algebra $ A $ we define the space of differential forms over $ A $ by $ \Omega^0_{\G^{(0)}}(A) = A $ and the iterated tensor products over $ \ccinfty(\G^{(0)}) $ given by 
$$
\Omega^n_{\G^{(0)}}(A) = A^+ \otimes_{\ccinfty(\G^{(0)})} A^{\otimes_{\ccinfty(\G^{(0)})} n} \cong A^{\otimes_{\ccinfty(\G^{(0)})} n + 1} \oplus A^{\otimes_{\ccinfty(\G^{(0)})} n}
$$
for all $ n > 0 $, where we recall that $ A^+ $ denotes the $ \G $-unitarisation defined in section \ref{section:dgmodules}. 
We always view $ \Omega^n_{\G^{(0)}}(A) $ as a pro-$ \G $-module with the diagonal action. Elements of $ \Omega^n_{\G^{(0)}}(A) $ contained in the first summand of the above decomposition, that is, tensors of the form $ a^0 \otimes a^1 \otimes \cdots \otimes a^n $ with $ a^0, a^1, \dots, a^n \in A $, will usually be written $ a^0da^1 \cdots da^n $. Similarly, elements in the second summand  will be denoted $ da^1 \cdots da^n $. If we want to cover both cases at the same time we shall write $ \langle a^0 \rangle da^1 \cdots da^n $, following \cite{MEYER_thesis}. 

The pro-$ \G $-module $ \Omega^n_{\G^{(0)}}(A) $ becomes an $ A $-$ A $-bimodule object in $ \pro(\G \operatorname{-Mod}) $ with the left $ A $-module structure 
$$
a \cdot (\langle a^0 \rangle da^1 \cdots da^n) = a\langle a^0 \rangle da^1 \cdots da^n, 
$$ 
and the right $ A $-module determined by the Leibniz rule, that is, 
\begin{align*}
(\langle a^0 \rangle da^1\cdots \, da^n)\cdot a &= \langle a^0 \rangle da^1 \cdots d(a^na) + \sum_{j = 1}^{n - 1}(-1)^{n - j} \langle a^0 \rangle da^1\cdots d(a^ja^{j + 1}) \cdots da^n da \\
&\quad +(-1)^{n} \langle a^0 \rangle a^1da^2\cdots da^nda. 
\end{align*}
We note that the $ A $-bimodule $ \Omega^n_{\G^{(0)}}(A) $ can be identified with the $ n $-fold tensor product of $ \Omega^1_{\G^{(0)}}(A) $ over $ A $ in the category of pro-$ \ccinfty(\G^{(0)}) $-modules. From this description it is easy to see that 
$$
\Omega_{\G^{(0)}}(A) = \bigoplus_{n = 0}^\infty \Omega^n_{\G^{(0)}}(A)
$$
is a pro-$ \G $-algebra in a natural way. 
Let us also define the $ \ccinfty(\G^{(0)}) $-linear operator $ d: \Omega^n_{\G^{(0)}}(A) \rightarrow \Omega^{n+1}_{\G^{(0)}}(A)$ by
$$
d(a^0da^1 \cdots da^n) = da^0 da^1 \cdots da^n, \qquad d(da^1\cdots da^n)=0
$$ 
for $ a^0, a^1, \dots, a^n \in A $. By construction one has $ d^2 = 0 $. 

Next we introduce the $ \G $-equivariant differential forms over $ A $. We set $ \Omega^0_\G(A) = \OG \otimes_{\ccinfty(\G^{(0)})} A $ and 
$$
\Omega^n_\G(A) = \OG \otimes_{\ccinfty(\G^{(0)})} \Omega^n_{\G^{(0)}}(A)
$$ 
for $ n > 0 $, where we recall that $ \OG $ is the $ \G $-algebra of functions on $ \G_{ad} $ with the adjoint action. This becomes a pro-$ \G $-module with the diagonal action, and a pro-$ \OG $-module with the multiplication action on the first tensor factor. These actions turn $ \Omega^n_\G(A) $ into a pro-$ \G $-anti-Yetter-Drinfeld module. We write $ \Omega_\G(A) $ for the direct sum of all $ \Omega^n_\G(A) $ for $ n \geq 0 $. 

We need several operators on equivariant differential forms. Let us define $ d_\G: \Omega^n_\G(A) \rightarrow \Omega^{n + 1}_\G(A) $ by 
$$
d_\G(f\otimes \omega) = f\otimes d\omega,
$$ 
where $ f \in \ccinfty(\G_{ad}) $ and $ \omega \in \Omega^n_{\G^{(0)}}(A) $.
The equivariant Hochschild operator $ b_\G: \Omega^n_\G(A) \rightarrow \Omega^{n - 1}_\G(A) $ is defined by $ b_\G = 0 $ for $ n = 0 $ and 
\begin{align*}
b_\G(f\otimes \omega da) &= (-1)^{n - 1} (f \otimes \omega a - (\id \otimes \mu)(T(f \otimes a) \otimes \omega) 
\end{align*} 
for $ n > 1 $, where $ \mu $ denotes multiplication in $ \Omega_{\G^{(0)}}(A) $ and $ T $ is the canonical map. If $ U \subseteq \G $ is a compact open bisection then we can write this in the form 
\begin{align*}
b_\G(\chi_U \otimes \omega da) &= (-1)^{n - 1} (\chi_U \otimes \omega a - \chi_U \otimes (\chi_{U^{-1}} \cdot a)\omega), 
\end{align*} 
or explicitly, 
\begin{align*}
b_\G(\chi_U \otimes \langle a^0 \rangle da^1&\cdots da^n) = \chi_U \otimes \langle a^0 \rangle a^1da^2\cdots da^n \\
&+ \sum_{j = 1}^{n - 1}(-1)^j \chi_U \otimes \langle a^0 \rangle da^1\cdots d(a^ja^{j+1}) \cdots da^n \\
&+ (-1)^n \chi_U \otimes (\chi_{U^{-1}} \cdot a^n)\langle a^0 \rangle d a^1 \cdots d a^{n-1}
\end{align*}
for $ \langle a^0 \rangle a^1da^2\cdots da^n \in \Omega^n_{\G^{(0)}}(A) $. 
With these formulas at hand one verifies in the same way as in the group equivariant case that $ b_\G^2 = 0 $. 

Starting from $ d_\G $ and $ b_\G $ we define the equivariant Karoubi operator $ \kappa_\G $ by
$$
\kappa_\G = \id - (b_\G d_\G + d_\G b_\G),
$$ 
and the equivariant Connes operator $ B_\G $ by 
$$
B_\G = \sum_{j = 0}^n \kappa_\G^j d_\G
$$  
on $ \Omega^n_{\G^{(0)}}(A) $. Using $ d_\G^2 = 0 $ we see that $ d_\G $ and $ \kappa_{\G} $ commute and that $ B_\G^2 = 0 $.
Explicitly, for $ n > 0 $ and a compact open bisection $ U \subseteq \G $ we obtain 
$$
\kappa_\G(\chi_U \otimes \omega da) = (-1)^{n - 1} \chi_U \otimes (\chi_{U^{-1}} \cdot da) \omega,
$$
and for $ n = 0 $ we get $ \kappa_\G(\chi_U \otimes a) = \chi_U \otimes \chi_{U^{-1}} \cdot a $. For $ B_\G $ one calculates
$$
B_\G(\chi_U \otimes a^0da^1\cdots da^n) = \sum_{i = 0}^n (-1)^{ni} \chi_U \otimes (\chi_{U^{-1}} \cdot (da^{n+1-i} \cdots da^n)) da^0 \cdots da^{n - i}.
$$
The canonical operator $ T $ on $ \Omega_\G(A) $ is given by 
$$
T(\chi_U \otimes \omega) = \chi_U \otimes \chi_{U^{-1}} \cdot \omega,  
$$
compare the discussion at the end of section \ref{section:dgmodules}. All the operators introduced above are morphisms of pro-$ \G $-anti-Yetter-Drinfeld modules, and therefore commute with $ T $ by Lemma \ref{automaticcommutation}. 

The following formulas are verified in a similar way as in the group case, see \cite[Lemma 7.2]{VOIGT_equivariantperiodiccyclichomology}.

\begin{lemma} \label{lemma: properties of bounderies}
The following relations hold on $ \Omega^n_\G(A) $. 
\begin{enumerate}
\item $ \kappa_\G^{n + 1} d_\G = T d_\G $
\item $ \kappa_\G^n = T + b_{\G} \kappa_\G^n d_\G $
\item $ b_\G \kappa_\G^n = b_\G T $
\item $ \kappa_\G^{n + 1} = (\id - d_\G b_\G) T $
\item $ (\kappa_\G^{n + 1} - T)(\kappa_\G^n - T) = 0 $
\item $ B_\G b_\G + b_\G B_\G = \id - T. $
\end{enumerate}
\end{lemma}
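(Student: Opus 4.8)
The plan is to verify all six relations by direct computation with the explicit formulas for $ d_\G $, $ b_\G $, $ \kappa_\G $ and $ B_\G $ recorded above, using only $ d_\G^2 = 0 $, $ b_\G^2 = 0 $ and the defining identity $ \kappa_\G = \id - (b_\G d_\G + d_\G b_\G) $. Since $ \Omega^n_\G(A) $ is spanned by elements $ \chi_U \otimes \omega $ with $ U $ a compact open bisection of $ \G $ and $ \omega \in \Omega^n_{\G^{(0)}}(A) $, it is enough to check each identity on such elements. Inspecting the formulas, the only new feature compared with the non-equivariant setting is the twist $ \chi_U \otimes \omega \mapsto \chi_U \otimes \chi_{U^{-1}} \cdot \omega $, which is precisely the operator $ T $; so each computation is a transcription of the corresponding classical one in \cite{CUNTZ_QUILLEN_nonsingularity} with $ T $-corrections inserted in the appropriate places, exactly as in the group-equivariant case \cite[Lemma 7.2]{VOIGT_equivariantperiodiccyclichomology}. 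Since $ d_\G, b_\G, \kappa_\G, B_\G $ are morphisms of pro-$ \G $-anti-Yetter-Drinfeld modules, they commute with $ T $ by Lemma \ref{automaticcommutation}, and we use this freely to move $ T $ past the other operators. In particular $ d_\G \kappa_\G = \kappa_\G d_\G $ and $ b_\G \kappa_\G = \kappa_\G b_\G $, which follow at once from $ d_\G^2 = 0 $ and $ b_\G^2 = 0 $.

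I would treat $ (1) $ first, as the remaining relations are essentially formal consequences of it together with the definitions. On an exact form $ \chi_U \otimes da^0 da^1 \cdots da^n $ of degree $ n + 1 $, which is a typical element in the image of $ d_\G $ on $ \Omega^n_\G(A) $, the operator $ \kappa_\G $ acts as a cyclic rotation of the $ n + 1 $ differentials $ da^0, \dots, da^n $, with the twist $ \chi_{U^{-1}} \cdot (-) $ applied to the factor transported across the end. Iterating $ n + 1 $ times restores the original ordering of the differentials but leaves one global twist, which is precisely $ T d_\G $. This is the same combinatorial computation as in the non-equivariant case, the only change being to keep track of the twist and the signs, and the latter cancel after $ n + 1 $ steps.

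From $ (1) $ one deduces $ (4) $: writing a general element of $ \Omega^n_\G(A) $ as a combination of an exact form and a form of type $ \chi_U \otimes a^0 da^1 \cdots da^n $, one checks the identity on exact forms directly from $ (1) $ applied one degree lower together with $ d_\G^2 = 0 $, and on the remaining forms by the classical manipulation with $ \kappa_\G = \id - b_\G d_\G - d_\G b_\G $. Relation $ (3) $ then follows by applying $ (4) $ on $ \Omega^{n - 1}_\G(A) $, composing with $ b_\G $ on the left, commuting $ T $ through, and using $ b_\G^2 = 0 $. Relation $ (2) $ is equivalent, after rewriting $ \id - b_\G d_\G = \kappa_\G + d_\G b_\G $, to $ \kappa_\G^{n + 1} + d_\G b_\G \kappa_\G^n = T $ on $ \Omega^n_\G(A) $, and this is immediate from $ (3) $ and $ (4) $. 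Finally $ (5) $ is purely formal: by $ (4) $ one has $ \kappa_\G^{n + 1} - T = - d_\G b_\G T $, whence $ (\kappa_\G^{n + 1} - T)(\kappa_\G^n - T) = - d_\G b_\G T(\kappa_\G^n - T) = - d_\G (b_\G \kappa_\G^n - b_\G T) T = 0 $ by $ (3) $.

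For $ (6) $ one substitutes $ b_\G d_\G + d_\G b_\G = \id - \kappa_\G $ into $ b_\G B_\G + B_\G b_\G $ and uses the telescoping $ \sum_{j = 0}^{n - 1} \kappa_\G^j(\id - \kappa_\G) = \id - \kappa_\G^n $ together with $ b_\G \kappa_\G = \kappa_\G b_\G $ and $ d_\G \kappa_\G = \kappa_\G d_\G $ to align the partial power-sums of $ \kappa_\G $ living in degrees $ n - 1 $, $ n $ and $ n + 1 $; this reduces $ b_\G B_\G + B_\G b_\G $ to $ (\id - \kappa_\G^n) + \kappa_\G^n b_\G d_\G $ on $ \Omega^n_\G(A) $, and relation $ (2) $, via $ \kappa_\G^n b_\G d_\G = b_\G \kappa_\G^n d_\G = \kappa_\G^n - T $, turns this into $ \id - T $. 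The main obstacle throughout is the degree bookkeeping: one must be scrupulous about which Karoubi operator, and which of its power-sums, acts on which $ \Omega^m_\G(A) $, and about the exact position of the $ T $-twist, since these are precisely what distinguishes the equivariant computation from the classical one. None of the steps is conceptually difficult, and the whole argument is a faithful adaptation of \cite[Lemma 7.2]{VOIGT_equivariantperiodiccyclichomology}.
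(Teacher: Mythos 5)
Your proposal is correct and takes essentially the same route as the paper, which offers no independent argument but verifies the relations ``in a similar way as in the group case'' (\cite[Lemma 7.2]{VOIGT_equivariantperiodiccyclichomology}) --- precisely the twisted transcription of the Cuntz--Quillen computation that you carry out, and your formal derivations of (2), (3), (5), (6) from (1) and (4) all check out, including the degree bookkeeping. The only terse point is (4) on forms $ \chi_U \otimes a^0 da^1 \cdots da^n $, but the ``classical manipulation'' you invoke does go through: the clean rotation formula $ \kappa_\G^j(\chi_U \otimes a^0 da^1 \cdots da^n) = (-1)^{j(n-1)} \chi_U \otimes d(\chi_{U^{-1}} \cdot a^{n-j+1}) \cdots d(\chi_{U^{-1}} \cdot a^n)\, a^0 da^1 \cdots da^{n-j} $ for $ j \leq n $, followed by one further application of $ \kappa_\G $ and the Leibniz rule, yields exactly $ (\id - d_\G b_\G)T $.
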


Note that the final formula of Lemma \ref{lemma: properties of bounderies} can equivalently be phrased as saying that $ \Omega_\G(A) $ together with the operators $ b_\G, B_\G $ defines a paramixed complex in the category of pro-$ \G $-anti-Yetter-Drinfeld modules. 

\subsection{Quasifree pro-$ \G $-algebras} 

Let us next discuss the main definitions and facts related to quasifreeness. For background information and more details we refer 
to \cite{CUNTZ_QUILLEN_algebraextensions}, \cite{MEYER_thesis}, \cite{VOIGT_thesis}, \cite{VOIGT_equivariantperiodiccyclichomology}. 

We endow the pro-$ \G $-module $ \Omega_{\G^{(0)}}(A) $ of differential forms over a pro-$ \G $-algebra $ A $ with the Fedosov product, defined by  
$$ 
\omega \circ \eta = \omega \eta -(-1)^m d\omega d\eta
$$ 
for forms $ \omega \in \Omega_{\G^{(0)}}^m(A), \eta \in \Omega_{\G^{(0)}}^n(A) $. By definition, the periodic tensor algebra $ \T A $  of $ A $ is the pro-$ \G $-algebra obtained as projective limit of the projective system $ \left(\T A /(\J A)^n\right)_{n \in \mathbb{N}} $, 
where $ T A /(\J A)^n = A \oplus \Omega_{\G^{(0)}}^2(A) \oplus \cdots \oplus \Omega_{\G^{(0)}}^{2n}(A) $ and the structure maps are the canonical projections. 
Similarly one defines the pro-$ \G $-algebra $ \J A $ as the projective limit of the projective 
system $ (\J A /(\J A)^n)_{n \in \mathbb{N}} $, where $ \J A /(\J A)^n = \Omega_{\G^{(0)}}^2(A) \oplus \cdots \oplus \Omega_{\G^{(0)}}^{2n}(A) $.

The natural projection from $ \T A$ to the first term of the projective system gives a $ \G $-equivariant homomorphism $ \tau_A: \T A \rightarrow A $. Moreover, for 
every $ n \in \mathbb{N} $ we have natural inclusions $ A \rightarrow A \oplus \Omega_{\G^{(0)}}^2(A) \oplus \cdots \oplus \Omega_{\G^{(0)}}^{2 n}(A) $, and these maps 
assemble to a $ \G $-equivariant pro-linear section $ \sigma_A $ for $ \tau_A $.
It follows that we obtain an admissible extension
\begin{equation*} \label{eqext}
   \xymatrix{
     \J A\;\; \ar@{>->}[r]^{\iota_A} & \T A \ar@{->>}[r]^{\tau_A} & A 
     }
\end{equation*}

If $ B $ is a pro-$ \G $-algebra we shall write $ m^n_B: B^{\otimes_{\ccinfty(\G^{(0)})} n} \rightarrow B $ for the iterated multiplication map, defined on the $ n $-fold balanced tensor product over $ \ccinfty(\G^{(0)}) $. 
A $ \G $-equivariant pro-linear map $ l: A \rightarrow B $ between pro-$ \G $-algebras $ A $ and $ B $ is called a $ \G $-lonilcur if its curvature $ \omega_l: A \otimes_{\ccinfty(\G^{(0)})} A \rightarrow B $, defined by $ \omega_l(a, b) = l(a b) - l(a) l(b) $, is locally nilpotent in the sense that for every $ \G $-equivariant pro-linear map $ f: B \rightarrow C $ with constant range $ C $ there exists $ n \in \mathbb{N} $ such that $ f m_B^n \omega_l^{\otimes_{\ccinfty(\G^{(0)})} n} = 0 $.

Every $ \G $-equivariant homomorphism $ l: A \rightarrow B $ between pro-$ \G $-algebras is a lonilcur since $ \omega_l(a, b) = l(a b) - l(a) l(b) = 0 $. 
More importantly, the splitting map $ \sigma_A: A \rightarrow \T A$ is a $ \G $-lonilcur for any $ A $, and the following result is verified in the same way as \cite[Proposition 3.3]{VOIGT_thesis}. 

\begin{prop} \label{prop: universal property of tensor algebra}
Let $ A,B $ be pro-$ \G $-algebras. The pro-$ \G $-algebra $ \T A $ and the $ \G $- equivariant pro-linear map $ \sigma_A: A \rightarrow \T A $ satisfy the following universal property. If $ l: A \rightarrow B $ is a $ \G $-lonilcur there exists a unique $ \G $-equivariant homomorphism $ [[l]]: \T A \rightarrow B $ such that the diagram
\begin{center}
\begin{tikzcd}
A\arrow[r,"\sigma_A"]\arrow[rd,"l"]& \T A \arrow[d,"{[[l]]}"]\\
& B
\end{tikzcd}
\end{center}
is commutative.
\end{prop}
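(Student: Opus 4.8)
The plan is to follow the Cuntz--Quillen construction of the periodic tensor algebra, in the equivariant form carried out in \cite[Proposition 3.3]{VOIGT_thesis}. The starting point is the identification of $ \T A $ with a completion of the algebraic tensor algebra over $ A $: the Fedosov product turns $ \bigoplus_{k \geq 0} \Omega^{2k}_{\G^{(0)}}(A) $ into a pro-$ \G $-algebra whose underlying $ \G $-module is the algebraic tensor algebra on $ A $, with $ \sigma_A $ corresponding to the inclusion $ A = \Omega^0_{\G^{(0)}}(A) \hookrightarrow \T A $ and the curvature elements $ da\, db = \sigma_A(ab) - \sigma_A(a) \circ \sigma_A(b) $ generating $ \J A $, while $ \T A = \varprojlim_n \T A/(\J A)^n $ is the $ \J A $-adic completion. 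Granting this, the universal property follows from two facts: \emph{(i)} every $ \G $-equivariant pro-linear map $ A \to B $ extends uniquely to a $ \G $-equivariant homomorphism out of the algebraic tensor algebra, and \emph{(ii)} such an extension factors through the completion $ \T A $ precisely when the original map is a $ \G $-lonilcur.

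For \emph{(i)}, given $ l $ I would define $ \G $-equivariant pro-linear maps $ \phi_k : \Omega^{2k}_{\G^{(0)}}(A) \to B $ by the curvature formula
$$
\phi_k(\langle a^0 \rangle\, da^1 \cdots da^{2k}) = \langle l(a^0) \rangle\, \omega_l(a^1, a^2) \cdots \omega_l(a^{2k-1}, a^{2k}),
$$
where $ \omega_l(a,b) = l(ab) - l(a)l(b) $ and $ \langle l(a^0) \rangle $ is read off via the induced map $ A^+ \to M(B) $ on the unital summand, and set $ [[l]] = \sum_k \phi_k $. The substance of \emph{(i)} is that $ [[l]] $ is multiplicative for the Fedosov product and is the only homomorphism restricting to $ l $ on $ A $; both are identities involving only the associative products of $ A $ and $ B $ and the Leibniz rule, and therefore go through verbatim as in \cite{CUNTZ_QUILLEN_algebraextensions} and \cite{VOIGT_thesis}. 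Here $ \G $-equivariance of $ [[l]] $ is automatic: $ \phi_k $ is assembled from the $ \G $-equivariant maps $ l $, $ \omega_l $, the multiplication of $ B $, and the diagonal $ \G $-action on $ \Omega^{2k}_{\G^{(0)}}(A) $, and the manipulations with ``elements'' encode honest morphisms in $ \pro(\G\operatorname{-Mod}) $.

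For \emph{(ii)}, note that, after the obvious regrouping $ A^{\otimes_{\ccinfty(\G^{(0)})} 2k} \cong (A \otimes_{\ccinfty(\G^{(0)})} A)^{\otimes_{\ccinfty(\G^{(0)})} k} $, the map $ \phi_k $ is an iterated multiplication in $ B $ precomposed with $ l \otimes \omega_l^{\otimes_{\ccinfty(\G^{(0)})} k} $. Thus for any $ \G $-equivariant pro-linear $ f : B \to C $ with constant range, local nilpotency of $ \omega_l $ yields $ N $ with $ f \phi_k = 0 $ for all $ k \geq N $, so that $ f [[l]] $ factors through the projection $ \T A \to \T A/(\J A)^N $. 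Since a morphism of $ \pro(\G\operatorname{-Mod}) $ out of $ \varprojlim_n \T A/(\J A)^n $ into a pro-$ \G $-algebra is determined componentwise by such factorisations, $ [[l]] $ descends to a $ \G $-equivariant homomorphism $ \T A \to B $, which is an algebra homomorphism since multiplicativity already holds at the level of forms. Uniqueness is immediate, because $ \sigma_A(A) $ generates $ \T A $ as a pro-$ \G $-algebra by the relation $ da\, db = \sigma_A(ab) - \sigma_A(a) \circ \sigma_A(b) $ together with an induction on form degree.

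I expect the only real work to be the Fedosov multiplicativity identity $ [[l]](\omega \circ \eta) = [[l]](\omega)\,[[l]](\eta) $ in \emph{(i)}: this is the computational heart of the Cuntz--Quillen argument, but it is insensitive to the equivariant and pro refinements and so transfers unchanged. The genuinely new point is merely to check that every map occurring is a morphism in $ \pro(\G\operatorname{-Mod}) $ rather than a bare linear map, which is routine given the monoidal structure of $ \G\operatorname{-Mod} $ established in Proposition \ref{tensorcategory}.
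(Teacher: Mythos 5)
Your proposal is correct and follows essentially the same route as the paper, which itself gives no details and simply asserts that the statement is verified as in \cite[Proposition 3.3]{VOIGT_thesis}: namely the Cuntz--Quillen argument of defining $[[l]]$ on $\Omega^{2k}_{\G^{(0)}}(A)$ by the curvature formula, checking Fedosov multiplicativity, using local nilpotency of $\omega_l$ to see that each composition with a constant-range map factors through a finite stage $\T A/(\J A)^N$, and deducing uniqueness from the relation $da\,db = \sigma_A(ab)-\sigma_A(a)\circ\sigma_A(b)$. The only point you gloss over is that killing $f\phi_k$ for large $k$ needs the vanishing to persist after multiplying curvature products on the left by $l(a^0)$ and lower curvature factors, which in the element-free pro-setting follows by first factoring the constant-range map $f\circ m_B$ through a finite level $B_k\otimes B_k$ and then applying the lonilcur condition to the canonical map $B\to B_k$; this is routine and consistent with your componentwise argument.
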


The periodic tensor algebra plays a central role in the definition of quasifree pro-$ \G $-algebras. 

\begin{dfn}
A pro-$ \G $-algebra $ R $ is called quasifree if there exists a $ \G $-equivariant splitting homomorphism $ R \rightarrow \T R $ for the canonical projection $ \tau_R $.  
\end{dfn}

Let us list without proof a number of equivalent characterisations of the class of quasifree pro-$ \G $-algebras.

\begin{thm}\label{thm: characterization of quasi-free algebras}
Let $ R $ be a pro-$ \G $-algebra. Then the following conditions are equivalent:
\begin{enumerate}
\item $ R $ is quasifree.
\item There exists a $ \G $-equivariant pro-linear map $\phi: R \rightarrow \Omega_{\G^{(0)}}^2(R) $ satisfying
$$
\phi(x y)=\phi(x) y+x \phi(y)-d x d y
$$
for all  $x, y \in R $.
\item There exists a $ \G $-equivariant pro-linear map $ \nabla: \Omega_{\G^{(0)}}^1(R) \rightarrow \Omega_{\G^{(0)}}^2(R) $ satisfying
$$
\nabla(x \omega) = x \nabla(\omega), \quad \nabla(\omega x) = \nabla(\omega) x-\omega d x
$$
for all $ x \in R $ and $ \omega \in \Omega_{\G^{(0)}}^1(R) $.
\item The $ R $-bimodule $ \Omega_{\G^{(0)}}^1(R) $ is projective in $ \pro(\G \operatorname{-Mod}) $.
\end{enumerate}
\end{thm}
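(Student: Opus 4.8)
The plan is to follow the strategy of Cuntz and Quillen \cite{CUNTZ_QUILLEN_algebraextensions}, \cite{CUNTZ_QUILLEN_nonsingularity}, adapted to the equivariant setting as in \cite{VOIGT_thesis}, with the base ring $ \mathbb{C} $ replaced throughout by $ \ccinfty(\G^{(0)}) $ and all categorical notions (balanced tensor products, bimodules, projectivity) interpreted inside $ \pro(\G\operatorname{-Mod}) $. I would prove the cyclic chain of implications $ (1) \Rightarrow (2) \Rightarrow (3) \Rightarrow (4) \Rightarrow (1) $. The recurring theme is that the underlying algebra is exactly the non-equivariant computation, so the only genuinely new input is to check that every map produced along the way -- $ \phi $, the graded connection $ \nabla $, the bimodule splitting, and the splitting homomorphism $ R \to \T R $ -- is $ \G $-equivariant and pro-linear. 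This will always be automatic: such a map is assembled from the structure maps of $ \Omega_{\G^{(0)}}(R) $, $ \T R $ and $ \J R $, all of which are morphisms in $ \pro(\G\operatorname{-Mod}) $ by construction, together with the $ \G $-equivariant datum furnished by the hypothesis; one also uses that a $ \G $-equivariant (that is, $ \DG $-linear) map is in particular $ \ccinfty(\G^{(0)}) $-linear, so that the formulas below are compatible with the balanced tensor products.

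For $ (1) \Rightarrow (2) $: let $ v \colon R \to \T R $ be a $ \G $-equivariant splitting homomorphism of $ \tau_R $. Composing $ v $ with the canonical projection onto the quotient $ R \oplus \Omega^2_{\G^{(0)}}(R) $ of $ \T R $ (carrying the Fedosov product) gives a $ \G $-equivariant homomorphism of the form $ x \mapsto (x, \phi(x)) $, since $ v $ splits $ \tau_R $; here $ \phi \colon R \to \Omega^2_{\G^{(0)}}(R) $ is $ \G $-equivariant and pro-linear. Writing out that $ (\id, \phi) $ is multiplicative -- in the Fedosov product the degree-raising contributions beyond $ \Omega^2_{\G^{(0)}}(R) $ are killed -- yields precisely $ \phi(xy) = \phi(x)y + x\phi(y) - dx\,dy $.

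For $ (2) \Rightarrow (3) $: from $ \phi $ define $ \nabla \colon \Omega^1_{\G^{(0)}}(R) \to \Omega^2_{\G^{(0)}}(R) $ on generators by $ \nabla(\langle x \rangle\, dy) = \langle x \rangle\,\phi(y) - d\langle x \rangle\, dy $; using the cocycle identity for $ \phi $, the Leibniz rule, and $ \ccinfty(\G^{(0)}) $-linearity of $ \phi $, one checks that $ \nabla $ is well defined on the balanced tensor product, is left $ R $-linear, and satisfies $ \nabla(\omega x) = \nabla(\omega)x - \omega\, dx $ -- this is the same computation as in \cite{VOIGT_thesis}, and $ \phi = \nabla\, d $ recovers $ (2) $. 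For $ (3) \Rightarrow (4) $: a map $ \nabla $ as in $ (3) $ is exactly an $ R $-bimodule splitting, in $ \pro(\G\operatorname{-Mod}) $, of the canonical surjection onto $ \Omega^1_{\G^{(0)}}(R) $ from the free $ R $-bimodule object it generates, the Leibniz twist in the right-linearity of $ \nabla $ recording the right module structure; existence of such a splitting exhibits $ \Omega^1_{\G^{(0)}}(R) $ as a direct summand of a projective bimodule object, hence as projective in $ \pro(\G\operatorname{-Mod}) $. Conversely, projectivity of $ \Omega^1_{\G^{(0)}}(R) $ supplies such a splitting and hence, via the preceding remark, a map $ \phi $ as in $ (2) $.

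Finally $ (4) \Rightarrow (1) $: this is where the real work lies, and I expect it to be the main obstacle. Having recovered from $ (4) $ a $ \G $-equivariant map $ \phi $ as in $ (2) $, the task is to promote this first-order datum to a $ \G $-equivariant algebra homomorphism $ v \colon R \to \T R = \varprojlim_n \T R/(\J R)^n $ splitting $ \tau_R $. I would do this by the explicit recursion of Cuntz--Quillen: set $ \phi_0 = \id $, $ \phi_1 = \phi $, define $ \G $-equivariant pro-linear maps $ \phi_n \colon R \to \Omega^{2n}_{\G^{(0)}}(R) $ by a recursion built from the Fedosov products of the lower $ \phi_j $, prove by induction on $ n $ -- using the cocycle identity -- that the partial sums are multiplicative into $ \T R/(\J R)^n $, and pass to the inverse limit to obtain $ v = \sum_n \phi_n $. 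The point requiring care in the groupoid setting is that this recursion must be carried out inside the balanced tensor powers over $ \ccinfty(\G^{(0)}) $ and must produce morphisms of pro-$ \G $-modules at every stage; this follows from the universal property of $ \T R $ (Proposition \ref{prop: universal property of tensor algebra}) together with the monoidal structure on $ \G\operatorname{-Mod} $ (Proposition \ref{tensorcategory}), but it is the step that demands the most attention. Once $ v $ is available, condition $ (1) $ holds by definition, closing the cycle.
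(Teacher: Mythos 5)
The paper states this theorem \emph{without proof}, deferring to the Cuntz--Quillen arguments and their group-equivariant adaptation in \cite{VOIGT_thesis}, \cite{VOIGT_equivariantperiodiccyclichomology}; your plan -- transport the standard cyclic chain of implications, with $\mathbb{C}$ replaced by $\ccinfty(\G^{(0)})$ and all constructions carried out in $\pro(\G\operatorname{-Mod})$, equivariance being automatic because every map is built from morphisms of pro-$\G$-modules -- is exactly the intended argument, and the steps $(1)\Rightarrow(2)$, $(3)\Leftrightarrow(4)$ and the level-wise recursion for $(4)\Rightarrow(1)$ are described correctly in outline.

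There is, however, a concrete error in your $(2)\Rightarrow(3)$ step. The formula $\nabla(\langle x\rangle\,dy)=\langle x\rangle\,\phi(y)-d\langle x\rangle\,dy$ does \emph{not} satisfy the conditions of (3): for $x\in R$ and $\omega=x_0\,dx_1$ one gets
$$
\nabla(x\cdot\omega)-x\cdot\nabla(\omega)=\bigl(x x_0\phi(x_1)-d(xx_0)\,dx_1\bigr)-\bigl(xx_0\phi(x_1)-x\,dx_0\,dx_1\bigr)=-(dx)\,x_0\,dx_1\neq 0,
$$
so left $R$-linearity fails; the extra term in fact turns $\nabla$ into a connection of the opposite handedness (right $R$-linear with a left Leibniz defect), which is not what condition (3) asks for. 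The correct and simpler definition is $\nabla(\langle x\rangle\,dy)=\langle x\rangle\,\phi(y)$: this is balanced over $\ccinfty(\G^{(0)})$ because $\phi$, being $\G$-equivariant, is in particular $\ccinfty(\G^{(0)})$-linear, it is manifestly left $R$-linear, and the cocycle identity for $\phi$ gives $\nabla(\omega x)=\nabla(\omega)x-\omega\,dx$; moreover $\nabla(dy)=\phi(y)$ recovers (2). One smaller point on $(4)\Rightarrow(1)$: the universal property of $\T R$ (Proposition \ref{prop: universal property of tensor algebra}) produces homomorphisms \emph{out of} $\T R$, so it cannot by itself furnish the splitting $v\colon R\rightarrow\T R$; the construction has to proceed, as you also indicate, level-wise through the tower $\T R/(\J R)^n$, lifting step by step using $\phi$ (each stage being a square-zero extension with kernel $\Omega^{2n}_{\G^{(0)}}(R)$), with equivariance of each $\phi_n$ automatic since it is assembled from $\phi$ and the multiplication maps, all of which are morphisms in $\pro(\G\operatorname{-Mod})$.
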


We note that the list in Theorem \ref{thm: characterization of quasi-free algebras} can be extended with further equivalent characterisations of quasifreeness in the same way as 
in \cite[Theorem 6.5]{VOIGT_equivariantperiodiccyclichomology}. 

\begin{lemma} \label{trivialquasifree}
The trivial $ \G $-algebra $ \ccinfty(\G^{(0)}) $ is quasifree.
\end{lemma}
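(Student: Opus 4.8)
The plan is to verify characterisation (2) in Theorem \ref{thm: characterization of quasi-free algebras}, since on the trivial $ \G $-algebra $ \ccinfty(\G^{(0)}) $ all products are governed by the commutative pointwise multiplication, which should force the obstruction cocycle $ \phi $ to be the zero map. First I would compute $ \Omega^1_{\G^{(0)}}(\ccinfty(\G^{(0)})) $ and $ \Omega^2_{\G^{(0)}}(\ccinfty(\G^{(0)})) $ explicitly. Writing $ B = \ccinfty(\G^{(0)}) $, the balanced tensor product $ B \otimes_B B $ collapses, so $ \Omega^1_{\G^{(0)}}(B) = B^+ \otimes_B B \cong B $ and, more carefully, the universal $ 1 $-forms are spanned by $ f\,dg $ with $ f,g \in B $ subject to the Leibniz relation $ d(fg) = f\,dg + g\,df $; since $ B $ is commutative with local units this is a very small module. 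The key point is that one can take $ \phi = 0 $: the required identity $ \phi(xy) = \phi(x)y + x\phi(y) - dx\,dy $ reduces to showing that $ dx\,dy = 0 $ in $ \Omega^2_{\G^{(0)}}(B) $ for all $ x,y \in B $.

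So the heart of the argument is the claim that $ df\,dg = 0 $ in $ \Omega^2_{\G^{(0)}}(\ccinfty(\G^{(0)})) $. I would prove this by reducing to characteristic functions, using Lemma \ref{lemma:funct.local.disc.tensor}: it suffices to show $ d\chi_U\,d\chi_V = 0 $ for compact open $ U, V \subseteq \G^{(0)} $. Since $ \chi_U \chi_U = \chi_U $, the Leibniz rule gives $ d\chi_U = d(\chi_U \chi_U) = \chi_U\,d\chi_U + \chi_U\,d\chi_U = 2\chi_U\,d\chi_U $, hence $ \chi_U\,d\chi_U = \tfrac12\,d\chi_U $ and therefore $ (1-\chi_U)\,d\chi_U = -\tfrac12\,d\chi_U $; iterating, $ \chi_U \cdot d\chi_U = \tfrac12 d\chi_U $ while also $ \chi_U\cdot(\chi_U\,d\chi_U)=\chi_U\,d\chi_U $, forcing $ \tfrac14 d\chi_U = \tfrac12 d\chi_U $, i.e. $ d\chi_U = 0 $ already in $ \Omega^1_{\G^{(0)}}(B) $. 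Consequently $ d\chi_U\,d\chi_V = 0 $ trivially, and by bilinearity $ df\,dg = 0 $ for all $ f,g\in B $. (Alternatively one observes directly that $ \Omega^1_{\G^{(0)}}(B) = 0 $ for a commutative algebra with local units over a field, which makes condition (4) of Theorem \ref{thm: characterization of quasi-free algebras} immediate as well.)

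With $ df\,dg = 0 $ established, I would conclude: the zero map $ \phi: B \rightarrow \Omega^2_{\G^{(0)}}(B) $ is $ \G $-equivariant and pro-linear and satisfies the defining identity of Theorem \ref{thm: characterization of quasi-free algebras}(2), whence $ \ccinfty(\G^{(0)}) $ is quasifree. The main obstacle — and it is a mild one — is being careful about the balanced tensor products over $ \ccinfty(\G^{(0)}) $ and about the $ \G $-equivariance of the maps involved; but since everything in sight is built from $ \ccinfty(\G^{(0)}) $ with its intrinsic multiplication, equivariance is automatic, and the computation above shows the relevant module of $ 1 $-forms vanishes. One should also double-check that the argument $ d\chi_U = 0 $ is carried out inside $ \Omega^1_{\G^{(0)}}(B) = B^+ \otimes_B B $ with the correct bimodule structure, which is the only place a small amount of bookkeeping is needed.
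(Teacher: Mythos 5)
Your proposal does not work, and the problem is the central computation. You are implicitly treating $ \Omega^1_{\G^{(0)}}(B) $, for $ B = \ccinfty(\G^{(0)}) $, as the module of K\"ahler differentials of a commutative algebra, i.e.\ you use the symmetric Leibniz rule $ d(fg) = f\,dg + g\,df $ with both terms given by the \emph{left} action. In the Cuntz--Quillen framework used in this paper, $ \Omega^1_{\G^{(0)}}(A) = A^+ \otimes_{\ccinfty(\G^{(0)})} A $ with \emph{no} relations imposed; the Leibniz rule is encoded in the definition of the right module structure, namely $ (\langle a^0\rangle da^1)\cdot a = \langle a^0\rangle d(a^1a) - \langle a^0\rangle a^1\,da $, and in particular $ (df)\,g \neq g\,df $ even for a commutative algebra. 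So your step $ d\chi_U = d(\chi_U\chi_U) = 2\chi_U\,d\chi_U $ is invalid: the correct identity $ d\chi_U = \chi_U\,d\chi_U + (d\chi_U)\chi_U $ is a tautology and gives no collapse. Concretely, $ \Omega^1_{\G^{(0)}}(B) = B^+ \otimes_B B \cong B \oplus B \neq 0 $, the element $ d\chi_U $ corresponds to $ \chi_U \neq 0 $ in the second summand, and $ d\chi_U\,d\chi_V $ is likewise nonzero in $ \Omega^2_{\G^{(0)}}(B) $. Hence $ \phi = 0 $ does not satisfy the identity $ \phi(xy) = \phi(x)y + x\phi(y) - dx\,dy $ of Theorem \ref{thm: characterization of quasi-free algebras}(2), and condition (4) is not ``immediate'' from $ \Omega^1 = 0 $ (which is false). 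Note also that if your vanishing argument were correct it would show that \emph{every} commutative algebra with local units is quasifree for trivial reasons, whereas quasifreeness is a genuine smoothness/one-dimensionality condition.

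The actual proof requires a nonzero connection-type map: for $ f \in \ccinfty(\G^{(0)}) $ one sets $ \phi(f) = 2f\,d\chi_U\,d\chi_U - df\,d\chi_U $, where $ \chi_U $ is the characteristic function of a compact open set $ U \subseteq \G^{(0)} $ with $ \chi_U f = f $; one then checks that this is independent of the choice of $ U $, is $ \G $-equivariant (the action on $ \ccinfty(\G^{(0)}) $ is the trivial one, so this is harmless), and satisfies the identity in Theorem \ref{thm: characterization of quasi-free algebras}(2), using idempotence of $ \chi_U $ and the correct (noncommutative) Leibniz rule. Your overall strategy of invoking characterisation (2) is the right one, but the witness $ \phi $ cannot be taken to be zero, and the verification genuinely uses the bimodule structure of $ \Omega^1_{\G^{(0)}} $ and $ \Omega^2_{\G^{(0)}} $ rather than any vanishing of forms.
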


\begin{proof}
Let $ f \in \ccinfty(\G^{(0)}) $ and define $ \phi(f) = 2 f d \chi_U d\chi_U - df d\chi_U $, where $ \chi_U $ is the characteristic function of a compact open 
subset $ U \subseteq \G^{(0)} $ such that $ \chi_U f = f $. This does not depend on the choice of $ U $, and one checks that $ \phi $ satisfies condition (2) in 
Theorem \ref{thm: characterization of quasi-free algebras}. 
\end{proof}

\begin{prop} \label{TAquasifree}
Let $ A $ be any pro-$ \G $-algebra. Then the periodic tensor algebra $ \T A $ is quasifree.
\end{prop}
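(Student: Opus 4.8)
The plan is to follow the Cuntz--Quillen argument \cite{CUNTZ_QUILLEN_algebraextensions}, in the $ \G $-equivariant form worked out in \cite{VOIGT_thesis}. By the definition of quasifreeness, it suffices to construct a $ \G $-equivariant splitting homomorphism $ v \colon \T A \rightarrow \T(\T A) $ for the canonical projection $ \tau_{\T A} \colon \T(\T A) \rightarrow \T A $.

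First I would check that the $ \G $-equivariant pro-linear map $ \sigma_{\T A}\sigma_A \colon A \rightarrow \T A \rightarrow \T(\T A) $ is a $ \G $-lonilcur; grant this for the moment. Proposition \ref{prop: universal property of tensor algebra} then yields a unique $ \G $-equivariant homomorphism $ v = [[\sigma_{\T A}\sigma_A]] \colon \T A \rightarrow \T(\T A) $ with $ v\sigma_A = \sigma_{\T A}\sigma_A $. Since $ \tau_{\T A}\sigma_{\T A} = \id $ by the admissible extension $ 0 \rightarrow \J(\T A) \rightarrow \T(\T A) \rightarrow \T A \rightarrow 0 $, the composite $ \tau_{\T A}v $ is a $ \G $-equivariant homomorphism $ \T A \rightarrow \T A $ with $ (\tau_{\T A}v)\sigma_A = \tau_{\T A}\sigma_{\T A}\sigma_A = \sigma_A $. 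As $ \id_{\T A} $ is itself the homomorphism classifying the $ \G $-lonilcur $ \sigma_A $, the uniqueness part of Proposition \ref{prop: universal property of tensor algebra}, applied with $ B = \T A $, forces $ \tau_{\T A}v = \id_{\T A} $. Thus $ v $ splits $ \tau_{\T A} $ and $ \T A $ is quasifree.

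The only step requiring genuine work is the claim that $ \sigma_{\T A}\sigma_A $ is a $ \G $-lonilcur. A short computation, using linearity of $ \sigma_A $ and $ \sigma_{\T A} $ together with $ \omega_{\sigma_A}(a,b) = da\,db \in \Omega^2_{\G^{(0)}}(A) \subseteq \J A $, gives
$$
\omega_{\sigma_{\T A}\sigma_A}(a,b) = \sigma_{\T A}\bigl(da\,db\bigr) + \omega_{\sigma_{\T A}}\bigl(\sigma_A(a), \sigma_A(b)\bigr),
$$
where $ \omega_{\sigma_{\T A}} $ takes values in the ideal $ \J(\T A) \subseteq \T(\T A) $. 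Given a $ \G $-equivariant pro-linear map $ f \colon \T(\T A) \rightarrow C $ with $ C $ constant, one expands $ f\,m^n_{\T(\T A)}\bigl(\omega_{\sigma_{\T A}\sigma_A}^{\otimes n}\bigr) $ into $ 2^n $ summands according to which of the two terms above is selected in each tensor slot. The map $ f $ factors through a constant quotient of $ \T(\T A) $, concretely through $ \T(B)/(\J B)^N $ with $ B = \T A/(\J A)^M $ for suitable $ M, N $; any summand containing at least $ N $ factors of the $ \omega_{\sigma_{\T A}} $-type then lies in $ (\J(\T A))^N $ and is killed, while each remaining summand contains a long block of consecutive factors $ \sigma_{\T A}(w) $ with $ w \in \J A $, which — applying $ \sigma_{\T A}(x)\sigma_{\T A}(y) = \sigma_{\T A}(xy) - \omega_{\sigma_{\T A}}(x,y) $ repeatedly, together with the vanishing of $ (\J A)^p $ in $ B $ for $ p \geq M $ — also die once $ n $ is large relative to $ M $ and $ N $. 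All of this takes place inside $ \pro(\G\operatorname{-Mod}) $, so $ \G $-equivariance is automatic and the bookkeeping is formally identical to the non-equivariant case \cite{CUNTZ_QUILLEN_algebraextensions}. As an alternative, one could establish quasifreeness of $ \T A $ directly via Theorem \ref{thm: characterization of quasi-free algebras}(2), by exhibiting the $ \G $-equivariant map $ \phi \colon \T A \rightarrow \Omega^2_{\G^{(0)}}(\T A) $ as the unique pro-linear extension of $ \phi\sigma_A = 0 $ along the twisted Leibniz identity, the only point then being its continuity for the $ \J A $-adic topology.
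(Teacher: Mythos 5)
Your proposal is correct and follows essentially the same route as the paper, which proves the statement by constructing a $\G$-equivariant splitting homomorphism for $\T\T A \rightarrow \T A$ (deferring the details to the group-equivariant case in \cite{VOIGT_equivariantperiodiccyclichomology}); that splitting is exactly your $[[\sigma_{\T A}\sigma_A]]$, with $\tau_{\T A}v=\id$ forced by the uniqueness part of Proposition \ref{prop: universal property of tensor algebra}. Your verification that $\sigma_{\T A}\sigma_A$ is a $\G$-lonilcur is the standard Cuntz--Quillen bookkeeping, carried out in $\pro(\G\operatorname{-Mod})$ just as in the cited reference.
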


Proposition \ref{TAquasifree} is proved by constructing a $ \G $-equivariant splitting homomorphism for the canonical projection $ \T \T A \rightarrow \T A $, compare  \cite[Proposition 6.10]{VOIGT_equivariantperiodiccyclichomology}.

\subsection{The Hodge tower and the equivariant $ X $-complex}

Let us now return to the paramixed complex $ \Omega_\G(A) $ of $ \G $-equivariant differential forms over a pro-$ \G $-algebra $ A $. Following Cuntz and Quillen ~\cite{CUNTZ_QUILLEN_nonsingularity}, we define the $ n $-th level of the Hodge tower associated to $ \Omega_\G(A) $ by 
\begin{equation*}
\theta^n \Omega_\G(A) = \bigoplus_{j = 0}^{n - 1} \Omega^j_\G(A) \oplus \Omega_\G^n(A)/b(\Omega^{n + 1}_\G(A)).
\end{equation*}
It is easy to check that the operators $ d_\G $ and $ b_\G $ descend to $ \theta^n \Omega_\G(A) $, and hence the same is true for $\kappa_\G $ and $ B_\G $. Using the natural grading into even and odd forms we see that 
$ \theta^n \Omega_\G(A) $ together with the boundary operator $ B_\G + b_\G $ becomes a pro-paracomplex of $ \G $-anti-Yetter-Drinfeld modules. 

For $ m \geq n $ there exists a natural chain map $ \theta^m \Omega_\G(A) \rightarrow 
\theta^n\Omega_\G(A) $ given by the obvious projection. By definition, the Hodge tower $ \theta \Omega_\G(A) $ of $ A $ is the projective limit 
of the projective system $ (\theta^n \Omega_\G(A))_{n \in \mathbb{N}} $ obtained in this way. 

\begin{dfn} 
Let $ A $ be a pro-$ \G $-algebra. The equivariant $ X $-complex 
$ X_\G(A) $ of $ A $ is the pro-paracomplex $ \theta^1 \Omega_\G(A) $. Explicitly, we have  
\begin{equation*}
\xymatrix{
{X_\G(A) \colon \ }
{\Omega^0_\G(A)\;} \ar@<.5ex>@{->}[r]^-{\natural d_\G} &
{\;\Omega^1_\G(A)/b_\G(\Omega^2_\G(A))} 
\ar@<.5ex>@{->}[l]^-{b_\G} 
} 
\end{equation*}
where $ \natural: \Omega^1_\G(A) \rightarrow \Omega^1_\G(A)/ b_\G(\Omega^2_\G(A)) $ denotes the canonical projection. 
\end{dfn}

Let us point out that the equivariant $ X $-complex $ X_\G(A) $ is typically not a chain complex but only a paracomplex. A notable exception is the case when $ A = \ccinfty(\G^{(0)}) $ is the trivial $ \G $-algebra. 

\begin{lemma} \label{XA}
The equivariant $ X $-complex $ X_\G(\ccinfty(\G^{(0)})) $ of the trivial $ \G $-algebra $ \ccinfty(\G^{(0)}) $ identifies canonically with
$$
\xymatrix{
{\OG\;} \ar@<.5ex>@{->}[r]  &
{\;0} 
\ar@<.5ex>@{->}[l]
} 
$$
that is, it is equal to the trivial supercomplex $ \OG[0] $.
\end{lemma}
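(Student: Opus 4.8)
The plan is to compute the two terms of $X_\G(\ccinfty(\G^{(0)})) = \theta^1\Omega_\G(\ccinfty(\G^{(0)}))$ directly from the definitions, using that $\ccinfty(\G^{(0)})$ carries the \emph{trivial} $\G$-action, so that multiplication $\ccinfty(\G^{(0)}) \otimes_{\ccinfty(\G^{(0)})} \ccinfty(\G^{(0)}) \to \ccinfty(\G^{(0)})$ is the canonical identification and, crucially, $T$ acts as the identity on $\ccinfty(\G^{(0)})$ and on all its tensor powers over $\ccinfty(\G^{(0)})$. First I would identify the degree-zero term: $\Omega^0_\G(\ccinfty(\G^{(0)})) = \OG \otimes_{\ccinfty(\G^{(0)})} \ccinfty(\G^{(0)}) \cong \OG$ as a $\G$-anti-Yetter-Drinfeld module, which is exactly the object appearing in the claimed answer.

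Next I would show the degree-one term $\Omega^1_\G(\ccinfty(\G^{(0)}))/b_\G(\Omega^2_\G(\ccinfty(\G^{(0)})))$ vanishes. Here the key computation is on $\Omega^1_{\G^{(0)}}(\ccinfty(\G^{(0)}))$ and $\Omega^2_{\G^{(0)}}(\ccinfty(\G^{(0)}))$: for the trivial algebra the Hochschild operator $b$ on $\Omega^1$ sends $\langle f^0\rangle df^1$ to $\langle f^0\rangle f^1 - f^1\langle f^0\rangle = 0$ (commutativity), so $b$ itself is zero on $\Omega^1_{\G^{(0)}}$, but this is \emph{not} what we need—we need $b_\G$ on $\Omega^2_\G$ to surject onto $\Omega^1_\G$. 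The right approach is to use the Karoubi/Connes machinery, or more directly the identity $bB + Bb = \id - T$ from Lemma~\ref{lemma: properties of bounderies}(6): since $T = \id$ on $\Omega_\G(\ccinfty(\G^{(0)}))$ (as everything is built from the trivial module and $T$ is natural, acting trivially because $\chi_{U^{-1}}$ acts trivially), we get $b_\G B_\G + B_\G b_\G = 0$ on this paramixed complex. This does not immediately kill the quotient, so instead I would argue concretely: $\Omega^1_\G(\ccinfty(\G^{(0)})) = \OG \otimes_{\ccinfty(\G^{(0)})}\Omega^1_{\G^{(0)}}(\ccinfty(\G^{(0)}))$, and for the trivial algebra with local units one checks $\Omega^1_{\G^{(0)}}(\ccinfty(\G^{(0)})) = 0$ already, because $d$ of a locally constant function can be rewritten using $\chi_U d\chi_U = d\chi_U$ type relations and the graded commutativity forces every one-form to be a boundary; alternatively one invokes that $\ccinfty(\G^{(0)})$ is quasifree (Lemma~\ref{trivialquasifree}) with the explicit $\phi$ given there, which trivialises $\Omega^1$ up to the relevant identifications.

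The cleanest route, which I would adopt, is: show $\Omega^1_{\G^{(0)}}(\ccinfty(\G^{(0)})) \cong 0$, hence $\Omega^1_\G(\ccinfty(\G^{(0)})) = 0$ and a fortiori the quotient by $b_\G(\Omega^2_\G)$ is $0$; this reduces everything to the degree-zero term $\OG$, and since the only possible differentials land in or come from a zero group, $X_\G(\ccinfty(\G^{(0)}))$ is the trivial supercomplex $\OG[0]$. To see $\Omega^1_{\G^{(0)}}(\ccinfty(\G^{(0)})) = 0$: by Lemma~\ref{lemma:funct.local.disc.tensor} it is spanned by elements $\chi_V d\chi_W$ and $d\chi_W$ for compact open $V,W \subseteq \G^{(0)}$; the Leibniz rule gives $d(\chi_W \chi_W) = 2\chi_W d\chi_W$, while $\chi_W\chi_W = \chi_W$ gives $d\chi_W = 2\chi_W d\chi_W$, and applying Leibniz to $\chi_W\chi_W$ inside $\Omega^1$ over $\ccinfty(\G^{(0)})$ (where the tensor is balanced!) shows $\chi_W d\chi_W = 0$, whence $d\chi_W = 0$ and all one-forms vanish.

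\textbf{Main obstacle.} The subtle point is the balancing: $\Omega^1_{\G^{(0)}}(A) = A^+ \otimes_{\ccinfty(\G^{(0)})} A$, and when $A = \ccinfty(\G^{(0)})$ the balanced tensor product over $\ccinfty(\G^{(0)})$ itself collapses, so one must be careful to distinguish the $A$-bimodule structure (Leibniz rule) from the $\ccinfty(\G^{(0)})$-module structure along which we balance—these coincide here, and that coincidence is precisely what forces the one-forms to die. I expect verifying this collapse rigorously, rather than the surrounding bookkeeping, to be where care is needed.
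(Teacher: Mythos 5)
Your identification of the even part as $\OG$ is fine, and your overall strategy (show the odd part vanishes) is the right one. But the route you commit to contains a genuine error: the claim that $\Omega^1_{\G^{(0)}}(\ccinfty(\G^{(0)}))=0$ is false. By definition $\Omega^1_{\G^{(0)}}(A)=A^+\otimes_{\ccinfty(\G^{(0)})}A$ with \emph{no} relations beyond the balancing; the Leibniz rule is only the definition of the right $A$-module structure on this space, not an identity imposed inside it. For $A=\ccinfty(\G^{(0)})$, which has local units, the multiplication map $A\otimes_A A\to A$ is an isomorphism, so $\Omega^1_{\G^{(0)}}(\ccinfty(\G^{(0)}))\cong \ccinfty(\G^{(0)})\oplus\ccinfty(\G^{(0)})$ and $\Omega^1_\G(\ccinfty(\G^{(0)}))\cong\OG\oplus\OG$, which is nonzero; in particular $\chi_W d\chi_W=\chi_W\otimes\chi_W$ maps to $\chi_W\neq 0$ under this isomorphism. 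Your key step, ``applying Leibniz to $\chi_W\chi_W$ inside $\Omega^1$ shows $\chi_W d\chi_W=0$,'' does not work: $d(\chi_W)\chi_W$ is itself \emph{defined} via the Leibniz rule as $d\chi_W-\chi_W d\chi_W$, so the identity $d\chi_W=d(\chi_W)\chi_W+\chi_W d\chi_W$ is a tautology and produces no collapse. The failure is already visible in the baby case $\G^{(0)}=\mathrm{pt}$: there $\Omega^1(\mathbb{C})=\mathbb{C}^+\otimes\mathbb{C}\cong\mathbb{C}^2\neq 0$, yet $X(\mathbb{C})=\mathbb{C}[0]$.

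What actually makes the odd part vanish is the quotient by $b_\G(\Omega^2_\G)$, i.e.\ surjectivity of $b_\G\colon\Omega^2_\G\to\Omega^1_\G$ — the mechanism you briefly considered and then set aside. Since the canonical twist $T$ is the identity on $\OG\otimes_{\ccinfty(\G^{(0)})}\ccinfty(\G^{(0)})\cong\OG$, the formula for $b_\G$ on $\Omega^2_\G$ shows that modulo $b_\G(\Omega^2_\G)$ one may commute a function past $d$, i.e.\ $\chi_U\otimes d(\chi_V)\chi_V\equiv\chi_U\otimes\chi_V d\chi_V$; combining this with the Leibniz expansion of $\chi_V d(\chi_V\chi_V)$ and of $d(\chi_V\chi_V)$ yields $\chi_U\otimes\chi_V d\chi_V=2\,\chi_U\otimes\chi_V d\chi_V$ and $\chi_U\otimes d\chi_V=2\,\chi_U\otimes\chi_V d\chi_V$ \emph{in the quotient}, hence both classes vanish there (this, together with Lemma \ref{lemma:restriction is epimorphism} to see that such elements span, is exactly the paper's argument). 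So the statement to prove is not that one-forms are zero but that every one-form is a Hochschild boundary up to the trivial twist; your proposal conflates the two, and as written the proof does not go through.
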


\begin{proof} 
By definition of the equivariant $ X $-complex, the even part of  $ X_\G(\ccinfty(\G^{(0)})) $ is given by $ \OG \otimes_{\ccinfty(\G^{(0)})} \ccinfty(\G^{(0)}) \cong \OG $. 

Every element in the odd part of $ X_\G(\ccinfty(\G^{(0)})) $ can be represented as a linear combination of terms of the form $ \chi_U \otimes d\chi_V $ and $ \chi_U \otimes \chi_V d\chi_V $ for compact open bisections $ U \subseteq \G $ and compact open subsets $ V \subseteq \G^{(0)} $. Moreover, the canonical map $ T $ of $ \OG \otimes_{\ccinfty(\G^{(0)})} \ccinfty(\G^{(0)}) \cong \OG $ equals the identity, which implies that the Hochschild operator $ b_\G: \Omega^2_\G(\ccinfty(\G^{(0)})) \rightarrow \Omega^1_\G(\ccinfty(\G^{(0)})) $ satisfies
$$
b_\G(\chi_U \otimes \langle \chi_V \rangle d\chi_V d \chi_V) = - \chi_U \otimes \langle \chi_V \rangle d(\chi_V) \chi_V + \chi_U \otimes \chi_V d\chi_V.   
$$
We therefore obtain 
\begin{align*}
\chi_U \otimes \chi_V d\chi_V &= \chi_U \otimes \chi_V d(\chi_V \chi_V) \\
&= \chi_U \otimes \chi_V d(\chi_V) \chi_V + \chi_U \otimes \chi_V d(\chi_V) \\
&= 2 \chi_U \otimes \chi_V d\chi_V
\end{align*}
in $ X_\G(\ccinfty(\G^{(0)})) $ and hence $ \chi_U \otimes \chi_V d\chi_V = 0 $. 
Similarly, 
\begin{align*}
\chi_U \otimes d\chi_V &= \chi_U \otimes d(\chi_V \chi_V) \\
&= \chi_U \otimes d(\chi_V) \chi_V + \chi_U \otimes \chi_V d(\chi_V) \\
&= 2 \chi_U \otimes \chi_V d\chi_V = 0,  
\end{align*}
and we conclude that the odd part of $ X_\G(\ccinfty(\G^{(0)})) $ vanishes as claimed. 
\end{proof}

The central result regarding the equivariant $ X $-complex 
is the following theorem, compare \cite[Theorem 8.6]{VOIGT_equivariantperiodiccyclichomology}. 

\begin{thm} \label{homotopyeq} 
For any pro-$ \G $-algebra $ A $ the equivariant $ X $-complex $ X_\G(\T A) $ and the Hodge tower $ \theta\Omega_\G(A) $ are homotopy equivalent as pro-paracomplexes of $ \G $-anti-Yetter-Drinfeld modules. 
\end{thm}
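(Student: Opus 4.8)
The plan is to follow the strategy of the corresponding statement in the group equivariant case, see \cite[Theorem 8.6]{VOIGT_equivariantperiodiccyclichomology}, which in turn goes back to Cuntz and Quillen \cite{CUNTZ_QUILLEN_nonsingularity}. First I would give an explicit description of $ X_\G(\T A) $ in terms of equivariant differential forms. By construction $ \T A $ is the pro-$ \G $-algebra $ \prod_n \Omega^{2n}_{\G^{(0)}}(A) $ equipped with the Fedosov product, so the even part $ \Omega^0_\G(\T A) = \OG \otimes_{\ccinfty(\G^{(0)})} \T A $ of the equivariant $ X $-complex is identified with the product $ \prod_n \Omega^{2n}_\G(A) $ of even equivariant forms. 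For the odd part one analyses the $ \T A $-bimodule $ \Omega^1_{\G^{(0)}}(\T A) $ via the Leibniz rule, identifies the relevant commutator-type quotient with a completion of the odd forms over $ A $, and, after tensoring with $ \OG $ and dividing by the image of $ b_\G $, identifies $ \Omega^1_\G(\T A)/b_\G(\Omega^2_\G(\T A)) $ with $ \prod_n \Omega^{2n+1}_\G(A) $. Both identifications are isomorphisms of pro-$ \G $-anti-Yetter-Drinfeld modules. Under them the two boundary operators $ \natural d_\G $ and $ b_\G $ of $ X_\G(\T A) $ turn into explicit operators on $ \prod_n \Omega^n_\G(A) $, and a computation parallel to the group case shows that they coincide with $ B_\G + b_\G $ up to a modification built from the equivariant Karoubi operator $ \kappa_\G $.

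It then remains to compare this pro-paracomplex with the Hodge tower $ \theta\Omega_\G(A) $, which carries the genuine boundary $ B_\G + b_\G $. Here I would invoke the relations for $ \kappa_\G $ from Lemma \ref{lemma: properties of bounderies}, in particular $ (\kappa_\G^{n+1} - T)(\kappa_\G^n - T) = 0 $ and $ B_\G b_\G + b_\G B_\G = \id - T $ on $ \Omega^n_\G(A) $. The first relation splits $ \Omega^n_\G(A) $ into the summand on which $ \kappa_\G^n = T $ and a complement on which $ \id - \kappa_\G $ is invertible; using this one checks that the operator relating the two differentials commutes with $ \kappa_\G $ and $ T $ and is invertible on each $ \Omega^n_\G(A) $, hence is an isomorphism of pro-objects intertwining the two differentials. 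This gives an isomorphism — and in particular, via the standard chain-homotopy formulas, a homotopy equivalence — of pro-paracomplexes $ X_\G(\T A) \cong \theta\Omega_\G(A) $. Alternatively one can follow Cuntz and Quillen more literally by writing down the natural chain map $ X_\G(\T A) \rightarrow \theta^n\Omega_\G(A) $ for each $ n $, compatible with the tower structure, and exhibiting an explicit contracting homotopy for its mapping cone.

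All maps and homotopies occurring above are assembled from the multiplication of $ \T A $ and from the operators $ d_\G, b_\G, \kappa_\G, B_\G $ and the canonical map $ T $, all of which are morphisms of pro-$ \G $-anti-Yetter-Drinfeld modules and commute with $ T $ by Lemma \ref{automaticcommutation}. Hence the entire argument takes place in the pro-category of $ \G $-anti-Yetter-Drinfeld modules, as required. I expect the main obstacle to be the first step: identifying $ \Omega^1_\G(\T A)/b_\G(\Omega^2_\G(\T A)) $ with the odd equivariant forms and computing the induced differentials precisely, while keeping careful track of signs and of the extra $ \chi_{U^{-1}} $-twists that the anti-Yetter-Drinfeld structure introduces into $ b_\G $, $ \kappa_\G $ and $ B_\G $. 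Once these identifications are established, the remaining deformation argument is formal and runs exactly as in \cite[Theorem 8.6]{VOIGT_equivariantperiodiccyclichomology}.
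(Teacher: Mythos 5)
Your proposal matches the paper's treatment: the paper offers no independent argument, stating only that the proof is a direct translation of the group equivariant case \cite[Theorem 8.6]{VOIGT_equivariantperiodiccyclichomology} built on the relations of Lemma \ref{lemma: properties of bounderies} (with Lemma \ref{automaticcommutation} ensuring all operators commute with $ T $), which is exactly the strategy you outline. The only inaccuracy is your claim of a global isomorphism intertwining the two differentials: in the Cuntz--Quillen/Voigt argument one only conjugates the differentials on the generalized $ T $-eigenspace of $ \kappa_\G $ and shows the complementary summand is contractible for both boundaries, which yields the asserted homotopy equivalence rather than an isomorphism.
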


The proof of Theorem \ref{homotopyeq} is a direct translation of the proof in the group equivariant case, building on the relations in Lemma \ref{lemma: properties of bounderies}. We will not spell out the details. 

\subsection{Bivariant equivariant periodic cyclic homology}

We are now ready to define bivariant equivariant periodic cyclic homology. 

\begin{dfn} \label{defhpg}
Let $ \G $ be an ample groupoid and let $ A $ and $ B $ be pro-$ \G $-algebras. The bivariant equivariant periodic cyclic homology of $ A $ and $ B $ is 
$$
HP_*^\G(A,B) = H_*(\Hom_{A(\G)}(X_\G(\T(A \otimes_{\ccinfty(\G^{(0)})} \K_\G)), X_\G( \T (B\otimes_{\ccinfty(\G^{(0)})} \K_\G)))).
$$
\end{dfn}

We point out that the Hom-complex in this definition is indeed an ordinary supercomplex, so that one can take its homology in the standard way. In order to explain this let us write $ \partial_A $ and $ \partial_B $ for the differentials of the equivariant $ X $-complexes in the source and the target, respectively. Then the differential in the Hom-complex is given by 
\begin{equation*}
\partial(\phi) = \phi \partial_A - (-1)^{|\phi|} \partial_B \phi
\end{equation*}
for a homogeneous element $ \phi $, and  we have
\begin{equation*}
\partial^2(\phi) =
\phi\, \partial_A^2 + (-1)^{|\phi|} (-1)^{|\phi| - 1} \partial_B^2\,\phi 
= \phi(\id - T) - (\id - T)\phi = T \phi - \phi T. 
\end{equation*}
Hence the relation $ \partial^2(\phi) = 0 $ is a consequence of the crucial commutation property in Lemma \ref{automaticcommutation}.

It follows directly from the definition that $ HP^\G_* $ is a bifunctor, contravariant in $ A $ and covariant in $ B $. We call 
$$
HP^\G_*(B) = HP_*^\G(\ccinfty(\G^{(0)}), B), \qquad HP^*_\G(A) = HP^\G_*(A, \ccinfty(\G^{(0)}))
$$
the $ \G $-equivariant periodic cyclic homology of $ B $, and the $ \G $-equivariant periodic cyclic cohomology of $ A $, respectively. 
Every $ \G $-equivariant algebra homomorphism $ f: A \rightarrow B $ induces naturally an element $ [f] \in HP^\G_0(A,B) $. We have an associative product 
$$ 
HP^\G_*(A,B) \times HP^\G_*(B,C) \rightarrow HP^\G_*(A,C), \quad (x,y) \mapsto x \cdot y 
$$ 
induced by the composition, and this generalises the composition of $ \G $-equivariant homomorphisms $ f: A \rightarrow B, g: B \rightarrow C $ in the sense that $ [f] \cdot [g] = [g \circ f] $. In particular, we obtain a natural ring structure on $ HP^\G_*(A,A) $ for every $ \G $-algebra $ A $ with unit element $ [\id] $. Note also that if $ \G^{(0)} $ is a singleton, or equivalently, if the groupoid $ \G $ is a discrete group, then the above constructions reduce to the theory developed in \cite{VOIGT_thesis}, \cite{VOIGT_equivariantperiodiccyclichomology}. 

Let us write $ \OG^\G \subset M(\OG) = \cinfty(\G_{ad}) $ for the subalgebra consisting of all functions which are constant along the orbits of the conjugation action. 
It is straightforward to check that the Hom-complex in Definition \ref{defhpg} is a complex of $ \OG^\G $-modules via the obvious multiplication action in the source or target, and hence the periodic cyclic homology groups $ HP^\G_*(A,B) $ are $ \OG^\G $-modules in a natural way. 
Using this module structure we can single out the contributions to $ HP^\G_* $ from the different conjugacy classes in $ \G_{ad} $. More precisely, for an ideal $ I $ in $ \OG^\G $ we define the \emph{localisation} of $ HP^G_*(A,B) $ at $ I $ by
$$
HP^\G_*(A,B)_I = HP^\G_*(A,B)/I \cdot HP^\G_*(A,B).
$$
The most obvious localisation is to take the ideal of all functions vanishing at the units $ \G^{(0)} \subseteq \G_{ad} $. We write $ HP^\G_*(A,B)_{[1]} $ for this localisation, following the notation introduced in  \cite{CRAINIC_MOERDIJK_homologyetalegroupoids} for groupoid homology. Let us note that there are analogous localisation structures in the Hochschild and cyclic homology of Steinberg algebras, see  \cite{ARNONE_CORTINAS_MUKHERJEE_homologysteinberg}. 

Localisation can help to analyse the structure of equivariant periodic cyclic homology. For instance, let us consider a discrete groupoid $ \G $ and show how to reduce the calculation of $ HP^\G_* $ to the group equivariant theory in this case. Recall that $ \G^x_x $ denotes the stabiliser group of $ x \in \G^{(0)} $ and 
write $ \mathfrak{m}_x \subseteq \ccinfty(\G^{(0)}) $ for the maximal ideal of all functions vanishing at $ x $. If $ A $ is a pro-$ \G $-algebra 
then $ A_x = A/\mathfrak{m}_x \cdot A $ is naturally a pro-$ \G_x^x $-algebra. 

\begin{prop}
Let $ \G $ be a discrete groupoid and let $ A, B $ be pro-$ \G $-algebras. Then we have a canonical isomorphism 
$$
HP^\G_*(A,B) \cong \bigoplus_{[x] \in \G \backslash \G^{(0)}} HP^{\G^x_x}_*(A_x, B_x),
$$
where each $ x $ is an arbitrary representative of the orbit $ [x] \in \G \backslash \G^{(0)} $. 
\end{prop}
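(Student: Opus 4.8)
The plan is to decompose $ \G $ into its transitive components, one for each orbit, and then identify the contribution of each component with the group equivariant theory of the corresponding isotropy group, thereby reducing the statement to the theory of \cite{VOIGT_thesis}.

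First I would exploit that, since $ \G $ is discrete, it is the disjoint union $ \G = \bigsqcup_{[x] \in \G \backslash \G^{(0)}} \G_{[x]} $ of the full subgroupoids $ \G_{[x]} = r^{-1}([x]) = s^{-1}([x]) $, each of which is transitive with unit space $ [x] $. Correspondingly $ \ccinfty(\G^{(0)}) = \bigoplus_{[x]} \ccinfty([x]) $, $ \DG = \bigoplus_{[x]} \D(\G_{[x]}) $ and $ A(\G) = \OG \rtimes \G = \bigoplus_{[x]} A(\G_{[x]}) $ as direct sums of algebras whose blocks annihilate one another, and every pro-$ \G $-algebra splits as $ A = \bigoplus_{[x]} A_{[x]} $ with $ A_{[x]} = \chi_{[x]} \cdot A $ a pro-$ \G_{[x]} $-algebra. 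Every construction entering Definition \ref{defhpg} — the $ \G $-unitarisation, the balanced tensor products over $ \ccinfty(\G^{(0)}) $, $ \KG $, the equivariant differential forms, the periodic tensor algebra, and the equivariant $ X $-complex — is built from tensor products over $ \ccinfty(\G^{(0)}) $ and hence respects these decompositions; and an $ A(\G) $-linear map between modules supported on distinct orbit blocks vanishes, so the $ \Hom $-complex of Definition \ref{defhpg} decomposes along orbits. Passing to homology would then give $ HP^\G_*(A,B) \cong \bigoplus_{[x]} HP^{\G_{[x]}}_*(A_{[x]}, B_{[x]}) $; conceptually this is the decomposition of $ HP^\G_*(A,B) $ over the spectrum of the central subalgebra $ \ccinfty(\G \backslash \G^{(0)}) \subseteq \OG^\G $.

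Next I would treat a single transitive component $ \G_{[x]} $, fixing a representative $ x $ and writing $ H = \G^x_x $ for its isotropy group, which is a discrete group. The key point is that restriction to the fibre over $ x $, namely $ M \mapsto M_x = \chi_{\{x\}} \cdot M $, is a monoidal equivalence from $ \G_{[x]} $-modules to $ H $-modules with quasi-inverse given by induction; concretely this is the Morita equivalence between $ \D(\G_{[x]}) $ and its full corner $ \chi_{\{x\}} \D(\G_{[x]}) \chi_{\{x\}} = \D(H) $. Being monoidal, it sends $ \G_{[x]} $-algebras to $ H $-algebras and, by Proposition \ref{crossedproductuniversal}, $ \G_{[x]} $-anti-Yetter-Drinfeld modules (that is, $ A(\G_{[x]}) $-modules) to $ H $-anti-Yetter-Drinfeld modules (that is, $ A(H) $-modules). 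I would then record the elementary identifications $ (A_{[x]})_x = A_x $, $ (A_{[x]}^+)_x = (A_x)^+ $, $ (\O_{\G_{[x]}})_x = \O_H $, and — using that $ \G^x $ is a free transitive left $ H $-set, so that $ \ccinfty(\G^x) \cong \D(H) $ together with its regular pairing — $ (\K_{\G_{[x]}})_x \cong \K_H $. These imply that the equivalence matches $ X_{\G_{[x]}}(\T(A_{[x]} \otimes_{\ccinfty([x])} \K_{\G_{[x]}})) $ with $ X_H(\T(A_x \otimes \K_H)) $, together with their differentials $ b_\G, B_\G $ and canonical operators $ T $; here one checks that the formulas of Section \ref{section:epch} for $ \G_{[x]} $ specialise to the group formulas of \cite{VOIGT_thesis}, essentially by evaluating on characteristic functions of compact open bisections of $ \G_{[x]} $. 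It follows that $ HP^{\G_{[x]}}_*(A_{[x]}, B_{[x]}) \cong HP^H_*(A_x, B_x) $, and combining with the first step gives the claim.

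The hard part will be the second step: verifying in detail that restriction to the fibre is compatible with the entire package of structures used to define $ HP $ — the monoidal structure, the anti-Yetter-Drinfeld module $ \OG $ and its canonical operator, the periodic tensor algebra, the equivariant $ X $-complex and the $ \Hom $-complex — so that the categorical equivalence actually passes to an isomorphism of homology groups. A secondary but genuine point, in the first step, is the bookkeeping needed to ensure that the decomposition of the $ \Hom $-complex produces an honest direct sum over the (possibly infinite) orbit set rather than a product; this relies on the objects being pro-objects carrying enough $ \ccinfty(\G \backslash \G^{(0)}) $-module structure for $ HP^\G_*(A,B) $ to be a nondegenerate module over that algebra.
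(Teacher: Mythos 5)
Your overall strategy coincides with the paper's: decompose along the orbit space, then for a transitive discrete groupoid cut down by the idempotent attached to a chosen base point to reduce to the isotropy group. For the transitive step the paper simply observes that $ M \mapsto \chi \cdot M $, where $ \chi \in M(\OG) $ is the characteristic function of $ \G^x_x $, is an equivalence from $ \G $-anti-Yetter-Drinfeld modules to $ \G^x_x $-anti-Yetter-Drinfeld modules and applies it to the Hom-complex of Definition \ref{defhpg}; your corner picture $ \chi_{\{x\}} \D(\G_{[x]}) \chi_{\{x\}} = \D(\G^x_x) $ is the same cut, and the compatibility checks you enumerate (monoidality, unitarisation, forms, $ \T $, $ X_\G $, the operator $ T $) are precisely what the paper leaves implicit, so carrying them out is welcome rather than a detour. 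One slip there: $ \G^x $ is a free but not transitive left $ \G^x_x $-set (its orbit space is $ [x] $), so $ \ccinfty(\G^x) $ is a direct sum of copies of the regular representation indexed by the orbit, and $ (\K_{\G_{[x]}})_x \cong \K(\ccinfty(\G^x)) $ is not literally $ \K_{\G^x_x} $ unless the orbit is a single point. This is harmless — Theorem \ref{StabLemma} together with Lemma \ref{infinitedirectsums} identifies the corresponding $ X $-complexes up to homotopy — but the identification should be routed through stability rather than asserted as an isomorphism of coefficient algebras.

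The genuine divergence is in the first step. The paper dispatches it in one line, claiming a direct sum decomposition of $ HP^\G_* $ ``via localisation'', i.e.\ through the $ \OG^\G $-module structure from the end of Section \ref{section:epch}. Your blockwise argument, as written, only shows that the Hom-complex is a product over orbits of the blockwise Hom-complexes (with uniform bounds on the pro-levels), and taking homology then yields a priori a product, not a direct sum. You flag this, but the remedy you propose — nondegeneracy of the $ \ccinfty(\G \backslash \G^{(0)}) $-module structure on $ HP^\G_*(A,B) $ — is not available in general: already for $ \G = \G^{(0)} $ an infinite discrete set and $ A = B = \ccinfty(\G^{(0)}) $, using Lemma \ref{trivialquasifree}, Lemma \ref{XA}, stability and the homotopy equivalence $ X_\G(\T A) \simeq X_\G(A) $ for quasifree $ A $, the Hom-complex is homotopy equivalent to $ \Hom_{\ccinfty(\G^{(0)})}(\ccinfty(\G^{(0)})[0], \ccinfty(\G^{(0)})[0]) $, so $ HP^\G_0(A,B) \cong \cinfty(\G^{(0)}) $ and the class of the identity is not supported over finitely many orbits. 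So the passage from the product to the asserted direct sum is a real gap in your write-up; be aware that it is also the one point where the paper's own proof is no more than an assertion, and the example above shows that some additional input (a finiteness hypothesis on $ \G \backslash \G^{(0)} $, or interpreting the orbitwise components via localisation rather than as direct summands) is genuinely needed there.
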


\begin{proof}
Every discrete groupoid can be written as a disjoint union of transitive groupoids, and it is easy to see that this induces a corresponding direct sum decomposition of $ HP^\G_*(A,B) $ via localisation. Therefore it suffices to consider the case that $ \G $ is transitive. 

In this case, given any $ x \in \G^{(0)} $ one obtains an equivalence between the category of $ \G $-anti-Yetter-Drinfeld modules and the category of $ \G^x_x$-anti-Yetter-Drinfeld modules by sending a $ \G $-anti-Yetter-Drinfeld module $ M $ to $ \chi \cdot M $, where $ \chi \in M(\OG)) = \cinfty(\G_{ad}) $ denotes the characteristic function of $ \G_x^x $. 
Applying this functor, or rather its extension to the corresponding pro-categories, to the Hom-complex defining $ HP^\G_*(A,B) $
yields the desired isomorphism. 
\end{proof}

\section{Homotopy invariance, stability and excision} \label{section:properties}

In this section we show that $ HP^\G_* $ satisfies similar homological properties as equivariant $ KK $-theory, compare \cite{BOENICKE_PROIETTI_categoricalbc}.  

\subsection{Homotopy invariance} \label{section:homotopy}

We establish first that $ HP^\G_* $ is homotopy invariant with respect to smooth homotopies in both variables. 

Let $ A, B $ be pro-$ \G $-algebras. By definition, a smooth $ \G $-equivariant homotopy between $ \G $-equivariant algebra homomorphisms $ \phi_0, \phi_1: A \rightarrow B $ is a $ \G $-equivariant algebra homomorphism $ \Phi: A \rightarrow B[0,1] $ such that $ \Phi_i = ev_i \,\Phi $ equals $ \phi_i $ for $ i = 0,1 $. Here $ B[0,1] = B \otimes C^\infty[0,1] $ is the algebra of smooth functions on the unit interval with values in $ B $, equipped with the $ \G $-action which treats $ C^\infty[0,1] $ as a dummy term, and $ ev_i: B[0,1] \rightarrow B $ is evaluation at $ i $. 

Recall from Section \ref{section:epch} that $ \theta^n \Omega_\G(A) $ denotes the $ n $-th level of the Hodge tower, and that $ \theta^1 \Omega_\G(A) = X_\G(A) $ is the 
equivariant $ X $-complex. We have canonical projection maps $ \xi_n: \theta^n \Omega_\G(A) \rightarrow \theta^{n - 1} \Omega_\G(A) $ for all $ n \geq 1 $. 

\begin{lemma} \label{lemma: homotopy equivalence between theta^2 and X_G}
Let $ A $ be a quasifree pro-$ \G $-algebra. Then the map $ \xi_2: \theta^2 \Omega_\G(A) \rightarrow X_\G(A) $ is a homotopy equivalence of pro-paracomplexes of $ \G $-anti-Yetter-Drinfeld modules.
\end{lemma}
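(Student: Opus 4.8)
The plan is to exhibit $\xi_2$ as an admissible epimorphism of pro-paracomplexes of $\G$-anti-Yetter-Drinfeld modules whose kernel is contractible, the connection supplied by quasifreeness of $A$ being used both to split $\xi_2$ and to contract its kernel. Concretely, $\theta^2\Omega_\G(A)$ and $X_\G(A) = \theta^1\Omega_\G(A)$ differ only by the summand $\Omega^2_\G(A)/b_\G(\Omega^3_\G(A))$ in even degree and the summand $b_\G(\Omega^2_\G(A)) \subseteq \Omega^1_\G(A)$ in odd degree, so that one obtains an admissible extension
\begin{equation*}
\xymatrix{ P \;\; \ar@{>->}[r] & \theta^2\Omega_\G(A) \ar@{->>}[r]^-{\xi_2} & X_\G(A) }
\end{equation*}
of pro-paracomplexes, with $P_{\mathrm{even}} = \Omega^2_\G(A)/b_\G(\Omega^3_\G(A))$ and $P_{\mathrm{odd}} = b_\G(\Omega^2_\G(A))$, the differential of $P$ being induced by $b_\G$ from even to odd and by $\natural B_\G$ from odd to even. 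A chain map of pro-paracomplexes which is an admissible epimorphism with contractible kernel is a homotopy equivalence; the standard argument for admissible extensions applies, using only the naturality of $T$, since the formulas for chain homotopies never invoke $\partial^2 = 0$. It therefore suffices to show that $P$ is contractible.

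It is first convenient to record that $P$ is a paracomplex: by Lemma \ref{lemma: properties of bounderies}(6) one has $b_\G B_\G + B_\G b_\G = \id - T$ on $\Omega^\bullet_\G(A)$, and the term $b_\G B_\G$ takes $\Omega^2_\G(A)$ into $b_\G(\Omega^3_\G(A))$ and hence vanishes on $P_{\mathrm{even}}$, while $B_\G b_\G$ vanishes on $P_{\mathrm{odd}} = b_\G(\Omega^2_\G(A))$ since $b_\G^2 = 0$; thus $\partial_P^2 = \id - T$. The key step is to invoke quasifreeness of $A$. By Theorem \ref{thm: characterization of quasi-free algebras} there is a $\G$-equivariant pro-linear connection $\nabla \colon \Omega^1_{\G^{(0)}}(A) \to \Omega^2_{\G^{(0)}}(A)$, and then $\nabla_\G = \id_\OG \otimes \nabla \colon \Omega^1_\G(A) \to \Omega^2_\G(A)$ is a morphism of pro-$\G$-anti-Yetter-Drinfeld modules, being $\OG$-linear and $\G$-equivariant. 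Just as in the non-equivariant theory of Cuntz--Quillen and the group equivariant case treated in \cite{VOIGT_equivariantperiodiccyclichomology}, the connection yields a contracting homotopy for the equivariant Hochschild complex in degrees $\geq 2$; in particular $b_\G \colon P_{\mathrm{even}} \to P_{\mathrm{odd}}$ becomes an isomorphism of pro-$\G$-anti-Yetter-Drinfeld modules, with inverse built from $\nabla_\G$. Taking this inverse as chain homotopy on $P_{\mathrm{odd}}$ and zero on $P_{\mathrm{even}}$, a short computation using Lemma \ref{lemma: properties of bounderies} gives $\partial_P h + h\partial_P = \id_P$, so that $P$ is contractible. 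All operators involved are morphisms of pro-$\G$-anti-Yetter-Drinfeld modules and hence commute with the canonical operator $T$ by Lemma \ref{automaticcommutation}, which ensures that no twisting corrections enter. Feeding the contraction of $P$ back into the admissible extension above proves the lemma.

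The main obstacle is the assertion that the connection turns $b_\G \colon P_{\mathrm{even}} \to P_{\mathrm{odd}}$ into an isomorphism, equivalently that a quasifree pro-$\G$-algebra has vanishing Hochschild homology above degree one with a splitting supplied by the connection. This rests on the familiar identities relating $b$, $d$ and $\nabla$ on differential forms, which we do not reproduce here: they are formally identical to their non-equivariant counterparts, the only additional bookkeeping being the $\OG$-module structure and the operator $T$, and the latter is handled automatically by Lemma \ref{automaticcommutation}.
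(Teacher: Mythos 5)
Your proof is correct, and it runs on exactly the same engine as the paper's --- the connection $\nabla$ from Theorem \ref{thm: characterization of quasi-free algebras}, extended to higher forms and promoted to the anti-Yetter-Drinfeld morphism $\nabla_\G$, together with the identity $b_\G \nabla_\G + \nabla_\G b_\G = \id$ on $\Omega^n_\G(A)$ for $n \geq 2$ --- but it is packaged differently. The paper writes down an explicit homotopy inverse $\nu = \id - [\nabla_\G, B_\G + b_\G] \colon X_\G(A) \rightarrow \theta^2\Omega_\G(A)$ and verifies $\xi_2 \nu = \id$ and $\nu \xi_2 \simeq \id$ directly, with $\nabla_\G$ itself serving as the homotopy; you instead split off the kernel paracomplex $P$ and contract it, invoking the general fact that an admissible epimorphism of paracomplexes with contractible kernel is a homotopy equivalence. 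Your observation that the standard untwisting argument survives in the paracomplex setting is right: the only place $\partial^2$ enters is through $\partial^2 s - s\,\partial^2 = (\id - T)s - s(\id - T)$, which vanishes because the splitting and homotopies are morphisms of pro-$\G$-anti-Yetter-Drinfeld modules (Lemma \ref{automaticcommutation}). Your route isolates a reusable general principle and a clean structural reason ($b_\G \colon \Omega^2_\G(A)/b_\G(\Omega^3_\G(A)) \rightarrow b_\G(\Omega^2_\G(A))$ is an isomorphism, with inverse induced by $\nabla_\G$), while the paper's route produces an explicit formula for the inverse, which is handy when $\xi_2$ is composed with other maps, as in the proof of homotopy invariance. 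Two points deserve to be made explicit. First, the admissibility of your extension is not free: the required splitting of $\Omega^1_\G(A) \rightarrow \Omega^1_\G(A)/b_\G(\Omega^2_\G(A))$ in the category of pro-$\G$-anti-Yetter-Drinfeld modules comes from the idempotent $b_\G \nabla_\G$ on $\Omega^1_\G(A)$ (idempotent precisely because $[b_\G,\nabla_\G] = \id$ on $\Omega^2_\G(A)$), so quasifreeness is indeed used here, as you hint but do not spell out. Second, the claim that the twist is ``handled automatically by Lemma \ref{automaticcommutation}'' is a little too quick for the key identity $[b_\G,\nabla_\G] = \id$ in degrees $\geq 2$: the twisted last term of $b_\G$ genuinely enters the computation and cancels because $\nabla_\G$ is left $A$-linear, not merely because all the operators commute with $T$; since the paper also only asserts this identity, your level of detail is comparable, but the reason for the cancellation should be stated correctly.
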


\begin{proof}
Since $A$ is quasifree there exists by Theorem \ref{thm: characterization of quasi-free algebras} a $ \G $-equivariant pro-linear 
map $ \nabla: \Omega^1_{\G^{(0)}}(A) \rightarrow \Omega^2_{\G^{(0)}}(A)$ such that
$$
\nabla(a \omega) = a \nabla(\omega) \text{  and  } \nabla(\omega a) = \nabla(\omega) a -\omega da
$$
for all $ a \in A $ and $\omega \in \Omega^1_{\G^{(0)}}(A) $. We extend $ \nabla $ to forms of higher degree by setting 
$$
\nabla\left(\langle a^0 \rangle da^1 \cdots da^n\right)=\nabla\left(\langle a^0 \rangle da^1\right) da^2 \cdots da^n.
$$ 
Then we have
$$
\nabla(a \omega)=a \nabla(\omega), \quad \nabla(\omega \eta)=\nabla(\omega) \eta+(-1)^{|\omega|} \omega d \eta
$$
for $ a \in A $ and $ \omega, \eta \in \Omega_{\G^{(0)}}(A) $. Moreover we set $ \nabla(a) = 0$ for $ a \in \Omega^0_{\G^{(0)}}(A) = A $.

One then obtains a map $ \nabla_\G: \Omega_\G^n(A) \rightarrow \Omega_\G^{n+1}(A)$ of pro-$ \G $-anti-Yetter-Drinfeld modules by setting 
$$
\nabla_\G(f\otimes \omega)=f\otimes \nabla(\omega).
$$
We will use $ \nabla_\G $ to construct an inverse of $ \xi_2 $ up to homotopy. 
Firstly, an explicit computation gives $ \left[b_\G, \nabla_\G\right] = \id $ on $ \Omega^n_{\G}(A) $ for $ n \geq 2 $. Since $\left[b_\G, \nabla_\G\right]$ commutes 
with $ b_\G $ this equality holds on $ b_\G(\Omega_\G^2(A)) \subseteq \Omega^1_\G(A) $ as well. As a consequence, we obtain a well-defined 
map $ \nu: X_\G(A) \rightarrow \theta^2 \Omega_\G(A) $ by setting $ \nu = \id - [\nabla_\G, B_\G + b_\G] $, noting that $ [\nabla_\G, B_\G] $ increases the degree of 
differential forms by $ 2 $. 

Using Lemma \ref{automaticcommutation} with the fact that $ \nabla_\G $ is a map of pro-$ \G $-anti-Yetter-Drinfeld modules one checks that $ \nu $ is a chain map with 
respect to $ \partial = B_\G + b_\G $. Explicitly, we have 
$$
\begin{array}{ll}
\nu = \id-\nabla_\G d & \text { on } \Omega_\G^0(A) \\
\nu = \id-\left[\nabla_\G, b_\G\right] = \id - b_\G \nabla_\G & \text { on } \Omega_\G^1(A)/b_\G\left(\Omega_\G^2(A)\right),
\end{array}
$$
and this implies $ \xi_2 \nu = \id $. Moreover, by construction 
$ \nu \xi_2 = \id - \left[\nabla_\G, B_\G + b_\G\right]$ is homotopic to the identity.
\end{proof}

Let again $ A, B $ be pro-$ \G $-algebras and let $ \Phi: A \rightarrow B[0,1] $ be a $ \G $-equivariant homotopy. The derivative $ \Phi': A \rightarrow B[0,1] $ is a $ \G $-equivariant pro-linear map satisying  $ \Phi'(ab) = \Phi'(a)\Phi(b) + \Phi(a)\Phi'(b) $ for all $ a,b \in A $. 

We define $ \eta: \Omega_\G^n(A) \rightarrow \Omega_\G^{n - 1}(B) $ by
$$
\eta(f\otimes a^0da^1 \cdots da^n) = \int_0^1 f \otimes \Phi_t(a^0)\Phi'_t(a^1) d\Phi_t(a^2) \cdots d \Phi_t(a^n) dt
$$
for $ n > 0 $ and $ \eta = 0 $ on $ \Omega_\G^0(A) $. Using the fact that $ \Phi' $ is a derivation with respect to $ \Phi $ one computes $ \eta b_\G + b_\G \eta = 0 $ 
on $ \Omega_\G^n(A) $ for all $ n \geq 0 $. 
In particular, we have  $ \eta b_\G(\Omega_\G^3(A)) \subseteq b_\G(\Omega_\G^2(B)) $, and hence we obtain a $ \G $-equivariant pro-linear 
map $ \eta: \theta^2 \Omega_\G(A) \rightarrow X_\G(B) $.

\begin{lemma}\label{lemma: induced homotopy between theta^2 and X_G}
Let $ \Phi: A \rightarrow B[0,1] $ be a $ \G $-equivariant homotopy between pro-$ \G $-algebras $ A, B $. Then we have $ X_\G(\Phi_1) \xi_2 - X_\G(\Phi_0) \xi_2 = \partial \eta + \eta \partial $, where $ \eta: \theta^2 \Omega_\G(A) \rightarrow X_\G(B) $ is the map defined above. Hence the chain maps $ X_\G(\Phi_t) \xi_2: \theta^2 \Omega_\G(A) \rightarrow X_\G(B)$ for $ t = 0,1 $ are homotopic.
\end{lemma}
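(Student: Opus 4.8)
The plan is to verify the identity $ X_\G(\Phi_1)\xi_2 - X_\G(\Phi_0)\xi_2 = \partial\eta + \eta\partial $ by a direct computation with the explicit formulas for $ d_\G $, $ b_\G $ and $ B_\G $, following the group equivariant case \cite{VOIGT_equivariantperiodiccyclichomology}; the final assertion about homotopy of the chain maps $ X_\G(\Phi_t)\xi_2 $ is then immediate from the definition of a chain homotopy. Recall that the computation preceding the lemma gives $ \eta b_\G + b_\G\eta = 0 $ on $ \Omega_\G^n(A) $ for all $ n $; since $ \eta $, $ b_\G $ and $ B_\G $ are all of odd degree, the graded commutator $ \partial\eta + \eta\partial $ therefore only receives contributions from $ d_\G $ in degree zero and from $ B_\G $ in degree one. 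I would check the identity componentwise on the three homogeneous summands $ \Omega^0_\G(A) $, $ \Omega^1_\G(A) $ and $ \Omega^2_\G(A)/b_\G(\Omega^3_\G(A)) $ of $ \theta^2\Omega_\G(A) $.

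On $ \Omega^0_\G(A) $ both chain maps restrict to $ f \otimes a \mapsto f \otimes \Phi_i(a) $ on the relevant summand, $ \xi_2 $ is the identity there, $ \eta $ vanishes in degree zero, and the identity reduces to $ \eta d_\G(f \otimes a) = f \otimes \int_0^1 \Phi'_t(a)\,dt = f \otimes (\Phi_1(a) - \Phi_0(a)) $, which is the fundamental theorem of calculus. On $ \Omega^2_\G(A)/b_\G(\Omega^3_\G(A)) $ the projection $ \xi_2 $ vanishes, so the right-hand side is zero; on the left-hand side $ B_\G $ would land in the truncated part of the Hodge tower and hence contributes nothing, so the only surviving term is $ b_\G\eta + \eta b_\G = 0 $. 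The substantial case is $ \Omega^1_\G(A) $, where for a compact open bisection $ U \subseteq \G $ one has to establish
\begin{equation*}
\natural\bigl( d_\G\eta(\chi_U \otimes a^0 da^1) + \eta B_\G(\chi_U \otimes a^0 da^1) \bigr) = \natural\bigl( \chi_U \otimes \Phi_1(a^0)\,d\Phi_1(a^1) - \chi_U \otimes \Phi_0(a^0)\,d\Phi_0(a^1) \bigr)
\end{equation*}
in $ \Omega^1_\G(B)/b_\G(\Omega^2_\G(B)) $, together with the analogous (easier) identity for the forms $ \chi_U \otimes da^1 $.

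To prove this, first expand $ d_\G\eta $ using that $ d $ commutes with $ \frac{d}{dt} $, then substitute $ B_\G(\chi_U \otimes a^0 da^1) = \chi_U \otimes da^0\,da^1 - \chi_U \otimes (\chi_{U^{-1}}\cdot da^1)\,da^0 $ and apply $ \eta $, using $ \G $-equivariance of $ \Phi $ and $ \Phi' $ to move $ \Phi_t $ and $ \Phi'_t $ past the action of $ \chi_{U^{-1}} $ and the fact that $ \Phi' $ is a derivation along $ \Phi $. Rewriting $ \Phi_1(a^0)\,d\Phi_1(a^1) - \Phi_0(a^0)\,d\Phi_0(a^1) = \int_0^1 \frac{d}{dt}\bigl( \Phi_t(a^0)\,d\Phi_t(a^1) \bigr)\,dt $ via the fundamental theorem of calculus and cancelling, the difference of the two sides collapses to $ \int_0^1 \chi_U \otimes \bigl( (d\Phi_t(a^0))\Phi'_t(a^1) - (\chi_{U^{-1}}\cdot\Phi'_t(a^1))\,d\Phi_t(a^0) \bigr)\,dt $, and by the explicit formula for $ b_\G $ on two-forms this integrand equals $ -b_\G\bigl( \chi_U \otimes d\Phi_t(a^0)\,d\Phi'_t(a^1) \bigr) $, so the whole expression is a $ b_\G $-boundary and hence vanishes modulo $ b_\G(\Omega^2_\G(B)) $. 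The case of $ \chi_U \otimes da^1 $ is handled the same way and is simpler, since $ B_\G $ vanishes on it. I expect the only real work to be the bookkeeping: keeping track of the equivariance terms $ \chi_{U^{-1}}\cdot(-) $ and of the passage to the quotient by $ b_\G $-boundaries. Since every operator occurring here is a morphism of pro-$ \G $-anti-Yetter-Drinfeld modules and therefore commutes with $ T $ by Lemma \ref{automaticcommutation}, this proceeds just as in the group equivariant case \cite{VOIGT_equivariantperiodiccyclichomology}.
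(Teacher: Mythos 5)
Your proposal is correct and takes essentially the same route as the paper: a componentwise check on the three summands of $ \theta^2\Omega_\G(A) $, using $ \eta b_\G + b_\G\eta = 0 $, the fundamental theorem of calculus in degree zero, and the identification of the degree-one discrepancy as a $ b_\G $-boundary that vanishes in $ X_\G(B) $. The only cosmetic differences are that you subtract the integrated $ t $-derivative first and exhibit the remainder $ \int_0^1 \chi_U \otimes \bigl( (d\Phi_t(a^0))\Phi'_t(a^1) - (\chi_{U^{-1}}\cdot\Phi'_t(a^1))\,d\Phi_t(a^0) \bigr)\,dt $ as a boundary, whereas the paper rewrites the integrand directly as a boundary plus a $ t $-derivative, and that you make explicit the easier case of forms $ \chi_U \otimes da^1 $, which the paper leaves implicit.
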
 

\begin{proof}
Recall that the boundary operator is $ \partial = B_\G + b_\G $. For $ j = 0 $ we have
$$
[\partial, \eta](f \otimes a)=\eta(f \otimes d a)=\int_0^1 f \otimes \Phi_t'(a) d t=f \otimes \Phi_1(a)-f \otimes \Phi_0(a) .
$$
For $ j = 1 $ we get
\begin{align*}
[\partial, \eta] & (\chi_U \otimes a^0 d a^1) = d_\G \eta(\chi_U \otimes a^0 d a^1) + \eta B_\G(\chi_U \otimes a^0 d a^1) \\
&= \int_0^1 (\chi_U \otimes d(\Phi_t(a^0) \Phi_t'(a^1)) + \chi_U \otimes \Phi_t'(a^0) d \Phi_t(a^1) \\
&\quad- \chi_U \otimes \Phi_t'(\chi_{U^{-1}} \cdot a^1) d \Phi_t(a^0)) dt \\
&=\int_0^1 b_\G(\chi_U \otimes d \Phi_t(a^0) d \Phi_t'(a^1))+\frac{\partial}{\partial t}(\chi_U \otimes \Phi_t(a^0) d \Phi_t(a^1)) dt
\end{align*}
for any compact open bisection $ U \subseteq \G $. 
Since the first term vanishes in $ X_\G(B) $ we conclude
$$
[\partial, \eta] (\chi_U \otimes a^0 d a^1) = \chi_U \otimes \Phi_1(a^0) d \Phi_1(a^1) - \chi_U \otimes \Phi_0(a^0) d \Phi_0(a^1). 
$$
Finally, on $ \Omega_\G^2(A) / b_\G(\Omega_\G^3(A))$ we have $ \partial \eta + \eta \partial = \eta b_\G + b_\G \eta = 0 $, with the last equality due to the calculation just before this Lemma.    
\end{proof}

We are now ready to state and prove the following result. 

\begin{thm}[Homotopy invariance] \label{thm: homotopy invariance}
Let $ A $ and $ B $ be pro-$ \G $-algebras and let $ \Phi: A \rightarrow B[0,1] $ be a $ \G $-equivariant homotopy. Then the elements $ \left[\Phi_0\right] $ and $ \left[\Phi_1\right] $ in $ HP_0^\G(A,B) $ are equal. More generally, if $ A $ is a quasifree pro-$ \G $-algebra then the elements $ \left[\Phi_0\right] $ and $ \left[\Phi_1\right] $ 
in $ H_0(\Hom_{A(\G)}(X_\G(A), X_\G(B))) $ are equal.    
\end{thm}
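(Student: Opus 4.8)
The plan is to reduce the statement to the two preceding lemmas in two stages: first treat the case that $ A $ is quasifree, which is essentially formal, and then bootstrap to an arbitrary pro-$ \G $-algebra $ A $ using the universal property of the periodic tensor algebra together with the quasifreeness of $ \T(A \otimes_{\ccinfty(\G^{(0)})} \K_\G) $.

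\emph{The quasifree case.} Assume $ A $ is quasifree. By Lemma \ref{lemma: homotopy equivalence between theta^2 and X_G} the projection $ \xi_2 \colon \theta^2 \Omega_\G(A) \rightarrow X_\G(A) $ is a homotopy equivalence of pro-paracomplexes of $ \G $-anti-Yetter-Drinfeld modules, with a $ \G $-equivariant pro-linear homotopy inverse $ \nu $ that is a chain map satisfying $ \xi_2 \nu = \id $, and all maps involved are morphisms of pro-$ \G $-anti-Yetter-Drinfeld modules, hence $ A(\G) $-linear. By Lemma \ref{lemma: induced homotopy between theta^2 and X_G} the chain maps $ X_\G(\Phi_0) \xi_2 $ and $ X_\G(\Phi_1) \xi_2 $ from $ \theta^2 \Omega_\G(A) $ to $ X_\G(B) $ satisfy $ X_\G(\Phi_1) \xi_2 - X_\G(\Phi_0) \xi_2 = \partial \eta + \eta \partial $ for an $ A(\G) $-linear map $ \eta $. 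Precomposing this relation with $ \nu $ and using that $ \nu $ is a chain map with $ \xi_2 \nu = \id $ gives $ X_\G(\Phi_1) - X_\G(\Phi_0) = \partial(\eta \nu) + (\eta \nu) \partial $, so $ X_\G(\Phi_0) $ and $ X_\G(\Phi_1) $ are $ A(\G) $-linearly chain homotopic as maps $ X_\G(A) \rightarrow X_\G(B) $. Hence they define the same class in $ H_0(\Hom_{A(\G)}(X_\G(A), X_\G(B))) $, which is the second assertion of the theorem.

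\emph{The general case.} Put $ C = A \otimes_{\ccinfty(\G^{(0)})} \K_\G $ and $ D = B \otimes_{\ccinfty(\G^{(0)})} \K_\G $. Tensoring $ \Phi $ with $ \id_{\K_\G} $ and using the canonical isomorphism $ B[0,1] \otimes_{\ccinfty(\G^{(0)})} \K_\G \cong D[0,1] $, which holds because $ C^\infty[0,1] $ enters only as a dummy tensor factor, we obtain a $ \G $-equivariant homotopy $ \Psi = \Phi \otimes \id \colon C \rightarrow D[0,1] $ with $ ev_t \Psi = \Phi_t \otimes \id $. The map $ \sigma_D \otimes \id \colon D[0,1] \rightarrow (\T D)[0,1] $ is a $ \G $-lonilcur: its curvature is $ \omega_{\sigma_D} \otimes m $, where $ m $ is the multiplication of $ C^\infty[0,1] $, and local nilpotence of $ \omega_{\sigma_D} $, which holds since $ \sigma_D $ is a lonilcur, transfers to it because the extra commutative factor does not interfere. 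Composing with $ \Psi $ produces a $ \G $-lonilcur $ C \rightarrow (\T D)[0,1] $, so Proposition \ref{prop: universal property of tensor algebra} yields a $ \G $-equivariant homomorphism $ \Xi \colon \T C \rightarrow (\T D)[0,1] $. Comparing the compositions with $ \sigma_C $, using naturality of $ \sigma $ and the identity $ ev_t (\sigma_D \otimes \id) = \sigma_D \, ev_t $, and appealing to the uniqueness clause of Proposition \ref{prop: universal property of tensor algebra}, one checks $ ev_t \Xi = \T(\Phi_t \otimes \id) $, so $ \Xi $ is a $ \G $-equivariant homotopy between $ \T(\Phi_0 \otimes \id) $ and $ \T(\Phi_1 \otimes \id) $.

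Now $ \T C = \T(A \otimes_{\ccinfty(\G^{(0)})} \K_\G) $ is quasifree by Proposition \ref{TAquasifree}, so applying the quasifree case to $ \Xi $ shows that $ X_\G(\T(\Phi_0 \otimes \id)) $ and $ X_\G(\T(\Phi_1 \otimes \id)) $ are $ A(\G) $-linearly chain homotopic as maps $ X_\G(\T C) \rightarrow X_\G(\T D) $. Since these chain maps represent the classes $ [\Phi_0] $ and $ [\Phi_1] $ in $ HP_0^\G(A,B) $ by the very definition of the class induced by a $ \G $-equivariant homomorphism, we conclude $ [\Phi_0] = [\Phi_1] $. The only point going beyond the two lemmas is the verification that $ \sigma_D \otimes \id $ is a lonilcur; apart from that, the argument is bookkeeping with the universal property, where one must keep every homotopy $ A(\G) $-linear so that it lives in the relevant Hom-complex of $ A(\G) $-modules.
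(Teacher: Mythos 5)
Your proof is correct and follows essentially the same route as the paper: the quasifree case by composing the chain homotopy of Lemma \ref{lemma: induced homotopy between theta^2 and X_G} with the $ A(\G) $-linear homotopy inverse $ \nu $ of $ \xi_2 $ from Lemma \ref{lemma: homotopy equivalence between theta^2 and X_G}, and the general case by tensoring with $ \K_\G $ and using the universal property of the periodic tensor algebra together with Proposition \ref{TAquasifree}. The only (immaterial) difference is that you apply the universal property once to the composite lonilcur $ (\sigma_D \otimes \id)\Psi $, whereas the paper first forms $ \T(\Phi \otimes \K_\G) $ and then composes with $ [[l]] $; by uniqueness these give the same homomorphism.
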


\begin{proof}
The second part of the Theorem follows directly by combining Lemma \ref{lemma: homotopy equivalence between theta^2 and X_G} and 
Lemma \ref{lemma: induced homotopy between theta^2 and X_G}.

In order to show that the first part of the Theorem can be viewed as a special case of the second, assume that $ \Phi: A \rightarrow B[0,1] $ is a $ \G $-equivariant homotopy. 
We tensor $ A $ and $ B $ with $ \K_\G $ to obtain a $ \G $-equivariant 
homotopy $ \Phi \otimes_{\ccinfty(\G^{(0)})} \K_\G: A \otimes_{\ccinfty(\G^{(0)})} \K_\G \rightarrow (B\otimes_{\ccinfty(\G^{(0)})} \K_\G)[0,1] $. 
Passing to the periodic tensor algebras we obtain a $ \G $-equivariant algebra homomorphism $ \T(\Phi \otimes_{\ccinfty(\G^{(0)})} \K_\G): \T(A \otimes_{\ccinfty(\G^{(0)})} \K_\G) \rightarrow \T((B \otimes_{\ccinfty(\G^{(0)})} \K_\G)[0,1]) $. 

Consider the $ \G $-equivariant pro-linear map
\begin{align*}
l: B\otimes_{\ccinfty(\G^{(0)})} \K_\G \otimes C^\infty[0,1]&\rightarrow \T(B \otimes_{\ccinfty(\G^{(0)})}\K_\G) \otimes C^\infty[0,1] \\
l(b\otimes T\otimes f) &= \sigma(b \otimes T) \otimes f,
\end{align*}
where $ \sigma: B \otimes_{\ccinfty(\G^{(0)})} \K_\G \rightarrow \T(B \otimes_{\ccinfty(\G^{(0)})} \K_\G) $ is the standard $ \G $-equivariant pro-linear splitting. 
Then $ l $ is a lonilcur, and we get an associated $ \G $-equivariant homomorphism 
$$
[[l]]: \T((B\otimes_{\ccinfty(\G^{(0)})}\K_\G)[0,1]) \rightarrow \T(B \otimes_{\ccinfty(\G^{(0)})} \K_\G)[0,1]
$$
by the universal property of the periodic tensor algebra from Proposition \ref{prop: universal property of tensor algebra}. Consider the $ \G $-equivariant homotopy 
$$
\Psi = [[l]] \T(\Phi\otimes_{\ccinfty(\G^{(0)})} \K_\G): \T(A\otimes_{\ccinfty(\G^{(0)})} \K_\G) \rightarrow \T(B\otimes_{\ccinfty(\G^{(0)})}\K_\G)[0,1]
$$
and note that $ \Psi_t = \T(\Phi_t \otimes_{\ccinfty(\G^{(0)})} \K_\G) $ for all $ t \in [0,1] $.
Since $ \T(A\otimes_{\ccinfty(\G^{(0)})} \K_\G) $ is quasifree we are now in the setting of the second part of the Theorem, and this concludes the proof. 
\end{proof}

We note that as an application of homotopy invariance one can show that $ X_\G(\T A) $ is homotopy equivalent to $ X_\G(A) $ if $ A $ is a quasifree pro-$ \G $-algebra, compare  \cite[Proposition 10.5]{VOIGT_equivariantperiodiccyclichomology}. 

\subsection{Stability} \label{section:stability}

Let us next show that $ HP^\G_* $ is stable in both variables with respect to tensoring with the algebra $ \K(E) $ of smoothing operators associated to a $ \G $-module $ E $ together with a $ \G $-equivariant pairing, compare section \ref{section:dgmodules}. 

We denote by $ h: E \otimes_{\ccinfty(\G^{(0)})} E \rightarrow \ccinfty(\G^{(0)}) $ the given $ \G $-equivariant pairing on $ E $ and define the twisted trace map $ ttr: \OG \otimes_{\ccinfty(\G^{(0)})} \K(E) \rightarrow \OG $ by setting 
$$
ttr(f \otimes e_1 \otimes e_2) = (\id \otimes h)(T(f \otimes e_2) \otimes e_1)
$$
for $ f \in \OG $ and $ e_1, e_2 \in E $. Explicitly, we have 
$$
ttr(\chi_U \otimes e_1 \otimes e_2) = \chi_U \otimes h(\chi_{U^{-1}} \cdot e_2 \otimes e_1) \in \OG \otimes_{\ccinfty(\G^{(0)})} \ccinfty(\G^{(0)}) \cong \OG
$$
for any compact open bisection $ U \subseteq \G $. 

\begin{lemma} \label{twistedtrace}
The twisted trace map introduced above satisfies
$$
ttr(\chi_U \otimes L_0 L_1) = ttr(\chi_U \otimes (\chi_{U^{-1}}\cdot L_1) L_0)
$$
for any compact open bisection $ U \subseteq \G $ and $ L_0, L_1 \in \K(E) $.  
\end{lemma}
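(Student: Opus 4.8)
The plan is to reduce to generators and compute directly. Since $ \OG $ is linearly spanned by characteristic functions $ \chi_U $ of compact open bisections $ U \subseteq \G $ and $ \K(E) = E \otimes_{\ccinfty(\G^{(0)})} E $ is spanned by elementary tensors, it suffices to check the identity for $ L_0 = e_1 \otimes f_1 $, $ L_1 = e_2 \otimes f_2 $ with $ e_1, e_2, f_1, f_2 \in E $ and a fixed compact open bisection $ U $. The only ingredients are the explicit formula for $ ttr $, the multiplication rule in $ \K(E) $, Lemma \ref{lemma:how diagonal action works} for the diagonal action on $ \K(E) $, and the equivariance $ h(\chi_V \cdot e, \chi_V \cdot f) = \chi_V \cdot h(e,f) $ of the pairing together with its $ \ccinfty(\G^{(0)}) $-bilinearity.

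On the left hand side one has $ L_0 L_1 = e_1 h(f_1, e_2) \otimes f_2 $, so that
\begin{align*}
ttr(\chi_U \otimes L_0 L_1)
&= \chi_U \otimes h(\chi_{U^{-1}} \cdot f_2,\; e_1 h(f_1, e_2)) \\
&= \chi_U \otimes h(f_1, e_2)\, h(\chi_{U^{-1}} \cdot f_2, e_1),
\end{align*}
pulling the scalar $ h(f_1, e_2) \in \ccinfty(\G^{(0)}) $ out by bilinearity of $ h $. On the right hand side, Lemma \ref{lemma:how diagonal action works} gives $ \chi_{U^{-1}} \cdot L_1 = (\chi_{U^{-1}} \cdot e_2) \otimes (\chi_{U^{-1}} \cdot f_2) $, whence $ (\chi_{U^{-1}} \cdot L_1) L_0 = (\chi_{U^{-1}} \cdot e_2)\, h(\chi_{U^{-1}} \cdot f_2, e_1) \otimes f_1 $ and
\begin{align*}
ttr(\chi_U \otimes (\chi_{U^{-1}} \cdot L_1) L_0)
&= \chi_U \otimes h(\chi_{U^{-1}} \cdot f_2, e_1)\, h(\chi_{U^{-1}} \cdot f_1, \chi_{U^{-1}} \cdot e_2) \\
&= \chi_U \otimes h(\chi_{U^{-1}} \cdot f_2, e_1)\, (\chi_{U^{-1}} \cdot h(f_1, e_2)),
\end{align*}
again by bilinearity of $ h $ and then its equivariance under the bisection $ U^{-1} $. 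Comparing the two outputs, the lemma is reduced to the scalar identity $ g\, h(f_1, e_2) = g\, (\chi_{U^{-1}} \cdot h(f_1, e_2)) $ in $ \ccinfty(\G^{(0)}) $, where $ g = h(\chi_{U^{-1}} \cdot f_2, e_1) $ and the action on the right is that of the tensor unit, i.e. the trivial $ \G $-module.

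To finish, I would record two elementary facts about the compact open bisection $ U $: that $ \chi_{s(U)} * \chi_{U^{-1}} = \chi_{U^{-1}} $ in $ \DG $, which yields $ \chi_{s(U)} \cdot (\chi_{U^{-1}} \cdot f_2) = \chi_{U^{-1}} \cdot f_2 $ and hence $ g = \chi_{s(U)}\, g $ by bilinearity of $ h $; and that $ \lambda(\chi_{U^{-1}}) = \chi_{s(U)} $, so that $ \chi_{U^{-1}} $ acts on the trivial $ \G $-module $ \ccinfty(\G^{(0)}) $ as multiplication by $ \chi_{s(U)} $. Together these give $ g\, (\chi_{U^{-1}} \cdot h(f_1, e_2)) = g\, \chi_{s(U)}\, h(f_1, e_2) = g\, h(f_1, e_2) $, which is the required identity. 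The only slightly delicate point is this bookkeeping with the idempotent $ \chi_{s(U)} $, which reflects the fact that a compact open bisection is only locally invertible in $ \DG $; one can avoid it entirely by running the computation in the $ \ccinfty(\G) $-comodule picture of Theorem \ref{theorem: unification theorem}, where the canonical map $ T $ absorbs these compatibilities automatically.
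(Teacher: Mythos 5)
Your proposal is correct and follows essentially the same route as the paper: reduce to elementary tensors, compute both sides of the identity using the multiplication in $\K(E)$, Lemma \ref{lemma:how diagonal action works} and the $\G$-equivariance plus $\ccinfty(\G^{(0)})$-bilinearity of $h$. The only (harmless) difference is the last step: the paper closes the gap between $h(f_1,e_2)$ and $\chi_{U^{-1}}\cdot h(f_1,e_2)$ by observing that $\chi_U \otimes f = \chi_U \otimes \chi_{U^{-1}}\cdot f$ in $\OG \otimes_{\ccinfty(\G^{(0)})} \ccinfty(\G^{(0)}) \cong \OG$ (the canonical map $T$ is the identity there), whereas you prove the slightly stronger scalar identity by absorbing the idempotent $\chi_{s(U)}$ into the factor $h(\chi_{U^{-1}}\cdot f_2, e_1)$, which is equally valid.
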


\begin{proof}
It suffices to prove the claim for $ L_0 = e_1 \otimes e_2, L_1 = e_3 \otimes e_4 $ for any $ e_1, e_2, e_3, e_4 \in E $. In this case we obtain 
\begin{align*}
ttr(\chi_U \otimes L_0 L_1) = \chi_U \otimes  h(\chi_{U^{-1}} \cdot e_4 \otimes e_1) h(e_2 \otimes e_3) 
\end{align*}
and 
\begin{align*}
ttr(\chi_U \otimes (\chi_{U^{-1}}\cdot L_1) L_0) = \chi_U \otimes h(\chi_{U^{-1}} \cdot e_2 \otimes \chi_{U^{-1}} \cdot e_3) h(\chi_{U^{-1}} \cdot e_4 \otimes e_1) . 
\end{align*}
Moreover, note that $ h(\chi_{U^{-1}} \cdot e_2 \otimes \chi_{U^{-1}} \cdot e_3) = \chi_{U^{-1}} \cdot h(e_2 \otimes e_3) $ by $ \G $-equivariance of the pairing. 

It is therefore enough to observe that $ \chi_U \otimes f = \chi_U \otimes \chi_{U^{-1}} \cdot f $ for all $ f \in \ccinfty(\G^{(0)}) $, or equivalently, that the canonical map $ T $ of $ \OG \otimes_{\ccinfty(\G^{(0)})} \ccinfty(\G^{(0)}) $ equals the identity. 
\end{proof}

We say that a $ \G $-equivariant pairing $ h $ on a $ \G $-module $ E $ is $ \G $-admissible if there exists a $ \G $-equivariant linear embedding $ \ccinfty(\G^{(0)}) \rightarrow E $ such that the restriction of $ h $ to $ \ccinfty(\G^{(0)}) \subseteq E $ agrees with the canonical isomorphism $ \ccinfty(\G^{(0)}) \otimes_{\ccinfty(\G^{(0)})} \ccinfty(\G^{(0)}) \cong \ccinfty(\G^{(0)}) $. One checks that such an embedding induces a $ \G $-equivariant algebra homomorphism $ \iota: \ccinfty(\G^{(0)}) \rightarrow \K(E) $ in this case. 
More generally, if $ A $ is a $ \G $-algebra then we obtain a $ \G $-equivariant algebra homo\-morphism $ \iota_A: A \cong A \otimes_{\ccinfty(\G^{(0)})} \ccinfty(\G^{(0)}) \rightarrow A \otimes_{\ccinfty(\G^{(0)})} \K(E) $ by tensoring the identity on $ A $ with $ \iota: \ccinfty(\G^{(0)}) \rightarrow \K(E) $. 

\begin{thm} \label{StabLemma} 
Let $ A $ be a pro-$ \G $-algebra, and let $ E $ be a $ \G $-module equipped with a $ \G $-admissible $ \G $-equivariant bilinear pairing. Then the class 
$$
[\iota_A] \in H_0 \Hom_{A(\G)}(X_\G(\T A), X_\G(\T(A \otimes_{\ccinfty(\G^{(0)})} \K(E)))) 
$$
is invertible.
\end{thm}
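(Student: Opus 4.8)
\emph{Outline of the argument.} The plan is to exhibit an explicit two-sided inverse of the class of $ \iota_A $, working throughout at the level of pro-paracomplexes of $ \G $-anti-Yetter-Drinfeld modules and adapting the group equivariant argument of \cite{VOIGT_thesis}, \cite{VOIGT_equivariantperiodiccyclichomology}, with the twisted trace $ ttr $ of Lemma \ref{twistedtrace} taking over the role of the ordinary trace. The first step is to promote $ ttr $ to a generalised trace on equivariant differential forms: for a pro-$ \G $-algebra $ B $ one sets
$$
ttr_\#\bigl(f \otimes (b^0 \otimes L_0)\, d(b^1 \otimes L_1) \cdots d(b^n \otimes L_n)\bigr) = ttr(f \otimes L_0 L_1 \cdots L_n) \otimes b^0 \, db^1 \cdots db^n,
$$
with the analogous formula when the leading factor is absent, and one checks that this defines a morphism $ ttr_\#: \Omega_\G(B \otimes_{\ccinfty(\G^{(0)})} \K(E)) \rightarrow \Omega_\G(B) $ of pro-$ \G $-anti-Yetter-Drinfeld modules commuting with $ d_\G $ and $ b_\G $, hence also with $ \kappa_\G $, $ B_\G $ and the canonical operator $ T $. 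The decisive point in this verification is that whenever $ b_\G $ cyclically transports a factor $ b^n \otimes L_n $ to the front and twists it by $ \chi_{U^{-1}} $, the twisted trace identity $ ttr(\chi_U \otimes L_0 L_1) = ttr(\chi_U \otimes (\chi_{U^{-1}} \cdot L_1) L_0) $ of Lemma \ref{twistedtrace} absorbs the resulting discrepancy; this is exactly the equivariant counterpart of the ordinary trace property, and nothing else is needed. Since $ ttr_\# $ is compatible with $ b_\G $ and $ B_\G $, it descends to a chain map of Hodge towers $ \theta\Omega_\G(B \otimes_{\ccinfty(\G^{(0)})} \K(E)) \rightarrow \theta\Omega_\G(B) $, and via the homotopy equivalences of Theorem \ref{homotopyeq} this produces, for $ B = A $, a class $ \mathrm{tr}_* \in H_0 \Hom_{A(\G)}(X_\G(\T(A \otimes_{\ccinfty(\G^{(0)})} \K(E))), X_\G(\T A)) $, which will be the desired inverse.

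\emph{The split.} Next I would verify that $ \mathrm{tr}_* $ composed with the map induced by $ \iota_A $ is the identity on $ X_\G(\T A) $, equivalently that $ [\iota_A] \cdot \mathrm{tr}_* = \id $. At the level of differential forms the point is simply that $ ttr_\# \circ (\iota_A)_\# = \id $ on $ \Omega_\G(A) $: writing $ \iota_A $ in terms of the corner idempotents $ \iota(\chi_V) $ furnished by the admissible embedding $ \ccinfty(\G^{(0)}) \hookrightarrow E $, and using that the restriction of $ h $ to $ \ccinfty(\G^{(0)}) $ is the canonical isomorphism together with the identity $ \chi_U \otimes g = \chi_U \otimes \chi_{U^{-1}} \cdot g $ in $ \OG \otimes_{\ccinfty(\G^{(0)})} \ccinfty(\G^{(0)}) \cong \OG $ (as in the proof of Lemma \ref{twistedtrace}), one finds that $ ttr_\# $ literally undoes $ (\iota_A)_\# $. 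This is where admissibility of the pairing is essential. Consequently $ [\iota_A] $ is split injective, with $ \mathrm{tr}_* $ as a one-sided inverse.

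\emph{The homotopy.} It remains to show that the other composite, the map induced by $ \iota_A $ after $ \mathrm{tr}_* $, is chain homotopic to the identity of $ X_\G(\T(A \otimes_{\ccinfty(\G^{(0)})} \K(E))) $; equivalently $ \mathrm{tr}_* \cdot [\iota_A] = \id $. This is the substantial part of the proof and the step I expect to be the main obstacle. The strategy mirrors the group equivariant case: one constructs an explicit chain homotopy, invoking homotopy invariance (Theorem \ref{thm: homotopy invariance}) to compare corner embeddings. Concretely, one uses the isomorphism $ \K(E) \otimes_{\ccinfty(\G^{(0)})} \K(E) \cong \K(E \otimes_{\ccinfty(\G^{(0)})} E) $ of $ \G $-algebras — under which the tensor-square pairing is again admissible — and shows that the two corner embeddings $ \K(E) \to \K(E) \otimes_{\ccinfty(\G^{(0)})} \K(E) $ given by $ L \mapsto L \otimes p $ and $ L \mapsto p \otimes L $, with $ p $ the corner idempotent, are connected by a smooth $ \G $-equivariant rotation homotopy, so that by Theorem \ref{thm: homotopy invariance} they induce the same morphism after applying $ X_\G \circ \T $; feeding this back into the computation of the previous step forces $ \iota_A \circ \mathrm{tr} \simeq \id $. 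The delicate points, which are precisely the additional work needed over the group case, are that $ ttr_\# $ commutes with $ b_\G $ and $ B_\G $ strictly and not merely up to homotopy, that the rotation path runs through genuinely $ \G $-equivariant (equivalently $ A(\G) $-equivariant) homomorphisms, and that admissibility persists under all the identifications, so that Theorem \ref{thm: homotopy invariance} is applicable at each stage. With these in place, $ \mathrm{tr}_* $ is a two-sided inverse of $ [\iota_A] $, which is therefore invertible.
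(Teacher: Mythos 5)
Your proposal is correct and takes essentially the same route as the paper: a trace-type chain map built from the twisted trace of Lemma \ref{twistedtrace}, the direct verification that this trace splits $ [\iota_A] $ using admissibility of the pairing, and then the doubling argument with the flip, the rotation homotopy $ \cos(\pi t/2)\id + \sin(\pi t/2)\Sigma $ and Theorem \ref{thm: homotopy invariance} to handle the other composite. The only difference is packaging: the paper defines the trace directly on $ X_\G(\T A \otimes_{\ccinfty(\G^{(0)})} \K(E)) $ and precomposes with $ X_\G(\lambda_A) $, where $ \lambda_A: \T(A \otimes_{\ccinfty(\G^{(0)})} \K(E)) \rightarrow \T A \otimes_{\ccinfty(\G^{(0)})} \K(E) $ comes from the universal property of the periodic tensor algebra, rather than defining it on the full complex of equivariant differential forms and passing through Theorem \ref{homotopyeq}.
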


\begin{proof}
We have to find an inverse for $ [\iota_A] $. First observe that the canonical $ \G $-equivariant linear map $ A \otimes_{\ccinfty(\G^{(0)})} \K(E) \rightarrow 
\T A \otimes_{\ccinfty(\G^{(0)})} \K(E) $ is a lonilcur and hence induces 
a $ \G $-equivariant homomorphism $ \lambda_A: \T(A \otimes_{\ccinfty(\G^{(0)})}
\K(E)) \rightarrow \T A \otimes_{\ccinfty(\G^{(0)})} \K(E) $.
Define $ tr_A: X_\G(\T A \otimes_{\ccinfty(\G^{(0)})} \K(E))
\rightarrow X_\G(\T A) $ by
\begin{equation*}
tr_A(f \otimes x \otimes L) = ttr(f \otimes L) \otimes x
\end{equation*}
and
\begin{align*}
tr_A(f \otimes (x_0 \otimes L_0) d(x_1 \otimes L_1)) &= 
ttr(f \otimes L_0 L_1) \otimes x_0 dx_1 \\
tr_A(f \otimes d(x_1 \otimes L_1)) &= 
ttr(f \otimes L_1) \otimes dx_1.
\end{align*}
Here, we use the twisted trace $ ttr $, see Lemma \ref{twistedtrace}. By construction, $ tr_A $ is a map of $ \G $-anti-Yetter-Drinfeld modules. 
We have
\begin{align*}
tr_A d_\G(f \otimes x \otimes L) &= tr_A(f \otimes d(x \otimes L)) \\
&= ttr(f \otimes L) \otimes dx \\
&= d_\G(ttr(f \otimes L) \otimes x) \\
&= d_\G tr_A(f \otimes x \otimes L), 
\end{align*}
and for a compact open bisection $ U \subseteq \G $ we calculate 
\begin{align*}
b_\G tr_A&(\chi_U \otimes (x_0 \otimes L_0) d(x_1\otimes L_1)) = b_\G(ttr(\chi_U \otimes L_0 L_1) \otimes x_0dx_1) \\
&= ttr(\chi_U \otimes L_0 L_1) \otimes (x_0x_1 - (\chi_{U^{-1}}\cdot x_1)x_0) \\
&= ttr(\chi_U \otimes L_0 L_1) \otimes x_0x_1 
- ttr(\chi_U \otimes (\chi_{U^{-1}}\cdot L_1) L_0) \otimes (\chi_{U^{-1}}\cdot x_1)x_0 \\ 
&= tr_A(\chi_U \otimes (x_0x_1\otimes L_0 L_1) - \chi_U \otimes (\chi_{U^{-1}} \cdot x_1)x_0 \otimes (\chi_{U^{-1}}\cdot L_1)L_0) \\
&= tr_A b_\G(\chi_U \otimes (x_0\otimes L_0) d(x_1\otimes L_1)), 
\end{align*}
using the twisted trace property from Lemma \ref{twistedtrace}. Similarly one checks
$$
b_\G tr_A(\chi_U \otimes d(x_1\otimes L_1)) = tr_A b_\G(\chi_U \otimes d(x_1\otimes L_1)). 
$$
It follows that $ tr_A $ is a chain map of paracomplexes. 

We define $ \tau_A = tr_A X_\G(\lambda_A) $ and claim that $ [\tau_A] $ is an inverse for $ [\iota_A] $. From the definitions one immediately computes 
$ [\iota_A] \cdot [\tau_A] = \id $. 
It thus remains to show that $ [\tau_A] \cdot
[\iota_A] = \id $. Consider the $ \G $-equivariant homomorphisms $ i_j: A \otimes_{\ccinfty(\G^{(0)})} \K(E) \rightarrow A \otimes_{\ccinfty(\G^{(0)})} \K(E) \otimes_{\ccinfty(\G^{(0)})} \K(E) $ for $ j = 0,1 $ given by 
\begin{align*}
i_0 = \id \otimes \iota, \quad 
i_1 = (\id \otimes \sigma) i_0. 
\end{align*}
Here we use the canonical identification 
$$ 
A \otimes_{\ccinfty(\G^{(0)})} \K(E) \cong A \otimes_{\ccinfty(\G^{(0)})} \K(E) \otimes_{\ccinfty(\G^{(0)})} \ccinfty(\G^{(0)}) 
$$ 
in the definition of $ i_0 $, and the tensor flip automorphism $ \sigma $ of $ \K(E) \otimes_{\ccinfty(\G^{(0)})} \K(E) $ given by $ \sigma(L_1 \otimes L_2) = L_2 \otimes L_1 $ in the definition of $ i_1 $. 

We calculate $ [i_0] \cdot [\tau_{A\otimes \K(E)}] = \id  $ 
and $ [i_1] \cdot [\tau_{A \otimes \K(E)}] =
[\tau_A] \cdot [\iota_A] $. Let us show that the maps $ i_0 $ and $ i_1 $ are $ \G $-equivariantly homotopic. To this end observe that $ \K(E) \otimes_{\ccinfty(\G^{(0)})} \K(E) \cong \K(E \otimes_{\ccinfty(\G^{(0)})} E)$ as $ \G $-algebras and denote by $ \Sigma $ the flip automorphism of $ E \otimes_{\ccinfty(\G^{(0)})} E $ given by $ \Sigma (e \otimes f) = f \otimes e $. For $ t \in [0,1] $ we then obtain a $ \G $-equivariant linear endomorphism $ \Sigma_t $ of $ E \otimes_{\ccinfty(\G^{(0)})} E $ given by
$$ 
\Sigma_t = \cos(\pi t/2) \id + i \sin(\pi t/2) \Sigma,
$$
and we note that $ \Sigma_t $ is invertible with inverse $ \Sigma_t^{-1} = \cos(\pi t/2) \id - i \sin(\pi t/2) \Sigma $. Conjugation with $ \Sigma_t $ defines $ \G $-equivariant algebra automorphism $ \sigma_t $ of 
$$ 
\K(E \otimes_{\ccinfty(\G^{(0)})} E) = (E \otimes_{\ccinfty(\G^{(0)})} E) \otimes_{\ccinfty(\G^{(0)})} (E \otimes_{\ccinfty(\G^{(0)})} E). 
$$ 
The family $ (\sigma_t)_{t \in [0,1]} $ depends smoothly on $ t $, and by construction we have 
$ \sigma_0 = \id $ and $ \sigma_1 = \sigma $. Now define $ h_t: A \otimes_{\ccinfty(\G^{(0)})} \K(E) \rightarrow 
A \otimes_{\ccinfty(\G^{(0)})} \K(E) \otimes_{\ccinfty(\G^{(0)})} \K(E) $ by $ h_t = (\id \otimes \sigma_t) i_0 $ for $ t \in [0,1] $. Then each $ h_t $ is a $ \G $-equivariant algebra homomorphism, and by construction
$ h_j = i_j $ for $ j = 0,1 $. Since the family $ (h_t)_{t \in [0,1]} $ depends again smoothly on $ t $ we have thus constructed a $ \G $-equivariant smooth homotopy between $ i_0 $ and $ i_1 $. According to Theorem \ref{thm: homotopy invariance} we obtain $ [i_0] = [i_1] $, and hence $ [\tau_A] \cdot [\iota_A] = \id $ as required. 
\end{proof}

In order to discuss the implications of Theorem \ref{StabLemma} for the stability properties of the functor $ HP^\G_* $ we need some preparations. 

\begin{lemma} \label{automaticequivariance}
Let $ E,F $ be $ \G $-modules and assume that $ \phi: E \rightarrow F $ is a $ \ccinfty(\G^{(0)}) $-linear isomorphism. 
Then 
$$ 
\phi^T = T_F^{-1} \circ (\id \otimes \phi) \circ T_E: \ccinfty(\G) \stackrel{r,\id}{\otimes} E \rightarrow \ccinfty(\G) \stackrel{r,\id}{\otimes} F 
$$ 
is an isomorphism of $ \G $-modules. 
\end{lemma}

\begin{proof}
The map $ T_F^{-1} \circ (\id \otimes \phi) \circ T_E $ is $ \G $-linear by construction, and it is clearly bijective. 
\end{proof}

Now assume that $ E, F $ are $ \G $-modules equipped with $ \G $-equivariant pairings $ h_E, h_F $, respectively. We say that a $ \ccinfty(\G^{(0)}) $-linear map $ \phi: E \rightarrow F $ is isometric if 
$$ 
h_F(\phi(e_1), \phi(e_2)) = h_E(e_1, e_2)  
$$
for all $ e_1, e_2 \in E $. 

\begin{lemma} \label{automaticequivariancepairing}
Let $ E, F $ be $ \G $-modules equipped with $ \G $-equivariant pairings, and assume that $ \phi: E \rightarrow F $ is a $ \ccinfty(\G^{(0)}) $-linear isometric isomorphism. Then $ \phi^T: \ccinfty(\G) \stackrel{r,\id}{\otimes} E \rightarrow \ccinfty(\G) \stackrel{r,\id}{\otimes} F $ is an isometric isomorphism of $ \G $-modules. 
\end{lemma}

\begin{proof}
The $ \G $-module $ \ccinfty(\G) $ is equipped with the regular pairing $ \lambda $, introduced in the discussion following Definition \ref{defpairing}, so that the tensor product pairing on $ \ccinfty(\G) \stackrel{r,\id}{\otimes} E $ is given by 
$$ 
h_{\ccinfty(\G) \otimes E}(f \otimes e_1, g \otimes e_2) = \lambda(f,g) h_E(e_1, e_2)
$$
for $ f,g \in \ccinfty(\G) $ and $ e_1, e_2 \in E $. If $ U, V \subseteq \G $ are compact open bisections, then using $ \lambda(\chi_U, \chi_V) = \lambda(\chi_{U \cap V} = \chi_{r(U \cap V)} $ we obtain
\begin{align*}
h_{\ccinfty(\G) \otimes F}&(\phi^T(\chi_U \otimes e_1), \phi^T(\chi_V \otimes e_2)) \\
&= h_{\ccinfty(\G) \otimes F}(\chi_U \otimes \chi_U \cdot \phi(\chi_{U^{-1}} \cdot e_1), \chi_V \otimes \chi_V \cdot \phi(\chi_{V^{-1}} \cdot e_2)) \\
&= \lambda(\chi_{U \cap V}) h_F(\chi_U \cdot \phi(\chi_{U^{-1}} \cdot e_1), \chi_V \cdot \phi(\chi_{V^{-1}} \cdot e_2)) \\
&= \lambda(\chi_{U \cap V}) \chi_{U \cap V} \cdot h_F(\phi(\chi_{(U \cap V)^{-1}} \cdot e_1), \phi(\chi_{(U \cap V)^{-1}} \cdot e_2)) \\
&= \lambda(\chi_{U \cap V}) \chi_{U \cap V} \cdot h_E(\chi_{(U \cap V)^{-1}} \cdot e_1, \chi_{(U \cap V)^{-1}} \cdot e_2) \\
&= h_{\ccinfty(\G) \otimes E}(\chi_U \otimes e_1, \chi_V \otimes e_2)
\end{align*} 
as required.  
\end{proof}

Let us say that a $ \G $-equivariant pairing $ h $ on a $ \G $-module $ E $ is $ \ccinfty(\G^{(0)}) $-regular if there exists a direct sum decomposition $ E \cong \ccinfty(\G^{(0)}) \oplus F $ of the underlying $ \ccinfty(\G^{(0)}) $-modules which is orthogonal with respect to $ h $, such that the restriction of $ h $ to $ \ccinfty(\G^{(0)}) \otimes_{\ccinfty(\G^{(0)})} \ccinfty(\G^{(0)}) \subseteq E \otimes_{\ccinfty(\G^{(0)})} E $ is the canonical isomorphism with $ \ccinfty(\G^{(0)}) $. Here orthogonality means that elements from opposite summands pair to zero under $ h $. The regular pairing on $ \D(\G) $ is an example of a $ \ccinfty(\G^{(0)}) $-regular pairing which is typically not $ \G $-admissible. 

\begin{thm}[Stability]
Let $ E $ be a $ \G $-module equipped with a $ \ccinfty(\G^{(0)}) $-regular $ \G $-equivariant pairing. 
Then there exists an invertible element in 
$$ 
HP^\G_0(A, A \otimes_{\ccinfty(\G^{(0)})} \K(E))
$$ 
for any pro-$ \G $-algebra $ A $. It follows that we have natural isomorphisms 
$$
HP^\G_*(A \otimes_{\ccinfty(\G^{(0)})} \K(E), B) \cong HP^\G_*(A, B) \cong HP^\G_*(A, B \otimes_{\ccinfty(\G^{(0)})} \K(E))
$$
for all pro-$ \G $-algebras $ A, B $. 
\end{thm}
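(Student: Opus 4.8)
The plan is to deduce the entire statement from Theorem \ref{StabLemma}. Begin with the two displayed isomorphisms: these follow formally once the first assertion is established. Indeed, if $\alpha \in HP^\G_0(A, A \otimes_{\ccinfty(\G^{(0)})} \K(E))$ is invertible with inverse $\beta$, then the composition product on $HP^\G$ (which is associative and has units $[\id]$) shows that composition with $\alpha$ from the appropriate side induces the isomorphism $HP^\G_*(A \otimes_{\ccinfty(\G^{(0)})} \K(E), B) \cong HP^\G_*(A,B)$; applying the same construction with $B$ in place of $A$ gives $HP^\G_*(A,B) \cong HP^\G_*(A, B \otimes_{\ccinfty(\G^{(0)})} \K(E))$. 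This is a routine Morita-type argument and requires no new input, so the task is reduced to producing the invertible element.

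To produce it, note first that $\K(E) \otimes_{\ccinfty(\G^{(0)})} \KG \cong \K(E \otimes_{\ccinfty(\G^{(0)})} \DG)$ as $\G$-algebras, with $E \otimes_{\ccinfty(\G^{(0)})} \DG$ carrying the diagonal $\G$-action and the tensor-product pairing; since $\KG = \K(\DG)$, Definition \ref{defhpg} identifies $HP^\G_0(A, A \otimes_{\ccinfty(\G^{(0)})} \K(E))$ with the degree-$0$ homology of $\Hom_{A(\G)}(X_\G(\T(A \otimes_{\ccinfty(\G^{(0)})} \K(\DG))), X_\G(\T(A \otimes_{\ccinfty(\G^{(0)})} \K(E \otimes_{\ccinfty(\G^{(0)})} \DG))))$. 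Theorem \ref{StabLemma}, applied to the pro-$\G$-algebra $A \otimes_{\ccinfty(\G^{(0)})} \KG$ together with \emph{any} admissible $\G$-module $F$, yields invertible classes relating $X_\G(\T(A \otimes_{\ccinfty(\G^{(0)})} \KG))$ and $X_\G(\T(A \otimes_{\ccinfty(\G^{(0)})} \KG \otimes_{\ccinfty(\G^{(0)})} \K(F)))$. It therefore suffices to show that, after tensoring with $\KG$ and passing to an infinite direct sum of copies if necessary, both $\DG$ and $E \otimes_{\ccinfty(\G^{(0)})} \DG$ become $\K$ of an admissible module. The mechanism is an absorption principle: tensoring an arbitrary $\G$-module with $\DG$ replaces its $\G$-action by the trivial one up to $\G$-equivariant isomorphism (this is analogous to the group-equivariant case and is implemented by the canonical map $T$). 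Combined with Lemma \ref{infinitedirectsums} — which, using the hypotheses that $\ccinfty(\G^{(0)})$ and $E$ are projective and $h$ is surjective, identifies $E^{\oplus \kappa}$ with $\ccinfty(\G^{(0)})^{\oplus \kappa}$ as a $\ccinfty(\G^{(0)})$-module for a suitable infinite cardinal $\kappa$ — and Lemma \ref{automaticequivariance} — which promotes such a $\ccinfty(\G^{(0)})$-module identification to a $\G$-module isomorphism after tensoring with $\ccinfty(\G)$ — one identifies $E^{\oplus \kappa} \otimes_{\ccinfty(\G^{(0)})} \DG$ with a direct sum of copies of $\DG$ carrying the standard pairing, which is visibly admissible (it is admissible via the embedding of $\ccinfty(\G^{(0)})$ into a single summand). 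The remaining discrepancy between $\K(E)$ and $\K(E^{\oplus \kappa}) \cong \K(E) \otimes_{\ccinfty(\G^{(0)})} \K(\ccinfty(\G^{(0)})^{\oplus \kappa})$ is absorbed by one further application of Theorem \ref{StabLemma}, the matrix factor $\K(\ccinfty(\G^{(0)})^{\oplus \kappa})$ being $\K$ of an admissible module. Composing all these invertible classes, together with the reverse maps, produces the desired invertible element.

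The genuinely technical point, and the main obstacle, lies in the bookkeeping of the previous paragraph: one must track not only the underlying $\ccinfty(\G^{(0)})$- and $\G$-module structures but the bilinear pairings through the absorption isomorphism and through Lemma \ref{infinitedirectsums}, and verify that the pairing one arrives at is the standard admissible one up to $\G$-equivariant isometry; one must also choose the infinite cardinal $\kappa$ uniformly, large enough for every invocation of Lemma \ref{infinitedirectsums}, and carry out the matrix-stability reconciliation of $\K(E)$ with $\K(E^{\oplus \kappa})$ cleanly. No further analytic work is needed, since the homotopy-invariance statement on which Theorem \ref{StabLemma} rests is already available as Theorem \ref{thm: homotopy invariance}.
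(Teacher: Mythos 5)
Your high-level architecture is the same as the paper's (reduce to an invertible class, then combine Lemma \ref{infinitedirectsums}, Lemma \ref{automaticequivariance}, the absorption isomorphism $ \DG \otimes_{\ccinfty(\G^{(0)})} M \cong \DG \otimes_{\ccinfty(\G^{(0)})} M_{\mathrm{triv}} $, and Theorem \ref{StabLemma} together with the matrix factor $ \K(\ccinfty(\G^{(0)})^{\oplus \kappa}) $), but there is a genuine gap at the pivotal admissibility claim. You assert that a direct sum of copies of $ \DG $ with the standard pairing is admissible ``via the embedding of $ \ccinfty(\G^{(0)}) $ into a single summand''. Admissibility, as defined before Theorem \ref{StabLemma}, requires a $ \G $-\emph{equivariant} embedding of the trivial $ \G $-module $ \ccinfty(\G^{(0)}) $; extension by zero into a summand of $ \DG $ is only $ \ccinfty(\G^{(0)}) $-linear, since $ \chi_U * \iota(f) $ is supported in $ U $ rather than in $ \G^{(0)} $. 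Worse, for $ \G $ an infinite discrete group there is no nonzero $ \G $-equivariant map at all from the trivial module into a direct sum of copies of $ \DG $ (a left-invariant finitely supported function vanishes), so no choice of embedding can repair this. Note also that if your claim were true, Theorem \ref{StabLemma} applied with $ F = \DG^{\oplus \kappa} \cong \DG \otimes_{\ccinfty(\G^{(0)})} \ccinfty(\G^{(0)})^{\oplus\kappa} $ would give $ X_\G(\T A) \simeq X_\G(\T(A \otimes_{\ccinfty(\G^{(0)})} \KG)) $ for \emph{every} pro-$ \G $-algebra $ A $, i.e.\ the $ \KG $-stabilisation in Definition \ref{defhpg} would be redundant for arbitrary ample groupoids; the paper establishes this only for proper $ \G $, and it needs a cut-off function there (Lemma \ref{averaging} and the proposition following it). Your intermediate reduction ``both $ \DG $ and $ E \otimes \DG $ become $ \K $ of an admissible module'' is the wrong target for the same reason, and it would moreover require applying Lemma \ref{infinitedirectsums} to $ \DG $, whose projectivity is not among the hypotheses of the theorem.

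The paper's proof sidesteps all of this by feeding Theorem \ref{StabLemma} only the \emph{trivial} $ \G $-module $ \ccinfty(\G^{(0)})^{\oplus \kappa} $ --- once with the pairing induced from $ E^{\oplus \kappa} $ via Lemma \ref{infinitedirectsums}, once with the standard pairing --- for which any $ \ccinfty(\G^{(0)}) $-linear embedding of $ \ccinfty(\G^{(0)}) $ is automatically equivariant. The $ \G $-module isomorphism $ \DG \otimes_{\ccinfty(\G^{(0)})} E^{\oplus\kappa} \cong \DG \otimes_{\ccinfty(\G^{(0)})} \ccinfty(\G^{(0)})^{\oplus\kappa} \cong \DG^{\oplus\kappa} $ from Lemma \ref{automaticequivariance} is used only to identify the coefficient algebras $ \K(\,\cdot\,) $ inside $ X_\G(\T(A \otimes_{\ccinfty(\G^{(0)})} \cdot\,)) $, never as a source of admissibility for $ \DG^{\oplus\kappa} $ itself. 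Concretely, one passes from $ X_\G(\T(A \otimes_{\ccinfty(\G^{(0)})} \KG)) $ to $ X_\G(\T(A \otimes_{\ccinfty(\G^{(0)})} \KG \otimes_{\ccinfty(\G^{(0)})} \K(\ccinfty(\G^{(0)})^{\oplus\kappa}))) $ by stability, identifies this with $ X_\G(\T(A \otimes_{\ccinfty(\G^{(0)})} \K(\DG \otimes_{\ccinfty(\G^{(0)})} E^{\oplus\kappa}))) $, strips off the factor $ \K(\ccinfty(\G^{(0)})^{\oplus\kappa}) $ by a second application of stability to land in $ X_\G(\T(A \otimes_{\ccinfty(\G^{(0)})} \K(\DG \otimes_{\ccinfty(\G^{(0)})} E))) \cong X_\G(\T(A \otimes_{\ccinfty(\G^{(0)})} \KG \otimes_{\ccinfty(\G^{(0)})} \K(E))) $. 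If you recast your argument in this form, the remaining parts of your proposal --- the formal deduction of the two displayed isomorphisms from the invertible class, and the pairing bookkeeping you correctly flag as the technical point --- go through.
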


\begin{proof}
Let us write $ E^{\oplus \infty} $ for the countable direct sum of copies of $ E $. By assumption, we obtain an isometric isomorphism $ E^{\oplus \infty} \cong \ccinfty(\G^{(0)}) \oplus E^{\oplus \infty} $ of $ \ccinfty(\G^{(0)}) $-modules, where we consider the canonical pairing on the first summand on the right hand side, and the given pairing on each copy of $ E $. If we equip $ \ccinfty(\G^{(0)}) $ with the trivial $ \G $-module structure and $ E^{\oplus \infty} $ with the one induced from $ E $, then the pairing on $ \ccinfty(\G^{(0)}) \oplus E^{\oplus \infty} $ becomes $ \G $-admissible. However, we note that the resulting $ \G $-module $ \ccinfty(\G^{(0)}) \oplus E^{\oplus \infty} $ is typically not isomorphic to $ E^{\oplus \infty} $ as a $ \G $-module. 

Due to Lemma \ref{automaticequivariancepairing} there exists an isometric isomorphism 
\begin{align*}
\DG \otimes_{\ccinfty(\G^{(0)})} E^{\oplus \infty} \cong \DG \otimes_{\ccinfty(\G^{(0)})} (\ccinfty(\G^{(0)}) \oplus E^{\oplus\infty}) 
\end{align*}
of $ \G $-modules with respect to the inner products described above. Using Theorem \ref{StabLemma} we therefore obtain 
\begin{align*}
X_\G(\T(A &\otimes_{\ccinfty(\G^{(0)})} \K_\G)) = X_\G(\T(A \otimes_{\ccinfty(\G^{(0)})} \K(\DG))) \\ 
&\simeq X_\G(\T(A \otimes_{\ccinfty(\G^{(0)})} \K(\DG \otimes_{\ccinfty(\G^{(0)})} (\ccinfty(\G^{(0)}) \oplus E^{\oplus\infty})))) \\
&\cong X_\G(\T(A \otimes_{\ccinfty(\G^{(0)})} \K(\DG \otimes_{\ccinfty(\G^{(0)})} E^{\oplus\infty}))) \\
&\cong X_\G(\T(A \otimes_{\ccinfty(\G^{(0)})} \K(\DG \otimes_{\ccinfty(\G^{(0)})} E) \otimes_{\ccinfty(\G^{(0)})} \K(\ccinfty(\G^{(0)})^{\oplus\infty}))) \\
&\simeq X_\G(\T(A \otimes_{\ccinfty(\G^{(0)})} \K(\DG \otimes_{\ccinfty(\G^{(0)})} E))) \\
&\cong X_\G(\T(A \otimes_{\ccinfty(\G^{(0)})} \KG \otimes_{\ccinfty(\G^{(0)})} \K(E))). 
\end{align*}
This yields the assertion. 
\end{proof}

Let $ \G $ be a proper ample groupoid such that $ \G \backslash \G^{(0)} $ is paracompact. According to Proposition \ref{smoothcutoff} there exists a locally constant cut-off function $ c $ for $ \G $. It follows that for any $ f \in \ccinfty(\G^{(0)}) $ the function $ s^*(c) r^*(f): \G \rightarrow \mathbb{C} $ given by 
$$
s^*(c) r^*(f)(\alpha) = c(s(\alpha)) f(r(\alpha)) 
$$
has compact support and is thus contained in $ \ccinfty(\G) $. 

Now let $ E, F $ be $ \G $-modules and let $ \phi: E \rightarrow F $ be a $ \ccinfty(\G^{(0)}) $-linear map. From Lemma \ref{automaticequivariance} we obtain a $ \G $-equivariant linear map $ \phi^T = T_F^{-1}(\id \otimes \phi) T_E: \ccinfty(\G) \stackrel{r,\id}{\otimes} E \rightarrow  \ccinfty(\G) \stackrel{r,\id}{\otimes} F $. 
Recall moreover from Lemma \ref{integrationmap} that the integration map $ \lambda: \ccinfty(\G) \rightarrow \ccinfty(\G^{(0)}) $ is $ \G $-equivariant with respect to the left multiplication action on $ \ccinfty(\G) = \DG $. 
Hence we obtain a linear map $ \phi^\G: E \rightarrow F $ by defining 
$$ 
\phi^\G(f \otimes e) = (\lambda \otimes \id) \phi^T(s^*(c) r^*(f) \otimes e), 
$$
using the canonical identification $ X \cong \ccinfty(\G^{(0)}) \otimes_{\ccinfty(\G^{(0)})} X $ for $ X = E,F $. 

\begin{lemma} \label{averaging}
Let $ \G $ be a proper ample groupoid with $ \G \backslash \G^{(0)} $ paracompact and let $ E, F $ be $ \G $-modules. If 
$ \phi: E \rightarrow F $ is a $ \ccinfty(\G^{(0)}) $-linear map then $ \phi^\G: E \rightarrow F $ is a $ \G $-equivariant linear map. 
\end{lemma}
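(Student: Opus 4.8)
The claim is that the averaging construction $\phi \mapsto \phi^\G$ turns a merely $\ccinfty(\G^{(0)})$-linear map into a $\G$-equivariant one. The natural strategy is to unwind the definition of $\phi^\G$ and check $\G$-equivariance directly on characteristic functions $\chi_U$ of compact open bisections $U \subseteq \G$, since these span $\DG$ and the $\G$-module structures are determined by their action.

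First I would record the explicit formulas for all the ingredients. Using the comodule description of $\G$-modules (Theorem \ref{theorem: unification theorem}) and the formula $T_E^{-1}(\chi_U \otimes e) = \chi_U \otimes \chi_{U^{-1}} \cdot e$, one computes that $\phi^T = T_F^{-1}(\id \otimes \phi)T_E$ acts by $\phi^T(\chi_U \otimes e) = \chi_U \otimes \chi_U \cdot \phi(\chi_{U^{-1}} \cdot e)$, where I use that $\phi$ is $\ccinfty(\G^{(0)})$-linear to move the range-twist through. Combined with the integration map $\lambda$ applied to the first tensor factor and the definition $\phi^\G(e) = (\lambda \otimes \id)\phi^T(s^*(c)r^*(f_e) \otimes e)$ where $f_e$ is a local unit for $e$, one obtains a concrete sum-over-$\G^x$ formula for $\phi^\G$. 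The key point is that the cut-off function makes the sum finite: for fixed $e$ with $\chi_V \cdot e = e$, only $\alpha$ with $s(\alpha) \in \operatorname{supp}(c)$ and $r(\alpha) \in V$ contribute, and properness of $\G$ (via the defining property of the cut-off) ensures this is a finite set, so $\phi^\G$ is well-defined; this uses Lemma \ref{lemma: integration over fiber is continuous for functions with proper support} exactly as in Proposition \ref{smoothcutoff}.

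Next, to verify $\G$-equivariance I would check $\phi^\G(\chi_W \cdot e) = \chi_W \cdot \phi^\G(e)$ for a compact open bisection $W$. Expanding both sides using the explicit formula, the left side is a sum over $\alpha \in \G^{r(W \cdot e)}$ (informally) of terms involving $c(s(\alpha))$ and $\phi(\chi_{\alpha^{-1}} \cdot \chi_W \cdot e)$, while the right side applies $\chi_W$ after summing. The reindexing $\alpha \mapsto \beta = \alpha\gamma$ (or $\beta = \gamma^{-1}\alpha$, depending on conventions), where $\gamma$ ranges over the single element of $W$ with the appropriate range, transforms one sum into the other; this is precisely the argument that makes $\lambda$ itself $\G$-equivariant in Lemma \ref{integrationmap}, now carried out one level up and tensored with $\phi^T$. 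The cut-off condition $\sum_{\alpha \in \G^x} c(s(\alpha)) = 1$ is what guarantees that no normalisation is lost under this reindexing — the averaging is genuinely an average.

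The main obstacle will be bookkeeping rather than conceptual difficulty: keeping track of which variable the twist $\chi_{U^{-1}} \cdot (-)$ acts on as it passes through $\phi$, $T_F^{-1}$, $\lambda$, and the multiplication by $\chi_W$, and getting the direction of the reindexing substitution consistent with the left-action conventions fixed in Section \ref{section:dgmodules}. I would organize the computation so that the only nontrivial input is the $\G$-equivariance of $\lambda$ from Lemma \ref{integrationmap} together with $\ccinfty(\G^{(0)})$-linearity of $\phi$; everything else is formal manipulation of balanced tensor products. It is also worth remarking explicitly that $\phi^\G$ is independent of the choice of local unit $f_e$, which follows since $s^*(c)r^*(f)\otimes e$ agrees in $\ccinfty(\G)\stackrel{r,\id}{\otimes} E$ for any two such $f$.
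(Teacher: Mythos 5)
Your plan is correct and essentially reproduces the paper's proof: the paper likewise verifies $\phi^\G(\chi_U\cdot(\chi_V\otimes e))=\chi_U\cdot\phi^\G(\chi_V\otimes e)$ on characteristic functions of compact open bisections, the only inputs being the $\G$-equivariance of $\lambda$ from Lemma \ref{integrationmap}, the equivariance of $\phi^T$, and the fibrewise reindexing you describe, packaged there as the identity $\lambda(\chi_U*(s^*(c)r^*(\chi_V)))=\lambda(s^*(c)r^*(\chi_{U\cdot V}))$. One minor quibble: since $s(\gamma^{-1}\alpha)=s(\alpha)$, the reindexing leaves the weights $c(s(\alpha))$ untouched, so the normalisation $\sum_{\alpha\in\G^x}c(s(\alpha))=1$ is not what makes the equivariance computation close up (it is rather what guarantees $\phi^\G=\phi$ for already equivariant $\phi$), even though the paper's own derivation of the displayed identity does invoke it.
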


\begin{proof}
For a compact open bisection $ U \subseteq \G $ and a compact open set $ V \subseteq \G^{(0)} $ we have $ \chi_U \cdot \chi_V = \chi_{r(U \cap s^{-1}(V))} = \chi_{U \cdot V} $. 
Using this we calculate
\begin{align*}
\lambda(\chi_U * (s^*(c) r^*(\chi_V)))(x) &= \chi_U \cdot \lambda(s^*(c) r^*(\chi_V))(x) \\
&= \sum_{\gamma \in \G^x} \chi_U(\gamma) \lambda(s^*(c) r^*(\chi_V))(\gamma^{-1} \cdot x) \\
&= \sum_{\beta \in \G^{\gamma^{-1} \cdot x}} \sum_{\gamma \in \G^x} \chi_U(\gamma) c(s(\beta)) \chi_V(r(\beta)) \\
&= \sum_{\beta \in \G^x} \sum_{\gamma \in \G^x} \chi_U(\gamma) c(s(\gamma^{-1}\beta)) \chi_V(r(\gamma^{-1}\beta)) \\
&= \sum_{\gamma \in \G^x} \chi_U(\gamma) \chi_V(r(\gamma^{-1})) \allowdisplaybreaks[2] \\
&= \chi_{r(U \cap s^{-1}(V))}(x) \\
&= \sum_{\beta \in \G^x} cs(\beta) \chi_{r(U \cap s^{-1}(V))}(x) \\
&= \lambda(s^*(c) r^*(\chi_{U \cdot V}))(x)
\end{align*}
for all $ x \in \G^{(0)} $. For $ e \in E $ we then compute 
\begin{align*}
\phi^\G(\chi_U \cdot (\chi_V \otimes e)) &= \phi^\G(\chi_{U \cdot V} \otimes \chi_U \cdot e) \\ 
&= (\lambda \otimes \id) \phi^T(s^*(c) r^*(\chi_{U \cdot V}) \otimes \chi_U \cdot e) \\
&= (\lambda \otimes \id) \phi^T(\chi_U * (s^*(c) r^*(\chi_V)) \otimes \chi_U \cdot e) \\
&= \chi_U \cdot (\lambda \otimes \id) \phi^T(s^*(c) r^*(\chi_V) \otimes e) \\
&= \chi_U \cdot \phi^\G(\chi_V \otimes e)
\end{align*}
as required, noting that $ \phi^T $ is $ \ccinfty(\G) $-linear. 
\end{proof}

We remark that if the map $ \phi $ in Lemma \ref{averaging} is already $ \G $-equivariant then $ \phi^\G = \phi $. Indeed, in this case we have $ \phi^\G(f \otimes e) = \lambda(s^*(c)) f \otimes \phi(e) = f \otimes \phi(e) $ for all $ f \in \ccinfty(\G^{(0)}) $ and $ e \in E $. In a similar way we get $ (\phi \psi)^\G = \phi^\G \psi $ and $ (\theta \phi)^\G = \theta \phi^\G $ if $ \psi, \theta $ are $ \G $-equivariant linear maps. 

\begin{prop}
Let $ \G $ be a proper ample groupoid with $ \G \backslash \G^{(0)} $ paracompact. Then we have a natural isomorphism 
$$
HP_*^\G(A,B) \cong H_*(\Hom_{A(\G)}(X_\G(\T A), X_\G(\T B)))
$$
for all $ \G $-algebras $ A, B $. 
\end{prop}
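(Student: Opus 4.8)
The plan is to show that, for a proper ample groupoid, the factor $\K_\G$ in Definition \ref{defhpg} is redundant: I will prove that $X_\G(\T(A\otimes_{\ccinfty(\G^{(0)})}\K_\G))$ and $X_\G(\T A)$ are homotopy equivalent as complexes of pro-$A(\G)$-modules, naturally in $A$. Applying this in both variables then identifies the two Hom-complexes in question and gives the isomorphism $HP^\G_*(A,B)\cong H_*(\Hom_{A(\G)}(X_\G(\T A), X_\G(\T B)))$. Since $\G\backslash\G^{(0)}$ is paracompact, Proposition \ref{smoothcutoff} furnishes a locally constant cut-off function, so the averaging operation $\phi\mapsto\phi^\G$ from Lemma \ref{averaging} is available. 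I will use that it is the identity on $\G$-equivariant maps and satisfies $(\theta\phi)^\G=\theta\phi^\G$ and $(\phi\psi)^\G=\phi^\G\psi$ for $\G$-equivariant $\theta,\psi$; as a consequence it carries a $\ccinfty(\G^{(0)})$-linear chain map (resp. chain homotopy) between complexes of pro-$\G$-anti-Yetter-Drinfeld modules with $\G$-equivariant differentials to a $\G$-equivariant such map (resp. homotopy), which is moreover automatically $\OG$-linear and hence $A(\G)$-linear.

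First I would record that the trace map $\tau_A = tr_A\,X_\G(\lambda_A)\colon X_\G(\T(A\otimes_{\ccinfty(\G^{(0)})}\K_\G))\to X_\G(\T A)$ built in the proof of Theorem \ref{StabLemma} is a morphism of complexes of pro-$A(\G)$-modules whose construction makes no use of admissibility; this is the candidate homotopy equivalence. Next, although the regular pairing $\lambda$ on $\DG$ is generally not admissible $\G$-equivariantly, it is admissible as a pairing of $\ccinfty(\G^{(0)})$-modules: the map $\ccinfty(\G^{(0)})\to\DG$ sending $f$ to the extension by zero of its restriction to $\G^{(0)}\subseteq\G$ is $\ccinfty(\G^{(0)})$-linear and identifies the restriction of $\lambda$ with the canonical pairing. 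Running the argument in the proof of Theorem \ref{StabLemma} while keeping track only of $\ccinfty(\G^{(0)})$-linearity then produces a $\ccinfty(\G^{(0)})$-linear chain map $\beta_A\colon X_\G(\T A)\to X_\G(\T(A\otimes_{\ccinfty(\G^{(0)})}\K_\G))$ together with $\ccinfty(\G^{(0)})$-linear chain homotopies realising $\tau_A\beta_A\simeq\id$ and $\beta_A\tau_A\simeq\id$.

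Finally I would average. Since $\tau_A$ is $\G$-equivariant we have $\tau_A\beta^\G_A=(\tau_A\beta_A)^\G$ and $\beta^\G_A\tau_A=(\beta_A\tau_A)^\G$, so averaging the two homotopies turns them into $A(\G)$-linear homotopies $\tau_A\beta^\G_A\simeq\id$ and $\beta^\G_A\tau_A\simeq\id$; hence $\tau_A$ is a homotopy equivalence of complexes of pro-$A(\G)$-modules, naturally in $A$. Doing the same for $B$, the assignments $\phi\mapsto \tau_B\,\phi\,\beta^\G_A$ and $\psi\mapsto\beta^\G_B\,\psi\,\tau_A$ are mutually inverse up to homotopy between $\Hom_{A(\G)}(X_\G(\T(A\otimes_{\ccinfty(\G^{(0)})}\K_\G)),X_\G(\T(B\otimes_{\ccinfty(\G^{(0)})}\K_\G)))$ and $\Hom_{A(\G)}(X_\G(\T A),X_\G(\T B))$; passing to homology yields the stated isomorphism, which one checks is natural and compatible with the bifunctor and product structures on $HP^\G_*$. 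The step I expect to require the most care is the second one: one must make sure the $\ccinfty(\G^{(0)})$-relative form of Theorem \ref{StabLemma} genuinely applies to the complexes $X_\G(\T(-))$ — the delicate point being that the differential of $X_\G$ involves the canonical automorphism $T$, so everything must be phrased in terms of $\ccinfty(\G^{(0)})$-linear maps between pro-$A(\G)$-module complexes (rather than maps induced by non-$\G$-equivariant algebra homomorphisms), and one must verify that the inclusion, the trace map, and the smooth homotopy used in the proof of Theorem \ref{StabLemma} all respect this. I expect this to be bookkeeping rather than a new idea.
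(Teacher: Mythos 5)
Your first and last steps are sound: the map $\tau_A = tr_A\,X_\G(\lambda_A)$ from the proof of Theorem \ref{StabLemma} indeed needs no admissibility, and the averaging formalism of Lemma \ref{averaging} does convert suitable $\ccinfty(\G^{(0)})$-linear chain maps and homotopies into $A(\G)$-linear ones. The genuine gap is your middle step. You propose to ``run the proof of Theorem \ref{StabLemma} keeping track only of $\ccinfty(\G^{(0)})$-linearity'' to obtain a $\ccinfty(\G^{(0)})$-linear chain map $\beta_A\colon X_\G(\T A)\to X_\G(\T(A\otimes_{\ccinfty(\G^{(0)})}\K_\G))$ together with homotopies $\tau_A\beta_A\simeq\id$ and $\beta_A\tau_A\simeq\id$. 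But apart from $tr_A$, every map in that proof is produced from algebra homomorphisms via the functor $X_\G(\T(\cdot))$ and from homotopy invariance (Theorem \ref{thm: homotopy invariance}), and both constructions require $\G$-equivariance: the differential of $X_\G$ involves the twist, $b_\G(\chi_U\otimes\omega\,da)=(-1)^{n-1}\bigl(\chi_U\otimes\omega a-\chi_U\otimes(\chi_{U^{-1}}\cdot a)\omega\bigr)$, so a merely $\ccinfty(\G^{(0)})$-linear homomorphism -- such as the $\iota_A$ built from the zero-extension $\ccinfty(\G^{(0)})\to\DG$ -- does not induce a chain map on $X_\G(\T(\cdot))$; likewise the operator $\eta$ in Lemma \ref{lemma: induced homotopy between theta^2 and X_G} satisfies the required identities only because $\Phi_t$ is $\G$-equivariant. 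You flag exactly this as the ``delicate point'' but offer no construction that circumvents it, and in addition your averaging step needs the homotopies to be $\OG$-linear (averaging a map that is only $\ccinfty(\G^{(0)})$-linear does not make it $\OG$-linear), which the hypothetical relative argument would also have to supply. So the central input of your proof is missing.

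The correct move -- and the paper's proof -- is to average the section itself rather than chain maps and homotopies, and this is precisely where the hypotheses enter. The zero-extension $\iota\colon\ccinfty(\G^{(0)})\to\DG$ is $\ccinfty(\G^{(0)})$-linear with $\lambda\iota=\id$, where $\lambda$ is the $\G$-equivariant integration map of Lemma \ref{integrationmap}. Since $\G$ is proper with $\G\backslash\G^{(0)}$ paracompact, Proposition \ref{smoothcutoff} provides a locally constant cut-off function, so Lemma \ref{averaging} applies; using $(\theta\phi)^\G=\theta\phi^\G$ and $\id^\G=\id$ one gets a $\G$-equivariant section $\iota^\G$ of $\lambda$. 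Hence the trivial module $\ccinfty(\G^{(0)})$ is a direct summand of $\DG$ in the category of $\G$-modules, the regular pairing on $\DG$ is admissible -- contrary to your working assumption that admissibility fails equivariantly in this setting -- and Theorem \ref{StabLemma} applies verbatim with $E=\DG$ in both variables, giving the stated isomorphism directly.
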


\begin{proof} 
The integration map defines a $ \G $-equivariant surjection $ \lambda: \DG \rightarrow \ccinfty(\G^{(0)}) $, compare Lemma \ref{integrationmap}. 
Moreover, the extension of functions by zero induces a $ \ccinfty(\G^{(0)}) $-linear inclusion map $ \iota: \ccinfty(\G^{(0)}) \rightarrow \DG $. Since 
$$
\lambda \iota(f)(x) = \sum_{\alpha \in \G^x} \iota(f)(\alpha) = f(x) 
$$
we see that $ \ccinfty(\G^{(0)}) $ is a direct summand of the $ \ccinfty(\G^{(0)}) $-module $ \DG $. According to Lemma \ref{averaging} it follows that the 
trivial $ \G $-module $ \ccinfty(\G^{(0)}) $ is a direct summand of $ \DG $ in the category of $ \G $-modules as well. We conclude that the regular pairing on $ \DG $ 
is $ \G $-admissible, so that the claim follows from Theorem \ref{StabLemma}. 
\end{proof}

\subsection{Excision} \label{section:excision}

In the final part of this section we show that equivariant periodic cyclic homology satisfies excision in both variables. The key ideas go back to \cite{MEYER_thesis}, and the argument follows closely the proof in the group equivariant case \cite{VOIGT_equivariantperiodiccyclichomology}. For this reason we will be rather brief and only sketch the main strategy. 

We consider an admissible extension 
\begin{equation*} \label{eqext}
   \xymatrix{
     K\;\; \ar@{>->}[r]^{\iota} & E \ar@{->>}[r]^{\pi} & Q 
     }
\end{equation*}
of pro-$ \G $-algebras, with a fixed $ \G $-equivariant pro-linear splitting $ \sigma: Q \rightarrow E $ for the quotient homomorphism $ \pi: E \rightarrow Q $. 

Let $ X_\G(\T E:\T Q) $ be the kernel of the map $ X_\G(\T\pi): X_\G(\T E) \rightarrow X_\G(\T Q) $ induced by $ \pi $. The splitting $ \sigma $ yields a direct sum decomposition $ X_\G(\T E) = X_\G(\T E:\T Q) \oplus X_\G(\T Q) $ of pro-$ \G $-anti-Yetter-Drinfeld modules. The resulting extension 
$$ 
   \xymatrix{
      X_\G(\T E:\T Q)\;\; \ar@{>->}[r] & X_\G(\T E) \ar@{->>}[r] & X_\G(\T Q) 
     }
$$ 
of paracomplexes induces long exact sequences in homology in both variables. Moreover, there is a canonical map $ \rho: X_\G(\T K) \rightarrow X_\G(\T E:\T Q) $ of paracomplexes of pro-$ \G $-anti-Yetter-Drinfeld modules. The key step in the proof of the excision theorem is the following result. 

\begin{thm} \label{Excision2} 
The map $ \rho: X_\G(\T K) \rightarrow X_\G(\T E:\T Q) $ is a homotopy equivalence.
\end{thm}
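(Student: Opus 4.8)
The plan is to follow the Cuntz--Quillen--Meyer strategy, as carried out in the group equivariant case \cite{VOIGT_equivariantperiodiccyclichomology}, and adapt it to the present setting. The first move is to reduce the statement about $ X_\G $-complexes to a statement about Hodge towers. Using Theorem \ref{homotopyeq}, the paracomplexes $ X_\G(\T K) $, $ X_\G(\T E) $, $ X_\G(\T Q) $ are homotopy equivalent to the Hodge towers $ \theta\Omega_\G(K) $, $ \theta\Omega_\G(E) $, $ \theta\Omega_\G(Q) $ respectively, compatibly with the maps induced by $ \iota $ and $ \pi $. Hence it suffices to show that the kernel $ \theta\Omega_\G(E:Q) $ of the (split) surjection $ \theta\Omega_\G(E) \to \theta\Omega_\G(Q) $ is homotopy equivalent, via the canonical map, to $ \theta\Omega_\G(K) $. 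One works level by level in the Hodge tower: the claim reduces to showing that the canonical map $ \theta^n\Omega_\G(K) \to \theta^n\Omega_\G(E:Q) $ is a homotopy equivalence of pro-paracomplexes of $ \G $-anti-Yetter-Drinfeld modules for each $ n $, with homotopies compatible with the structure maps of the towers.

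The second, and main, ingredient is a careful analysis of the pro-$ \G $-anti-Yetter-Drinfeld module $ \Omega_\G(E:Q) $ in terms of the ideal $ \J E $ and the splitting $ \sigma $. Following Meyer \cite{MEYER_thesis} and \cite[Section 11]{VOIGT_equivariantperiodiccyclichomology}, one uses the $ \G $-equivariant pro-linear splitting $ \sigma $ to construct, out of $ \K = \ker(\pi) $, a pro-$ \G $-algebra $ \K' $ together with a $ \G $-equivariant homomorphism $ \K' \to \T E $ lifting $ \K \to E $, in such a way that $ \T\K $ maps to it by a homomorphism inducing an isomorphism on the relevant quotients. Concretely, the point is that $ \Omega_\G^n(E)/\Omega_\G^n(E:Q) \cong \Omega_\G^n(Q) $ as pro-$ \G $-anti-Yetter-Drinfeld modules, and one must identify the subcomplex $ \theta^n\Omega_\G(E:Q) $ with the corresponding object built from $ \K $ using that $ \Omega_{\G^{(0)}} $ and $ \T $ are constructed out of balanced tensor products over $ \ccinfty(\G^{(0)}) $, for which the relevant exactness and direct-sum properties carry over from the module-level results in Section \ref{section:preliminaries} (in particular Proposition \ref{prop:isomorphisms ccinfty}). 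The equivariant operators $ b_\G, B_\G, d_\G, \kappa_\G $ all involve the canonical map $ T $ only through its action on the $ \OG $-factor, so the combinatorial identities from Lemma \ref{lemma: properties of bounderies} and the commutation with $ T $ from Lemma \ref{automaticcommutation} allow the group-case homotopy formulas to be transcribed essentially verbatim; the explicit contracting homotopy on the relevant filtration quotients is built from $ b_\G $, $ B_\G $, and the splitting-induced maps, exactly as in \cite[Theorem 11.3]{VOIGT_equivariantperiodiccyclichomology}.

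The third step is to assemble the level-wise homotopy equivalences into a homotopy equivalence of the projective limits. Because all the maps and homotopies are constructed naturally and compatibly with the truncation maps $ \theta^{m}\Omega_\G(-) \to \theta^{n}\Omega_\G(-) $, the induced map on pro-paracomplexes is again a homotopy equivalence; here one uses that a level-wise system of compatible homotopy equivalences between pro-objects yields a homotopy equivalence in the pro-category, which is a formal consequence of the definition of morphisms in $ \pro(\mathcal{C}) $. Tracing back through Theorem \ref{homotopyeq} then gives the homotopy equivalence $ \rho: X_\G(\T K) \to X_\G(\T E:\T Q) $ as asserted.

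\textbf{Expected main obstacle.} The delicate point is the identification of $ \theta^n\Omega_\G(E:Q) $ with the object built from $ K $: one must check that passing to the balanced tensor products over $ \ccinfty(\G^{(0)}) $ that define $ \Omega_{\G^{(0)}} $, $ \T $, and hence $ \Omega_\G $, interacts correctly with the admissible extension, so that the relevant short sequences of pro-$ \G $-anti-Yetter-Drinfeld modules are again admissible and the $ \OG $-module structures match up. Since the splitting $ \sigma $ is only required to be $ \ccinfty(\G^{(0)}) $-linear and not $ \G $-equivariant, one has to be careful that the resulting decompositions are $ \G $-equivariant where needed; this is precisely the place where the comodule description of $ \G $-modules from Proposition \ref{theorem: unification theorem} and the explicit formula for $ T_M $ on tensor products (Lemma \ref{lemma:how diagonal action works}) are used to keep track of equivariance. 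Once this bookkeeping is set up, the homotopy-theoretic part of the argument is a routine, if lengthy, adaptation of the group case, and we only indicate the necessary modifications.
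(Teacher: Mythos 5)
Your plan breaks down at its central step, the reduction to the Hodge tower ``level by level''. The claim that it suffices to show that $ \theta^n\Omega_\G(K) \rightarrow \theta^n\Omega_\G(E:Q) $ is a homotopy equivalence for each $ n $, compatibly with the structure maps, is not only not a reduction of the theorem but is false in general: the finite levels $ \theta^n\Omega_\G(-) $ compute truncated, cyclic-type homology, and excision fails for these theories for an arbitrary linearly split extension (no H-unitality-type hypothesis on $ K $ is assumed here). Already non-equivariantly, for $ K = \epsilon\mathbb{C} \subseteq \mathbb{C}[\epsilon]/(\epsilon^2) \twoheadrightarrow \mathbb{C} $ the square-zero kernel has nontrivial cyclic homology in infinitely many degrees while the relative groups are different, so no level-wise homotopy equivalence compatible with the towers can exist. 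The genuine equivalence $ X_\G(\T K) \simeq X_\G(\T E:\T Q) $ is a pro-map which does not respect the Hodge filtration degreewise; it can neither be assembled from nor reduced to level-wise equivalences. (The preliminary replacement of $ X_\G(\T E:\T Q) $ by the kernel of $ \theta\Omega_\G(E) \rightarrow \theta\Omega_\G(Q) $ also needs a mapping-cone argument for paracomplexes rather than just Theorem \ref{homotopyeq} applied objectwise, but that part could be repaired; the level-wise claim cannot.)

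The argument the paper intends (it defers to the group-equivariant case \cite{VOIGT_equivariantperiodiccyclichomology}, following \cite{MEYER_thesis} and \cite{CUNTZ_QUILLEN_excision}) works directly with the $ X $-complexes and never passes through the Hodge towers of $ K, E, Q $: one introduces the left ideal $ L \subseteq \T E $ generated by $ K $, proves that $ L $ is equivariantly quasifree, and compares the two quasifree extensions of $ K $ with locally nilpotent kernels, $ \T K \rightarrow K $ and $ L \rightarrow K $, to obtain $ X_\G(\T K) \simeq X_\G(L) $; the technical heart is then an explicit homotopy retraction of $ X_\G(\T E:\T Q) $ onto $ X_\G(L) $, built from a projective $ L $-bimodule resolution and the decomposition of $ \T E $ relative to $ L $ and $ \T Q $, with all operators checked to be morphisms of pro-$ \G $-anti-Yetter-Drinfeld modules so that Lemma \ref{automaticcommutation} applies. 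Your proposal gestures at this (the pro-$ \G $-algebra ``$ \K' $'' mapping to $ \T E $) but then abandons it for the level-wise identification of $ \theta^n\Omega_\G(E:Q) $ with an object built from $ K $, which is not available. Finally, note that in the setting of Theorem \ref{Excision2} the splitting $ \sigma $ is already fixed to be $ \G $-equivariant; the merely $ \ccinfty(\G^{(0)}) $-linear case enters only in deducing Theorem \ref{Excision}, where one tensors with $ \KG $, so the equivariance bookkeeping you single out as the main obstacle is not where the actual difficulty of this theorem lies.
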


As a consequence of Theorem \ref{Excision2} one obtains excision in both variables for $ \G $-equivariant periodic cyclic homology.

\begin{thm}[Excision] \label{Excision} 
Let $ A $ be a pro-$ \G $-algebra and let $ 0 \rightarrow K \rightarrow E \rightarrow Q \rightarrow 0 $ be
an extension of pro-$ \G $-algebras which is admissible as an extension of pro-$ \ccinfty(\G^{(0)}) $-modules. Then there are two natural exact sequences
\begin{equation*}
\xymatrix{
 {HP^\G_0(A,K)\;} \ar@{->}[r] \ar@{<-}[d] &
      HP^\G_0(A,E) \ar@{->}[r] &
        HP^\G_0(A,Q) \ar@{->}[d] \\
   {HP^\G_1(A,Q)\;} \ar@{<-}[r] &
    {HP^\G_1(A,E)}  \ar@{<-}[r] &
     {HP^\G_1(A,K)} \\
}
\end{equation*}
and
\begin{equation*}
\xymatrix{
    {HP^\G_0(Q,A)\;} \ar@{->}[r] \ar@{<-}[d] &
       HP^\G_0(E,A) \ar@{->}[r] &
          HP^\G_0(K,A) \ar@{->}[d] \\
    {HP^\G_1(K,A)\;} \ar@{<-}[r] &
      {HP^\G_1(E,A)}  \ar@{<-}[r] &
        {HP^\G_1(Q,A)} \\
}
\end{equation*}
The horizontal maps in these diagrams are induced by the maps in the extension.
\end{thm}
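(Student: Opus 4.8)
The assertion follows from Theorem~\ref{Excision2} by the standard homological algebra of linearly split extensions of paracomplexes, applied separately in each variable. The plan is as follows. Fix a pro-$\G$-algebra $A$ and abbreviate $A' = A \otimes_{\ccinfty(\G^{(0)})} \K_\G$, and likewise $K' = K \otimes_{\ccinfty(\G^{(0)})} \K_\G$, $E' = E \otimes_{\ccinfty(\G^{(0)})} \K_\G$, $Q' = Q \otimes_{\ccinfty(\G^{(0)})} \K_\G$. The functor $- \otimes_{\ccinfty(\G^{(0)})} \K_\G$ is additive and carries pro-linear splittings to pro-linear splittings, so $0 \to K' \to E' \to Q' \to 0$ is again an extension of pro-$\G$-algebras which is admissible as an extension of pro-$\ccinfty(\G^{(0)})$-modules, with pro-linear splitting $\sigma \otimes \id$. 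Theorem~\ref{Excision2} therefore applies to this extension and furnishes a homotopy equivalence $\rho\colon X_\G(\T K') \to X_\G(\T E':\T Q')$ of paracomplexes of pro-$\G$-anti-Yetter-Drinfeld modules; by its construction $\rho$ is the corestriction of $X_\G(\T\iota)$ along the inclusion $X_\G(\T E':\T Q') \hookrightarrow X_\G(\T E')$. Moreover, exactly as in Section~\ref{section:excision}, the splitting $\sigma \otimes \id$ produces a direct sum decomposition $X_\G(\T E') = X_\G(\T E':\T Q') \oplus X_\G(\T Q')$ in the category of pro-$\G$-anti-Yetter-Drinfeld modules, hence a short exact sequence of paracomplexes
\begin{equation*}
   \xymatrix{
      X_\G(\T E':\T Q')\;\; \ar@{>->}[r] & X_\G(\T E') \ar@{->>}[r] & X_\G(\T Q')
     }
\end{equation*}
which is split in each degree as a sequence of pro-$A(\G)$-modules.

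For the first variable I would apply the functor $\Hom_{A(\G)}(X_\G(\T A'), -)$ to this sequence. Since it is $A(\G)$-linearly split in each degree, the result is a short exact sequence of $\mathbb{Z}/2$-graded Hom-complexes, which are genuine supercomplexes because their differentials square to zero by the commutation relation $T\phi = \phi T$ from Lemma~\ref{automaticcommutation}, exactly as explained after Definition~\ref{defhpg}. Its associated long exact homology sequence is periodic of period two and therefore collapses into a six-term exact sequence relating $HP_0^\G(A,E)$, $HP_0^\G(A,Q)$, $HP_1^\G(A,E)$, $HP_1^\G(A,Q)$ and the groups $H_*(\Hom_{A(\G)}(X_\G(\T A'), X_\G(\T E':\T Q')))$. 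Finally, chain-homotopic chain maps induce the same morphism on $H_*(\Hom_{A(\G)}(X_\G(\T A'), -))$, so the homotopy equivalence $\rho$ yields isomorphisms $HP_*^\G(A,K) = H_*(\Hom_{A(\G)}(X_\G(\T A'), X_\G(\T K'))) \cong H_*(\Hom_{A(\G)}(X_\G(\T A'), X_\G(\T E':\T Q')))$. Substituting these into the six-term sequence and noting that, through $\rho$, the inclusion $X_\G(\T E':\T Q') \hookrightarrow X_\G(\T E')$ corresponds to $X_\G(\T\iota)$ and $X_\G(\T E') \twoheadrightarrow X_\G(\T Q')$ to $X_\G(\T\pi)$, one recovers precisely the first diagram of the theorem.

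The second variable is handled symmetrically, by applying instead the contravariant functor $\Hom_{A(\G)}(-, X_\G(\T A'))$ to the same degreewise-split short exact sequence of paracomplexes. This again produces a short exact sequence of supercomplexes, and its period-two long exact homology sequence, after using $\rho$ to identify $H_*(\Hom_{A(\G)}(X_\G(\T E':\T Q'), X_\G(\T A')))$ with $HP_*^\G(K,A)$, becomes the second diagram of the theorem.

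The entire weight of the proof rests on Theorem~\ref{Excision2}; granting it, the deduction above is purely formal, and I expect the only real obstacle to lie in that theorem rather than here. The points that still need to be checked are routine bookkeeping: that tensoring the original extension with $\K_\G$ preserves admissibility, that the relevant $\Hom$-complexes are honest $\mathbb{Z}/2$-graded complexes (where the paracomplex formalism and Lemma~\ref{automaticcommutation} are essential), that the module splitting of $X_\G(\T E') \to X_\G(\T Q')$ induced by $\sigma \otimes \id$ consists of $A(\G)$-linear maps so that $\Hom_{A(\G)}$ remains exact in both variables, and the identification of the connecting and horizontal homomorphisms with those induced by the maps of the extension.
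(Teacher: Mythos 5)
Your overall architecture is the same as the paper's: tensor the extension with $\K_\G$, invoke Theorem \ref{Excision2} for the resulting extension, use the degreewise split short exact sequence of paracomplexes $X_\G(\T E':\T Q') \rightarrowtail X_\G(\T E') \twoheadrightarrow X_\G(\T Q')$, apply $\Hom_{A(\G)}$ in either variable, and pass to the periodic long exact sequences, identifying the relative term via $\rho$. However, there is a genuine gap at the point where you claim that Theorem \ref{Excision2} applies with the splitting $\sigma \otimes \id$ and that this splitting yields the direct sum decomposition of $X_\G(\T E')$ \emph{in the category of pro-$\G$-anti-Yetter-Drinfeld modules}. Theorem \ref{Excision2} is formulated (see the standing assumptions at the start of Section \ref{section:excision}) for an extension equipped with a fixed \emph{$\G$-equivariant} pro-linear splitting, and the $A(\G)$-linearity of the induced splitting of $X_\G(\T E') \rightarrow X_\G(\T Q')$ — which is exactly what makes $\Hom_{A(\G)}(X_\G(\T A'),-)$ and $\Hom_{A(\G)}(-,X_\G(\T A'))$ exact on this sequence — also rests on equivariance of the section. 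The hypothesis of Theorem \ref{Excision} only provides a $\ccinfty(\G^{(0)})$-linear splitting $\sigma$, and $\sigma \otimes \id$ is in general \emph{not} $\G$-equivariant; so the item you list as ``routine bookkeeping'' (that the splitting consists of $A(\G)$-linear maps) is in fact false for $\sigma \otimes \id$ as it stands, and with a merely $\ccinfty(\G^{(0)})$-linear section the Hom-sequences need not be short exact.

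The missing idea — and the actual content of the deduction of Theorem \ref{Excision} from Theorem \ref{Excision2} in the paper — is that after tensoring with $\K_\G$ one can \emph{upgrade} the $\ccinfty(\G^{(0)})$-linear splitting to a $\G$-equivariant pro-linear splitting of $E \otimes_{\ccinfty(\G^{(0)})} \K_\G \rightarrow Q \otimes_{\ccinfty(\G^{(0)})} \K_\G$, by the same conjugation argument as in the proof of Lemma \ref{automaticequivariance}: modules of the form $\ccinfty(\G) \stackrel{r,\id}{\otimes} M$ admit the operation $\phi \mapsto T_F^{-1}(\id \otimes \phi)T_E$ turning $\ccinfty(\G^{(0)})$-linear maps into $\G$-equivariant ones, and the $\DG$-factor inside $\K_\G$ makes this applicable here. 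This is precisely why the definition of $HP^\G_*$ tensors with $\K_\G$ and why excision holds under the weaker, non-equivariant admissibility hypothesis. Once the splitting is replaced by a $\G$-equivariant one, the rest of your argument goes through as you describe.
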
 

We point out that in Theorem \ref{Excision} we only require that the given extension is admissible as an extension of pro-$ \ccinfty(\G^{(0)}) $-modules, or equivalently, that there exists a $ \ccinfty(\G^{(0)}) $-pro-linear splitting for the quotient homomorphism $ \pi: E \rightarrow Q $. 

Let us indicate how Theorem \ref{Excision2} implies Theorem \ref{Excision}. Tensoring the extension given in Theorem \ref{Excision} with $ \KG $ yields an extension 
\begin{equation*}\label{eqext2}
   \xymatrix{
     K \otimes_{\ccinfty(\G^{(0)})} \KG\;\; \ar@{>->}[r] & E \otimes_{\ccinfty(\G^{(0)})} \KG \ar@{->>}[r] & Q \otimes_{\ccinfty(\G^{(0)})} \KG 
     }
\end{equation*}
of pro-$ \G $-algebras which is admissible as an extension of pro-$ \ccinfty(\G^{(0)}) $-modules. This is not quite enough to apply Theorem \ref{Excision2}, however, using the same argument as in the proof of Lemma \ref{automaticequivariance} we obtain a $ \G $-equivariant pro-linear splitting for the quotient map $ E \otimes_{\ccinfty(\G^{(0)})} \KG \rightarrow Q \otimes_{\ccinfty(\G^{(0)})} \KG $. Hence the hypotheses of Theorem \ref{Excision2} are indeed satisfied, and Theorem \ref{Excision} follows by considering long exact sequences in homology.

\section{The Green-Julg Theorem} \label{section:greenjulg}

In this section we establish a homological analogue of the Green-Julg theorem for the equivariant $ K $-theory of proper groupoids due to Tu \cite[Proposition 6.25]{TU_novikovhyperbolique}. More precisely, we shall prove the following result. 

\begin{thm} \label{Green-Julg} 
Let $ \G $ be a proper ample groupoid such that $ \G \backslash\G^{(0)} $ is paracompact. Then there exists a natural isomorphism 
$$
HP_*^\G(\ccinfty(\G^{(0)}),A)\cong HP_*^{\ccinfty(\G\backslash\G^{(0)})}(\ccinfty(\G\backslash \G^{(0)}), A \rtimes \G)
$$
for all $ \G $-algebras $ A $. 
\end{thm}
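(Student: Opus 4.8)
The plan is to reduce the statement to the group-equivariant Green–Julg theorem of \cite{VOIGT_thesis} via the localisation and disassembly techniques sketched in Section \ref{section:epch}. First I would note that $ \ccinfty(\G\backslash\G^{(0)}) $, viewed as an algebra (that is, as a groupoid which is the space $ \G\backslash\G^{(0)} $ with no nontrivial arrows), has the property that $ HP_*^{\ccinfty(\G\backslash\G^{(0)})}(\ccinfty(\G\backslash\G^{(0)}), B) $ is simply $ HP_* $ of $ B $ relative to its $ \ccinfty(\G\backslash\G^{(0)}) $-algebra structure, so the right-hand side is a purely non-equivariant (or rather, $ \ccinfty(\G\backslash\G^{(0)}) $-linear) periodic cyclic homology group of the crossed product $ A\rtimes\G $. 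Thus the claim becomes a comparison between the $ \G $-equivariant $ HP $ of $ A $ and the ordinary $ HP $ of $ A\rtimes\G $, which is exactly the shape of a Green–Julg statement.

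\textbf{Key steps.} I would proceed as follows. (1) By Proposition \ref{smoothcutoff} a locally constant cut-off function $ c $ exists; this is the essential tool for constructing the averaging/projection maps, exactly as $ \phi\mapsto\phi^\G $ was built in Lemma \ref{averaging}. (2) Use Theorem \ref{StabLemma} together with the Proposition following Lemma \ref{averaging} to identify $ HP_*^\G(\ccinfty(\G^{(0)}),A) $ with $ H_*(\Hom_{A(\G)}(X_\G(\T\ccinfty(\G^{(0)})), X_\G(\T A))) $, eliminating the auxiliary $ \K_\G $-tensor factor; by Lemma \ref{XA} the source complex simplifies to $ \OG[0] $, so the left-hand side is $ H_*(\Hom_{A(\G)}(\OG, X_\G(\T A))) $. (3) Show that $ \Hom_{A(\G)}(\OG,-) $ is an ``invariants'' functor: for a pro-$ A(\G) $-module $ M $ one has $ \Hom_{A(\G)}(\OG,M)\cong M^\G $ in an appropriate sense, and identify this with the $ \ccinfty(\G\backslash\G^{(0)}) $-linear $ X $-complex of the crossed product. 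The crucial algebraic input here is a comparison $ X_\G(\T A)^\G \cong X(\T(A\rtimes\G))/(\text{appropriate ideal}) $, or more precisely a homotopy equivalence at the level of the relevant Hom-complexes. (4) Feed this through the localisation framework: decompose according to $ \G\backslash\G^{(0)} $ — or, since $ \G $ is proper and $ \G\backslash\G^{(0)} $ is nice, use a partition into pieces where the stabiliser behaves well — and reduce to the case of a transitive proper groupoid, where $ \G\simeq \Gamma\ltimes(\text{point})\times(\text{space}) $ with $ \Gamma $ a finite group (or a compact group in the bornological setting, but here finiteness is forced), so that the classical Green–Julg theorem of \cite[Theorem (Green-Julg)]{VOIGT_thesis} applies fibrewise. (5) Assemble the fibrewise isomorphisms and check naturality in $ A $.

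\textbf{Main obstacle.} The hard part will be step (3): establishing the precise homological identity between the $ A(\G) $-fixed points of the equivariant $ X $-complex $ X_\G(\T A) $ and the ordinary ($ \ccinfty(\G\backslash\G^{(0)}) $-linear) $ X $-complex of $ \T(A\rtimes\G) $. One must verify that the cut-off function induces a genuine chain homotopy equivalence, not merely a quasi-isomorphism of paracomplexes, and that it is compatible with the $ \OG $- and $ A(\G) $-module structures so that taking $ \Hom_{A(\G)}(\OG,-) $ commutes with passing to the tensor algebra and with the homotopy equivalences of Theorem \ref{homotopyeq}. A subtlety is that $ X_\G(\T A) $ is only a paracomplex while $ X(\T(A\rtimes\G)) $ is an honest complex; the paracomplex structure is governed by the canonical operator $ T $ on anti-Yetter–Drinfeld modules, and one has to check that after taking $ A(\G) $-invariants (equivalently, after localising at the conjugacy class $ [1] $ on the relevant subquotients) the operator $ T $ acts as the identity on the pieces that contribute, so that the target becomes an honest complex matching $ X(\T(A\rtimes\G)) $. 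Once this bookkeeping is under control, steps (4)–(5) are a routine application of the localisation decomposition already illustrated in the Proposition preceding this section together with the group case treated in \cite{VOIGT_thesis}.
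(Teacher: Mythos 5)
Your overall skeleton agrees with the paper's: eliminate the $ \K_\G $-factor using the Proposition following Lemma \ref{averaging}, identify the source complex with $ \OG[0] $ via Lemma \ref{XA}, and reduce to the finite-group Green--Julg theorem of \cite{VOIGT_thesis} by working orbitwise over $ \G \backslash \G^{(0)} $. But there is a genuine gap at the centre of your steps (3)--(5): you never construct a globally defined, natural, $ \ccinfty(\G\backslash\G^{(0)}) $-linear chain map between the two Hom-complexes, and without one the reduction cannot be completed. A literal ``decomposition into transitive pieces'' is not available ($ \G $ is a disjoint union of its restrictions to orbits only when the orbit space is discrete), and the local-to-global principle (Lemma \ref{localtoglobal}) upgrades localised isomorphisms to a global isomorphism only when they arise as localisations of one fixed morphism of essential $ \ccinfty(\G\backslash\G^{(0)}) $-modules; fibrewise isomorphisms produced separately over each orbit cannot be assembled, nor can naturality in $ A $ be checked, without the global map. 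Your step (3), as stated, is meant to supply such an identification in the strong form $ \Hom_{A(\G)}(\OG, X_\G(\T A)) \simeq $ ($ \ccinfty(\G\backslash\G^{(0)}) $-linear $ X $-complex of $ \T(A \rtimes \G) $), but you give no candidate map, and the paper's computations indicate that the naive cut-off comparison does not work on the nose.

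The paper fills exactly this hole by an explicit construction: the averaging operator $ \kappa^\G $ of Lemma \ref{AGaveraging} (whose definition uses properness) is combined with a form-level map $ \gamma_U $ sending $ \langle a_0 \rtimes \chi_{U_0}\rangle d(a_1\rtimes\chi_{U_1})\cdots d(a_n\rtimes\chi_{U_n}) $ to an equivariant differential form, and $ \gamma^\G = \varprojlim_U \gamma^\G_U $ with $ \gamma^\G_U = \varprojlim_m \theta^m \kappa^{\G^U_U}(\gamma_U(\cdot)) $. The computation in Lemma \ref{gammaboundary} shows the key subtlety: $ \gamma_U $ alone does \emph{not} commute with the Hochschild boundary, and only after composing with the averaging $ \kappa^{\G^U_U} $ does one obtain a chain map. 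Rather than proving any global equivalence of the kind you propose in (3), the paper then proves only that the localisations of $ \gamma^\G $ at each $ [x] \in \G\backslash\G^{(0)} $ induce isomorphisms (Theorem \ref{Green-Julg-local}), using exactness of localisation (Lemma \ref{localisationexact}), the orbit decomposition of $ HP^\G $ for discrete groupoids, Morita invariance of ordinary $ HP $ \cite{CUNTZ_moritainvariance} to pass from $ A_{[x]}\rtimes\G^{[x]}_{[x]} $ to $ A_x\rtimes\G^x_x $, and finally the finite-group Green--Julg theorem; your paracomplex-versus-complex worry is resolved automatically along this route, since after localisation both sides are computed by honest complexes thanks to quasifreeness of the trivial algebras (Lemma \ref{trivialquasifree}). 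So the ingredient you must supply is the construction of $ \gamma^\G $ and its chain-map property; with that in place the remainder of your outline goes through essentially as in the paper.
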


Here $ HP^{\ccinfty(\G \backslash \G^{(0)})}_* $ is our notation for the equivariant periodic cyclic theory associated to the totally disconnected locally compact 
space $ \G \backslash \G^{(0)} $, viewed as an ample groupoid. The crossed product $ A \rtimes \G $ becomes an essential $ \ccinfty(\G\backslash\G^{(0)}) $-module via 
pointwise multiplication on the copy of $ \DG $. That is, we have 
$$
(h \cdot f)(\alpha) = h(r(\alpha)) f(\alpha) = f(\alpha) h(s(\alpha)) 
$$
for all $ h \in \ccinfty(\G\backslash\G^{(0)}), f \in \DG $. 

For the proof of Theorem \ref{Green-Julg} we need some preparations. The basic strategy is to use localisation at the $ \G $-orbits in $ \G^{(0)} $ on both sides in order to reduce the claim to the Green-Julg theorem for finite groups established in \cite{VOIGT_thesis}, \cite{VOIGT_equivariantperiodiccyclichomology}. 

Let $ x \in \G^{(0)} $. By slight abuse of notation, we shall write $ [x] $ both for the $ \G $-orbit $ \G \cdot x \subseteq \G^{(0)} $ and the class of $ x $ under the quotient 
map $ \G^{(0)} \rightarrow \G \backslash \G^{(0)} $. The subset $ [x] \subseteq \G^{(0)} $ is closed in $ \G^{(0)} $, 
and since $ \G $ is proper it is discrete in the subspace topology. 

For $ [x] = \G \cdot x \in \G \backslash \G^{(0)} $ consider the maximal ideal $ \mathfrak{m}_{[x]} \subseteq \ccinfty(\G \backslash \G^{(0)}) $ of all functions vanishing at $ [x] $. Given an essential $ \ccinfty(\G \backslash \G^{(0)}) $-module $ M $ we write 
$$
M_{[x]} = M/\mathfrak{m}_{[x]} \cdot M 
$$
for the localisation of $ M $ at $ [x] $. Since functions in $ \ccinfty(\G \backslash \G^{(0)}) $ are locally constant this identifies indeed canonically with the usual ring-theoretic localisation at the maximal ideal $ \mathfrak{m}_{[x]} $. 

\begin{lemma} \label{localisationexact}
Let $ 0 \rightarrow K \rightarrow E \rightarrow Q \rightarrow 0 $ be a short exact sequence of essential $ \ccinfty(\G \backslash \G^{(0)}) $-modules. Then the localised sequence $ 0 \rightarrow K_{[x]} \rightarrow E_{[x]} \rightarrow Q_{[x]} \rightarrow 0 $ is exact for all $ [x] \in \G \backslash \G^{(0)}$. 
\end{lemma}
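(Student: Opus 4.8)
The plan is to reduce the exactness assertion to the classical statement that ordinary localisation of modules at a maximal ideal is exact, once we identify $M_{[x]}$ with the ring-theoretic localisation. The key observation, already noted just before the lemma, is that because elements of $\ccinfty(\G\backslash\G^{(0)})$ are locally constant, the quotient $M/\mathfrak{m}_{[x]}\cdot M$ coincides with the localisation $S^{-1}M$ where $S = \ccinfty(\G\backslash\G^{(0)})\setminus\mathfrak{m}_{[x]}$; concretely, an element $f\in\ccinfty(\G\backslash\G^{(0)})$ with $f([x])\neq 0$ becomes invertible after restricting to a compact open neighbourhood of $[x]$ on which $f$ is constant and nonzero, and multiplying by a suitable characteristic function. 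I would first make this identification precise, using essentiality of the modules to guarantee that every element of $M$ is supported on some compact open set, so that the local units act as identities on any given finite collection of elements.

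With that in place, the second step is routine: for any (possibly non-unital) commutative ring $R$ with local units — here $R = \ccinfty(\G\backslash\G^{(0)})$ — and any multiplicative set $S$, the localisation functor $M\mapsto S^{-1}M$ on essential $R$-modules is exact, since it is a filtered colimit of the exact functors ``multiplication by $s$'' indexed over $S$, and filtered colimits of abelian groups are exact. Applying this to $0\to K\to E\to Q\to 0$ yields exactness of $0\to K_{[x]}\to E_{[x]}\to Q_{[x]}\to 0$. One small point to check is that the localisation of an essential $\ccinfty(\G\backslash\G^{(0)})$-module is again essential, so that the statement stays within the relevant category; this follows because the local units near $[x]$ survive localisation.

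The only mild subtlety — and the closest thing to an obstacle — is bookkeeping around non-unitality: one must be careful that $\mathfrak{m}_{[x]}\cdot M$ is the correct submodule to quotient by (rather than, say, something strictly smaller in the absence of a global unit), and that surjectivity of $E_{[x]}\to Q_{[x]}$ does not silently use a unit. This is handled cleanly by working with a compact open neighbourhood $V$ of $[x]$ and the idempotent $\chi_V\in\ccinfty(\G\backslash\G^{(0)})$: for any element $q\in Q$ one may replace $q$ by $\chi_V\cdot q$ without changing its class in $Q_{[x]}$, and then lift using surjectivity of $E\to Q$ and essentiality. Exactness in the middle is immediate from the ring-theoretic statement once the identification is made. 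I therefore expect the proof to be short: identify the localisation, invoke exactness of localisation for rings with local units, and note that essentiality is preserved.
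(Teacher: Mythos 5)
Your proposal is correct, but it takes a genuinely different route from the paper. The paper does not pass through the ring-theoretic localisation at all: it proves exactness of $ 0 \rightarrow K_{[x]} \rightarrow E_{[x]} \rightarrow Q_{[x]} \rightarrow 0 $ by a direct diagram chase, whose only input is the observation that for any essential module $ M $ and any $ m \in \mathfrak{m}_{[x]} \cdot M $ there is a single idempotent $ f = \chi_U \in \mathfrak{m}_{[x]} $ (take $ U $ the union of the supports of the coefficient functions) with $ f \cdot m = m $; surjectivity of $ \pi_{[x]} $ is immediate, and both middle exactness and injectivity of $ \iota_{[x]} $ follow by replacing a representative $ e $ by $ e - f\cdot e $, respectively by writing $ \iota(k) = \iota(f \cdot k) $ and using injectivity of $ \iota $. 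You instead first identify $ M_{[x]} = M/\mathfrak{m}_{[x]}\cdot M $ with $ S^{-1}M $ for $ S = \ccinfty(\G\backslash\G^{(0)})\setminus\mathfrak{m}_{[x]} $ and then quote exactness of localisation via filtered colimits; this is a valid argument, and it makes the lemma a special case of standard commutative algebra (the paper itself asserts this identification in the sentence preceding the lemma, without proof, so you are in effect supplying a proof of that remark and deducing the lemma from it). Two comments on the trade-off: first, verifying the identification in this non-unital, locally-constant setting requires exactly the same ingredients as the paper's direct proof, namely the idempotent $ \chi_U \in \mathfrak{m}_{[x]} $ fixing a given element of $ \mathfrak{m}_{[x]}\cdot M $ and a characteristic function $ \chi_V $ of a small compact open neighbourhood of $ [x] $ annihilating it or inverting a given $ s \in S $, so the detour does not shorten the argument, though it does isolate where local constancy is used (for continuous rather than locally constant functions the quotient $ M/\mathfrak{m}_{[x]}\cdot M $ would not be a localisation and the lemma would be false in this form); second, your phrase ``filtered colimit of the exact functors multiplication by $ s $'' should be tightened to the statement that $ S^{-1}M $ is a filtered colimit of copies of $ M $ with transition maps given by multiplication, which is the standard well-definedness point in the fraction calculus, but this is routine. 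Your remarks on preservation of essentiality and on lifting through $ E \rightarrow Q $ using a local unit $ \chi_V $ are fine, although essentiality of the localised modules is not actually needed for the statement.
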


\begin{proof}
This is a well-known property of localisations; we shall give a direct argument in our special situation here for the convenience of the reader. Note that if $ M $ is an essential $ \ccinfty(\G \backslash \G^{(0)}) $-module and $ m = \sum f_i \cdot m_i \in \mathfrak{m}_{[x]} \cdot V $ with $ f_i \in \mathfrak{m}_{[x]}, m_i \in M $ then there exists $ f \in \mathfrak{m}_{[x]} $ such that $ f \cdot m = m $. For this it suffices to take $ f = \chi_U $ where $ U $ is the union of the supports of the $ f_i $.  

Let us write $ \iota: K \rightarrow E $ and $ \pi: E \rightarrow Q $ for the maps in the short exact sequence. It is evident that $ \pi_{[x]} $ is surjective. If $ e \in E $ represents an element of $ \ker(\pi_{[x]}) $, then $ \pi(e) \in \mathfrak{m}_{[x]} \cdot Q $, 
and we can find $ f \in \mathfrak{m}_{[x]} $ such that $ \pi(e - f \cdot e) = 0 $. 
By exactness we thus have $ e = f \cdot e + \iota(k) $ for some $ k \in K $, and hence $ \iota(k) $ and $ e $ represent the same element in $ E_{[x]} $. It follows that $ \ker(\pi_{[x]}) = \im(\iota_{[x]}) $. Finally, assume that $ k \in K $ represents an element of  $ \ker(\iota_{[x]}) $. Then we have $ \iota(k) \in \mathfrak{m}_{[x]} \cdot E $, and we find $ f \in \mathfrak{m}_{[x]} $ such that $ \iota(k) = f \cdot \iota(k) = \iota(f \cdot k) $. Since $ \iota $ is injective this means that $ k = f \cdot k $ is contained in $ \mathfrak{m}_{[x]} \cdot K $, so that $ k $ represents $ 0 \in K_{[x]} $. 
\end{proof}

Let us also state the following version of the well-known local-to-global principle from commutative algebra, compare \cite[Chapter 3]{ATIYAH_MACDONALD_commutativealgebra}. 

\begin{lemma} \label{localtoglobal}
Let $ \phi: M \rightarrow N $ be a homomorphism of essential $ \ccinfty(\G \backslash \G^{(0)}) $-modules. The $ \phi $ is an isomorphism iff $ \phi_{[x]}: M_{[x]} \rightarrow N_{[x]} $ is an isomorphism for all $ [x] \in \G \backslash \G^{(0)} $. 
\end{lemma}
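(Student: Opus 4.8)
The forward implication is immediate: $ M \mapsto M_{[x]} = M/\mathfrak{m}_{[x]} \cdot M $ is a functor from essential $ \ccinfty(\G \backslash \G^{(0)}) $-modules to vector spaces, hence carries isomorphisms to isomorphisms. For the converse I would assume $ \phi_{[x]} $ is an isomorphism for every $ [x] \in \G \backslash \G^{(0)} $ and write $ R = \ccinfty(\G \backslash \G^{(0)}) $.

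The first step is to check that $ \ker \phi $, $ \im \phi $ and $ \coker \phi $ are again essential $ R $-modules, so that Lemma \ref{localisationexact} applies to them. This uses only that $ R $ has local units (the remark after Lemma \ref{localunitsessential}): a quotient $ N/N' $ of an essential module $ N $ is essential since $ R\cdot(N/N') $ is the image of $ R\cdot N = N $; and a submodule $ N' \subseteq N $ is essential because any $ m \in N' $, written $ m = \sum_i f_i \cdot m_i $ with $ f_i \in R $ and $ m_i \in N $, satisfies $ m = e \cdot m \in R \cdot N' $ for a local unit $ e \in R $ with $ e f_i = f_i $ for all $ i $. Now I would apply Lemma \ref{localisationexact} to the short exact sequences $ 0 \to \ker \phi \to M \to \im \phi \to 0 $ and $ 0 \to \im \phi \to N \to \coker \phi \to 0 $; together with the (elementary) right-exactness of $ (-)_{[x]} $ this gives $ \ker(\phi_{[x]}) \cong (\ker \phi)_{[x]} $ and $ \coker(\phi_{[x]}) \cong (\coker \phi)_{[x]} $. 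Since each $ \phi_{[x]} $ is an isomorphism, $ (\ker \phi)_{[x]} = 0 = (\coker \phi)_{[x]} $ for all $ [x] $.

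It then remains to prove the vanishing statement that an essential $ R $-module $ P $ with $ P_{[x]} = 0 $ for all $ [x] $ is zero, and to apply it to $ P = \ker \phi $ and $ P = \coker \phi $. For this, let $ p \in P $. Essentiality of $ P $ and the existence of local units in $ R $ give a compact open set $ U \subseteq \G \backslash \G^{(0)} $ with $ p = \chi_U \cdot p $. For each $ [x] \in U $ the hypothesis $ P = \mathfrak{m}_{[x]} \cdot P $ lets us write $ p = \sum_j g_j \cdot q_j $ with $ g_j \in \mathfrak{m}_{[x]} $; as the finitely many $ g_j $ are locally constant and vanish at $ [x] $, they vanish on some compact open neighbourhood $ V_{[x]} $ of $ [x] $, whence $ \chi_{V_{[x]}} \cdot p = 0 $. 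By compactness of $ U $, finitely many of the $ V_{[x]} $ cover $ U $, and after passing to pairwise disjoint compact open subsets $ W_1, \dots, W_m $ of $ U $, each contained in one of these neighbourhoods and with $ \bigcup_k W_k = U $, we still have $ \chi_{W_k} \cdot p = 0 $, so $ p = \chi_U \cdot p = \sum_k \chi_{W_k} \cdot p = 0 $. Hence $ P = 0 $, and the converse implication follows.

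The third paragraph is where all the content lies; everything else is formal manipulation with Lemma \ref{localisationexact}. The point that requires care is that the vanishing statement genuinely uses both the essentiality of $ P $ — to exhibit each element as $ \chi_U \cdot p $ with $ U $ compact open — and the special structure of $ R $, namely that its local units are characteristic functions of compact open sets and that locally constant functions vanishing at a point vanish on a neighbourhood of it; this is what allows pointwise vanishing to be patched over the compact set $ U $, and the analogue fails over a general commutative ring even for unital modules, as $ \mathbb{Q} $ over $ \mathbb{Z} $ shows. Alternatively, one could invoke the classical local--global principle over the unital ring $ \cinfty(\G \backslash \G^{(0)}) $ of all locally constant functions, using that $ M_{[x]} $ agrees with the usual ring-theoretic localisation as noted before Lemma \ref{localisationexact}; compare \cite{ATIYAH_MACDONALD_commutativealgebra}.
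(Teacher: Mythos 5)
Your proof is correct and follows essentially the same route as the paper's: the heart in both cases is the vanishing statement that an essential module with all localisations zero is zero (proved via local constancy of functions in $ \mathfrak{m}_{[x]} $ and patching over a compact open set), combined with Lemma \ref{localisationexact} applied to the two short exact sequences through $ \im \phi $. The only difference is that you make explicit two points the paper leaves implicit — the essentiality of $ \ker\phi $, $ \im\phi $, $ \coker\phi $ via local units, and the disjointification needed to conclude $ p = \chi_U \cdot p = 0 $ — which is a welcome but not substantively different elaboration.
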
 

\begin{proof} 
Again we briefly sketch the proof for the convenience of the reader. 
If $ \phi $ is an isomorphism then clearly all localised maps $ \phi_{[x]} $ are isomorphisms. 

For the converse, let us first show that if an essential $ \ccinfty(\G \backslash \G^{(0)}) $-module $ V $ satisfies $ V_{[x]} = 0 $ for all $ [x] \in \G \backslash \G^{(0)} $ then $ V = 0 $. 
For this let $ v \in V $ and choose a compact open set $ K \subseteq \G \backslash \G^{(0)} $ such that $ \chi_K \cdot v = v $. Let $ [x] \in K $ and note that $ v = 0 $ in $ V_{[x]} $ means that $ v = f \cdot v $ for some $ f \in \mathfrak{m}_x $. Since the function $ f $ is locally constant we can thus find an open neighborhood $ U $ of $ [x] $ such that $ \chi_U \cdot v = 0 $. Since this works for all $ [x] \in K $ we get $ v = \chi_K \cdot v = 0 $. 

With this in place, we can apply Lemma \ref{localisationexact} to the short exact sequences $ 0 \rightarrow \ker(\phi) \rightarrow M \rightarrow \im(\phi) \rightarrow 0 $ and $ 0 \rightarrow \im(\phi) \rightarrow N \rightarrow N/\im(\phi) \rightarrow 0 $ to obtain canonical isomorphisms $ \ker(\phi_{[x]}) \cong \ker(\phi)_{[x]} $ and $ \im(\phi_{[x]}) \cong \im(\phi)_{[x]} $, as well as  $ N_{[x]}/\im(\phi_{[x]}) \cong (N/\im(\phi))_{[x]} $. This finishes the proof. 
\end{proof}

In our construction below we need an averaging procedure in the setting of $ \G $-anti-Yetter-Drinfeld modules. More precisely, let $ F \in A(\G) $ and consider the linear 
map $ \kappa(F): \OG \rightarrow A(\G) = \OG \rtimes \G $ given by $ \kappa(F)(f)(\alpha, \beta) = f(\alpha) F(\alpha, \beta) $. Then $ \kappa(F) $ is $ \OG $-linear, but in 
general not $ \G $-equivariant. In order to remedy this consider 
$$
\kappa^\G(F)(f)(\alpha, \beta) = \sum_{\gamma \in \G^{r(\alpha)}} f(\alpha) F(\gamma^{-1} \alpha \gamma, \gamma^{-1} \beta).  
$$
Since $ F $ has compact support this is a well-defined function on $ \G_{ad} \times_{\pi, r} \G $, and using that $ \G $ is proper we see that $ \kappa^\G(F)(f) $ is in fact 
contained in $ A(\G) $, so that we obtain again a linear map $ \kappa^\G(F): \OG \rightarrow A(\G) $. 
We calculate 
\begin{align*}
(\chi_U \cdot \kappa^\G(F)(f))(\alpha, \beta) &= \sum_{\delta \in \G^{r(\alpha)}}\sum_{\gamma \in \G^{s(\delta)}} \chi_U(\delta) f(\delta^{-1} \alpha \delta) F(\gamma^{-1} \delta^{-1} \alpha \delta \gamma, \gamma^{-1} \delta^{-1} \beta) \\ 
&= \sum_{\gamma \in \G^{r(\alpha)}} \sum_{\delta \in \G^{r(\alpha)}} \chi_U(\delta) f(\delta^{-1}\alpha \delta) F(\gamma^{-1} \alpha \gamma, \gamma^{-1} \beta) \\
&= \kappa^\G(F)(\chi_U \cdot f)(\alpha, \beta) 
\end{align*}
for a compact open bisection $ U \subseteq \G $, and it follows that $ \kappa^\G(F) $ is $ \G $-equivariant. Since it is also $ \OG $-linear this means that $ \kappa^\G(F) $ is a 
map of $ \G $-anti-Yetter-Drinfeld modules. It is straightforward to check that this construction is compatible with the right $ \DG $-action in the sense that 
$$ 
\kappa^\G(F \cdot \chi_U)(f) = (\kappa^\G(F)(f)) \cdot \chi_U 
$$ 
for all $ f \in \OG, F \in A(\G) $. Hence we obtain a right $ \DG $-linear map $ \kappa^\G $ from $ A(\G) $ 
into $ \Hom_{A(\G)}(\OG, A(\G)) $, which is also $ \ccinfty(\G \backslash \G^{(0)}) $-linear with respect to the obvious actions on the source and target. 
If $ M $ is a $ \G $-module this yields an induced 
map $ \kappa^\G \otimes \id: \OG \otimes_{\ccinfty(\G^{(0)})} M \cong A(\G) \otimes_{\DG} M \rightarrow 
\Hom_{A(\G)}(\OG, A(\G) \otimes_{\DG} M) \cong \Hom_{A(\G)}(\OG, \OG \otimes_{\ccinfty(\G^{(0)})} M)$, which we will for simplicity again denote by $ \kappa^\G $. 
We can summarise our discussion as follows. 

\begin{lemma} \label{AGaveraging}
Let $ M $ be a $ \DG $-module. Then the above construction yields a $ \ccinfty(\G \backslash \G^{(0)}) $-linear
map $ \kappa^\G: \OG \otimes_{\ccinfty(\G^{(0)})} M \rightarrow \Hom_{A(\G)}(\OG, \OG \otimes_{\ccinfty(\G^{(0)})} M) $. 
\end{lemma}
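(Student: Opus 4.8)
The statement collects properties that were essentially established in the discussion preceding it; it remains only to assemble them and to record which hypotheses are actually used.

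First I would check that $\kappa^\G(F)$ really takes values in $A(\G)$ for every $F \in A(\G)$ and $f \in \OG$. Writing $F$ and $f$ as finite linear combinations of characteristic functions of compact open sets, it is enough to treat $F = \chi_W$ with $W \subseteq \G_{ad}\times_{\pi,r}\G$ compact open and $f = \chi_V$ with $V \subseteq \G_{ad}$ compact open. In that case the sum defining $\kappa^\G(\chi_W)(\chi_V)(\alpha,\beta)$ has only finitely many nonzero terms and depends locally constantly on $(\alpha,\beta)$, so $\kappa^\G(\chi_W)(\chi_V)$ is locally constant; moreover its support is contained in the image of $W$ under the conjugation map $(\gamma,(\eta,\zeta)) \mapsto (\gamma\eta\gamma^{-1},\gamma\zeta)$, intersected with $V \times_{\pi,r}\G$, and properness of $\G$ forces this set to be compact. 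Hence $\kappa^\G(\chi_W)(\chi_V) \in \ccinfty(\G_{ad}\times_{\pi,r}\G) = A(\G)$, and by linearity the same holds for arbitrary $F$ and $f$.

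Next, $\kappa^\G(F)$ is $\OG$-linear by inspection of its formula, and the displayed identity $\chi_U \cdot \kappa^\G(F)(f) = \kappa^\G(F)(\chi_U \cdot f)$ computed above, valid for every compact open bisection $U \subseteq \G$, shows that $\kappa^\G(F)$ is $\G$-equivariant, since such characteristic functions span $\DG$. Therefore $\kappa^\G(F)$ is a morphism of $\G$-anti-Yetter-Drinfeld modules, i.e. an element of $\Hom_{A(\G)}(\OG, A(\G))$. The identity $\kappa^\G(F\cdot\chi_U)(f) = (\kappa^\G(F)(f))\cdot\chi_U$ is another direct calculation, and together with the evident $\ccinfty(\G\backslash\G^{(0)})$-linearity of $F \mapsto \kappa^\G(F)$ it shows that $\kappa^\G$ is a right $\DG$-linear and $\ccinfty(\G\backslash\G^{(0)})$-linear map $A(\G) \to \Hom_{A(\G)}(\OG, A(\G))$.

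Finally, given a $\DG$-module $M$ I would tensor $\kappa^\G$ with $\id_M$ over $\DG$. Using the canonical identifications $\OG \otimes_{\ccinfty(\G^{(0)})} M \cong A(\G) \otimes_{\DG} M$, which hold because $A(\G) = \OG \otimes_{\ccinfty(\G^{(0)})} \DG$ and $M$ is essential, applied once in the source and once inside the target $\Hom$, right $\DG$-linearity of $\kappa^\G$ makes $\sum_i F_i \otimes m_i \mapsto \bigl(f \mapsto \sum_i \kappa^\G(F_i)(f)\otimes m_i\bigr)$ well defined on the balanced tensor product, producing a $\ccinfty(\G\backslash\G^{(0)})$-linear map $\kappa^\G: \OG \otimes_{\ccinfty(\G^{(0)})} M \to \Hom_{A(\G)}(\OG, \OG \otimes_{\ccinfty(\G^{(0)})} M)$ as claimed. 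The only step that genuinely uses the standing hypothesis on $\G$ is the first one: properness is exactly what guarantees that the a priori infinite averaging sum over $\gamma \in \G^{r(\alpha)}$ yields a compactly supported, hence admissible, function, and this is the main point to get right.
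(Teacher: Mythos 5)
Your proposal is correct and follows essentially the same route as the paper, whose ``proof'' of this lemma is precisely the preceding discussion: finiteness and compact support of $ \kappa^\G(F)(f) $ via the compact support of $ F $ and properness of $ \G $, $ \OG $-linearity together with the bisection identity $ \chi_U \cdot \kappa^\G(F)(f) = \kappa^\G(F)(\chi_U \cdot f) $ giving $ A(\G) $-linearity, right $ \DG $-linearity and the evident $ \ccinfty(\G \backslash \G^{(0)}) $-linearity, and finally the passage to general $ M $ through the identification $ A(\G) \otimes_{\DG} M \cong \OG \otimes_{\ccinfty(\G^{(0)})} M $ for essential $ M $. The extra detail you give on why properness yields compact support of $ \kappa^\G(\chi_W)(\chi_V) $ simply fills in a step the paper only asserts.
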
 

Let $ U \subseteq \G^{(0)} $ be a $ \G $-compact open and closed subset, that is, $ U $ is an open and closed set with $ \G \cdot U \subseteq U $ such that $ \G \backslash U $ is compact. We write $ \G^U_U \subseteq \G $ for the subgroupoid consisting of all arrows with source and target in $ U $. 
Multiplication by the characteristic function $ \chi_{\G \backslash U} \in \cinfty(\G\backslash \G^{(0)}) $ yields a chain map 
$$
\Hom_{A(\G)}(\OG[0], \theta \Omega_\G(A)) \rightarrow \Hom_{A(\G^U_U)}(\O_{\G^U_U}[0], \theta \Omega_{\G^U_U}(A_U)), 
$$
where $ A_U = \chi_U \cdot A $ is the $ \G^U_U $-algebra obtained from $ A $ by restriction from $ \DG $ to $ \chi_U \DG \chi_U = \D(\G^U_U) $. 
Since $ \G \backslash \G^{(0)} $ is assumed to be paracompact it can be written as a disjoint union of compact open subsets. It follows that we obtain an isomorphism 
\begin{align*}
\Hom_{A(\G)}(\OG[0], \theta \Omega_\G(A)) &\cong \varprojlim_U \Hom_{A(\G^U_U)}(\O_{\G^U_U}[0], \theta \Omega_{\G^U_U}(A_U)) \\
&\cong \varprojlim_U \varprojlim_m \Hom_{A(\G^U_U)}(\O_{\G^U_U}[0], \theta^m \Omega_{\G^U_U}(A_U)) 
\end{align*}
by taking the inverse limit of the above chain maps over all $ \G $-compact open 
sets $ U \subseteq \G^{(0)} $. In a similar way we obtain an isomorphism of chain complexes
\begin{align*}
\Hom_{\ccinfty(\G \backslash \G^{(0)})}(&\ccinfty(\G \backslash \G^{(0)})[0], \theta \Omega_{\ccinfty(\G \backslash \G^{(0)})}(A \rtimes \G)) \\
&\cong \varprojlim_U \Hom_{\cinfty(\G \backslash U)}(\cinfty(\G \backslash U)[0], \theta \Omega_{\cinfty(\G \backslash U)}(A_U \rtimes \G_U^U)) \\
&\cong \varprojlim_U \varprojlim_m \theta^m \Omega_{\cinfty(\G \backslash U)}(A_U \rtimes \G_U^U),
\end{align*}
again taken over all $ \G $-compact open subsets $ U \subseteq \G^{(0)} $. 

We shall construct a compatible family of $ \ccinfty(\G \backslash \G^{(0)}) $-linear maps 
\begin{align*}
\gamma_\G^m: \Hom_{\ccinfty(\G \backslash \G^{(0)})}(\ccinfty(\G \backslash \G^{(0)})[0], &\theta^m \Omega_{\ccinfty(\G \backslash \G^{(0)})}(A \rtimes \G)) \\
&\rightarrow \Hom_{A(\G)}(\OG[0], \theta^m \Omega_\G(A))
\end{align*}
for $ m \in \mathbb{N} $
by taking $ \gamma_\G^m = \varprojlim_U (\gamma_\G^m)_U $, where $ (\gamma_\G^m)_U(\omega) = \theta^m \kappa^{\G_U^U}(\gamma_U(\omega)) $ and $ (\gamma_\G)_U(\omega) = \kappa^{\G_U^U}(\gamma_U(\omega)) $ are defined by 
considering $ \gamma_U: \Omega_{\cinfty(\G \backslash U)}(A_U \rtimes \G_U^U) \rightarrow \Omega_{\G^U_U}(A_U) $, given by 
\begin{align*}
\gamma_U(&\langle a_0 \rtimes \chi_{U_0} \rangle d(a_1 \rtimes \chi_{U_1}) \cdots d(a_n \rtimes \chi_{U_n}))\\
&= \chi_{U_0 \cdots U_n} \otimes \langle a_0 \rangle d(\chi_{U_0} \cdot a_1) d(\chi_{U_0} \chi_{U_1} \cdot a_2) \cdots d(\chi_{U_0} \cdots \chi_{U_{n - 1}} \cdot a_n)  
\end{align*}
for $ \omega = \langle a_0 \rtimes \chi_{U_0} \rangle d(a_1 \rtimes \chi_{U_1}) \cdots d(a_n \rtimes \chi_{U_n}) \in \Omega^n_{\cinfty(\G \backslash U)}(A_U \rtimes \G_U^U) $. 
Moreover, $ \kappa^{\G_U^U} $ is the averaging map constructed above, applied to $ \G_U^U $ and $ M = \Omega_{(\G^U_U)^{(0)}}(A_U) $.

\begin{lemma} \label{gammaboundary}
For every $ \G $-compact open set $ U \subseteq \G^{(0)} $ the map $ (\gamma_\G)_U $ is well-defined and $ \ccinfty(\G \backslash \G^{(0)}) $-linear. Moreover it 
commutes with the Hochschild and Connes boundary operators, and hence induces chain maps $ (\gamma^m_\G)_U $ for all $ m \in \mathbb{N} $ as stated. If $ V \subseteq \G^{(0)} $ is another $ \G $-compact open subset with $ V \subseteq U $ then the diagram 
\begin{center}
\begin{tikzcd}
\theta^m \Omega_{\cinfty(\G \backslash U)}(A_U \rtimes \G_U^U)
\arrow[r,"(\gamma^m_\G)_U"]
\arrow[d,""]& \Hom_{A(\G^U_U)}(\O_{\G^U_U}[0], \theta^m \Omega_{\G^U_U}(A_U)) \arrow[d,""] \\
\theta^m \Omega_{\cinfty(\G \backslash V)}(A_V \rtimes \G_V^V)
\arrow[r,"(\gamma^m_\G)_V"] & \Hom_{A(\G^V_V)}(\O_{\G^V_V}[0], \theta^m \Omega_{\G^V_V}(A_V))   
\end{tikzcd}
\end{center}
induced by restriction from $ U $ to $ V $ is commutative for all $ m \in \mathbb{N} $. 
As a consequence, $ \gamma^m_\G $ is a well-defined $ \ccinfty(\G \backslash \G^{(0)}) $-linear chain map. 
\end{lemma}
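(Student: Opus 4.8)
The plan is to establish the three assertions about $\gamma^\G_U$ separately and then assemble $\gamma^\G$ by passing to the inverse limit over the $\G$-compact open sets $U$.

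\emph{Well-definedness and linearity.} One first checks that the displayed formula for $\gamma_U$ respects the relations of the balanced tensor products over $\cinfty(\G\backslash U)$ appearing in $\Omega^n_{\cinfty(\G\backslash U)}(A_U\rtimes\G^U_U)$, and that it does not depend on the presentation of each $\chi_{U_j}$ as the restriction to $\G^U_U$ of a characteristic function of a compact open bisection. By the spanning arguments from Section \ref{section:dgmodules} this comes down to the identities $\chi_V\cdot\chi_W = \chi_{VW}$ for compact open bisections together with the facts that the $\G^U_U$-module action on $A_U$ is compatible with restriction to open subsets and acts by algebra automorphisms. Since the $\cinfty(\G\backslash U)$-module structure on the source acts by multiplication and $\gamma_U$ turns this into multiplication in the first tensor leg by the corresponding orbit-constant function, $\gamma_U$ is $\cinfty(\G\backslash U)$-linear; combined with the $\ccinfty(\G\backslash\G^{(0)})$-linearity of $\kappa^{\G^U_U}$ from Lemma \ref{AGaveraging} and the fact that the Hodge truncations $\theta^m$ are quotient maps of $\ccinfty(\G\backslash\G^{(0)})$-modules, it follows that $\gamma^\G_U = \varprojlim_m\theta^m\,\kappa^{\G^U_U}(\gamma_U(-))$ is well defined and $\ccinfty(\G\backslash\G^{(0)})$-linear.

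\emph{Compatibility with $b$ and $B$.} This is the heart of the argument. Since $\G\backslash U$ is a space, its equivariant theory carries the trivial twist, so $b$ and $B$ on $\Omega_{\cinfty(\G\backslash U)}(A_U\rtimes\G^U_U)$ are the ordinary Hochschild and Connes operators. Applying $b$ to $\langle a_0\rtimes\chi_{U_0}\rangle d(a_1\rtimes\chi_{U_1})\cdots d(a_n\rtimes\chi_{U_n})$ and using the crossed-product multiplication $(a_i\rtimes\chi_{U_i})(a_j\rtimes\chi_{U_j}) = a_i(\chi_{U_i}\cdot a_j)\rtimes\chi_{U_iU_j}$, the leading term and the interior collapsing terms are carried by $\gamma_U$ directly onto the corresponding terms of $b_{\G^U_U}\gamma_U$, because each $\chi_{U_i}\cdot(-)$ is an algebra automorphism of $A_U$ commuting with $d$. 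The cyclic wrap-around term, however, is not carried onto the twisted last summand of $b_{\G^U_U}\gamma_U$, but onto its image under the action of the bisection $\chi_{U_n}$ in the $\G^U_U$-anti-Yetter-Drinfeld module $\Omega_{\G^U_U}(A_U)$; one makes this precise by a direct computation with the explicit formula for $b_\G$ of Section \ref{section:epch} and the diagonal $\G^U_U$-action on $\O_{\G^U_U}\otimes_{\ccinfty((\G^U_U)^{(0)})}\Omega_{(\G^U_U)^{(0)}}(A_U)$, using that $\chi_{U_nU_0\cdots U_{n-1}}$ is the adjoint-translate of $\chi_{U_0\cdots U_n}$ on the inertia groupoid. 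Now the averaging map $\kappa^{\G^U_U}$ is a sum over the whole conjugation orbit and hence does not distinguish an element of $\O_{\G^U_U}\otimes_{\ccinfty((\G^U_U)^{(0)})}\Omega_{(\G^U_U)^{(0)}}(A_U)$ from its $\G^U_U$-translate; consequently $\kappa^{\G^U_U}\gamma_U b = \kappa^{\G^U_U}b_{\G^U_U}\gamma_U$, and since $b_{\G^U_U}$ is a morphism of $\G^U_U$-anti-Yetter-Drinfeld modules, so that post-composition by $b_{\G^U_U}$ commutes with $\kappa^{\G^U_U}$, this equals $b_{\G^U_U}\kappa^{\G^U_U}\gamma_U$. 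The same reasoning applied to the cyclic permutation sum defining $B_\G$ gives compatibility with the Connes operator, and since the projections $\theta^m\Omega_{\G^U_U}(A_U)\to\theta^{m-1}\Omega_{\G^U_U}(A_U)$ are chain maps we conclude that $\gamma^\G_U$ commutes with $b$ and $B$, hence with $\partial = b + B$.

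\emph{Compatibility with restriction and conclusion.} For $V\subseteq U$ both $\G$-compact open, the vertical maps in the diagram are induced by multiplication with the idempotent $\chi_{\G\backslash V}$, which implements restriction from $\G^U_U$ to $\G^V_V$; since the formulas defining $\gamma_U$, the averaging $\kappa^{\G^U_U}$, and the truncations $\theta^m$ all commute with this cutting down, the square is commutative. Finally, since $\G\backslash\G^{(0)}$ is paracompact it is a disjoint union of compact open sets, so the $\G$-compact open subsets of $\G^{(0)}$ form a cofinal directed system; the compatible family $(\gamma^\G_U)_U$ therefore assembles, under the inverse-limit identifications recorded before the lemma, to a well-defined $\ccinfty(\G\backslash\G^{(0)})$-linear chain map $\gamma^\G$.

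I expect the main obstacle to be the computation in the second paragraph: one must pin down exactly how $\gamma_U$ transports the untwisted cyclic wrap-around of the crossed-product complex — it lands at the image of the anti-Yetter-Drinfeld-twisted wrap-around term of $b_{\G^U_U}$ under a single bisection action — and then check that this discrepancy is invisible to the averaging $\kappa^{\G^U_U}$; carrying out the analogous, and longer, bookkeeping for $B$ with its full cyclic permutation sum is the most laborious part.
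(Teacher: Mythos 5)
Your proposal is correct and follows essentially the same route as the paper: the core step — computing that $\gamma_U$ intertwines everything except the cyclic wrap-around term, observing that the two wrap-around terms differ exactly by the diagonal action of the bisection $U_n$, and then noting that this discrepancy disappears under the averaging $\kappa^{\G^U_U}$ while post-composition with $b_{\G^U_U}$ commutes with $\kappa^{\G^U_U}$ by naturality — is precisely the paper's chain of equalities, and the well-definedness, restriction-compatibility and inverse-limit assembly are handled the same way. The only divergence is your "most laborious part": the paper does not redo the cyclic-sum bookkeeping for $B$, but gets compatibility with the Connes operator formally from the (obvious) compatibility of $\gamma_U$ with $d$ together with the already established $b$-compatibility of the averaged map, since $B_\G$ is built from $d_\G$ and the Karoubi operator $\kappa_\G = \id - (b_\G d_\G + d_\G b_\G)$.
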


\begin{proof}
It follows easily from Lemma \ref{AGaveraging} that the assignment which sends $ \omega \in \Omega_{\cinfty(\G \backslash U)}(A_U \rtimes \G_U^U) $ 
to $ \kappa^{\G_U^U}(\gamma_U(\omega)) $ yields a well-defined $ \ccinfty(\G \backslash \G^{(0)}) $-linear 
map from $ \Omega_{\cinfty(\G \backslash U)}(A_U \rtimes \G_U^U) $ to $ \Hom_{A(\G^U_U)}(\O_{\G^U_U}[0], \Omega_{\G^U_U}(A_U)) $. 

We compute 
\begin{align*}
&b_{\G_U^U} \gamma_U(\langle a_0 \rtimes \chi_{U_0} \rangle d(a_1 \rtimes \chi_{U_1}) \cdots d(a_n \rtimes \chi_{U_n})) \\
&= (-1)^{n - 1} \chi_{U_0 \cdots U_n} \otimes \langle a_0 \rangle d(\chi_{U_0} \cdot a_1) \cdots d(\chi_{U_0 \cdots U_{n - 2}} \cdot a_{n - 1}) \cdot (\chi_{U_0 \cdots U_{n - 1}} \cdot a_n) \\
&\quad+(-1)^n \chi_{U_0 \cdots U_n} \otimes (\chi_{(U_0 \cdots U_n)^{-1}} \cdot \chi_{U_0 \cdots U_{n - 1}} \cdot a_n) \langle a_0 \rangle d(\chi_{U_0} \cdot a_1) \cdots d(\chi_{U_0 \cdots U_{n - 2}} \cdot a_{n - 1}) \\
&= (-1)^{n - 1} \chi_{U_0 \cdots U_n} \otimes \langle a_0 \rangle d(\chi_{U_0} \cdot a_1) \cdots d(\chi_{U_0 \cdots U_{n - 2}} \cdot a_{n - 1}) \cdot (\chi_{U_0 \cdots U_{n - 1}} \cdot a_n) \\
&\quad + (-1)^n \chi_{U_0 \cdots U_n} \otimes (\chi_{U_n^{-1}} \cdot a_n) \langle a_0 \rangle d(\chi_{U_0} \cdot a_1) \cdots d(\chi_{U_0 \cdots U_{n - 2}} \cdot a_{n - 1})
\end{align*} 
and 
\begin{align*} 
&\gamma_U b(\langle a_0 \rtimes \chi_{U_0} \rangle d(a_1 \rtimes \chi_{U_1}) \cdots d(a_n \rtimes \chi_{U_n})) \\
&= (-1)^{n - 1} \gamma_U(\langle a_0 \rtimes \chi_{U_0} \rangle d(a_1 \rtimes \chi_{U_1}) \cdots d(a_{n - 1} \rtimes \chi_{U_{n - 1}}) \cdot (a_n \rtimes \chi_{U_n})) \\
&\quad +(-1)^n \gamma_U((a_n \rtimes \chi_{U_n}) \langle a_0 \rtimes \chi_{U_0} \rangle d(a_1 \rtimes \chi_{U_1}) \cdots d(a_{n - 1} \rtimes \chi_{U_{n - 1}})) \\
&= (-1)^{n - 1} \chi_{U_0 \cdots U_n} \otimes \langle a_0 \rangle d(\chi_{U_0} \cdot a_1) \cdots d(\chi_{U_0 \cdots U_{n - 2}} \cdot a_{n - 1}) \cdot (\chi_{U_0 \cdots U_{n - 1}} \cdot a_n) \\
&\quad+(-1)^n \chi_{U_n U_0 \cdots U_{n - 1}} \otimes a_n \langle \chi_{U_n} 
\cdot a_0 \rangle d(\chi_{U_n U_0} \cdot a_1) \cdots d(\chi_{U_n U_0 \cdots U_{n - 2}} \cdot a_{n - 1}). 
\end{align*}
These expressions do not agree in general, but it follows from Lemma \ref{AGaveraging} that 
$$ 
b_{\G_U^U} \kappa^{\G_U^U}(\gamma_U(\omega)) = \kappa^{\G_U^U}(b_{\G_U^U} \gamma_U(\omega)) = \kappa^{\G_U^U}(\gamma_U b(\omega))  = \kappa^{\G_U^U}(\gamma_U) b(\omega)
$$  
for $ \omega = \langle a_0 \rtimes \chi_{U_0} \rangle d(a_1 \rtimes \chi_{U_1}) \cdots d(a_n \rtimes \chi_{U_n}) $ as desired. 
As a consequence, we see that that $ (\gamma_\G)_U $ induces a map $ (\gamma_\G^m)_U $ between the levels of the Hodge towers on both sides as claimed. 

Since $ \gamma_U $ is obviously compatible with the exterior differential $ d $ the same is true for $ (\gamma_\G)_U $, 
and we conclude that $ (\gamma_\G)_U $ commutes with the Connes boundary operators as well. The compatibility with the canonical restriction map from $ U $ to $ V $  
is straightforward to check, and it follows that $ (\gamma^m_\G) $ is a well-defined chain map. Linearity over $ \ccinfty(\G \backslash \G^{(0)}) $ is clear from the construction. 
\end{proof}

Let $ m \in \mathbb{N} $ and consider the supercomplexes    
\begin{align*}
C^m &= \Hom_{\ccinfty(\G\backslash\G^{(0)})}(\ccinfty(\G\backslash\G^{(0)})[0], 
\theta^m \Omega_{\ccinfty(\G\backslash\G^{(0)})}(A \rtimes \G)), \\
D^m &= \Hom_{A(\G)}(\O_\G[0], \theta^m \Omega_{\G}(A)). 
\end{align*} 
From the proof of Lemma \ref{gammaboundary} we deduce that taking inverse limits of the maps $ \gamma_\G^m $ yields a chain map $ \gamma_\G: \varprojlim C^n \rightarrow \varprojlim D^n $. 
By Lemma \ref{XA} and Lemma \ref{trivialquasifree}, we have
\begin{align*}
H_*(\varprojlim C^m) &\cong HP_*^{\ccinfty(\G\backslash\G^{(0)})}(\ccinfty(\G\backslash \G^{(0)}), A \rtimes \G), \\ 
H_*(\varprojlim D^m) &\cong HP_*^\G(\ccinfty(\G^{(0)}),A), 
\end{align*}  
since $ \ccinfty(\G \backslash \G^{(0)}) $ is quasifree as a $ \ccinfty(\G \backslash \G^{(0)}) $-algebra and $ \ccinfty(\G^{(0)}) $ is quasifree as a $ \G $-algebra. 
Moreover, the maps in homology induced by the chain maps $ \gamma^m_\G $ and $ \gamma_\G $ fit into a commutative diagram  
\begin{center}
\begin{tikzcd}
0 \arrow[r,""] &
\varprojlim^1 H_{* + 1}(C^m) 
\arrow[r,""] 
\arrow[d,"\varprojlim^1(\gamma_\G^m)_*"]& H_*(\varprojlim C^m)  
\arrow[r,""] 
\arrow[d,"(\gamma_\G)_*"]& \varprojlim H_*(C^m) 
\arrow[r,""] 
\arrow[d,"\varprojlim(\gamma_\G^m)_*"] & 0 
\\
0 \arrow[r,""] &
\varprojlim^1 H_{* + 1}(D^m) 
\arrow[r,""] & H_*(\varprojlim D^m) 
\arrow[r,""] & \varprojlim H_*(D^m) 
\arrow[r,""] & 0 
\end{tikzcd}
\end{center}
with exact rows, compare \cite[Theorem 3.5.8]{WEIBEL_homologicalalgebra}. 
Using Milnor's description of $ \varprojlim^1 $ and the $ 5 $-lemma, we thus conclude that Theorem \ref{Green-Julg} is a consequence of the following assertion. 

\begin{thm} \label{Green-Julg-local}
The chain maps $ \gamma^m_\G $ induce isomorphisms 
$$
H_*(C^m) \cong H_*(D^m)
$$
for all $ m \in \mathbb{N} $. 
\end{thm}

\begin{proof} 
Due to Lemma \ref{localisationexact} and Lemma \ref{localtoglobal} it suffices to show that the localised chain maps $ (\gamma_\G^m)_{[x]}: C^m_{[x]} \rightarrow D^m_{[x]} $ induce isomorphisms in homology for all $ [x] \in \G\backslash\G^{(0)} $. Note here that $ C^m $ is a complex of essential $ \ccinfty(\G \backslash \G^{(0)}) $-modules by construction, whereas $ D^m $ is naturally a complex of essential $ \ccinfty(\G\backslash\G_{ad}) $-modules. Using that the projection map $ r = s: \G_{ad} \rightarrow \G^{(0)} $ is $ \G $-equivariant, one obtains an injective essential algebra homomorphism $ \ccinfty(\G\backslash\G^{(0)}) \rightarrow M(\ccinfty(\G\backslash\G_{ad})) $, which allows one to view $ D^m $ as a complex of essential  $ \ccinfty(\G \backslash \G^{(0)}) $-modules as well. 

Let us fix $ x \in \G^{(0)} $. 
The restricted groupoid $ \G^{[x]}_{[x]} $ is discrete, and the localisation $ A_{[x]} $ is naturally a $ \G^{[x]}_{[x]} $-algebra. Similarly, the localisation $ A_x $ 
of $ A $ at the maximal ideal of $ \ccinfty(\G^{(0)}) $ consisting of all functions vanishing at $ x $ is naturally a $ \G^x_x $-algebra, and we note that
the isotropy group $ \G^x_x $ is finite. 

Consider first the homology of $ C^m_{[x]} $. A direct inspection shows that we have a canonical isomorphism 
$$ 
\Omega_{\ccinfty(\G\backslash\G^{(0)})}(A \rtimes \G)_{[x]} \cong \Omega(A_{[x]} \rtimes \G_{[x]}^{[x]})
$$ 
of mixed complexes. 
Using exactness of localisation we get 
\begin{align*}
H_*(C^m_{[x]}) &\cong H_*(\Hom(\mathbb{C}[0], (\theta^m \Omega_{\ccinfty(\G\backslash\G^{(0)})}(A \rtimes \G))_{[x]})) \\
&= H_*((\theta^m \Omega_{\cinfty(X)}(A \rtimes \G))_{[x]}) \\
&\cong H_*(\theta^m(\Omega_{\cinfty(X)}(A \rtimes \G)_{[x]})) \\
&\cong H_*(\theta^m(\Omega_{\ccinfty(\G \backslash \G^{(0)})}(A \rtimes \G)_{[x]})) \\
&= H_*(\theta^m \Omega(A_{[x]} \rtimes \G^{[x]}_{[x]})). 
\end{align*}
In a similar way one can describe the homology of $ D^m_{[x]} $. More precisely, using properness of $ \G $ we obtain
\begin{align*}
H_*(D^m_{[x]}) &\cong H_*(\Hom_{A(\G_{[x]}^{[x]})}(\O_{\G^{[x]}_{[x]}}[0], \theta^m \Omega_{\G^{[x]}_{[x]}}(A_{[x]}))) \\
&\cong H_*(\Hom_{A(\G_x^x)}(\O_{\G_x^x}[0], \theta^m \Omega_{\G^x_x}(A_x))) \\ 
&= H_*(\Hom_{\G_x^x}(\mathbb{C}[0], \theta^m \Omega_{\G^x_x}(A_x))) \\ 
&= H_*(\theta^m \Omega_{\G^x_x}(A_x)^{\G_x^x}) 
\end{align*}
compare also the discussion at the end of section \ref{section:epch}. 

It is a general fact about Hodge towers that a morphism $ M \rightarrow N $ of mixed complexes which induces an isomorphism in Hochschild homology also induces isomorphisms $ H_*(\theta^m M) \rightarrow H_*(\theta^m N) $ for all $ m \in \mathbb{N} $, see \cite{CUNTZ_QUILLEN_nonsingularity}. Now we have 
$$
A_{[x]} \rtimes \G_{[x]}^{[x]} \cong (A_x \rtimes \G^x_x) \otimes M_{[x]}(\mathbb{C}) , 
$$
where $ M_{[x]}(\mathbb{C}) $ denotes the algebra of matrices indexed by $ [x] $. It follows that the inclusion map $ A_x \rtimes \G^x_x \rightarrow A_{[x]} \rtimes \G_{[x]}^{[x]} $ 
induces an isomorphism
$$ 
HH_*(A_{[x]} \rtimes \G^{[x]}_{[x]}) = 
HH_*(\Omega(A_{[x]} \rtimes \G^{[x]}_{[x]})) \cong 
HH_*(\Omega(A_x \rtimes \G^x_x)) = HH_*(A_x \rtimes \G^x_x). 
$$
This reduces the claim to the Green-Julg theorem in group equivariant Hochschild homology for finite groups, see \cite{BRYLINSKI_brown}, \cite{BUES_thesis}. In fact, following through the above identifications one checks that the map 
$$
HH_*(\gamma_\G): HH_*(A_x \rtimes \G^x_x) = HH_*(\Omega(A_x \rtimes \G^x_x)) \rightarrow HH_*(\Omega_{\G^x_x}(A_x)^{\G_x^x}) = HH_*^{\G_x^x}(A_x)
$$
induced by $ \gamma_\G $ agrees up to a nonzero scalar with the isomorphism obtained from these sources, compare the proof of \cite[Theorem 4.3]{VOIGT_thesis}. This implies that $ HH_*(\gamma_\G) $ is indeed an isomorphism, thus completing the argument. 
\end{proof}

\bibliographystyle{plain}

\bibliography{cvoigt}

\end{document}